\newcommand{\Ker}{\mathrm{Ker}}
\newcommand{\Zp}{\mathbb{Z}_p}
\newcommand{\Qp}{\mathbb{Q}_p}
\newcommand{\val}{\mathrm{val}}
\newcommand{\diag}{\mathrm{diag}}
\newcommand{\genlength}{{\mathrm{length}_{gen}}}
\newcommand{\Tr}{\mathrm{Tr}}
\newcommand{\Ext}{\mathrm{Ext}}
\newcommand{\Hom}{\mathrm{Hom}}
\newcommand{\Ind}{\mathrm{Ind}}
\newcommand{\Fil}{\mathrm{Fil}}
\newcommand{\GL}{\mathrm{GL}}
\newcommand{\Gal}{\mathrm{Gal}}
\newcommand{\res}{\mathrm{res}}
\newcommand{\id}{\mathrm{id}}
\newcommand{\rk}{\mathrm{rk}}
\newcommand{\pr}{\mathrm{pr}}
\newcommand{\Coker}{\mathrm{Coker}}
\newcommand{\gr}{\mathrm{gr}^{\cdot}}
\newcommand{\bg}{(\hspace{-0.06cm}(}
\newcommand{\jg}{)\hspace{-0.06cm})}
\newcommand{\bs}{[\hspace{-0.04cm}[}
\newcommand{\js}{]\hspace{-0.04cm}]}
\newtheorem{thm}{Theorem}[section]
\newtheorem{thma}{Theorem}
\newtheorem{pro}[thm]{Proposition}
\newtheorem{lem}[thm]{Lemma}
\newtheorem{cor}[thm]{Corollary}
\newtheorem{que}{Question}
\theoremstyle{definition}
\newtheorem*{rem}{Remark}
\newtheorem*{rems}{Remarks}
\begin{document}
\date{\today}
\title{Multivariable $(\varphi,\Gamma)$-modules and smooth $o$-torsion representations}
\author{Gergely Z\'abr\'adi \footnote{I would like to thank the MTA Alfr\'ed R\'enyi Institute of Mathematics for its hospitality where parts of this work was written. This research was supported by a Hungarian OTKA Research grant K-100291 and by the J\'anos Bolyai Scholarship of the Hungarian Academy of Sciences.}}
\maketitle

\begin{abstract}
Let $G$ be a $\mathbb{Q}_p$-split reductive group with connected centre and Borel subgroup $B=TN$. We construct a right exact functor $D^\vee_\Delta$ from the category of smooth modulo $p^n$ representations of $B$ to the category of projective limits of finitely generated \'etale $(\varphi,\Gamma)$-modules over a multivariable (indexed by the set of simple roots) commutative Laurent-series ring. These correspond to representations of a direct power of $\Gal(\overline{\mathbb{Q}_p}/\mathbb{Q}_p)$ via an equivalence of categories. Parabolic induction from a subgroup $P=L_PN_P$ gives rise to a basechange from a Laurent-series ring in those variables with corresponding simple roots contained in the Levi component $L_P$. $D^\vee_\Delta$ is exact and yields finitely generated objects on the category $SP_A$ of finite length representations with subquotients of principal series as Jordan-H\"older factors. Lifting the functor $D^\vee_\Delta$ to all (noncommuting) variables indexed by the positive roots allows us to construct a $G$-equivariant sheaf $\mathfrak{Y}_{\pi,\Delta}$ on $G/B$ and a $G$-equivariant continuous map from the Pontryagin dual $\pi^\vee$ of a smooth representation $\pi$ of $G$ to the global sections $\mathfrak{Y}_{\pi,\Delta}(G/B)$. We deduce that $D^\vee_\Delta$ is fully faithful on the full subcategory of $SP_A$ with Jordan-H\"older factors isomorphic to irreducible principal series.
\end{abstract}

\tableofcontents

\section{Introduction}

\subsection{Background and motivation}

By now the $p$-adic Langlands correspondence for $\GL_2(\mathbb{Q}_p)$ is very well understood through the work of Berger \cite{Be}, Breuil \cite{B03a,B03b,BE}, Colmez \cite{Mira}, \cite{C}, Emerton \cite{E}, Kisin \cite{K1}, Pa\v{s}k\=unas \cite{P} (see \cite{B1} for an overview). The starting point of Colmez's work is Fontaine's \cite{F} theorem that the category of modulo $p^h$ Galois representations of $\mathbb{Q}_p$ is equivalent to the category of \'etale $(\varphi,\Gamma)$-modules over $\mathbb{Z}/p^h\bg X\jg $. One of Colmez's breakthroughs was that he managed to relate smooth modulo $p^h$ representations (therefore also continuous $p$-adic representations by letting $h\to\infty$ and inverting $p$) of $\GL_2(\mathbb{Q}_p)$ to $(\varphi,\Gamma)$-modules, too. The so-called ``Montr\'eal-functor'' associates to a smooth mod $p^h$ representation $\pi$ of $\GL_2(\mathbb{Q}_p)$ (first restricting $\pi$ to a Borel subgroup $B_2(\mathbb{Q}_p)$) an \'etale $(\varphi,\Gamma)$-module over $\mathbb{Z}/p^h\bg X\jg$. By Pa\v{s}k\={u}nas's work \cite{P} this induces a bijection for certain $p$-adic Banach space representations of $\GL_2(\mathbb{Q}_p)$.

There have been attempts, for instance by Schneider and Vigneras \cite{SVig}, to generalize Colmez's functor to other $\mathbb{Q}_p$-split reductive groups $G$. More recently, Breuil \cite{B} (in a slightly more general setting allowing finite extensions of $\mathbb{Q}_p$, too) introduced a functor $D^\vee_\xi=D^\vee_{\xi,\ell}$ from smooth $\mathbb{Z}/p^h$-representations of $G$ to projective limits of \'etale $(\varphi,\Gamma)$-modules. The construction depends on the choice of a cocharacter $\xi\colon\mathbf{G}_m\to T$ (with the property that the composition of $\xi$ with all simple roots $\alpha\in\Delta$ is an isomorphism of $\mathbf{G}_m$) and on a Whittaker type functional $\ell$ from the unipotent radical $N$ of a Borel subgroup $B=TN$ to $\mathbb{Q}_p$. In \cite{B} (and also in \cite{SVig}) $\ell$ is assumed to be \emph{generic}, ie.\ $\ell$ induces an isomorphism $N_{\alpha}\to\mathbb{Q}_p$ for the root subgroups $N_\alpha$ of all simple roots $\alpha\in \Delta$ with respect to $B$. The action of $\varphi$ (resp.\ of $\Gamma\cong\Zp^\times$) on $D^\vee_\xi(\pi)$ for a smooth mod $p^h$ representation $\pi$ of $G$ comes from the (inverse of the) action of $\xi(p)$ (resp.\ of $\xi(\Zp^\times)$) on $\pi$. The functor $D^\vee_{\xi,\ell}$ has very promising properties: it is right exact and compatible with tensor products and with parabolic induction. Moreover, $D^\vee_{\xi,\ell}$ is exact and produces finitely generated objects on the category $SP_A$ of finite length representations with all Jordan-H\"older factors appearing as a subquotient of principal series representations (ie.\ of $\Ind_B^G\chi$ for some character $\chi$ of $T$). Finally, $D^\vee_{\xi,\ell}$ is compatible with the conjectures in \cite{BH} made from a global point of view. The assumption on the genericity of $\ell$ is needed crucially for some of these properties, in particular for the exactness on $SP_A$ and for the compatibility with \cite{BH}. However, if $\ell$ is a generic Whittaker functional then the functor $D^\vee_{\xi,\ell}$ loses a lot of information, one cannot possibly recover the representation $\pi$ from the attached $(\varphi,\Gamma)$-module $D^\vee_{\xi,\ell}(\pi)$ (by the methods developed in \cite{SVZ} or otherwise). This has also been predicted by the work of Breuil and Pa\v{s}k\={u}nas \cite{BP}: when one moves beyond $\GL_2(\mathbb{Q}_p)$ then there are much more representations on the automorphic side than on the Galois side. So if we would like to have a bijection for some large class of representations on the reductive group side, we need to put additional data on our Galois-representations. One candidate is that we could perhaps equip the Galois representation with an additional character of the torus $T/\xi(\mathbb{Q}_p^\times)$ extending the action of $\varphi$ and $\Gamma$. The heuristics for this is that even in the case of $\GL_2(\mathbb{Q}_p)$ a central character appears naturally on the attached $(\varphi,\Gamma)$-module. However, if $\ell$ is generic then the action of $\varphi$ and $\Gamma$ on $D^\vee_{\xi,\ell}(\pi)$ cannot be extended to the dominant submonoid $T_+\subset T$ since in this case the kernel $H_{gen}=\Ker(\ell\colon N\to \mathbb{Q}_p)$ is not invariant under the conjugation action of any larger subgroup of $T$ than the product of the image of $\xi$ and the centre. On the other hand, if we choose $\ell$ to be very far from being generic, ie.\ $\ell=\ell_\alpha$ is the projection onto a root subgroup $N_\alpha$ for some simple root $\alpha\in\Delta$ then we do have an additional action of $T_+$ on $D^\vee_{\xi,\ell}(\pi)$ as shown by the present author and Erd\'elyi \cite{EZ}. Moreover, in op.\ cit.\ a natural transformation $\beta_{G/B,\cdot}$ from the functor $\pi\mapsto\pi^\vee$ (taking Pontryagin duals) to the global sections $\mathfrak{Y}_{\alpha,\pi}(G/B)$ of a $G$-equivariant sheaf $\mathfrak{Y}_{\alpha,\pi}$ on the flag variety $G/B$ associated to the \'etale $T_+$-module $D^\vee_{\xi,\ell}(\pi)$ is constructed for the choice $\ell=\ell_\alpha$. The map $\beta_{G/B,\pi}\colon \pi^\vee\to \mathfrak{Y}_{\alpha,\pi}(G/B)$ is nonzero whenever $D^\vee_{\xi,\ell}(\pi)$ is nonzero. However, as mentioned above, for non-generic $\ell$ the functor $D^\vee_{\xi,\ell}$ does not have so good exactness and compatibility properties. 

The goal of this paper is to combine all the mentioned good properties of the above approaches. In order to do this we are going to use \emph{multivariable} $(\varphi,\Gamma)$-modules in the variables $X_\alpha$ ($\alpha\in\Delta$). More concretely, consider the Laurent series ring $\mathbb{Z}/p^h \bg X_{\alpha}\mid \alpha\in\Delta\jg:=\mathbb{Z}/p^h \bs X_\alpha,\alpha\in\Delta\js[X_\alpha^{-1}\mid\alpha\in\Delta]$ with the conjugation action of the monoid $T_+:=\{t\in T\mid \alpha(t)\in\Zp\text{ for all }\alpha\in\Delta\}$. In an analogous way to \cite{B} we construct a functor $D^\vee_\Delta$ from smooth mod $p^h$-representations of a $\mathbb{Q}_p$-split connected reductive group $G$ with connected centre to the category of projective limits of finitely generated \'etale $T_+$-modules over $\mathbb{Z}/p^h \bg X_{\alpha}\mid \alpha\in\Delta\jg$. Moreover, in \cite{Z16} a pair $\mathbb{D}$ and $\mathbb{V}$ of quasi-inverse equivalences of categories is constructed between the category of continuous mod $p^n$ representations of the $|\Delta|$th direct power of the Galois group $\Gal(\overline{\mathbb{Q}_p}/\mathbb{Q}_p)$ (endowed with a character of $\mathbb{Q}_p^\times$) and multivariable \'etale $T_+$-modules. One can pass to usual $(\varphi,\Gamma)$-modules by identifying the variables $X_\alpha$ with each other---this step corresponds to the restriction of a representation of $\Gal(\overline{\mathbb{Q}_p}/\mathbb{Q}_p)^{|\Delta|}$ to the diagonal embedding of $\Gal(\overline{\mathbb{Q}_p}/\mathbb{Q}_p)$ (Cor.\ 3.10 in \cite{Z16}). When doing so we must forget the action of the monoid $T_+$ just keeping the action of $\varphi^{\mathbb{N}}\Gamma=\xi(\Zp\setminus\{0\})\subset T_+$ (or possibly also the action of the centre of $G$) as the kernel $(X_\alpha-X_\beta\mid \alpha,\beta\in\Delta)$ of this identification is not stable under $T_+$. This assignment is faithful and exact in general, but definitely not full. In all known cases---including objects in the category $SP_A$ and parabolically induced representations from the product of copies of $\GL_2(\mathbb{Q}_p)$ and a torus---the resulting Galois representation will coincide with Breuil's $\mathbb{V}_F\circ D^\vee_{\xi,\ell}(\pi)$ (for generic $\ell$) where $\mathbb{V}_F$ stands for Fontaine's equivalence. Whether or not this is true in general is an open question. If $P=L_PN_P$ is a standard parabolic subgroup with Levi component $L_P$ isomorphic to the product of copies of $\GL_2(\mathbb{Q}_p)$ and a torus, then the value of $\mathbb{V}\circ D^\vee_\Delta$ at parabolically induced representations $\Ind_P^G\pi_P$ is well-described in terms of tensor product of Galois representations for each $\alpha\in \Delta$. It can be shown using the fully faithful property of $\mathbb{D}$ that the resulting multivariable $(\varphi,\Gamma)$-modules $D_\Delta^\vee(\Ind_P^G\pi_P)$ are therefore pairwise non-isomorphic for all the irreducible mod $p$ representations of $G$ arising this way (for varying $P$ of this form).

Apart from all the above mentioned exactness and compatibility properties $D^\vee_\Delta$ has the following additional features: induction from a parabolic subgroup $P=L_PN_P$ corresponds to basechange from $\mathbb{Z}/(p^n)\bg X_\alpha\mid\alpha\in\Delta_P\jg$ to $\mathbb{Z}/p^h \bg X_{\alpha}\mid \alpha\in\Delta\jg$ where $\Delta_P\subseteq \Delta$ consists of those simple roots whose root subgroups are contained in the Levi component $L_P$. On the Galois side this means, in particular, that the copy of the Galois group $\Gal(\overline{\mathbb{Q}_p}/\mathbb{Q}_p)$ corresponding to those simple roots $\alpha\in\Delta$ whose root subgroup is not contained in the Levi $L_P$ acts on $\mathbb{V}\circ D^\vee_\Delta(\Ind_P^G\pi_P)$ via a character. This could hopefully lead to detecting $P$ from the attached $T_+$-module over $\mathbb{Z}/p^h \bg X_{\alpha}\mid \alpha\in\Delta\jg$. Another promising property of $D^\vee_\Delta$ is that we can indeed recover successive extensions $\pi$ of irreducible principal series representations from $D^\vee_\Delta(\pi)$. In other words we show---using the methods of \cite{SVZ} \cite{EZ} realizing $\pi^\vee$ as a $G$-invariant subspace of the global sections of a $G$-equivariant sheaf on $G/B$---that $D^\vee_\Delta$ is fully faithful on the category $SP_A^0$ of these representations. By the aforementioned work of Breuil and Pa\v{s}k\={u}nas \cite{BP} we cannot expect a bijection between smooth $\mathbb{Z}/p^h $-representations of $G$ and mod $p^h$ Galois representations of $\mathbb{Q}_p$. However, this work could be considered as evidence that there might still be a bijection between a large class of smooth $\mathbb{Z}/p^h $-representations of $G$ and certain representations of $\Gal(\overline{\mathbb{Q}_p}/\mathbb{Q}_p)^{|\Delta|}$.

Moreover, Breuil, Herzig, and Schraen \cite{B16p} predict that Breuil's functor $\mathbb{V}_F\circ D^\vee_{\xi,\ell}(\pi)$ on a representation of $\GL_n(\mathbb{Q}_p)$ built out from some mod $p$ Hecke isotopic subspace would give something like an ``internal'' tensor product $\rho \otimes_{\mathbb{F}_p} \wedge^2(\rho) \otimes_{\mathbb{F}_p} \cdots \otimes_{\mathbb{F}_p} \wedge^{n-1}(\rho)$ for some local Galois representation $\rho$ of dimension $n$ furnished by the global theory. Now the functor $\mathbb{V}\circ D^\vee_\Delta$ should give the same, but ``external'' tensor product, instead of internal (ie.\ different copies of $\Gal(\overline{\mathbb{Q}_p}/\mathbb{Q}_p)$ for each term in the tensor product). This could perhaps explain why the individual $\wedge^i(\rho)$ appear in the Shimura cohomology of unitary groups of type $U(i,n-i)$, but not their internal tensor product.

Another motivation is that the Robba versions of multivariable $(\varphi,\Gamma)$-modules seem to play a role \cite{Ber} \cite{Ber2} \cite{Ked} in the case of the $p$-adic Langlands programme for $\GL_2(F)$ for finite extensions $F\neq\mathbb{Q}_p$, too.

\subsection{Notations}

Let $G=\mathbf{G}(\mathbb{Q}_p)$ be the $\mathbb{Q}_p$-points of a $\mathbb{Q}_p$-split connected reductive group $\mathbf{G}$ defined over $\Zp$ with connected centre and a fixed split Borel subgroup $\mathbf{B}=\mathbf{TN}$. Put $B:=\mathbf{B}(\mathbb{Q}_p)$, $T:=\mathbf{T}(\mathbb{Q}_p)$, and $N:=\mathbf{N}(\mathbb{Q}_p)$. We denote by $\Phi^+$ the  set  of  roots of $T$ in $N$, by $\Delta\subset \Phi^+$ the set of simple roots, and by $u_\alpha :\mathbb G_a \to  N_\alpha$, for $\alpha \in \Phi^+$,  a  $\mathbb Q_p$-homomorphism onto the root subgroup $N_\alpha$ of $N$ such that $tu_\alpha (x) t^{-1}=  u_\alpha(\alpha (t) x)$ for $x\in \mathbb Q_p$ and $t\in T(\mathbb Q_p)$, and   $N_0=\prod_{\alpha\in \Phi^+} u_\alpha (\mathbb Z_p)$  is a compact open subgroup of $N(\mathbb Q_p) $. We put $n_\alpha:=u_\alpha(1)$ and $N_{\alpha,0}:=u_{\alpha}(\mathbb{Z}_p)$ for the image of $u_{\alpha}$ on $\mathbb{Z}_p$.
We denote by $T_{+} $ the monoid of dominant elements  $t$ in $T$ such that $ \val_p(\alpha(t))\geq 0$ for all $\alpha \in \Phi^+$,   by $T_0\subset T_+$ the maximal subgroup, and we put $B_+=N_0T_+, B_0=N_0T_0$.  

Let $K$ be a finite extension of $\mathbb{Q}_p$ with ring of integers $o$, uniformizer $\varpi$, and residue field $\kappa:=o/\varpi$. By a smooth $o$-torsion representation $\pi$ of $G$ (resp.\ of $B$) we mean a torsion $o$-module $\pi$ together with a smooth (ie.\ stabilizers are open) and linear action of the group $G$ (resp.\ of $B$). We will consider representations $\pi$ with $\varpi^h\pi=0$ for some $h\geq 1$ and put $A:=o/\varpi^h$.

The natural conjugation action of  $T_+$ on $N_0$ extends to an action on the Iwasawa $A$-algebra $A\bs N_0\js$. For $t\in T_+$ we denote this action of $t$ on $A\bs N_0\js$ by $\varphi_t$. The map $\varphi_t\colon A\bs N_0\js\to A\bs N_0\js$ is an injective ring homomorphism with a distinguished left inverse $\psi_t\colon A\bs N_0\js\to A\bs N_0\js$ satisfying $\psi_t\circ\varphi_t=\id_{A\bs N_0\js}$ and $\psi_t(u\varphi_t(\lambda))=\psi_t(\varphi_t(\lambda)u)=0$ for all $u\in N_0\setminus tN_0t^{-1}$ and $\lambda\in A\bs N_0\js$. Further, the normal subgroup $H_{\Delta,0}:=\prod_{\beta\in\Phi^+\setminus\Delta}N_{\beta,0}$ is invariant under the action of $T_+$ so the quotient group $N_{\Delta,0}:=N_0/H_{\Delta,0}\cong \prod_{\alpha\in\Delta}N_{\alpha,0}$ also inherits the action of $T_+$. The Iwasawa algebra $A\bs N_{\Delta,0}\js$ can be identified with the multivariable power series ring $A\bs X_\alpha\mid \alpha\in\Delta\js$ by the map $n_\alpha-1\mapsto X_\alpha$ ($\alpha\in\Delta$). We define $A\bg N_{\Delta,0}\jg$ as the localization $A\bs N_{\Delta,0}\js [X_\alpha^{-1},\alpha\in\Delta]$. We also denote by $\varphi_t\colon A\bs N_{\Delta,0}\js\to A\bs N_{\Delta,0}\js$ (resp.\ $\varphi_t\colon A\bg N_{\Delta,0}\jg \to  A\bg N_{\Delta,0}\jg$) the induced action of $t\in T_+$ on these rings. By an \'etale $T_+$-module over $A\bg N_{\Delta,0}\jg$ we mean a (unless otherwise mentioned) finitely generated module $M$ over $A\bg N_{\Delta,0}\jg$ together with a semilinear action of the monoid $T_+$ (also denoted by $\varphi_t$ for $t\in T_+$) such that the maps $$\id\otimes\varphi_t\colon \varphi_t^*M:=A\bg N_{\Delta,0}\jg\otimes_{A\bg N_{\Delta,0}\jg,\varphi_t}M\to M$$ are isomorphisms for all $t\in T_+$.

Since the centre of $G$ is assumed to be connected, there exists a cocharacter $\lambda_{\alpha^\vee}\colon \mathbb{Q}_p^\times\to T$ such that $\alpha\circ\lambda_{\alpha^\vee}$ is the identity on $\mathbb{Q}_p^\times$ for each $\alpha\in\Delta$ and $\beta\circ\lambda_{\alpha^\vee}=1$ for all $\beta\neq\alpha\in\Delta$. Note that $\lambda_{\alpha^\vee}$ is only unique up to a cocharacter of the centre $Z(G)$. We put $\xi:=\sum_{\alpha\in\Delta}\lambda_{\alpha^\vee}$, $\Gamma:=\xi(\mathbb{Z}_p^\times)\leq T$, and often denote the action of $s:=\xi(p)$ by $\varphi=\varphi_s$. Further, for each $\alpha\in\Delta$ we set $t_\alpha:=\lambda_{\alpha^\vee}(p)$.

For example,  $\mathbf{G}=\GL_n$,  $B$  is  the subgroup of upper triangular matrices,  $N$    consists of the strictly upper triangular matrices ($1$ on the diagonal),
$T$  is the diagonal subgroup,  $N_0=\mathbf{N}(\mathbb Z_p)$, the simple roots are $\alpha_1, \ldots, \alpha_{n-1}$ where  $\alpha_i(\diag(t_1,\ldots, t_n))= t_i t_{i+1}^{-1}$, $u_{ \alpha_i}(\cdot) $ is the strictly upper triangular matrix, with $(i,i+1)$-coefficient $\cdot$ and $0$ everywhere else.

For a finite index subgroup $\mathcal{G}_2$ in a group $\mathcal{G}_1$ we denote by $J(\mathcal{G}_1/\mathcal{G}_2)\subset \mathcal{G}_1$ a (fixed) set of representatives of the left cosets in $\mathcal{G}_1/\mathcal{G}_2$.

\subsection{Description of the results}

In section \ref{commutative} we describe the first properties of \'etale $T_+$-modules over $A\bg N_{\Delta,0}\jg$ (the ``multivariable $(\varphi,\Gamma)$-modules'' in the title). Even though the ring $A\bg N_{\Delta,0}\jg$ is not artinian, the existence of an action of $T_0$ improves its properties: by the nonexistence of $T_0$-invariant ideals in $\kappa\bg N_{\Delta,0}\jg$ it follows that any finitely generated module over $A\bg N_{\Delta,0}\jg$ admitting a semilinear action of $T_0$ has finite length (in the category of modules with semilinear $T_0$-action). This fact allows us to construct a functor $D^\vee_\Delta$ from the category of smooth $A$-representations of the Borel $B$ to projective limits of finitely generated \'etale $T_+$-modules over $A\bg N_{\Delta,0}\jg$ in an analogous way to Breuil's functor \cite{B}. More precisely, we consider the skew polynomial ring $A\bs N_{\Delta,0}\js [F_\alpha\mid \alpha\in\Delta]$ where the variables $F_\alpha$ commute with each other and we have $F_\alpha\lambda=(t_\alpha\lambda t_\alpha^{-1})F_\alpha$ for $\lambda\in A\bs N_{\Delta,0}\js$. For a smooth representation $\pi$ of $B$ over $A$ we denote by $\mathcal{M}_\Delta(\pi^{H_{\Delta,0}})$ the set of finitely generated $A\bs N_{\Delta,0}\js [F_\alpha\mid \alpha\in\Delta]$-submodules of $\pi^{H_\Delta,0}$ that are stable under the action of $T_0$ and admissible as a representation of $N_{\Delta,0}=N_0/H_{\Delta,0}$. Here $F_\alpha$ acts on $\pi^{H_\Delta,0}$ by the Hecke action of $t_\alpha\in T_+$, ie.\ $F_\alpha v:=\Tr_{H_{\Delta,0}/t_\alpha H_{\Delta,0}t_\alpha^{-1}}(t_\alpha v)$ for $v\in\pi^{H_\Delta,0}$. Then the functor $D^\vee_\Delta$ is defined by the projective limit $$D^\vee_\Delta(\pi):=\varprojlim_{M\in \mathcal{M}_\Delta(\pi^{H_{\Delta,0}})}M^\vee [1/X_{\Delta}]$$ where $X_\Delta=\prod_{\alpha\in\Delta}X_\alpha$ is the product of all the variables $X_\alpha=n_\alpha-1$ in the power series ring $A\bs N_{\Delta,0}\js$.

If we define $$\ell\colon N\to N/[N,N]=\prod_{\alpha\in \Delta}N_\alpha\overset{\sum_{\alpha\in\Delta}u_\alpha^{-1}}{\longrightarrow}\mathbb{Q}_p$$
in a generic way and extend this to the Iwasawa algebra $A\bs N_{\Delta,0}\js$ then we find that $\ell(X_{\alpha})=X$ for all $\alpha\in\Delta$ after the identification $A\bs \Zp\js \cong A\bs X\js$. Therefore we may extend $\ell$ to a map $\ell\colon A\bg N_{\Delta,0}\jg\to A\bg X\jg$ of Laurent series rings. Note that the kernel of $\ell$ is not stable under the action of $T_+$, but it is stable under the action of $\varphi$ and $\Gamma$. So we obtain a reduction map $A\bg X\jg \otimes_{A\bg N_{\Delta,0}\jg,\ell}\cdot$ from \'etale $T_+$-modules to usual \'etale $(\varphi,\Gamma)$-modules. We show that this reduction map is faithful and exact which implies 
\begin{thma}
The functor $D^\vee_\Delta$ is right exact. 
\end{thma}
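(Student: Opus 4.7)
The plan is to prove right exactness of $D^\vee_\Delta$ by descending along the faithful exact functor $\mathrm{red}(\cdot):=A\bg X\jg\otimes_{A\bg N_{\Delta,0}\jg,\ell}(\cdot)$ (whose faithfulness and exactness are asserted in the paragraph preceding the theorem) from the right exactness of Breuil's functor $D^\vee_{\xi,\ell}$ proved in \cite{B} for generic $\ell$. The first key step is to establish a natural isomorphism $\mathrm{red}\circ D^\vee_\Delta\cong D^\vee_{\xi,\ell}$ on smooth $A$-representations of $B$. Because $\ell$ sends each $X_\alpha$ to the single variable $X$, basechange along $\ell$ collapses the localization at $X_\Delta$ to the localization at $X$ and intertwines the multivariable Frobenii $\varphi_{t_\alpha}$ with Breuil's $\varphi=\varphi_{\xi(p)}$. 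The technical content of the comparison is to match the defining direct systems: the submodules in $\mathcal{M}_\Delta(\pi^{H_{\Delta,0}})$ with Breuil's analogous system of admissible finitely generated submodules of $\pi^{H_{gen}}$ (using $H_{\Delta,0}\cap N_0\subseteq H_{gen}\cap N_0$, since $\ell$ vanishes on the commutator subgroup containing $H_{\Delta,0}$), together with the check that basechange along $\ell$ commutes with Pontryagin duality and with the projective limit appearing in the definition of $D^\vee_\Delta$.

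Once this identification is in place, right exactness of $D^\vee_\Delta$ is formal. Given a short exact sequence $0\to\pi_1\to\pi_2\to\pi_3\to 0$ of smooth $A$-representations of $B$, Breuil's theorem yields right exactness of $D^\vee_{\xi,\ell}(\pi_\bullet)\to 0$, and hence of $\mathrm{red}$ applied to the complex $D^\vee_\Delta(\pi_\bullet)\to 0$. Any additive faithful exact functor reflects zero objects (since $F(X)=0$ forces $F(\mathrm{id}_X)=0$, and thus $\mathrm{id}_X=0$), so it reflects the vanishing both of the cokernel at $D^\vee_\Delta(\pi_3)$ (giving surjectivity there) and of the middle homology of the complex (giving exactness in the middle), producing right exactness of $D^\vee_\Delta(\pi_\bullet)\to 0$ itself.

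The main obstacle is the identification $\mathrm{red}\circ D^\vee_\Delta\cong D^\vee_{\xi,\ell}$: the two constructions use different invariance subgroups and different skew-polynomial actions, so making the cofinal comparison of $\mathcal{M}_\Delta(\pi^{H_{\Delta,0}})$ with Breuil's direct system precise, and reconciling it with Pontryagin duality (which interchanges colimits with limits) and with basechange by the non-$T_+$-stable ideal $\ker(\ell)$, is the technical heart of the proof. Once that compatibility is carried out, the descent of right exactness from $D^\vee_{\xi,\ell}$ to $D^\vee_\Delta$ via the faithful exact reduction functor is automatic.
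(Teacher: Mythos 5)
There is a genuine gap, and it sits exactly where you place ``the technical heart'': the natural isomorphism $A\bg X\jg\hat{\otimes}_{\ell,A\bg N_{\Delta,0}\jg}D^\vee_\Delta(\pi)\cong D^\vee_{\xi,\ell}(\pi)$ for arbitrary smooth $A$-representations $\pi$ of $B$ is not available. What one gets from the universal property is only a natural \emph{surjection} $D^\vee_\xi(\pi)\twoheadrightarrow A\bg X\jg\hat{\otimes}_{\ell,A\bg N_{\Delta,0}\jg}D^\vee_\Delta(\pi)$, and whether it is an isomorphism in general is explicitly an open question (see the Remark following Prop.\ \ref{reduceellexact}); it is established only for $\pi$ in $SP_A$ (Cor.\ \ref{reduceellSP}), and that argument itself invokes the exactness of $D^\vee_\Delta$ on $SP_A$ (Thm.\ \ref{princserexact}), i.e.\ a consequence of the very theorem you are proving --- so your descent is circular as written. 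The obstruction to the comparison is structural: the invariants are taken under different subgroups ($H_{\Delta,0}\subseteq H_{\ell,0}$, usually strictly), Breuil's single operator $F$ corresponds not to any $F_\alpha$ but to $\Tr_{H_{\ell,\Delta,0}/sH_{\ell,\Delta,0}s^{-1}}\circ\prod_{\alpha\in\Delta}F_\alpha$, and $\Ker(\ell)$ is not $T_+$-stable, so reduction does not match the system $\mathcal{M}_\Delta(\pi^{H_{\Delta,0}})$ with Breuil's system in both directions. Right exactness cannot be transported along a mere epimorphism of functors, so the formal second half of your plan has nothing to descend from.

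The paper's proof uses the reduction functor only in the preparation, never to identify $D^\vee_\Delta$ with $D^\vee_{\xi,\ell}$. Faithfulness and exactness of $A\bg X\jg\otimes_{\ell,A\bg N_{\Delta,0}\jg}\cdot$ (Prop.\ \ref{reduceellexact}) are used to prove Prop.\ \ref{dualcompactfingen} (the Pontryagin dual of a finitely generated $\psi$-stable lattice in an \'etale $T_+$-module is finitely generated over $A\bs N_{\Delta,0}\js[F_\Delta]$, via the one-variable Lemma 3.5 of \cite{EZ} applied to the reduced module) and hence Prop.\ \ref{intersectionfingen}: for $\pi_1\subseteq\pi_2$ and $M\in\mathcal{M}_\Delta(\pi_2^{H_{\Delta,0}})$ there is $M_1\in\mathcal{M}_\Delta(\pi_1^{H_{\Delta,0}})$ with $M_1^\vee[1/X_\Delta]=(M\cap\pi_1)^\vee[1/X_\Delta]$. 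With these in hand the proof runs Breuil's scheme directly in the multivariable setting: for each $M_2\in\mathcal{M}_\Delta(\pi_2^{H_{\Delta,0}})$ one obtains a short exact sequence $0\to f_2(M_2)^\vee[1/X_\Delta]\to M_2^\vee[1/X_\Delta]\to M_1^\vee[1/X_\Delta]\to 0$ of \'etale $T_+$-modules and then passes to projective limits using the Mittag--Leffler property. To salvage your strategy you would first have to settle the open comparison question, which is strictly harder than the statement at hand.
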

In particular, one has a natural transformation from Breuil's functor $D^\vee_{\xi,\ell}$ to the composite $A\bg X\jg \otimes_{A\bg N_{\Delta,0}\jg,\ell}D^\vee_\Delta$. When restricted to the category $SP_A$, this is an isomorphism.  Moreover, this is also an isomorphism for objects obtained by parabolic induction from a subgroup with Levi component isomorphic to the product of copies of $\GL_2(\mathbb{Q}_p)$ and a split torus.

Section \ref{compatible} is devoted to various compatibility results. The first is the compatibility with products $G\times G'$ of groups with simple roots $\Delta$, resp.\ $\Delta'$. The value of $D^\vee_{\Delta\cup\Delta'}$ on a tensor product $\pi\otimes_\kappa\pi'$ of representations $\pi$ of $G$ (resp.\ $\pi'$ of $G'$) is the completed tensor product $D^\vee_\Delta(\pi)\hat{\otimes}_\kappa D^\vee_{\Delta'}(\pi')$. Note that this is a module over a multivariate Laurent series ring $A\bg N_{\Delta\cup\Delta',0}\jg$ in variables indexed by the union $\Delta\cup\Delta'$. Similarly, we have a compatibility result for parabolic induction: Let $P=L_PN_P$ be a parabolic subgroup containing $B$ and $\pi_P$ a smooth representation of $L_P$ over $A$ viewed as representation of the opposite parabolic $P^-$. Denote by $\Delta_P\subseteq \Delta$ the set of those simple roots whose root subgroups are contained in the Levi component $L_P$. We show
\begin{thma}
Let $\pi_P$ be a smooth locally admissible representation of $L_P$ over $A$ which we view by inflation as a representation of $P^-$. We have an isomorphism 
\begin{equation*}
D^\vee_{\Delta}\left(\Ind_{P^-}^G\pi_P\right)\cong A\bg N_{\Delta,0}\jg\hat{\otimes}_{A\bg N_{\Delta_P,0}\jg}D^\vee_{\Delta_P}(\pi_P)
\end{equation*}
in the category $\mathcal{D}^{pro-et}(T_+,A\bg N_{\Delta,0}\jg)$.
\end{thma}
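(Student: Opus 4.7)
The plan is to construct a natural map between the two sides using explicit analysis on the big open Bruhat cell $P^- N_{P,0} \subset G$, and then to verify it is an isomorphism via a cofinality argument in the projective limits defining $D^\vee_\Delta$. The structural input is the factorization $N_0 = N_{L_P,0} \cdot N_{P,0}$ together with $H_{\Delta,0} = H_{\Delta_P,0}^{L_P} \cdot H_P$, where $H_{\Delta_P,0}^{L_P} := \prod_{\beta \in \Phi^+ \cap \Phi_{L_P},\, \beta \notin \Delta_P} N_{\beta,0}$ is the intrinsic analogue of $H_{\Delta,0}$ inside $L_P$, and $H_P := \prod_{\beta \in \Phi_{N_P}^+,\, \beta \notin \Delta} N_{\beta,0}$ collects the non-simple positive root subgroups in $N_P$. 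A key commutator observation is that $[H_{\Delta_P,0}^{L_P}, N_{P,0}] \subset H_P$, because any root sum $i\beta + j\alpha$ with $\beta \in \Phi_{L_P}^+ \setminus \Delta_P$, $\alpha \in \Phi_{N_P}^+$, and $i, j \geq 1$ lies in $\Phi_{N_P}^+ \setminus \Delta$. Consequently $N_{\Delta,0} \cong N_{\Delta_P,0} \times N_{\Delta \setminus \Delta_P,0}$, the conjugation action of $H_{\Delta_P,0}^{L_P}$ descends trivially to $N_{P,0}/H_P = N_{\Delta \setminus \Delta_P,0}$, and $A\bs N_{\Delta,0}\js$ becomes free of the desired form over $A\bs N_{\Delta_P,0}\js$.

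Next I would analyze $H_{\Delta,0}$-invariants of $\Ind_{P^-}^G \pi_P$ on the big cell. Since $N_P \cap P^- = \{1\}$, the map $n \mapsto P^- n$ identifies $N_{P,0}$ with an open subset of $P^- \backslash G$. For an $H_{\Delta,0}$-invariant $f$ supported on $P^- N_{P,0}$ and $h = h_L h_P$ with $h_L \in H_{\Delta_P,0}^{L_P}$, $h_P \in H_P$, the identity $nh = h_L \cdot ((h_L^{-1} n h_L) h_P)$ combined with the commutator triviality above forces $\tilde f := f|_{N_{P,0}}$ to descend to $N_{\Delta \setminus \Delta_P,0}$ with values in $\pi_P^{H_{\Delta_P,0}^{L_P}}$. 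For any $M_P \in \mathcal{M}_{\Delta_P}(\pi_P^{H_{\Delta_P,0}^{L_P}})$, the $A\bs N_{\Delta,0}\js$-submodule $M$ generated by the big-cell functions valued in $M_P$ therefore lies in $\mathcal{M}_\Delta((\Ind_{P^-}^G \pi_P)^{H_{\Delta,0}})$, and taking Pontryagin duals and inverting $X_\Delta$ yields a canonical isomorphism $M^\vee[1/X_\Delta] \cong A\bg N_{\Delta,0}\jg \hat{\otimes}_{A\bg N_{\Delta_P,0}\jg} M_P^\vee[1/X_{\Delta_P}]$. Compatibility with the $T_+$-action and the Hecke operators $F_\alpha = \Tr_{H_{\Delta,0}/t_\alpha H_{\Delta,0} t_\alpha^{-1}}(t_\alpha \cdot)$ follows from the observation that $t \in T_+$ acts on $A\bs N_{\Delta \setminus \Delta_P,0}\js$ by the Frobenius $\varphi_t$ induced by conjugation while acting on $\pi_P$ as an element of $L_P$, and that the above trace splits compatibly with $H_{\Delta,0} = H_{\Delta_P,0}^{L_P} \cdot H_P$.

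The main obstacle is cofinality of these $M$'s in $\mathcal{M}_\Delta((\Ind_{P^-}^G \pi_P)^{H_{\Delta,0}})$ after inverting $X_\Delta$, which is what is needed to identify the projective limits. Here I would show that functions supported on strictly lower Bruhat cells of $P^- \backslash G$ contribute trivially after inverting $X_\Delta$, because iterated $\psi$-operators push such functions into the big-cell subspace (equivalently, some $X_\alpha$ with $\alpha \in \Delta \setminus \Delta_P$ acts topologically nilpotently on their Pontryagin duals). Combined with the local admissibility of $\pi_P$ and the finite-length property of $T_0$-stable finitely generated modules over $A\bg N_{\Delta,0}\jg$ established earlier in the paper, this allows any $M' \in \mathcal{M}_\Delta((\Ind_{P^-}^G \pi_P)^{H_{\Delta,0}})$ to be dominated, modulo such nilpotent contributions, by some $M$ of the above form. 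Passing to the projective limit then yields the claimed isomorphism in $\mathcal{D}^{pro-et}(T_+, A\bg N_{\Delta,0}\jg)$.
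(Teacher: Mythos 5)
Your overall architecture (big-cell contribution gives the induced module, everything else dies after inverting $X_\Delta$, cofinality of induced submodules) is the same as the paper's, and your explicit description of $\mathcal{C}_{1,0}$-type functions and of $H_{\Delta,0}$-invariants matches the paper's Lemma on $\mathcal{C}_{1,0}(\pi_P)^{H_{\Delta,0}}\cong\Ind_{N_{\Delta_P,0}}^{N_{\Delta,0}}\pi_P^{H_{\Delta_P,0}}$. But the two steps that carry essentially all of the difficulty are only asserted. First, the vanishing away from $P^-N_{P,0}$: your one-line mechanism (``iterated $\psi$-operators push such functions into the big-cell subspace, equivalently some $X_\alpha$ with $\alpha\in\Delta\setminus\Delta_P$ acts topologically nilpotently'') is not a proof, and it conflates two different vanishing statements that the paper treats separately: (i) $D^\vee_\Delta(\pi')=0$ for any $B$-subrepresentation $\pi'$ of $\mathcal{C}_w(\pi_P)$ with $w\in K_P\setminus\{1\}$, and (ii) $\left(M/M\cap\mathcal{C}_{1,0}(\pi_P)\right)^\vee[1/X_\Delta]=0$ inside the big cell itself (the part of $P^-N$ outside the compact piece $N_{P,0}$). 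The paper does not reprove these local-nilpotence facts in the multivariable setting at all; it imports Breuil's one-variable results (Prop.\ 6.2 and Lemma 6.6 of \cite{B}) through the reduction functor $A\bg X\jg\otimes_{\ell,A\bg N_{\Delta,0}\jg}\cdot$, whose faithfulness and exactness (Prop.\ \ref{reduceellexact}) is exactly what makes one-variable vanishing imply multivariable vanishing, and it needs the right exactness of $D^\vee_\Delta$ (Thm.\ \ref{rightexact}) to run this along the Bruhat filtration of $\left(\Ind_{P^-}^G\pi_P\right)_{\mid B}$. Your proposal contains no substitute for this device, so the central vanishing claim is an open gap; if you tried to prove it directly you would essentially have to redo Breuil's cell analysis in the multivariable setting.

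Second, the cofinality step is likewise asserted rather than proved. Dominating an arbitrary $M'\in\mathcal{M}_\Delta\bigl((\Ind_{P^-}^G\pi_P)^{H_{\Delta,0}}\bigr)$ ``modulo nilpotent contributions'' by an induced module requires showing that the evaluation module $M_\ast:=\{f(1)\mid f\in M'\}$ lies in $\mathcal{M}_{\Delta_P}(\pi_P^{H_{\Delta_P,0}})$ and that $M'\subseteq\widetilde{M_\ast}:=\Ind_{N_{\Delta_P,0}}^{N_{\Delta,0}}M_\ast$. The nontrivial points there are the identity $F_t(f)(1)=F_t(f(1))$ (which needs a careful splitting of the trace over $H_{\Delta,0}/tH_{\Delta,0}t^{-1}$ compatibly with $H_{\Delta_P,0}$), the finite generation of $M_\ast$ over $A\bs N_{\Delta_P,0}\js[F_{\Delta_P}]$ --- this is where the local admissibility of $\pi_P$ and the fact that $t_\alpha$ for $\alpha\in\Delta\setminus\Delta_P$ is central in $L_P$ enter --- and the admissibility of $M_\ast$ as an $N_{\Delta_P,0}$-representation. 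You mention local admissibility but give no argument where or how it is used; as written, the cofinality paragraph is a restatement of what must be proved. So the proposal is a correct outline of the paper's strategy, but the two pillars (vanishing of the non-big-cell contributions and cofinality of the induced submodules) are missing, and the first of these is where the paper's genuinely new idea --- reduction to the one-variable functor via the faithful exact $\ell$-base change --- is used.
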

On one hand, the above result shows that $D^\vee_{\Delta}$ is nonzero and finitely generated on parabolically induced representations from products of copies of $\GL_2(\mathbb{Q}_p)$ and a torus unless one of the representations of $\GL_2(\mathbb{Q}_p)$ is finite dimensional. Moreover, combined with the right exactness we also know this for extensions of representations of this type just like for Breuil's functor \cite{B}. On the other hand, this might lead to another characterization of supercuspidal representations: it would be natural to expect that if $\pi$ is an irreducible supercuspidal representation then $D^\vee_\Delta(\pi)$ cannot be induced from a $T_+$-module in less variables. However, showing this would require a better understanding of supercuspidals beyond $\GL_2$.

Let $SP_A$ be the category of smooth finite length representations of $G$ whose Jordan-H\"older factors are subquotients of principal series. We end section \ref{compatible} by showing
\begin{thma}
The restriction of $D^\vee_\Delta$ to $SP_A$ is exact and produces finitely generated objects.
\end{thma}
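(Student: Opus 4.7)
The strategy is to derive both properties from the corresponding properties of Breuil's functor $D^\vee_{\xi,\ell}$ on $SP_A$ established in \cite{B}, transported through the isomorphism $D^\vee_{\xi,\ell}(\pi)\cong A\bg X\jg\otimes_{A\bg N_{\Delta,0}\jg,\ell}D^\vee_\Delta(\pi)$ valid on $SP_A$, together with the Noetherianness of the base ring.

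For exactness, let $0\to\pi_1\to\pi_2\to\pi_3\to 0$ be a short exact sequence in $SP_A$. Theorem A yields $D^\vee_\Delta(\pi_1)\xrightarrow{f}D^\vee_\Delta(\pi_2)\to D^\vee_\Delta(\pi_3)\to 0$, so it suffices to show $K:=\ker(f)=0$. Applying the reduction functor $A\bg X\jg\otimes_{A\bg N_{\Delta,0}\jg,\ell}\cdot$, which is exact, sends $K$ to the kernel of $D^\vee_{\xi,\ell}(\pi_1)\to D^\vee_{\xi,\ell}(\pi_2)$ via the comparison on $SP_A$, and this kernel vanishes by exactness of Breuil's functor on $SP_A$. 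Faithfulness of the reduction then forces $K=0$.

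For finite generation, I would use that $A\bg N_{\Delta,0}\jg=A\bs X_\alpha\mid\alpha\in\Delta\js[X_\Delta^{-1}]$ is Noetherian (a localization of a formal power series ring in finitely many variables over the finite ring $A$), so subobjects and quotients of finitely generated modules over it remain finitely generated. One inducts on the length of $\pi\in SP_A$. The base case reduces to showing $D^\vee_\Delta(\Ind_{B^-}^G\chi)$ is finitely generated for every smooth character $\chi\colon T\to A^\times$: this follows from Theorem B with $P=B$, since $\Delta_P=\emptyset$ and $D^\vee_\emptyset(\chi)\cong A$ then give $D^\vee_\Delta(\Ind_{B^-}^G\chi)\cong A\bg N_{\Delta,0}\jg$, free of rank one. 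For the inductive step, pick an irreducible subrepresentation $\pi'\subset\pi$ and realize it as $\pi_1/\pi_2$ with $\pi_2\subset\pi_1\subset\Ind_{B^-}^G\chi$ for some $\chi$. By the exactness just established, $D^\vee_\Delta(\pi_1)$ embeds into $D^\vee_\Delta(\Ind_{B^-}^G\chi)$ and is therefore finitely generated by Noetherianness; its quotient $D^\vee_\Delta(\pi')$ is finitely generated as well, and the short exact sequence $0\to D^\vee_\Delta(\pi')\to D^\vee_\Delta(\pi)\to D^\vee_\Delta(\pi/\pi')\to 0$ exhibits $D^\vee_\Delta(\pi)$ as an extension of two finitely generated modules using the inductive hypothesis on $\pi/\pi'$.

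The main obstacle is the exactness step, because the definition of $D^\vee_\Delta$ as a projective limit of the $M^\vee[1/X_\Delta]$ gives no direct control over left exactness; this is precisely where the faithful exact reduction to Breuil's functor is decisive. Once exactness is in hand, finite generation is formal, contingent on invoking Theorem B in the limiting case $P=B$ to anchor the induction on length.
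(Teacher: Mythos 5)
Your exactness step is circular. The comparison isomorphism $D^\vee_{\xi,\ell}(\pi)\cong A\bg X\jg\otimes_{\ell,A\bg N_{\Delta,0}\jg}D^\vee_\Delta(\pi)$ on $SP_A$ that you feed into the argument is, in the paper, Corollary \ref{reduceellSP}, and its proof is a length count that uses Theorem \ref{princserexact} --- the very exactness statement you are trying to prove --- together with Proposition \ref{genlengthDveeSPA}. What is available unconditionally is only a natural \emph{surjection} $D^\vee_\xi(\pi)\twoheadrightarrow A\bg X\jg\hat{\otimes}_{\ell}D^\vee_\Delta(\pi)$ (Remark after Prop.\ \ref{reduceellexact}), and injectivity of $D^\vee_\xi(\pi_3)\to D^\vee_\xi(\pi_2)$ does not descend along surjections to injectivity of the induced map between the reductions of the $D^\vee_\Delta$'s; so without the comparison isomorphism the exact-and-faithful reduction tells you nothing about your kernel $K$. (There is also a persistent variance slip: $D^\vee_\Delta$ is contravariant, so the sequences should read $0\to D^\vee_\Delta(\pi_3)\to D^\vee_\Delta(\pi_2)\to D^\vee_\Delta(\pi_1)\to 0$, and $D^\vee_\Delta$ of a subrepresentation of $\Ind_B^G\chi$ is a quotient, not a submodule, of the rank-one module; this does not break the Noetherian bookkeeping, but it should be fixed.)

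Because of this, the real content of the theorem is missing from your proposal. The paper's route is: first show that every finitely generated $A\bs N_{\Delta,0}\js[F_\Delta]$-submodule of $\pi^{H_{\Delta,0}}$ for $\pi$ in $SP_A$ is finitely presented, of finite length, and admissible (Lemma \ref{abstractmod} via the double-coset filtration of $\mathcal{C}_w(\chi)$, then Prop.\ \ref{finpresCw} and Prop.\ \ref{finpresSPA}); this substitutes for the coherence of $A\bs X\js[F]$ used in the $\GL_2$ case. Next it computes, by induction on the length of $\pi_{\mid B}$, that $\genlength(D^\vee_\Delta(\pi))$ equals the number of Jordan--H\"older factors of $\pi_{\mid B}$ isomorphic to $\mathcal{C}_1(\chi)$ (Prop.\ \ref{genlengthDveeSPA}); exactness of Breuil's functor enters there only to lift the generator $\mathbbm{1}_{N_0}$ of $\mathcal{C}_{1,0}(\chi)^{H_{\Delta,0}}$ to an element of $\pi^{H_{\ell,0}}$, not through any comparison isomorphism. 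Finally, exactness on $SP_A$ follows from additivity of the generic length together with the nonexistence of nontrivial $T_0$-invariant ideals (Prop.\ \ref{noideal}), and only then is Cor.\ \ref{reduceellSP} deduced. Your base case ($D^\vee_\Delta(\Ind_B^G\chi)$ free of rank one, Cor.\ \ref{princserDDelta}) and the Noetherian induction are fine as far as they go, but they become usable only after the finite-presentation and generic-length work has delivered exactness, so they cannot carry the proof by themselves.
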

The proof of this builds on  showing that the finitely generated $A\bs N_{\Delta,0}\js [F_\alpha\mid \alpha\in\Delta]$-submodules of representations in $SP_A$ are in fact finitely presented. This substitutes the arguments using the coherence \cite{E1} of the one-variable analogue $A\bs X\js [F]$ in the classical $\GL_2$-situation as the ring $A\bs N_{\Delta,0}\js [F_\alpha\mid \alpha\in\Delta]$ is apparently not coherent.

In section \ref{noncommutative} we develop a noncommutative analogue of $D^\vee_\Delta$ as in \cite{EZ} for Breuil's functor. The first step is the construction of the ring $A\bg N_{\Delta,\infty}\jg$ as a projective limit $\varprojlim_k A\bg N_{\Delta,k}\jg$ where the finite layers $A\bg N_{\Delta,k}\jg:=A\bs N_{\Delta,k}\js[\varphi_s^{kn_0}(X_\alpha)^{-1}]$ (where $n_0=n_0(G)\in\mathbb{N}$ is the maximum of the degrees of the algebraic characters $\beta\circ\xi\colon \mathbf{G}_m\to\mathbf{G}_m$ for all positive roots $\beta\in \Phi^+$) are defined as localisations of the Iwasawa algebra $A\bs N_{\Delta,k}\js$. Here the group $N_{\Delta,k}:=N_0/H_{\Delta,k}$ is the extension of $N_{\Delta,0}$ by a finite $p$-group $H_{\Delta,0}/H_{\Delta,k}$ where $H_{\Delta,k}$ is the smallest normal subgroup in $N_0$ containing $s^kH_{\Delta,0}s^{-k}$. Note that unlike in the one variable localization $\Lambda_\ell(N_0)$ we do not have a section of the group homomorphism $N_{\Delta,k}\to N_{\Delta,0}$. However, restricting to the image of the conjugation by $s^{kn_0}$, we do: this allows us to build a functor $\mathbb{M}_{k,0}$ from the category $\mathcal{D}^{et}(T_+,A\bg N_{\Delta,0}\jg)$ of finitely generated \'etale $T_+$-modules over $A\bg N_{\Delta,0}\jg$ to the category $\mathcal{D}^{et}(T_+,A\bg N_{\Delta,k}\jg)$ of finitely generated \'etale $T_+$-modules over $A\bg N_{\Delta,k}\jg$. Putting $\mathbb{M}_{\infty,0}:=\varprojlim_k\mathbb{M}_{k,0}$ and $\mathbb{D}_{0,\infty}$ to be the functor from the category $\mathcal{D}^{et}(T_+,A \bg N_{\Delta,\infty}\jg)$ of finitely generated \'etale $T_+$-modules over $A\bg N_{\Delta,\infty}\jg$ to $\mathcal{D}^{et}(T_+,A \bg N_{\Delta,0}\jg)$ induced by the reduction map $A\bg N_{\Delta,\infty}\jg\to A\bg N_{\Delta,0}\jg$ we obtain
\begin{thma}\label{introequivcat}
The functors $\mathbb{M}_{\infty,0}$ and $\mathbb{D}_{0,\infty}$ are quasi-inverse equivalences of categories between $\mathcal{D}^{et}(T_+,A \bg N_{\Delta,0}\jg)$ and $\mathcal{D}^{et}(T_+,A \bg N_{\Delta,\infty}\jg)$.
\end{thma}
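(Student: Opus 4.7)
The plan is to first establish the equivalence at each finite level $k$ between $\mathcal{D}^{et}(T_+,A\bg N_{\Delta,0}\jg)$ and $\mathcal{D}^{et}(T_+,A\bg N_{\Delta,k}\jg)$ via the pair $(\mathbb{M}_{k,0},\mathbb{D}_{0,k})$, and then assemble these finite-level equivalences into the theorem by passing to the projective limit.

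\textbf{Finite-level equivalence.} The construction of $\mathbb{M}_{k,0}$ yields a ring homomorphism $\iota_k\colon A\bg N_{\Delta,0}\jg\to A\bg N_{\Delta,k}\jg$ coming from the section of $N_{\Delta,k}\twoheadrightarrow N_{\Delta,0}$ that exists on $s^{kn_0}$-conjugates precisely because $n_0$ is chosen to make that conjugation factor through the correct subgroup. Writing $r_{k,0}$ for the reduction, one obtains the two identities
$$r_{k,0}\circ\iota_k=\varphi_{s^{kn_0}}\quad\text{on }A\bg N_{\Delta,0}\jg,\qquad \iota_k\circ r_{k,0}=\varphi_{s^{kn_0}}\quad\text{on }A\bg N_{\Delta,k}\jg.$$
Rewriting both composites of $\mathbb{M}_{k,0}$ and $\mathbb{D}_{0,k}$ via these identities and invoking \'etaleness gives natural isomorphisms
$$\mathbb{D}_{0,k}\mathbb{M}_{k,0}(M)\cong\varphi_{s^{kn_0}}^{*}M\xrightarrow{\sim}M,\qquad \mathbb{M}_{k,0}\mathbb{D}_{0,k}(N)\cong\varphi_{s^{kn_0}}^{*}N\xrightarrow{\sim}N.$$
I would then verify $T_+$-equivariance of the unit and counit, together with the fact that $\mathbb{M}_{k,0}$ preserves finite generation (using that $A\bg N_{\Delta,k}\jg$ is finite as a module over the image of $\iota_k$) and \'etaleness.

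\textbf{Passage to the limit.} The homomorphisms $\iota_k$ are compatible under the reductions $A\bg N_{\Delta,k+1}\jg\twoheadrightarrow A\bg N_{\Delta,k}\jg$, so the system $\{\mathbb{M}_{k,0}(M)\}_k$ is an inverse system and $\mathbb{M}_{\infty,0}(M)=\varprojlim_k\mathbb{M}_{k,0}(M)$ inherits a $T_+$-action and a module structure over $A\bg N_{\Delta,\infty}\jg=\varprojlim_kA\bg N_{\Delta,k}\jg$. I would check:
\begin{enumerate}[(i)]
\item $\mathbb{M}_{\infty,0}(M)$ is finitely generated and \'etale over $A\bg N_{\Delta,\infty}\jg$: a finite generating set of $M$ lifts to compatible families of generators through each $\iota_k$ and assembles into generators of the limit, while \'etaleness is preserved since each $\varphi_t$ commutes with the transition maps.
\item For $N\in\mathcal{D}^{et}(T_+,A\bg N_{\Delta,\infty}\jg)$, the natural map $N\to\varprojlim_k(A\bg N_{\Delta,k}\jg\otimes_{A\bg N_{\Delta,\infty}\jg}N)$ is an isomorphism, i.e.\ $N$ is complete for the filtration by kernels of the reductions.
\end{enumerate}
Combining (i) and (ii) with the finite-level isomorphisms of the previous step yields $\mathbb{D}_{0,\infty}\circ\mathbb{M}_{\infty,0}\cong\mathrm{id}$ and $\mathbb{M}_{\infty,0}\circ\mathbb{D}_{0,\infty}\cong\mathrm{id}$.

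\textbf{Main obstacle.} The delicate point is (ii): one must show that every finitely generated \'etale $T_+$-module over $A\bg N_{\Delta,\infty}\jg$ is complete for the natural filtration, despite the fact that the transition maps $A\bg N_{\Delta,k+1}\jg\twoheadrightarrow A\bg N_{\Delta,k}\jg$ mix localisation and quotient and the rings in question are not noetherian. A Mittag--Leffler argument combined with the \'etale structure (which constrains how generators at different levels interact via $\varphi_t^{*}$) should suffice, but the non-noetherian setting requires care; relatedly the uniform bound on generators needed in (i) is not the naive rank of $A\bg N_{\Delta,k}\jg$ over $\iota_k(A\bg N_{\Delta,0}\jg)$ (which grows with $k$), but must be extracted from \'etaleness of the system.
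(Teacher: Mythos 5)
Your finite-level step is exactly the paper's Lemma on the pair $(\mathbb{M}_{k,0},\mathbb{D}_{0,k})$: the two identities you state are precisely the factorisation of $\varphi_{s^{kn_0}}$ through the reduction $A\bg N_{\Delta,k}\jg\twoheadrightarrow A\bg N_{\Delta,0}\jg$, and étaleness then identifies both composites with $\varphi_{s^{kn_0}}^{*}(\cdot)\xrightarrow{\sim}(\cdot)$. The gaps are in the passage to the limit. For (i), "étaleness is preserved since each $\varphi_t$ commutes with the transition maps" is not an argument: base change along $\varphi_t$ does not commute with projective limits, so bijectivity of $1\otimes\varphi_t$ on $\varprojlim_k\mathbb{M}_{k,0}(M)$ does not follow from bijectivity at each level. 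At level $k$ the map $\varphi_t$ has kernel coming from $(t^{-1}H_{\Delta,k}t-1)$, so the $\varphi_t$-components of an element are not unique; the actual proof establishes surjectivity by a Mittag--Leffler argument on the sets of admissible components (which become singletons in the limit because the ambiguity at level $k'$ dies in level $k$ once $t^{-1}H_{\Delta,k'}t\subseteq H_{\Delta,k}$), and injectivity by a similar "dies at lower levels" argument, both resting on the facts that $A\bg N_{\Delta,k}\jg$ is free over the image of $\varphi_t$ for $k$ large and that $\mathbb{M}_{k,0}(M)$ is induced as a representation of $H_{\Delta,0}/H_{\Delta,k}$ (vanishing of higher homology). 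Likewise, finite generation of the limit requires showing the transition maps are surjective on the kernels of the finite-level presentations (again homology vanishing plus Mittag--Leffler), not merely producing compatible lifts of generators; the uniform bound on generators, by the way, comes from Nakayama (the kernel of $A\bg N_{\Delta,k}\jg\to A\bg N_{\Delta,0}\jg$ is nilpotent), not from étaleness as you suggest.

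For (ii), which you rightly flag as the main obstacle, your proposed route does not work and rests on a false premise: $A\bg N_{\Delta,\infty}\jg$ \emph{is} (left and right) noetherian --- this is proved by a filtration argument with associated graded a polynomial ring over $\kappa\bg N_{\Delta,0}\jg$ --- and its Jacobson radical is generated by $\varpi$ together with the augmentation ideal of $H_{\Delta,0}$, i.e.\ by the centralizing sequence $n_\beta-1$, $\beta\in\Phi^+\setminus\Delta$. The correct argument for $N\xrightarrow{\sim}\varprojlim_k A\bg N_{\Delta,k}\jg\otimes_{A\bg N_{\Delta,\infty}\jg}N$ is: surjectivity by Nakayama modulo the Jacobson radical $J$ (the map is an isomorphism mod $J$ because $\mathbb{D}_{0,\infty}\circ\mathbb{M}_{\infty,0}\circ\mathbb{D}_{0,\infty}\cong\mathbb{D}_{0,\infty}$), and injectivity by showing $\Ker\subseteq\bigcap_k(H_{\Delta,k}-1)N\subseteq\bigcap_k J^kN=0$, where the last vanishing uses that an ideal generated by a centralizing sequence in a noetherian ring has the Artin--Rees property, so finitely generated modules are $J$-adically separated. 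A Mittag--Leffler argument controls exactness of projective limits but says nothing about separatedness of a finitely generated module for this filtration, and the étale structure alone does not supply it either; without the noetherianness and Artin--Rees input your step (ii) remains unproved.
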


By considering finitely generated $A\bs N_{\Delta,k}\js[F_{\alpha,k}\mid \alpha\in\Delta]$-submodules of $\pi^{H_{\Delta,k}}$ that are stable under the action of $T_0$ and are admissible as representations of $N_{\Delta,k}$ we introduce the functors $D^\vee_{\Delta,k}$ analogous to $D^\vee_\Delta$ for all $k\geq 0$ and we put $D^\vee_{\Delta,\infty}(\pi):=\varprojlim_k D^\vee_{\Delta,k}(\pi)$ for a smooth representation $\pi$ of $B$ over $A$. This corresponds to $D^\vee_\Delta(\pi)$ via the extension of the equivalence of categories in Theorem \ref{introequivcat} to pro-objects on both sides. The universal property of $D^\vee_{\Delta,\infty}$ leads to its alternative description via the Schneider--Vigneras functor $D_{SV}(\pi)$ (and via its \'etale hull $\widetilde{D_{SV}}(\pi)$):

\begin{thma}
We have
\begin{equation*}
D^\vee_{\Delta,\infty}(\pi)\cong \varprojlim_{D}D
\end{equation*}
where $D$ runs through the finitely generated \'etale $T_+$-modules over $A\bg N_{\Delta,\infty}\jg$ arising as a quotient of $A\bg N_{\Delta,\infty}\jg\otimes_{A\bs N_0\js}\widetilde{D_{SV}}(\pi)$ such that the quotient map is continuous in the weak topology of $D$ and the final topology on $A\bg N_{\Delta,\infty}\jg\otimes_{A\bs N_0\js}\widetilde{D_{SV}}(\pi)$ of the map $1\otimes\iota\colon D_{SV}(\pi)\to A\bg N_{\Delta,\infty}\jg\otimes_{A\bs N_0\js}\widetilde{D_{SV}}(\pi)$.
\end{thma}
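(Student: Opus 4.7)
The plan is to establish that $D^\vee_{\Delta,\infty}(\pi)$ is the terminal object in the category whose objects are the quotients $D$ described in the statement, with morphisms the $T_+$-equivariant $A\bg N_{\Delta,\infty}\jg$-linear maps compatible with the projection from $A\bg N_{\Delta,\infty}\jg\otimes_{A\bs N_0\js}\widetilde{D_{SV}}(\pi)$. Once this universal property is in place, the projective limit collapses onto $D^\vee_{\Delta,\infty}(\pi)$ via the canonical projection, yielding the claimed isomorphism.

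First I would verify that $D^\vee_{\Delta,\infty}(\pi)$ itself appears as such a quotient. Using the definition $D^\vee_{\Delta,\infty}(\pi)=\varprojlim_k D^\vee_{\Delta,k}(\pi)$, it suffices to produce compatible surjections $A\bg N_{\Delta,k}\jg\otimes_{A\bs N_0\js}\widetilde{D_{SV}}(\pi)\twoheadrightarrow D^\vee_{\Delta,k}(\pi)$ for each $k$. These arise from the canonical maps $\widetilde{D_{SV}}(\pi)\to D^\vee_{\Delta,k}(\pi)$ induced (after Pontryagin duality) by the inclusion $\pi^{H_{\Delta,k}}\hookrightarrow\pi$, extended $A\bg N_{\Delta,k}\jg$-linearly. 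Continuity in the weak topology with respect to the final topology on $A\bg N_{\Delta,\infty}\jg\otimes_{A\bs N_0\js}\widetilde{D_{SV}}(\pi)$ coming from $1\otimes\iota$ is then immediate from the construction of $\widetilde{D_{SV}}(\pi)$ as an étale hull of $D_{SV}(\pi)$.

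Second, for the universal property, let $q\colon A\bg N_{\Delta,\infty}\jg\otimes_{A\bs N_0\js}\widetilde{D_{SV}}(\pi)\twoheadrightarrow D$ be any such quotient. Applying the pro-extension of the equivalence $\mathbb{D}_{0,\infty}$ from Theorem \ref{introequivcat} to pass between infinite-layer and finite-layer data, I reduce the problem to producing compatible maps $D^\vee_{\Delta,k}(\pi)\to D(k)$ for each $k\geq 0$. The continuity of $q$ combined with the compactness of the image of $(\pi^{H_{\Delta,k}})^\vee$ inside $\widetilde{D_{SV}}(\pi)$ forces this image, in $D(k)$, to be contained in a finitely generated $A\bs N_{\Delta,k}\js[F_{\alpha,k}\mid\alpha\in\Delta]$-submodule that is $T_0$-stable and admissible as a representation of $N_{\Delta,k}$; this is dually an element of the level-$k$ analogue of $\mathcal{M}_\Delta(\pi^{H_{\Delta,0}})$. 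Taking the projective limit over all such submodules yields the desired $D^\vee_{\Delta,k}(\pi)\to D(k)$, and these assemble over $k$ into a canonical $D^\vee_{\Delta,\infty}(\pi)\to D$ compatible with the projection from the tensor product.

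The hardest step will be extracting \emph{admissibility} (not merely finite generation) of the image from the abstract continuity hypothesis. This requires transferring the weak topology on $D(k)$---which refines the natural adic/filtered topology coming from $A\bg N_{\Delta,k}\jg$---back through the final topology built from $1\otimes\iota$, and then exploiting the nonexistence of nontrivial $T_0$-stable ideals in $\kappa\bg N_{\Delta,k}\jg$ (the same ingredient underpinning the finite-length result in section \ref{commutative}) to promote finite generation as an $A\bg N_{\Delta,k}\jg$-module to admissibility as an $N_{\Delta,k}$-representation. Once this obstacle is overcome, the identification with the inverse limit defining $D^\vee_{\Delta,k}(\pi)$ and the assembly in $k$ are formal.
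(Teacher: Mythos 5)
Your overall architecture---factor through the \'etale hull, pass through the equivalence of Theorem~\ref{equivcat} between level $0$ (or level $k$) and level $\infty$, and identify the dual of the compact image of $\pi^\vee$ with an element of $\mathcal{M}_\Delta(\pi^{H_{\Delta,0}})$---is the same as the paper's, which deduces the statement from Lemma~\ref{prpsigamma} together with a universal property (Prop.~\ref{1otimestildeprinj}): any continuous $\psi_t$-equivariant map $D_{SV}(\pi)\to D$ into a finitely generated \'etale $T_+$-module over $A\bg N_{\Delta,\infty}\jg$ factors uniquely through $\pr$. However, your central step is asserted rather than proved, and you have misplaced where the difficulty sits. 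You claim that continuity of $q$ plus compactness ``forces'' the image of $\pi^\vee$ in $D(k)$ to be dual to an element of the level-$k$ analogue of $\mathcal{M}_\Delta$, and you flag \emph{admissibility} as the hard point, to be extracted from finite generation via the nonexistence of $T_0$-invariant ideals. This inverts the actual logic. Admissibility is the cheap part: the image of $\pi^\vee$ in $D_{H_{\Delta,0}}$ is a compact $A\bs N_{\Delta,0}\js$-submodule of a finitely generated $A\bg N_{\Delta,0}\jg$-module, hence finitely generated over the Iwasawa algebra, which \emph{is} admissibility of its Pontryagin dual $M_0$. What is genuinely nontrivial is that $M_0$ is finitely generated over the skew-polynomial ring $A\bs N_{\Delta,0}\js[F_\Delta]$ (equivalently over $A\bs N_{\Delta,k}\js[F_{\Delta,k}]$ at level $k$); this does not follow from compactness and continuity, and it is exactly Prop.~\ref{dualcompactfingen}, whose proof uses admissibility as an \emph{input} (finiteness of $T_0$-orbits) together with the reduction $\ell$ to one-variable $(\varphi,\Gamma)$-modules, Lemma 3.5 of \cite{EZ}, and Prop.~\ref{noideal}/\ref{reduceellexact}. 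Without invoking such a result your reduction to elements of $\mathcal{M}_{\Delta,k}(\pi^{H_{\Delta,k}})$ has no support, and the proposed route via Prop.~\ref{noideal} alone would not supply it.

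Two further points are glossed. First, in your Step 1 the maps $\widetilde{D_{SV}}(\pi)\to M_\infty^\vee[1/X_\Delta]$ are not simply ``induced by duality from $\pi^{H_{\Delta,k}}\hookrightarrow\pi$'': one must first show the map $\pi^\vee\to M_k^\vee[1/X_\Delta]$ factors through $D_{SV}(\pi)$ (via $M_k\subseteq W$ for $k$ large, Lemma 2.1 in \cite{SVig}) and is $\psi_t$-equivariant for all $t\in T_+$ (Lemma~\ref{prpsigamma}), before the universal property of the \'etale hull applies. Second, after constructing the map $D^\vee_{\Delta,\infty}(\pi)\to D$ at finite level and lifting it through the equivalence of categories, one still has to verify that it is compatible with the given projection from $A\bg N_{\Delta,\infty}\jg\otimes_{A\bs N_0\js}\widetilde{D_{SV}}(\pi)$ (and that it is unique); in the paper this is the argument showing the difference of the two maps lands in $\bigcap_k(H_{\Delta,k}-1)D=\{0\}$, using \'etaleness and $D\cong\varprojlim_k D_{H_{\Delta,k}}$. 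Your phrase ``these assemble \dots into a canonical map compatible with the projection'' hides precisely this verification.
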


Finally, we turn to the question of reconstructing the smooth representation $\pi$ of $G$ from $D^\vee_\Delta(\pi)$. This is certainly not possible in general, as for instance finite dimensional representations are in the kernel of $D^\vee_\Delta$ (unless the set $\Delta$ of simple roots is empty). However, using the ideas of \cite{SVZ} we show the following positive results in this direction. For an object $M\in\mathcal{M}_\Delta(\pi^{H_{\Delta,0}})$ we denote by $\widetilde{M_\infty^\vee}$ the \'etale hull of the image $M_\infty^\vee$ of the natural map from $\pi^\vee$ to the \'etale $T_+$-module $\mathbb{M}_{\infty,0}(M^\vee[1/X_\Delta])$.

\begin{thma}
For any smooth $o$-torsion representation $\pi$ of $G$ and any $M\in\mathcal{M}_\Delta(\pi^{H_{\Delta,0}})$ there exists a $G$-equivariant sheaf $\mathfrak{Y}_{\pi,M}$ on $G/B$ with sections $\mathfrak{Y}_{\pi,M}(\mathcal{C}_0)$ on $\mathcal{C}_0$ isomorphic to $\widetilde{M_\infty^\vee}$ as an \'etale $T_+$-module over $A\bs N_0\js$. Moreover, we have a $G$-equivariant continuous map $\beta_{G/B,M}$ from the Pontryagin dual $\pi^\vee$ to the global sections $\mathfrak{Y}_{\pi,M}(G/B)$ that is natural in both $\pi$ and $M$, and is nonzero unless $M^\vee[1/X_{\Delta}]=0$. 
\end{thma}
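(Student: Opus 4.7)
The plan is to follow the strategy of \cite{SVZ} and \cite{EZ}, now with multivariable \'etale $T_+$-modules over $A\bs N_0\js$ in place of their single-variable analogues. First, on the distinguished open cell $\mathcal{C}_0 \subset G/B$ (identified with $N_0$), I would declare the sections over a compact open $U = uN_0'B/B$, with $u\in N_0$ and $N_0'\leq N_0$ open, to be $u\cdot \phi_{N_0'}(\widetilde{M_\infty^\vee})$, where $\phi_{N_0'}$ is the idempotent on $\widetilde{M_\infty^\vee}$ obtained from the $\psi_t$-operators attached to suitable $t\in T_+$ realising $N_0' = tN_0 t^{-1}$. Since $\widetilde{M_\infty^\vee}$ is by construction \'etale as a $T_+$-module, these idempotents satisfy the expected orthogonality and refinement relations, so the sheaf axioms on $\mathcal{C}_0$ reduce to a formal verification, and in particular $\mathfrak{Y}_{\pi,M}(\mathcal{C}_0)\cong \widetilde{M_\infty^\vee}$ as required.

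Second, I would extend $\mathfrak{Y}_{\pi,M}$ to a $G$-equivariant sheaf on all of $G/B$ by covering with $G$-translates $g\mathcal{C}_0$ and gluing. The nontrivial compatibility occurs on overlaps $g_1\mathcal{C}_0\cap g_2\mathcal{C}_0$ and, via the Bruhat decomposition, reduces to checking that a lift of each simple Weyl reflection $s_\alpha$ carries the sections of $\mathfrak{Y}_{\pi,M}$ over an appropriate open subset of $\mathcal{C}_0$ to the correct sections elsewhere. This is the principal technical obstacle, and it is precisely here that one needs $\widetilde{M_\infty^\vee}$ to be a module over the noncommutative Iwasawa ring $A\bs N_0\js$ in \emph{all} positive-root variables, rather than over $A\bg N_{\Delta,0}\jg$: the conjugation action of $s_\alpha$ permutes positive-root subgroups and would be invisible on the latter. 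Theorem \ref{introequivcat} bridges the commutative and noncommutative worlds, so that the Weyl-reflection compatibility argument of \cite{EZ} transposes to the present multivariable setting essentially verbatim.

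For the map $\beta_{G/B,M}$, the canonical composite $\pi^\vee\twoheadrightarrow M_\infty^\vee\hookrightarrow\widetilde{M_\infty^\vee}$ is tautologically a map $\pi^\vee\to \mathfrak{Y}_{\pi,M}(\mathcal{C}_0)$. To extend it globally, I would specify, on each translate $g\mathcal{C}_0$ covering a compact open $\Omega\subseteq G/B$, the local section $v\mapsto g\cdot(\text{image of }g^{-1}v)$; the sheaf compatibilities established in the previous step force these local data to glue, producing $\beta_{G/B,M}(v)\in \mathfrak{Y}_{\pi,M}(G/B)$. $G$-equivariance is then tautological, continuity follows from continuity of the surjection $\pi^\vee\twoheadrightarrow M_\infty^\vee$, and naturality in both $\pi$ and $M$ is manifest from the functoriality of the \'etale hull and of $\mathbb{M}_{\infty,0}$. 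Finally, nonvanishing when $M^\vee[1/X_\Delta]\neq 0$ follows because Theorem \ref{introequivcat} then gives $\mathbb{M}_{\infty,0}(M^\vee[1/X_\Delta])\neq 0$; the image $M_\infty^\vee$ of $\pi^\vee$ is nonzero since $M\subseteq \pi^{H_{\Delta,0}}$ pairs nondegenerately with $\pi^\vee$, and hence $\beta_{G/B,M}$ restricted to $\mathcal{C}_0$ is already nonzero.
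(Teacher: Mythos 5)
Your construction of the sheaf on the open cell $\mathcal{C}_0$ via the idempotents $\res^{N_0}_{uN_0'}=(u\cdot)\circ\varphi_t\circ\psi_t\circ(u^{-1}\cdot)$ is fine and agrees with what the paper does, and your description of $\beta$ on $\mathcal{C}_0$ and of its naturality and nonvanishing is essentially right. The gap is in your second step. You assert that the overlap/Weyl-reflection compatibility ``transposes essentially verbatim'' from \cite{EZ} once one passes through the equivalence of categories, but this is exactly the point where the paper departs from \cite{SVZ} and \cite{EZ}: there the operators $\mathcal{H}_g=\res(g\mathcal{C}_0\cap\mathcal{C}_0)\circ(g\cdot)$ are produced as limits inside the \'etale module itself, using the property $\mathfrak{T}(1)$ of Prop.\ 6.8 in \cite{SVZ}, and the paper states explicitly that $\mathfrak{T}(1)$ fails in general for \'etale $T_+$-modules over $A\bg N_{\Delta,\infty}\jg$ and that the defining limits do not converge in the weak topology of the finitely generated module $M_\infty^\vee[1/X_\Delta]$. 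So the intrinsic route you propose (sheaf built purely out of the \'etale module $\widetilde{M_\infty^\vee}$, with $G$-equivariance checked afterwards) breaks down; no amount of moving between $A\bg N_{\Delta,0}\jg$ and $A\bg N_{\Delta,\infty}\jg$ repairs it, since the obstruction lives on the noncommutative side where the Weyl action has to act.

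The missing idea is to define $\mathcal{H}_g$ using the $G$-action on $\pi^\vee$ rather than on the module: writing $gu=n(g,u)t(g,u)\overline{n}(g,u)$ for $u\in\mathcal{U}_g$, one sets
\begin{equation*}
\mathcal{H}_g\Bigl(\sum_{u}\res^{N_0}_{us^kN_0s^{-k}}(\pr_M(\mu_{u,k}))\Bigr):=\sum_{u\in U_{k,g}}\res^{N_0}_{n(g,u)t(g,u)s^kN_0s^{-k}t(g,u)^{-1}}(\pr_M(g\mu_{u,k}))\ ,
\end{equation*}
and then verifies the relations H1--H3 of Prop.\ 5.14 in \cite{SVZ}. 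This is why the sheaf genuinely depends on the pair $(\pi,M)$ and not only on $\widetilde{M_\infty^\vee}$. Moreover, well-definedness of $\mathcal{H}_g$ is not formal: it rests on the invariance $U^{(1)}M_\infty=M_\infty$, which in turn needs the surjectivity of $1\otimes F_s$ on $M$ and hence the reduction to $M\in\mathcal{M}^0_\Delta(\pi^{H_{\Delta,0}})$ (replacing $M$ by the dual of the image of $\pi^\vee$ in $M^\vee[1/X_\Delta]$, which does not change $M^\vee[1/X_\Delta]$). None of these ingredients appear in your outline, and without them the gluing over $G/B$ in your second and third steps cannot be carried out.
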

Here we in fact use the $G$-action on $\pi$ in order to construct the sheaf $\mathfrak{Y}_{\pi,M}$ unlike in \cite{SVZ} where the operators $\mathcal{H}_g=\res(g\mathcal{C}_0\cap\mathcal{C}_0)\circ(g\cdot)$ for the open cell $\mathcal{C}_0:=N_0\overline{B}\subset G/\overline{B}\cong G/B$ are constructed as a limit. Apparently the formulas defining this limit do not converge in the weak topology of the finitely generated $A\bg N_{\Delta,\infty}\jg$-module $M_\infty^\vee[1/X_\Delta]$. Nevertheless, if $\pi$ is irreducible and $D^\vee_\Delta(\pi)\neq 0$ then we can realize $\pi^\vee$ as a subrepresentation of the global sections of a $G$-equivariant sheaf on $G/B$ whose space of sections on $\mathcal{C}_0$ is ``small'' in the sense that it is contained in a finitely generated $A\bg N_{\Delta,\infty}\jg$-module. Let us denote by $SP_A^0$ the full subcategory of $SP_A$ containing those representations whose Jordan-H\"older factors are irreducible principle series. As an application of the methods above we prove
\begin{thma}
The restriction of $D^\vee_\Delta$ to the category $SP_A^0$ is fully faithful.
\end{thma}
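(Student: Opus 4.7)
The proof naturally splits into faithfulness and fullness; I would handle them separately.

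For faithfulness, suppose $f\colon \pi_1\to \pi_2$ is a morphism in $SP_A^0$ with $D^\vee_\Delta(f)=0$. Applying the exactness of $D^\vee_\Delta$ on $SP_A$ (the previous theorem) to the canonical factorisation $\pi_1\twoheadrightarrow \mathrm{im}(f)\hookrightarrow \pi_2$ produces a factorisation $D^\vee_\Delta(\pi_1)\twoheadrightarrow D^\vee_\Delta(\mathrm{im}(f))\hookrightarrow D^\vee_\Delta(\pi_2)$, so the vanishing of $D^\vee_\Delta(f)$ forces $D^\vee_\Delta(\mathrm{im}(f))=0$. Since $\mathrm{im}(f)$ still lies in $SP_A^0$, a Jordan--H\"older d\'evissage together with exactness reduces the claim to showing that $D^\vee_\Delta$ is non-zero on every irreducible principal series, which is immediate from the parabolic-induction compatibility theorem with $P=B$: it identifies $D^\vee_\Delta(\mathrm{Ind}_B^G\chi)$ with a non-zero base change from the one-variable $(\varphi,\Gamma)$-module of $\chi$.

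For fullness, let $\phi\colon D^\vee_\Delta(\pi_1)\to D^\vee_\Delta(\pi_2)$ be given. I would first use the pro-version of the equivalence in Theorem \ref{introequivcat} to lift $\phi$ to a compatible system of morphisms $\widetilde{M_{1,\infty}^\vee}\to \widetilde{M_{2,\infty}^\vee}$ between the \'etale hulls of section 4, indexed by cofinal pairs $(M_1,M_2)\in \mathcal{M}_\Delta(\pi_1^{H_{\Delta,0}})\times \mathcal{M}_\Delta(\pi_2^{H_{\Delta,0}})$. Because the $G$-equivariant sheaf $\mathfrak{Y}_{\pi,M}$ on $G/B$ is built functorially from its sections $\widetilde{M_\infty^\vee}$ on the open cell $\mathcal{C}_0$ and the semilinear $G$-action induced from $\pi$, this lift yields a $G$-equivariant morphism of sheaves $\mathfrak{Y}_{\pi_1,M_1}\to \mathfrak{Y}_{\pi_2,M_2}$ and hence a $G$-equivariant continuous map $\mathfrak{Y}_{\pi_1,M_1}(G/B)\to \mathfrak{Y}_{\pi_2,M_2}(G/B)$ on global sections.

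The next step is to establish that $\beta_{G/B,M}\colon \pi^\vee\to \mathfrak{Y}_{\pi,M}(G/B)$ is injective for every $\pi\in SP_A^0$ (for an appropriate $M$). For an irreducible principal series $\pi$ this is automatic: the previous theorem gives $D^\vee_\Delta(\pi)\neq 0$, so $\beta_{G/B,M}$ is non-zero, and the topological irreducibility of $\pi^\vee$ as a continuous $G$-representation forces its closed $G$-stable kernel to vanish. For general $\pi\in SP_A^0$ I would proceed by induction on length along a short exact sequence $0\to \pi'\to \pi\to \pi''\to 0$ with $\pi''$ irreducible principal series, combining the naturality of $\beta$ in $\pi$, the exactness of $D^\vee_\Delta$ on $SP_A$, and the snake lemma to relate the kernels of $\beta_{\pi'}$, $\beta_\pi$ and $\beta_{\pi''}$. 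Given these injections, the sheaf morphism above produces a continuous $G$-equivariant map $\pi_1^\vee\to \mathfrak{Y}_{\pi_2,M_2}(G/B)$; once one checks that its image is contained in $\beta_{G/B,M_2}(\pi_2^\vee)$, Pontryagin duality yields a morphism $f\colon \pi_1\to \pi_2$, and naturality of $\beta$ gives $D^\vee_\Delta(f)=\phi$.

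The hard part will be the last containment. The sheaf $\mathfrak{Y}_{\pi,M}$ is constructed using the $G$-action on $\pi$ itself rather than as an object intrinsically manufactured from $D^\vee_\Delta(\pi)$, so a priori a sheaf morphism coming only from $\phi$ need not preserve the ``small'' subspace $\beta_{G/B,M}(\pi^\vee)$. To overcome this I would aim to characterise $\beta_{G/B,M}(\pi^\vee)$ intrinsically as the largest $G$-stable subsheaf whose sections on $\mathcal{C}_0$ lie inside the canonical image of $M_\infty^\vee$ (rather than its full \'etale hull) and are compatible with the Hecke action of the monoid generated by conjugates of $T_+$. The restriction to $SP_A^0$ is exactly what makes such a characterisation work: every simple subquotient is seen non-trivially by $D^\vee_\Delta$, so that the subspace in question is detected purely by the multivariable $(\varphi,\Gamma)$-module data and is consequently preserved by $\mathfrak{Y}_\phi$.
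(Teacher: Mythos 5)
Your faithfulness argument is fine and is essentially the paper's: exactness on $SP_A$ plus non-vanishing of $D^\vee_\Delta$ on irreducible principal series (Cor.\ \ref{princserDDelta}) do the job (modulo the variance convention: $D^\vee_\Delta$ is contravariant, so a morphism of $T_+$-modules should correspond to a map of representations in the opposite direction).

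The fullness argument, however, has a genuine gap at its very first substantive step. You claim that lifting $\phi$ to the \'etale hulls $\widetilde{M_{1,\infty}^\vee}\to\widetilde{M_{2,\infty}^\vee}$ ``yields a $G$-equivariant morphism of sheaves $\mathfrak{Y}_{\pi_1,M_1}\to\mathfrak{Y}_{\pi_2,M_2}$''. This is exactly what fails in this setting: the operators $\mathcal{H}_g$ on $\widetilde{M_\infty^\vee}$ are \emph{not} intrinsic to the \'etale $T_+$-module structure. They are defined by the formula $\mathcal{H}_g\bigl(\sum_u\res^{N_0}_{us^kN_0s^{-k}}(\pr_M(\mu_{u,k}))\bigr)=\sum_u\res^{N_0}_{n(g,u)t(g,u)s^kN_0s^{-k}t(g,u)^{-1}}(\pr_M(g\mu_{u,k}))$, i.e.\ they use the $G$-action on $\pi^\vee$ through $\pr_M$; the paper stresses that, unlike in the Schneider--Vigneras--Z\'abr\'adi construction, the intrinsic limit formulas for $\mathcal{H}_g$ do not converge in the weak topology here, which is precisely why the $G$-action on $\pi$ is invoked. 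Consequently the functoriality statement (Cor.\ \ref{sheaffunctorial}) is only available for morphisms of $G$-representations $\pi\to\pi'$, not for morphisms of the attached $T_+$-modules, and a map of \'etale hulls coming from $\phi$ alone has no reason to intertwine $\mathcal{H}_g^{\pi_1}$ and $\mathcal{H}_g^{\pi_2}$. On top of this, the containment of the image in $\beta_{G/B,M_2}(\pi_2^\vee)$, which you yourself flag as the hard part, is left as a heuristic; the proposed ``intrinsic characterisation'' of $\beta_{G/B,M}(\pi^\vee)$ is not substantiated and would in effect presuppose the full faithfulness you are trying to prove.

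The paper's route avoids transporting $\phi$ to a sheaf morphism altogether. Given a nonzero $f\colon D^\vee_\Delta(\pi)\to D^\vee_\Delta(\pi')$ with image $D$ of generic length $n$, one realises $D=M_1^\vee[1/X_\Delta]$ for some $M_1\in\mathcal{M}_\Delta(\pi^{H_{\Delta,0}})$, uses the sheaves $\mathfrak{Y}_{\pi,M_1}$ and a subsheaf of $\mathfrak{Y}_{\pi',\Delta}$ cut out by $D$ (each built with the $G$-action of its own representation), and then controls lengths via Lemma \ref{notinsmaller}, whose proof rests on the injectivity of $\Ext^1_G(\Ind_B^G\chi',\Ind_B^G\chi)\to\Ext^1(D^\vee_\Delta(\Ind_B^G\chi),D^\vee_\Delta(\Ind_B^G\chi'))$ (Lemma \ref{splitDsplitpi}) together with the exactness theorem \ref{princserexact} and Prop.\ \ref{genlengthDveeSPA}. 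This produces a subrepresentation $\pi_1\subseteq\pi$ and a quotient $\pi'\twoheadrightarrow\pi_2'$ with $D^\vee_\Delta(\pi_1)\cong D\cong D^\vee_\Delta(\pi_2')$, and the key final step is the direct-sum trick: the map $\theta\colon\pi_1^\vee\oplus\pi_2'^\vee\to D$ gives a sheaf for the $G$-representation $\pi_1\oplus\pi_2'$, and Lemma \ref{notinsmaller} applied once more forces the image of $\overline{\theta}$ to identify $\pi_1^\vee\cong\pi_2'^\vee$, yielding the desired $G$-map $\pi'\twoheadrightarrow\pi_2'\cong\pi_1\hookrightarrow\pi$ inducing $f$. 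Your proposal contains neither the $\Ext^1$ input nor any mechanism playing the role of this length comparison, so as it stands the fullness part does not go through.
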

In particular, the forgetful functor restricting $\pi$ to $B$ is also fully faithful on $SP_A^0$ as $D^\vee_\Delta$ factors through this.

\subsection*{Acknowledgements}

My debt to the works of Christophe Breuil \cite{B}, Pierre Colmez \cite{Mira} \cite{C}, Peter Schneider, and Marie-France Vign\'eras \cite{SVig} \cite{SVZ} will be obvious to the reader. I would also like to thank M\'arton Erd\'elyi, Jan Kohlhaase, Vytautas Pa\v{s}k\={u}nas, Peter Schneider, and Tam\'as Szamuely for discussions on the topic. I am grateful to the referee for the careful reading of the manuscript and for their various comments.

\section{\'Etale $T_+$-modules over $A \bg N_{\Delta,0}\jg$}\label{commutative}

Since the centre of $G$ is assumed to be connected, there exists a system $(\lambda_{\alpha^\vee})_{\alpha\in\Delta}\in X^\vee(T)^\Delta$ of cocharacters with the property $\beta\circ\lambda_{\alpha^\vee}=1$ for all $\alpha\neq\beta\in\Delta$ and $\alpha\circ\lambda_{\alpha^\vee}=\id_{\mathbf{G}_m}$. As in \cite{B} and \cite{SVig} we put $\xi:=\sum_{\alpha\in\Delta}\lambda_{\alpha^\vee}$. Further, we put $t_\alpha:=\lambda_{\alpha^\vee}(p)\in T$ for each $\alpha\in \Delta$ and denote by $\varphi_\alpha$ the conjugation action of $t_\alpha$ on $A \bs N_{\Delta,0}\js$ and on $A \bg N_{\Delta,0}\jg$. By definition we have $s=\xi(p)=\prod_{\alpha\in\Delta}t_\alpha$. We form the skew-polynomial ring $A\bs N_{\Delta,0}\js[F_\Delta]:=A \bs N_{\Delta,0}\js[F_\alpha\mid \alpha\in\Delta]$ in the variables $F_\alpha$ ($\alpha\in\Delta$) that commute with each other and satisfy $F_\alpha\lambda=\varphi_\alpha(\lambda)F_\alpha$ for any $\lambda\in \kappa\bs N_{\Delta,0}\js$. Note that we may extend the conjugation action of the group $T_0$ on $A \bs N_{\Delta,0}\js$ to the ring $A \bs N_{\Delta,0}\js[F_\Delta]$ by acting trivially on the variables $F_\alpha$ ($\alpha\in\Delta$).  Note that $T_0$ and the elements $t_\alpha$ ($\alpha\in\Delta$) generate $T_+$ under our assumption that the centre of $G$ is connected.

\subsection{$T_0$-invariant ideals in $A\bg N_{\Delta,0}\jg$}

\begin{pro}\label{noideal}
The ring $\kappa\bg N_{\Delta,0}\jg$ does not have any nontrivial $T_0$-invariant ideals.
\end{pro}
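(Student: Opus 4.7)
The plan is to show that any nonzero $T_0$-invariant ideal $I\subseteq\kappa\bg N_{\Delta,0}\jg$ contains a unit of $\kappa\bg N_{\Delta,0}\jg$, hence equals the whole ring. Since $X_\Delta:=\prod_{\alpha\in\Delta}X_\alpha$ is a unit in $\kappa\bg N_{\Delta,0}\jg$, multiplying any $0\neq f\in I$ by a sufficiently large power of $X_\Delta$ places it inside $J:=I\cap\kappa\bs N_{\Delta,0}\js$. Elements of the form $X^{\underline{a}}(c+h)$ with $c\in\kappa^\times$ and $h\in\mathfrak{m}$ are already units of $\kappa\bg N_{\Delta,0}\jg$, so it suffices to produce in $J$ an element whose lowest-degree homogeneous component is a single monomial.

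Write $0\neq f\in J$ as $f=\sum c_{\underline{i}}X^{\underline{i}}$ and let $f^{(d)}$ be its homogeneous part of lowest total degree $d$. On the associated graded ring $\mathrm{gr}_{\mathfrak{m}}(\kappa\bs N_{\Delta,0}\js)\cong\kappa[X_\alpha\colon\alpha\in\Delta]$ the $T_0$-action factors through the diagonal scaling $X_\alpha\mapsto\alpha(t)X_\alpha$, whose image in $(\kappa^\times)^\Delta$ is $\mu_{p-1}^\Delta$. Using the idempotents of the semisimple group algebra $\kappa[\mu_{p-1}^\Delta]$ (of order coprime to $p$), I project $f$ onto a single isotypic component while remaining inside $J$; this strictly decreases the number of monomials in $f^{(d)}$ unless all of them already lie in a common residue class modulo $(p-1)\mathbb{Z}^\Delta$. (For $p=2$ this step is vacuous.)

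In the remaining case the pro-$p$ part of $T_0$ does the separation. For $\alpha\in\Delta$ and $k\ge 1$, the element $t_{\alpha,k}:=\lambda_{\alpha^\vee}(1+p^k)\in T_0$ acts trivially on $X_\beta$ for $\beta\ne\alpha$ and sends $X_\alpha$ to $X_\alpha+X_\alpha^{p^k}(1+X_\alpha)$ in characteristic $p$. An explicit expansion via Lucas' theorem shows that, for $i_\alpha>0$, the operator $(t_{\alpha,k}-1)$ sends $X^{\underline{i}}$ to an element whose leading total degree is $|\underline{i}|+p^{v_p(i_\alpha)}(p^k-1)$ with nonzero coefficient (proportional to the lowest nonzero base-$p$ digit of $i_\alpha$), while $(t_{\alpha,k}-1)X^{\underline{i}}=0$ when $i_\alpha=0$. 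Given pairwise distinct multi-indices $\underline{i}_j$ in $f^{(d)}$, by choosing $\alpha$ where the $i_{j,\alpha}$'s are not all equal across $j$ (exists by distinctness) and $k$ large enough that contributions from $f^{(d)}$ dominate those from higher-degree terms of $f$, I produce $(t_{\alpha,k}-1)f\in J$ whose leading form involves strictly fewer descendants of the original monomials.

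Iterating these two reductions eventually collapses $f^{(d)}$ to a single monomial, completing the proof. The main obstacle is the combinatorial separation step: verifying that some finite sequence of operators $t_{\alpha,k}-1$ always separates the shifted leading degrees of any two distinct monomials in a common $\mu_{p-1}^\Delta$-character class. This rests on a careful analysis of how the $p$-adic valuations of coordinates evolve under iterated shifts by $(p-1)e_\alpha$, together with Lucas' theorem for binomial coefficients modulo $p$.
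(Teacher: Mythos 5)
There is a genuine gap, in fact two. First, your overall reduction is to produce an element of $J=I\cap \kappa\bs N_{\Delta,0}\js$ whose \emph{lowest-degree homogeneous component} is a single monomial, and you treat this as equivalent to the element having the form $X^{\underline a}(c+h)$ with $c\in\kappa^\times$ and $h$ in the maximal ideal. These are not the same thing, and the weaker condition does not give a unit of $\kappa\bg N_{\Delta,0}\jg$: the units of this localization are exactly the elements $u\prod_{\alpha}X_\alpha^{a_\alpha}$ with $u\in\kappa\bs N_{\Delta,0}\js^\times$ (the $X_\alpha$ are primes in the UFD $\kappa\bs N_{\Delta,0}\js$), so one needs \emph{every} monomial of the element to be divisible by the minimal one. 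For instance $X_\alpha+X_\beta^2$ has lowest-degree part the single monomial $X_\alpha$, yet it is irreducible, not associate to any $X_\gamma$, and hence not a unit in $\kappa\bg N_{\Delta,0}\jg$. So even if your iteration terminated as described, you would not be done.

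Second, the step you yourself flag as the ``main obstacle'' is precisely the heart of the matter and is not established. The operator $t_{\alpha,k}-1$ shifts the monomial $X^{\underline i}$ (with $i_\alpha\neq 0$) by total degree $p^{v_p(i_\alpha)}(p^k-1)$, which depends only on $v_p(i_\alpha)$; two distinct monomials of $f^{(d)}$ with $i_\alpha\neq i'_\alpha$ but $v_p(i_\alpha)=v_p(i'_\alpha)$ are shifted to the same degree and both survive, so the number of monomials in the leading form need not drop, and there is no termination argument. (There is also the unaddressed interference of higher-degree terms of $f$ whose shift can land below that of $f^{(d)}$ when the valuations differ.) The paper avoids this multivariable total-degree combinatorics entirely: it argues by induction on $|\Delta|$, expands elements of $J$ as power series in one chosen variable $X_\alpha$ with coefficients in $\kappa\bs N_{\Delta\setminus\{\alpha\},0}\js$, proves that the coefficient ideals $J_{\alpha,0}=J_{\alpha,1}=\cdots$ coincide (this is where $\lambda_{\alpha^\vee}(1+p^t)$ is used, with $t$ chosen so that $p^{t+1}-p^t$ exceeds the relevant exponent, and where the minimality choice of the index $l$ makes the single-variable degree estimates work), deduces that $J$ meets $\kappa\bs N_{\Delta\setminus\{\alpha\},0}\js$ nontrivially, and then applies the inductive hypothesis in one fewer variable, the base case $\kappa\bg X\jg$ being a field. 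If you want to rescue your approach you would need both a corrected target (force divisibility of all terms by the minimal monomial, not merely uniqueness of the lowest-degree monomial) and a genuine separation/termination argument; as written, neither is in place.
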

\begin{proof}
By our assumption that $G$ has connected centre the group homomorphisms $\Zp^\times\overset{\lambda_{\alpha^\vee}}{\hookrightarrow} T_0$ give rise to a subgroup $T_{\Delta,0}:=\prod_{\alpha\in\Delta}\lambda_{\alpha^\vee}(\Zp^\times)\leq T_0$. Note that this product is direct since $T_{\alpha,0}:=\lambda_{\alpha^\vee}(\Zp^\times)$ is contained in the kernel of $\beta$ for each $\alpha\neq\beta\in\Delta$. So we have an action of $(\Zp^\times)^\Delta$ on $\kappa\bg N_{\Delta,0}\jg=\kappa\bg X_\alpha\mid \alpha\in\Delta\jg$ such that the copy of $\Zp^\times$ indexed by $\alpha$ acts naturally on the corresponding variable $X_\alpha$ by the formula $\gamma\in\Zp^\times\colon X_\alpha\mapsto (1+X_\alpha)^\gamma-1$ and trivially on all the other variables $X_\beta$ for all pairs $\beta\neq\alpha\in\Delta$. We prove the (formally stronger) statement that $\kappa\bg N_{\Delta,0}\jg$ does not have any $T_{\Delta,0}$-invariant ideals by induction on the cardinality of the set $\Delta$. For $|\Delta|=1$ the statement is trivial since $\kappa\bg X\jg$ is a field. Now assume the statement for $|\Delta|<r$ for some $1<r$ choose a $T_{\Delta,0}$-invariant ideal $0\neq I\lhd \kappa\bg N_{\Delta,0}\jg$ with $|\Delta|=r$. Put $J:=I\cap \kappa\bs N_{\Delta,0}\js\lhd \kappa\bs N_{\Delta,0}\js$ and choose any $\alpha\in\Delta$. Any element $\mu\in \kappa\bs N_{\Delta,0}\js$ can uniquely be written as an infinite sum $\mu=\sum_{j=0}^\infty\mu_jX_\alpha^j$ with $\mu_j\in \kappa\bs N_{\Delta\setminus\{\alpha\},0}\js$ ($j\geq 0$) where $N_{\Delta\setminus\{\alpha\},0}=\prod_{\beta\neq\alpha\in\Delta}N_{\beta,0}$. We define
\begin{equation*}
J_{\alpha,i}:=\{\lambda\in \kappa\bs N_{\Delta\setminus\{\alpha\},0}\js\mid \exists\mu=\sum_{j=0}^\infty\mu_jX_\alpha^j\in J,\text{ s.\ t.\ }\mu_i=\lambda\}
\end{equation*}
for each $i\geq 0$. These are ideals in $\kappa\bs N_{\Delta\setminus\{\alpha\},0}\js$.
\begin{lem}
We have $J_{\alpha,0}=J_{\alpha,1}=\dots=J_{\alpha,i}=\cdots$.
\end{lem}
\begin{proof}
The inclusions $J_{\alpha,0}\subseteq J_{\alpha,1}\subseteq\dots\subseteq J_{\alpha,i}\subseteq\cdots$ are clear (we can multiply an element in $J$ by $X_\alpha$). Conversely, assume that $J_{\alpha,0}\subsetneq J_{\alpha,i}$ for some integer $i>0$. Assume $i>0$ is minimal with this property and choose an element $\mu\in J$ with  $\mu_i\in J_{\alpha,i}\setminus J_{\alpha,0}$. Assume there is an index $j>0$ such that $\mu_j\neq 0$ and $\mu_j\in J_{\alpha,0}$, and choose a $\nu_j\in J$ with $\nu_{j,0}=\mu_j$. Then $\mu':=\mu-X_\alpha^j\nu_j$ also lies in $J$ and has the property that $\mu'_i\notin J_{\alpha,0}$. Indeed, if $i<j$ then this is clear. Otherwise by the minimality of $i$, the coefficient of $X_\alpha^{i-j}$ in $\nu_j$ lies in $J_{\alpha,0}$ (and is equal to $\mu_i-\mu_i'$ for $i\geq j$). Since any $\kappa\bs N_{\Delta,0}\js$ is noetherian, any ideal in it is closed. So all the coefficients of $\mu$ with positive exponent that are contained in $J_{\alpha,0}$ can be removed recursively this way: first the smallest $j$. Therefore we find an element $\mu''\in J$ such that $\mu''_i\notin J_{\alpha,0}$ and for all $j>0$ we either have $\mu''_j=0$ or $\mu''_j\notin J_{\alpha,0}$. Let $0<l=p^rl'$ ($p\nmid l'$) be the smallest integer with the property that $\mu''_l\neq 0$ and $l$ is divisible by the least possible power of $p$ among these indices. Since $I$ is $T_0$-invariant and $\lambda_{\alpha^\vee}(1+p^t)$ is in $T_0$ for $t>1$, we have 
\begin{align*}
I\ni\lambda_\alpha(1+p^{t})\mu''-\mu''=\sum_{j=0}^\infty\mu''_j\left(((X_\alpha+1)^{1+p^{t}}-1)^j-X_\alpha^j\right)=\\
=\sum_{j=1}^\infty\mu''_j\left((X_\alpha+X_\alpha^{p^{t}}+X_\alpha^{p^{t}+1})^j-X_\alpha^j\right)\ .
\end{align*}
For $j=p^kj'$ with $p\nmid j'$ the lowest degree term of $(X_\alpha+X_\alpha^{p^{t}}+X_\alpha^{p^{t}+1})^j-X_\alpha^j$ is $j'X_\alpha^{p^k(j'-1)}X_\alpha^{p^{k+t}}$. Now for an $l\neq j>0$ with $\mu''_j\neq 0$ we have either $k=r$ and $j'>l'$ or $k>r$ (by the choice of $l$). Any case we have $p^k(j'-1)+p^{k+t}>p^r(l'-1)+p^{r+t}$ for any such $j$ as soon as we choose $t$ so that $p^{t+1}-p^t>l'-1$ (since $p^{k+t}-p^{r+t}\geq p^r(p^{t+1}-p^t)$). With such a choice of $t$ we deduce that $\frac{\lambda_\alpha(1+p^{t})\mu''-\mu''}{X_\alpha^{p^r(l'-1)+p^{r+t}}}$ lies in $J=I\cap \kappa\bs N_{\Delta,0}\js$ and has constant term $l'\mu''_l$ that does not lie in $J_{\alpha,0}$. This is a contradiction.
\end{proof}
Now we claim that $J_{\alpha,0}\subseteq J\cap \kappa\bs N_{\Delta\setminus\{\alpha\},0}\js$ and hence $J\cap \kappa\bs N_{\Delta\setminus\{\alpha\},0}\js$ is nonzero. For an element $\lambda\in J_{\alpha,0}$ choose an element $\mu\in J$ with $\mu_0=\lambda$. If $\mu_j\neq 0$ for some $j>0$ then in view of the Lemma choose $\nu_j\in J$ with $\nu_{j,0}=\mu_j$ and let $\mu':=\mu-X_\alpha^j\nu_j$. By the same recursive argument as in the Lemma we find an element $\mu''\in J$ with $\mu''_0=\mu_0=\lambda$ and $\mu''_j=0$ for $j>0$ showing the claim. The statement of the proposition follows from the inductional hypothesis: $(J\cap \kappa\bs N_{\Delta\setminus\{\alpha\},0}\js)[X_\beta^{-1}\mid \beta\in\Delta\setminus\{\alpha\}]$ is a nonzero $T_0$-invariant ideal in $\kappa\bg N_{\Delta\setminus\{\alpha\},0}\jg$ therefore contains $1$.
\end{proof}

\subsection{A functor from smooth $B$-representations to \'etale $T_+$-modules over $A\bg N_{\Delta,0}\jg$}

We have the following generalization of Lemma 2.6 in \cite{B}.

\begin{pro}\label{fingendualetale}
Let $M$ be a finitely generated module over $A \bs N_{\Delta,0}\js[F_\Delta]$ with a semilinear action of $T_0$ such that $M$ is admissible and smooth as a module over $A \bs N_{\Delta,0}\js$ (ie.\ the Pontryagin dual $M^\vee$ is finitely generated over $A \bs N_{\Delta,0}\js$). Then the module $M^\vee[1/X_\alpha\mid \alpha\in\Delta]$ has naturally the structure of an \'etale $T_+$-module over $A \bg N_{\Delta,0}\jg$.
\end{pro}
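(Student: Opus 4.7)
The strategy is to generalize Breuil's Lemma 2.6 from the one-variable setting to the multivariable setting, treating the commuting family of operators $\{F_\alpha\}_{\alpha \in \Delta}$ and the $T_0$-action simultaneously.

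First I would dualize and localize. Admissibility of $M$ gives that $M^\vee$ is finitely generated over $A\bs N_{\Delta,0}\js$. Each $\varphi_\alpha$-semilinear operator $F_\alpha$ on $M$ transposes to an operator $\psi_\alpha \colon M^\vee \to M^\vee$ via $\psi_\alpha(m^\vee)(m) := m^\vee(F_\alpha m)$, satisfying the projection formula $\psi_\alpha(\varphi_\alpha(\lambda) m^\vee) = \lambda \psi_\alpha(m^\vee)$ for $\lambda \in A\bs N_{\Delta,0}\js$. The $\psi_\alpha$'s commute (since the $F_\alpha$'s do) and interact compatibly with the dualized semilinear $T_0$-action. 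A direct calculation from $t_\alpha n_\beta t_\alpha^{-1} = u_\beta(\beta(t_\alpha))$ shows $\varphi_\alpha(X_\beta) \in X_\beta A\bs N_{\Delta,0}\js$ for all $\alpha, \beta \in \Delta$ (equal to $X_\beta$ for $\alpha \neq \beta$ and to $X_\alpha((1+X_\alpha)^{p-1}+\cdots+1)$ for $\alpha = \beta$), so the $\psi_\alpha$'s and the $T_0$-action extend uniquely to $M^\vee[1/X_\Delta] := A\bg N_{\Delta,0}\jg \otimes_{A\bs N_{\Delta,0}\js} M^\vee$, which is still finitely generated.

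Next I would construct $\varphi_\alpha$-semilinear sections. Because conjugation by $t_\alpha$ contracts only the $\alpha$-factor of $N_{\Delta,0} = \prod_\beta N_{\beta,0}$, we have $[N_{\Delta,0} : t_\alpha N_{\Delta,0} t_\alpha^{-1}] = p$ and the direct-sum decomposition
\[
A\bs N_{\Delta,0}\js = \bigoplus_{i=0}^{p-1} n_\alpha^i \cdot \varphi_\alpha(A\bs N_{\Delta,0}\js),
\]
which extends after inverting $X_\Delta$. Using this, I define on $M^\vee[1/X_\Delta]$ a $\varphi_\alpha$-semilinear operator $\varphi_\alpha^{M^\vee}$ that is a section of $\psi_\alpha$; commutativity of the $F_\alpha$'s and compatibility with $T_0$ transfer to give a semilinear action of the submonoid generated by $T_0$ and the $t_\alpha$, which is all of $T_+$ under the connected-centre hypothesis.

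Finally, étaleness reduces to showing that the linearization
\[
\id \otimes \varphi_\alpha^{M^\vee} \colon A\bg N_{\Delta,0}\jg \otimes_{A\bg N_{\Delta,0}\jg, \varphi_\alpha} M^\vee[1/X_\Delta] \to M^\vee[1/X_\Delta]
\]
is an isomorphism for each $\alpha$; surjectivity is immediate from the decomposition above and the section property. The main obstacle will be injectivity of this linearization. The kernel is naturally $T_0$-stable, and I plan to use Proposition \ref{noideal}: after reduction mod $\varpi$, its annihilator in $\kappa \bg N_{\Delta,0}\jg$ is a $T_0$-invariant ideal, hence either zero or the whole ring. Combined with the finite-length property for finitely generated modules carrying a semilinear $T_0$-action (itself a consequence of Proposition \ref{noideal}), this reduces injectivity to irreducible subquotients, where the section identity $\psi_\alpha \circ \varphi_\alpha^{M^\vee} = \id$ resolves it. The genuinely new difficulty compared to the one-variable case of Breuil is the need to coordinate the multiple operators $\psi_\alpha$ simultaneously after localization, and Proposition \ref{noideal} is precisely the ingredient adapted to handle this multivariable rigidity.
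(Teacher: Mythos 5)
There is a genuine gap at the centre of your plan: the construction of the $\varphi_\alpha$-semilinear section $\varphi_\alpha^{M^\vee}$ of $\psi_\alpha$. Having the operators $\psi_\alpha$ on $M^\vee[1/X_\Delta]$ together with the decomposition $A\bs N_{\Delta,0}\js=\bigoplus_{i=0}^{p-1}n_\alpha^i\varphi_\alpha(A\bs N_{\Delta,0}\js)$ does \emph{not} produce such a section: a module with a $\psi$-structure need not admit any $\varphi$ with $\psi_\alpha\circ\varphi_\alpha^{M^\vee}=\id$ (the operator $\psi_\alpha$ need not even be surjective), and when sections exist they are neither canonical nor automatically compatible with the module structure. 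The existence of this section is essentially equivalent to the bijectivity of the dualized linearization $(1\otimes F_\alpha)^\vee[1/X_\Delta]\colon M^\vee[1/X_\Delta]\to A\bs N_{\Delta,0}\js\otimes_{\varphi_\alpha}M^\vee[1/X_\Delta]$, i.e.\ to the statement of the proposition itself, so declaring surjectivity of the linearization ``immediate from the section property'' is circular. This also upsets your injectivity step: Proposition \ref{noideal} only annihilates \emph{torsion} modules carrying a $T_0$-action, so it can kill the kernel only after you know the kernel has generic length zero, which requires the surjectivity (or injectivity of the dual map) that you have not yet established.

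What is actually needed, and what the paper supplies, is an argument that the cokernel $C_\alpha$ of $1\otimes F_\alpha\colon \kappa\bs N_{\Delta,0}\js\otimes_{\varphi_\alpha}M\to M$ dies after inverting $X_\Delta$ (one reduces to $h=1$ first so that generic ranks over the domain $\kappa\bg N_{\Delta,0}\jg$ make sense). Here the hypotheses enter in an essentially multivariable way: since $M$ is finitely generated over $\kappa\bs N_{\Delta,0}\js[F_\Delta]$, the cokernel $C_\alpha$ is finitely generated over the smaller ring $\kappa\bs N_{\Delta,0}\js[F_\beta\mid\beta\neq\alpha]$, in which $X_\alpha$ is \emph{central}; smoothness of the $N_{\alpha,0}$-action kills the finitely many generators by some $X_\alpha^s$, and centrality then gives $X_\alpha^sC_\alpha=0$, hence $C_\alpha^\vee[1/X_\alpha]=0$ and the dual map is injective after localization. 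Only then does the step you do have in mind apply: the two sides have equal generic rank, so the cokernel of the dual map is a finitely generated torsion module with a semilinear $T_0$-action, and Proposition \ref{noideal} forces it to vanish, giving the isomorphism whose inverse defines the \'etale $T_+$-structure. Without the $C_\alpha$ argument (or a substitute using the finite generation over $A\bs N_{\Delta,0}\js[F_\Delta]$ and smoothness), your proof does not get off the ground.
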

In order to simplify notation we put $X_\Delta:=\prod_{\alpha\in\Delta}X_\alpha$ so that we have $(\cdot)[1/X_\Delta]=(\cdot)[1/X_\alpha\mid \alpha\in\Delta]$.
\begin{proof}
By passing to the modules $\varpi^rM/\varpi^{r+1}M$ ($0\leq r<h$) we may assume without loss of generality that $h=1$. Let $C_\alpha$ be the cokernel of the map $\kappa\bs N_{\Delta,0}\js\otimes_{\varphi_\alpha}M\overset{1\otimes F_\alpha}{\to}M$. Since $M$ is finitely generated over $\kappa\bs N_{\Delta,0}\js[F_\beta\mid \beta\in\Delta]$, $C_\alpha$ is finitely generated over the smaller ring $\kappa\bs N_{\Delta,0}\js[F_\beta\mid \beta\in\Delta\setminus\{\alpha\}]$. Let $m_1,\dots,m_r\in C_\alpha$ be the generators. Since $M$ is smooth as a representation of $N_{\alpha,0}\leq N_{\Delta,0}$, so is $C_\alpha$. Therefore there exists a power $X_\alpha^s$ ($s>0$) of $X_\alpha$ killing each $m_i$ ($1\leq i\leq r$). However, $X_\alpha$ is in the centre of $\kappa\bs N_{\Delta,0}\js[F_\beta\mid \beta\in\Delta\setminus\{\alpha\}]$ as each $F_\beta$ ($\beta\neq \alpha$) commutes with $X_\alpha$. Therefore we have $X_\alpha^sC_\alpha=0$. In particular, we deduce $C_\alpha^\vee[1/X_\alpha]=0$. This shows that the map
\begin{equation}\label{1otimesFalphavee}
 M^\vee[1/X_\Delta]\overset{(1\otimes F_\alpha)^\vee[1/X_\Delta]}{\longrightarrow} (\kappa\bs N_{\Delta,0}\js\otimes_{\varphi_\alpha,\kappa\bs N_{\Delta,0}\js}M)^\vee[1/X_\Delta]\cong \kappa\bs N_{\Delta,0}\js\otimes_{\varphi_\alpha,\kappa\bs N_{\Delta,0}\js}M^\vee[1/X_\Delta]
\end{equation}
is injective. Moreover, the generic rank over $\kappa\bg N_{\Delta,0}\jg$ of the two sides of \eqref{1otimesFalphavee} equals. Therefore the cokernel of \eqref{1otimesFalphavee} is a finitely generated torsion module over $\kappa\bg N_{\Delta,0}\jg$ since $M$ is admissible. Moreover, the global annihilator of this cokernel is $T_0$-invariant as the map \eqref{1otimesFalphavee} is $T_0$-equivariant. Proposition \ref{noideal} shows that in fact  \eqref{1otimesFalphavee} is an isomorphism for each $\alpha\in\Delta$.
\end{proof}

For a smooth representation $\pi$ of $B_+$ over $A$ we can make $\pi^{H_{\Delta,0}}$ a module over $A \bs N_{\Delta,0}\js[F_\Delta]$ by the Hecke action $F_\alpha(m):=\sum_{u\in J(H_{\Delta,0}/t_\alpha H_{\Delta,0}t_\alpha^{-1})}ut_\alpha m$. Let us denote by $\mathcal{M}_{\Delta}(\pi^{H_{\Delta,0}})$ the set of those finitely generated $A \bs N_{\Delta,0}\js[F_\Delta]$-submodules $M$ of $\pi^{H_{\Delta,0}}$ that are stable under the action of $T_0$ and are admissible as a module over $A \bs N_{\Delta,0}\js$. We define
\begin{equation*}
D_{\Delta}^\vee(\pi):=\varprojlim_{M\in\mathcal{M}_\Delta(\pi^{H_{\Delta,0}})}M^\vee[1/X_\Delta]\ .
\end{equation*}
Using Prop.\ \ref{fingendualetale} this is a projective limit of finitely generated \'etale $T_+$-module over $A \bg N_{\Delta,0}\jg$ attached functorially to $\pi$: If $f\colon \pi\to \pi'$ is a morphism of smooth $A$-representations of $B$ and $M$ lies in $\mathcal{M}_\Delta(\pi^{H_{\Delta,0}})$ then $f(M)$ lies in $\mathcal{M}_\Delta({\pi'}^{H_{\Delta,0}})$. Indeed, $f(M)$ is finitely generated over $A\bs N_{\Delta,0}\js [F_\Delta]$, stable under the action of $T_0$, and admissible as a representation of $N_\Delta$.

\subsection{The category $\mathcal{D}^{et}(T_+,A \bg N_{\Delta,0}\jg)$}

For a submonoid $T_\ast\leq T_+$ we denote by $\mathcal{D}^{et}(T_\ast,A \bg N_{\Delta,0}\jg)$ the category of finitely generated \'etale $T_\ast$-modules over $A \bg N_{\Delta,0}\jg$. We regard these objects as left modules over $A\bg N_{\Delta,0}\jg$. Further, we denote by $\mathcal{D}^{pro-et}(T_\ast,A \bg N_{\Delta,0}\jg)$ the category of projective limits of objects in $\mathcal{D}^{et}(T_\ast,A \bg N_{\Delta,0}\jg)$.

\begin{rem}
It is shown in Cor.\ 3.16 in \cite{Z16} that any object $D$ in $\mathcal{D}^{et}(T_+,\kappa\bg N_{\Delta,0}\jg)$ is free as a module over $\kappa\bg N_{\Delta,0}\jg$. However, we do not use this fact in the present paper.
\end{rem}

Since $\kappa\bg N_{\Delta,0}\jg$ is a localization of the local noetherian ring $\kappa\bs N_{\Delta,0}\js$ it is also noetherian and has finite global dimension ($\leq |\Delta|$). Moreover, any module over $\kappa\bg N_{\Delta,0}\jg$ admits a free resolution of finite length since any projective module over $\kappa\bs N_{\Delta,0}\js$ is free. In particular, we may define the (generic) rank of a module $M$ over $\kappa\bg N_{\Delta,0}\jg$ as the alternating sum $\rk(M):=\rk_{\kappa\bg N_{\Delta,0}\jg}(M):=\sum_{i=0}^{|\Delta|}(-1)^i r_i$ for a free resolution
\begin{equation*}
0\to F_{|\Delta|}\to\dots\to F_0\to M\to 0
\end{equation*}
with $F_i\cong \kappa\bg N_{\Delta,0}\jg^{r_i}$ ($i=0,\dots,|\Delta|$). This is equal to the dimension of $Q\bg N_{\Delta,0}\jg\otimes_{\kappa\bg N_{\Delta,0}\jg}M$ where $Q\bg N_{\Delta,0}\jg$ denotes the field of fractions of $\kappa\bg N_{\Delta,0}\jg$.

For a (left) module $D$ over $A\bg N_{\Delta,0}\jg$ we define the generic length of $D$ as $\genlength(D):=\mathrm{length}_{gen,A\bg N_{\Delta,0}\jg}(D):=\sum_{i=0}^{h-1}\rk_{\kappa\bg N_{\Delta,0}\jg}(\varpi^i D/\varpi^{i+1}D)$. Let $Q(A\bg N_{\Delta,0}\jg)$ be the localization of $A\bg N_{\Delta,0}\jg$ at the prime ideal generated by $\varpi$. This is an artinian local ring with maximal ideal generated by $\varpi$ and residue field isomorphic to $Q\bg N_{\Delta,0}\jg$. We have an isomorphism $$Q\bg N_{\Delta,0}\jg\otimes_{\kappa\bg N_{\Delta,0}\jg}(\varpi^i D/\varpi^{i+1}D)\cong \varpi^iQ(A\bg N_{\Delta,0}\jg)\otimes_{A\bg N_{\Delta,0}\jg}D/\varpi^{i+1}Q(A\bg N_{\Delta,0}\jg)\otimes_{A\bg N_{\Delta,0}\jg}D\ .$$ Therefore the generic length of an $A\bg N_{\Delta,0}\jg$-module $D$ equals the length of $Q(A\bg N_{\Delta,0}\jg)\otimes_{A\bg N_{\Delta,0}\jg}D$. In particular, the generic length is additive on short exact sequences.

\begin{lem}\label{equalgenlength}
For a finitely generated module $D$ in $A \bg N_{\Delta,0}\jg$ and $t\in T_+$ we have $\genlength(D)=\genlength(A\bg N_{\Delta,0}\jg\otimes_{\varphi_t}D)$.
\end{lem}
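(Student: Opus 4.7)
The plan is to exploit that $\varphi_t$ makes $A\bg N_{\Delta,0}\jg$ into a \emph{free} (hence flat) module of finite rank over itself, and then reduce the equality of generic lengths to a rank count using a finite free resolution.

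First I would establish the flatness of $\varphi_t$. The map $\varphi_t\colon A\bs N_{\Delta,0}\js\to A\bs N_{\Delta,0}\js$ corresponds to the inclusion of the completed group algebra of the finite index subgroup $tN_{\Delta,0}t^{-1}$ into that of $N_{\Delta,0}$, so standard facts on Iwasawa algebras give that $A\bs N_{\Delta,0}\js$ is free of rank $[N_{\Delta,0}:tN_{\Delta,0}t^{-1}] = \prod_{\alpha\in\Delta}p^{\val_p(\alpha(t))}$ over $\varphi_t(A\bs N_{\Delta,0}\js)$. Working modulo $\varpi$, one checks that $\varphi_t(X_\alpha)=(1+X_\alpha)^{\alpha(t)}-1$ equals $X_\alpha^{p^{\val_p(\alpha(t))}}$ times a unit in $\kappa\bs N_{\Delta,0}\js$, so inverting $\varphi_t(X_\Delta)$ in $\varphi_t(A\bs N_{\Delta,0}\js)$ and extending scalars recovers $A\bg N_{\Delta,0}\jg$. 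Hence $A\bg N_{\Delta,0}\jg$ is free of the same finite rank over $\varphi_t(A\bg N_{\Delta,0}\jg)$, and in particular $\varphi_t^*:=A\bg N_{\Delta,0}\jg\otimes_{\varphi_t}(-)$ is exact.

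Second, since $\varphi_t(\varpi)=\varpi$, flatness of $\varphi_t^*$ gives a canonical identification $\varphi_t^*(\varpi^iD/\varpi^{i+1}D)\cong \varpi^i\varphi_t^*D/\varpi^{i+1}\varphi_t^*D$ for every $0\leq i<h$. By additivity of the generic length over this filtration it is therefore enough to prove the statement for $h=1$, i.e.\ to show
\begin{equation*}
\rk_{\kappa\bg N_{\Delta,0}\jg}(\varphi_t^*\bar D)=\rk_{\kappa\bg N_{\Delta,0}\jg}(\bar D)
\end{equation*}
for an arbitrary finitely generated $\kappa\bg N_{\Delta,0}\jg$-module $\bar D$.

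Third, I would invoke the existence of a finite free resolution $0\to F_n\to\cdots\to F_0\to \bar D\to 0$ with $F_i\cong\kappa\bg N_{\Delta,0}\jg^{r_i}$, which is guaranteed by the finite global dimension of $\kappa\bg N_{\Delta,0}\jg$ recalled before the lemma. Applying $\varphi_t^*$ preserves exactness by flatness, producing a finite free resolution of $\varphi_t^*\bar D$ in which the $i$-th term $\varphi_t^*F_i$ is canonically isomorphic to $\kappa\bg N_{\Delta,0}\jg^{r_i}$ as a left module via the first tensor factor. By the definition of rank as the alternating sum $\sum_i(-1)^ir_i$, this yields $\rk(\varphi_t^*\bar D)=\rk(\bar D)$, finishing the proof.

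The only nontrivial step is the flatness of $\varphi_t$; once one knows this, the rest is formal. The mild subtlety in checking flatness is verifying that the Iwasawa-algebra freeness survives the passage to the multivariable Laurent localization $A\bg N_{\Delta,0}\jg$, which is handled by the explicit formula $\varphi_t(X_\alpha)\equiv X_\alpha^{p^{\val_p(\alpha(t))}}\cdot(\text{unit})\pmod\varpi$ described above.
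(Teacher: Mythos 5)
Your proof is correct and follows essentially the same route as the paper: reduce to the case $A=\kappa$ via the graded pieces $\varpi^iD/\varpi^{i+1}D$, then use flatness of $\varphi_t$ to carry a finite free resolution of $\bar D$ to one of $\varphi_t^*\bar D$ with the same ranks. The only difference is that you spell out the flatness of $\varphi_t$ (freeness of $A\bg N_{\Delta,0}\jg$ over $\varphi_t(A\bg N_{\Delta,0}\jg)$ on coset representatives, compatible with the Laurent localization since $\varphi_t(X_\alpha)$ is $X_\alpha^{p^{\val_p(\alpha(t))}}$ times a unit modulo the nilpotent $\varpi$), a point the paper treats as known.
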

\begin{proof}
Note that we have $\varpi^iA\bg N_{\Delta,0}\jg\otimes_{\varphi_t}D/\varpi^{i+1}A\bg N_{\Delta,0}\jg\otimes_{\varphi_t}D\cong \kappa\bg N_{\Delta,0}\jg\otimes_{\varphi_t}(\varpi^iD/\varpi^{i+1})$. So we may assume $A=\kappa$ and $\genlength=\rk$. The statement is clear since $\varphi_t\colon \kappa\bg N_{\Delta,0}\jg\to \kappa\bg N_{\Delta,0}\jg$ is flat, so it takes free resolutions to free resolutions.
\end{proof}

\begin{pro}\label{abeliancat}
If $D_2$ is an object in $\mathcal{D}^{et}(T_+,A \bg N_{\Delta,0}\jg)$ and $D_1$ is a $T_+$-stable $A \bg N_{\Delta,0}\jg$-submodule then both $D_1$ and $D_2/D_1$ are \'etale for the inherited action of $T_+$, ie.\ objects in $\mathcal{D}^{et}(T_+,A \bg N_{\Delta,0}\jg)$. In particular, $\mathcal{D}^{et}(T_+,A \bg N_{\Delta,0}\jg)$ is an abelian category.
\end{pro}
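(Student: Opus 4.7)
The plan is to show that for any $T_+$-stable $A\bg N_{\Delta,0}\jg$-submodule $D_1 \subseteq D_2$, both $D_1$ and $D_2/D_1$ inherit \'etaleness from $D_2$. Finite generation of $D_1$ (and hence of $D_2/D_1$) is automatic from noetherianity of $A\bg N_{\Delta,0}\jg$, so only the \'etaleness condition is substantial.

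The key technical input is flatness of the ring endomorphism $\varphi_t\colon A\bg N_{\Delta,0}\jg \to A\bg N_{\Delta,0}\jg$ for every $t \in T_+$. Since $T_+$ is generated as a monoid by $T_0$ (on which $\varphi_{t_0}$ is an automorphism) and by the $t_\alpha$ for $\alpha \in \Delta$, it suffices to treat $\varphi_{t_\alpha}$, which sends $X_\alpha \mapsto (1+X_\alpha)^p-1$ and fixes the remaining variables. One checks that $\varphi_{t_\alpha}(X_\alpha)$ is $X_\alpha^p$ times a unit in $A\bg N_{\Delta,0}\jg$ (the unit factor is $1$ plus $p$ times an element of $A\bg N_{\Delta,0}\jg$, invertible since $p$ is nilpotent in $A$); a direct telescoping computation then presents $A\bg N_{\Delta,0}\jg$ as a free module of rank $p$ over its $\varphi_{t_\alpha}$-image with basis $\{(1+X_\alpha)^i : 0 \leq i < p\}$.

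Applying the exact functor $\varphi_t^* := A\bg N_{\Delta,0}\jg \otimes_{\varphi_t,A\bg N_{\Delta,0}\jg}(-)$ to $0 \to D_1 \to D_2 \to D_2/D_1 \to 0$ yields the commutative diagram
\begin{equation*}
\begin{array}{ccccccccc}
0 & \to & \varphi_t^*D_1 & \to & \varphi_t^*D_2 & \to & \varphi_t^*(D_2/D_1) & \to & 0 \\
 & & \downarrow f_1 & & \downarrow f_2 & & \downarrow f_3 & & \\
0 & \to & D_1 & \to & D_2 & \to & D_2/D_1 & \to & 0
\end{array}
\end{equation*}
with exact rows, in which $f_2$ is an isomorphism by \'etaleness of $D_2$. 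The snake lemma then gives $\ker f_1 = 0$, $\Coker f_3 = 0$ and $\ker f_3 \cong \Coker f_1$, so the argument reduces to showing $\Coker f_1 = 0$.

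By Lemma \ref{equalgenlength} and additivity of the generic length on short exact sequences, $\genlength(\Coker f_1) = 0$. Moreover $\Coker f_1$ inherits a semilinear $T_0$-action (well-defined because $T$ is abelian, so $\varphi_{t_0}$ commutes with $\varphi_t$), and $f_1$ is $T_0$-equivariant. Any finitely generated $A\bg N_{\Delta,0}\jg$-module $C$ with semilinear $T_0$-action and $\genlength(C) = 0$ must vanish: each graded piece $\varpi^i C/\varpi^{i+1}C$ is a finitely generated torsion module over the noetherian domain $\kappa\bg N_{\Delta,0}\jg$, hence its annihilator is a nonzero $T_0$-invariant ideal which by Proposition \ref{noideal} equals the whole ring, forcing the graded piece to be zero; iterating in $\varpi$ gives $C = 0$. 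Thus $f_1$, and consequently $f_3$, are isomorphisms, and the abelian category assertion follows from closure of $\mathcal{D}^{et}(T_+,A\bg N_{\Delta,0}\jg)$ under kernels and cokernels. The main obstacle is verifying flatness of $\varphi_t$ on the localised ring; the remainder is snake-lemma bookkeeping combined with Proposition \ref{noideal}.
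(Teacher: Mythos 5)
Your proof is correct and follows essentially the same route as the paper: the same commutative diagram obtained by applying $\varphi_t^*$ to the short exact sequence, vanishing of the generic length of $\Coker(f_1)$ and $\Ker(f_3)$ via Lemma \ref{equalgenlength} and additivity, and then Proposition \ref{noideal} together with nilpotence of $\varpi$ to conclude both are zero. The only addition is your explicit verification that $\varphi_t$ is (faithfully) flat, which the paper uses implicitly for the exactness of the top row (and records in the proof of Lemma \ref{equalgenlength}); your freeness argument over the image of $\varphi_{t_\alpha}$ is a fine way to justify it.
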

\begin{proof}
It is clear that  $\mathcal{D}^{et}(T_+,A \bg N_{\Delta,0}\jg)$ is an additive category. So it suffices to show the first statement. Put $D_3:=D_2/D_1$ and for a fixed $t\in T_+$ consider the commutative diagram
\begin{equation*}
\xymatrix{
0\ar[r] & A \bg N_{\Delta,0}\jg\otimes_{\varphi_t}D_1\ar[r]\ar[d]^{f_1} & A \bg N_{\Delta,0}\jg\otimes_{\varphi_t}D_2 \ar[r]\ar[d]^{f_2} &  A \bg N_{\Delta,0}\jg\otimes_{\varphi_t}D_3\ar[r]\ar[d]^{f_3} &0\\
0\ar[r] & D_1\ar[r] & D_2 \ar[r] &  D_3\ar[r] &0
}
\end{equation*}
with exact rows. Since $f_2$ is an isomorphism, we deduce that $f_1$ is injective and $f_3$ is surjective. Therefore $\Coker(f_1)$ and $\Ker(f_3)$ have $0$ generic length by Lemma \ref{equalgenlength}. In particular, $\Coker(f_1)/\varpi \Coker(f_1)$ and $\Ker(f_3)/\varpi \Ker(f_3)$ are finitely generated torsion modules over $\kappa\bg N_{\Delta,0}\jg$ both admitting a semilinear action of $T_0$. Hence their global annihilator is a nonzero $T_0$-invariant ideal in $\kappa\bg N_{\Delta,0}\jg$ that contains $1$ by Prop.\ \ref{noideal}. We obtain that $\Coker(f_1)/\varpi \Coker(f_1)=\Ker(f_3)/\varpi \Ker(f_3)=0$ whence we also have $\Coker(f_1)=\Ker(f_3)=0$ showing that both $f_1$ and $f_3$ are isomorphisms.
\end{proof}

\begin{rem}
The above proof actually shows that $\mathcal{D}^{et}(T_\ast,A \bg N_{\Delta,0}\jg)$ is an abelian category for any submonoid $T_\ast\leq T_+$ containing an open subgroup of $T_0$.
\end{rem}

\subsection{A functor to usual $(\varphi,\Gamma)$-modules}\label{reducetousualsec}

Assume in this section that $\Delta\neq\emptyset$ or, equivalently, that $G\neq T$.

There exists a $\xi(\Zp\setminus\{0\})$-equivariant group homomorphism 
\begin{equation*}
\ell=\ell_{gen}:=\sum_{\alpha\in\Delta}u_\alpha^{-1}\colon N_{\Delta,0} \twoheadrightarrow \mathbb{Z}_p\ .
\end{equation*}
We denote by $H_{\ell,\Delta,0}\leq N_{\Delta,0}$ the kernel of the group homomorphism $\ell$ and by $H_{\ell,0}$ its preimage in $N_0$ under the quotient map $N_0\twoheadrightarrow N_{\Delta,0}$. The restriction of $\ell$ to $N_{\alpha,0}$ is an isomorphism for all $\alpha\in\Delta$, so $\ell$ is generic. Moreover, the induced ring homomorphism $\ell\colon A\bs N_{\Delta,0}\js\twoheadrightarrow A\bs \Zp\js\cong A\bs X\js$ extends to a surjective ring homomorphism $\ell\colon A\bg N_{\Delta,0}\jg\twoheadrightarrow A\bg X\jg$. Its kernel is generated by the elements $X_\alpha-X_\beta$ for $\alpha,\beta\in\Delta$. So we obtain a functor $A\bg X\jg\otimes_{\ell,A\bg N_{\Delta,0}\jg}\cdot$ from the category $\mathcal{D}^{et}(T_+,A\bg N_{\Delta,0}\jg)$ of \'etale $T_+$-modules over $A\bg N_{\Delta,0}\jg$ to the category $\mathcal{D}^{et}(\varphi,\Gamma,A\bg X\jg)$ of \'etale $(\varphi,\Gamma)$-modules over $A\bg X\jg$. 
\begin{pro}\label{reduceellexact}
The functor $A\bg X\jg\otimes_{\ell,A\bg N_{\Delta,0}\jg}\cdot$ from $\mathcal{D}^{et}(T_+,A\bg N_{\Delta,0}\jg)$ to $\mathcal{D}^{et}(\varphi,\Gamma,A\bg X\jg)$ is faithful and exact. In particular, if $D$ is a nonzero \'etale $T_+$-module over $A\bg N_{\Delta,0}\jg$ then $D_\ell:= A\bg X\jg\otimes_{\ell,A\bg N_{\Delta,0}\jg}D$ is nonzero either.
\end{pro}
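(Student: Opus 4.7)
The functor $A\bg X\jg \otimes_{\ell, A\bg N_{\Delta,0}\jg} -$ is tautologically right exact; since an exact additive functor between abelian categories is faithful iff it reflects zeros on objects, it suffices to establish (i) left exactness (i.e.\ vanishing of $\mathrm{Tor}_1^{A\bg N_{\Delta,0}\jg}(A\bg X\jg, D)$ for every $D$ in the category) and (ii) the nonvanishing $D_\ell \neq 0$ for $D \neq 0$. Both claims reduce to the case $A = \kappa$ by dévissage along the $\varpi$-adic filtration $D \supseteq \varpi D \supseteq \cdots \supseteq \varpi^h D = 0$, using the long exact Tor sequence and Nakayama. In what follows write $R := \kappa\bg N_{\Delta,0}\jg$ (a domain) and $I := \ker\ell$.

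The plan for (ii) is a Cayley--Hamilton / adjugate argument combined with Proposition \ref{noideal}. If $D \neq 0$ satisfied $D = ID$, then choosing generators $d_1,\ldots,d_n$ of $D$, the relations $d_i = \sum_j a_{ij} d_j$ with $a_{ij} \in I$ yield, via the adjugate trick, $\det(\mathrm{Id}_n - A) \cdot D = 0$, where $\det(\mathrm{Id}_n - A) \equiv 1 \pmod{I}$ is nonzero in $R$ since $R/I = \kappa\bg X\jg \neq 0$. So $\mathrm{Ann}_R(D) \neq 0$. The étaleness isomorphism $R \otimes_{\varphi_t, R} D \xrightarrow{\sim} D$ makes $\mathrm{Ann}_R(D)$ stable under $\varphi_t$ for every $t \in T_+$: if $rD = 0$ and $d = \sum_i b_i(t \cdot e_i)$, then $\varphi_t(r) d = \sum_i b_i \cdot t \cdot (r e_i) = 0$. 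For $t \in T_0$ the map $\varphi_t$ is an automorphism of $R$, so $\mathrm{Ann}_R(D)$ is actually $T_0$-stable, hence equal to $R$ by Proposition \ref{noideal}; this contradicts $D \neq 0$.

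The first half of (i) is handled by a parallel argument applied to the $R$-torsion submodule $D^{\mathrm{tor}}$. This submodule is $T_+$-stable, and in fact étale: each $\varphi_t \colon R \to R$ is free of finite rank (hence flat), so $R \otimes_{\varphi_t, R} -$ is exact on $0 \to D^{\mathrm{tor}} \to D \to D/D^{\mathrm{tor}} \to 0$, and because flat base change to the domain $R$ preserves torsion-freeness, the image of $R \otimes_{\varphi_t, R} D^{\mathrm{tor}}$ inside $R \otimes_{\varphi_t, R} D \cong D$ coincides with $D^{\mathrm{tor}}$. Thus $D^{\mathrm{tor}}$ is a finitely generated étale $T_+$-module all of whose elements have nonzero annihilator; its global annihilator---a finite intersection of nonzero ideals in the domain $R$, hence nonzero and $T_0$-stable by the previous paragraph---must equal $R$ by Proposition \ref{noideal}, forcing $D^{\mathrm{tor}} = 0$. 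This already settles (i) when $|\Delta| \in \{1, 2\}$: then $I$ is generated by at most one element, so $\mathrm{Tor}_1(R/I, D) = \ker((X_\alpha - X_\beta) \colon D \to D) \subseteq D^{\mathrm{tor}} = 0$.

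The main obstacle is $|\Delta| \geq 3$, where Tor-vanishing becomes the assertion that $(Y_\alpha := X_\alpha - X_{\alpha_1})_{\alpha \neq \alpha_1}$ is a $D$-regular sequence (a Koszul computation): torsion-freeness gives only the first element, and the successive quotients $D/(Y_{\alpha_2}, \ldots)D$ lose the $T_+$-structure because the relevant ideals are not $T_+$-stable, so the torsion-free argument cannot be iterated naively. To circumvent this I plan to appeal to the equivalence of categories of \cite{Z16} between $\mathcal{D}^{et}(T_+, \kappa\bg N_{\Delta,0}\jg)$ and continuous representations of $\Gal(\overline{\mathbb{Q}_p}/\mathbb{Q}_p)^{|\Delta|}$: under this equivalence the functor $D \mapsto D_\ell$ corresponds to restriction along the diagonal embedding $\Gal(\overline{\mathbb{Q}_p}/\mathbb{Q}_p) \hookrightarrow \Gal(\overline{\mathbb{Q}_p}/\mathbb{Q}_p)^{|\Delta|}$, which is manifestly exact and reflects zero, yielding (i) in full generality.
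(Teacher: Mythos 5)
Your reduction of faithfulness to ``exact $+$ reflects zero objects'' (legitimate, since $\mathcal{D}^{et}(T_+,A\bg N_{\Delta,0}\jg)$ is abelian by Prop.~\ref{abeliancat}), your d\'evissage to $A=\kappa$, the adjugate argument showing $D\neq ID$ via a nonzero, $T_0$-stable annihilator, and the proof that $D^{\mathrm{tor}}=0$ for any \'etale module are all correct, and in substance they are the same use of Prop.~\ref{noideal} that the paper makes. The genuine gap is the case $|\Delta|\geq 3$: there the crux of the proposition --- left exactness --- is not proved but delegated to the equivalence of categories of \cite{Z16} together with the compatibility of $D\mapsto D_\ell$ with restriction to the diagonal copy of $\Gal(\overline{\mathbb{Q}_p}/\mathbb{Q}_p)$. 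That is a far deeper input than the statement being proved, its use here requires checking naturality of that comparison and that \cite{Z16} nowhere presupposes this exactness, and in the paper the Galois-theoretic interpretation is recorded only as a remark \emph{after} the proposition, which is proved by a short self-contained argument that this proposition then feeds into throughout (Prop.~\ref{dualcompactfingen}, Thm.~\ref{rightexact}, Lemma~\ref{DDeltaCw}, \dots).

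Moreover, your diagnosis of why a direct argument should fail for $|\Delta|\geq 3$ is not accurate, and the tools you already have close the gap. Do not try to verify at one stroke that the $Y_\alpha$ form a $D$-regular sequence; instead kill one relation at a time. The quotient $D/(X_\alpha-X_\beta)D$ does lose the full $T_+$-action, but it retains an action of the submonoid $T_{+,\alpha=\beta}$ on which the characters $\alpha$ and $\beta$ agree, and it is an \'etale module over $A\bg N_{\Delta,0}\jg/(X_\alpha-X_\beta)$, a Laurent series ring in $|\Delta|-1$ variables. This submonoid contains enough of the torus: the proof of Prop.~\ref{noideal} only uses the subgroup $T_{\Delta,0}=\prod_{\alpha}\lambda_{\alpha^\vee}(\Zp^\times)$, and its analogue with the $\alpha$- and $\beta$-coordinates identified rules out invariant ideals in $|\Delta|-1$ variables (cf.\ the Remark after Prop.~\ref{abeliancat}). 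Granting this, one argues by induction on $|\Delta|$: it suffices to show that $0\to D_1/(X_\alpha-X_\beta)\to D_2/(X_\alpha-X_\beta)$ is exact for an injection $D_1\hookrightarrow D_2$ with \'etale quotient $D_3$, and this is exactly your torsion-freeness statement applied to $D_3$ rather than to $D_2$: if $d_1=(X_\alpha-X_\beta)d_2$ with $d_1\in D_1$, then the image of $d_2$ in $D_3$ is $(X_\alpha-X_\beta)$-torsion, hence lands in $\varpi^rD_3\setminus\varpi^{r+1}D_3$ for some $r$ with $\varpi^rD_3/\varpi^{r+1}D_3$ torsion-free over $\kappa\bg N_{\Delta,0}\jg$, forcing $d_2\in D_1$. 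No Tor computation and no $D_2$-regularity of the whole sequence is needed; this is precisely the route the paper takes.
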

\begin{proof}
At first we prove the exactness. The functor $A\bg X\jg\otimes_{\ell,A\bg N_{\Delta,0}\jg}\cdot$ is clearly right exact. We show by induction on $|\Delta|$ that it takes injective maps to injective maps. If $\Delta|=1$ then there is nothing to prove. So let $|\Delta|>1$ and choose $\alpha\neq\beta\in\Delta$. Denote by $T_{+,\alpha=\beta}$ the submonoid in $T_+$ on which the two characters $\alpha$ and $\beta$ agree and put $T_{0,\alpha=\beta}:=T_0\cap T_{+,\alpha=\beta}$. We have a functor $A\bg N_{\Delta,0}\jg/(X_\alpha-X_\beta)\otimes_{A\bg N_{\Delta,0}\jg}\cdot$ from $\mathcal{D}^{et}(T_+,A\bg N_{\Delta,0}\jg)$ to the category $\mathcal{D}^{et}(T_{+,\alpha=\beta},A\bg N_{\Delta,0}\jg/(X_\alpha-X_\beta))$ of \'etale $T_+$-modules over $A\bg N_{\Delta,0}\jg/(X_\alpha-X_\beta)$. Let $$0\to D_1\to D_2\to D_3\to 0$$ be an exact sequence in $\mathcal{D}^{et}(T_+,A\bg N_{\Delta,0}\jg)$. By induction it suffices to show that the sequence
$$0\to D_1/(X_\alpha-X_\beta)\to D_2/(X_\alpha-X_\beta)$$
is exact. Assume that $d_1+(X_\alpha-X_\beta)D_1$ lies in the kernel of the above map for some $d_1\in D_1$. Then we have $d_1\in D_1\cap (X_\alpha-X_\beta)D_2$ (viewing $D_1$ as a subobject of $D_2$). Therefore there exists a $d_2\in D_2$ such that $d_1=(X_\alpha-X_\beta)d_2$. Then the image $d_3$ of $d_2$ in $D_3$ is killed by $(X_\alpha-X_\beta)$. Assume that $d_3\neq 0$. Then there is an integer $0\leq r<h$ such that $d_3\in \varpi^rD_3\setminus \varpi^{r+1}D_3$. However, $\varpi^r D_3/\varpi^{r+1}D_3$ is torsion-free as a module over $\kappa\bg N_{\Delta,0}\jg$ since the global annihilator of its torsion part would be a $T_0$-invariant ideal that does not exist by Prop.\ \ref{noideal}. This is a contradiction as the class of $d_3$ in $\varpi^rD_3/\varpi^{r+1}D_3$ is killed by $X_\alpha-X_\beta$. So we conclude that $d_2$ lies in $D_1$ whence $d_1+(X_\alpha-X_\beta)D_1=0$.

For the faithfulness let $f\colon D_1\to D_2$ be a nonzero map in $\mathcal{D}^{et}(T_+,A\bg N_{\Delta,0}\jg)$. By passing to a suitable subquotient $\varpi^rD_2/\varpi^{r+1}D_2$ we may assume without loss of generality that $A=k$. Since $\mathcal{D}^{et}(T_+,\kappa\bg N_{\Delta,0}\jg)$ is an abelian category, $f(D_1)$ is a subobject in $D_2$. By the above left exactness it suffices to show that $\kappa\bg X\jg\otimes_{\ell,\kappa\bg N_{\Delta,0}\jg}f(D_1)$ is nonzero. However, this is clear since $f(D_1)$ is torsionfree as a module over $A\bg N_{\Delta,0}\jg$ (again by Prop.\ \ref{noideal}), therefore its localization at the maximal ideal $\Ker(\ell)$ of $\kappa\bg N_{\Delta,0}\jg$ is nonzero showing that $\kappa\bg X\jg\otimes_{\ell,\kappa\bg N_{\Delta,0}\jg}f(D_1)\neq 0$.
\end{proof}

\begin{rems}
\begin{enumerate}
\item It is shown in \cite{Z16} that the functor $A\bg X\jg\otimes_{\ell,A\bg N_{\Delta,0}\jg}\cdot$ from $\mathcal{D}^{et}(T_+,A\bg N_{\Delta,0}\jg)$ to $\mathcal{D}^{et}(\varphi,\Gamma,A\bg X\jg)$ corresponds to restriction to the diagonal embedding of $\Gal(\overline{\mathbb{Q}_p}/\mathbb{Q}_p)$ into its $|\Delta|$th direct power on the Galois side. In particular, it is not full if $|\Delta|>1$.
\item We can extend the functor $A\bg X\jg\otimes_{\ell,A\bg N_{\Delta,0}\jg}\cdot$ to $\mathcal{D}^{pro-et}(T_+,A\bg N_{\Delta,0}\jg)$ the following way. For $\hat{D}=\varprojlim_{i\in I}D_i$ with $D_i$ being an object in $\mathcal{D}^{et}(T_+,A\bg N_{\Delta,0}\jg)$, we define
\begin{equation*}
A\bg X\jg\hat{\otimes}_{\ell,A\bg N_{\Delta,0}\jg}\hat{D}:=\varprojlim_{i\in I}A\bg X\jg\otimes_{\ell,A\bg N_{\Delta,0}\jg}D_i\ .
\end{equation*}
Then $A\bg X\jg\hat{\otimes}_{\ell,A\bg N_{\Delta,0}\jg}\cdot$ is an exact and faithful functor from $\mathcal{D}^{pro-et}(T_+,A\bg N_{\Delta,0}\jg)$ to $\mathcal{D}^{pro-et}(\varphi,\Gamma,A\bg X\jg)$ since $A\bg X\jg$ is an artinian ring therefore the category of pseudocompact modules over $A\bg X\jg$ have exact projective limits. For any smooth representation $\pi$ of $B$ over $A$ we have a surjective map $D^\vee_\xi(\pi)\to A\bg X\jg\hat{\otimes}_{\ell,A\bg N_{\Delta,0}\jg}D^\vee_\Delta(\pi)$. Whether or not this is always an isomorphism (or even in the case $\pi$ satisfies certain admissibility conditions) is an open question. We shall see later on that for $\pi$ in the category $SP_A$ this is true.
\end{enumerate}
\end{rems}

Let $D$ be an object in $\mathcal{D}^{et}(T_\ast,A \bg N_{\Delta,0}\jg)$ for some submonoid $T_\ast\leq T_+$. For any set of representatives $J(N_{\Delta,0}/tN_{\Delta,0}t^{-1})\subset N_{\Delta,0}$ and elements $t\in T_+$, $d\in D$ we can write $d$ uniquely as $d=\sum_{u\in J(N_{\Delta,0}/tN_{\Delta,0}t^{-1})}u\varphi_t(d_{t,u})$. As usual \cite{SVZ} \cite{EZ} we put $\psi_t(d):=d_{t,1}$ for a subset $J(N_{\Delta,0}/tN_{\Delta,0})$ containing $1$. We have $\psi_t(u^{-1}d)=d_{t,u}$ for any $u\in J(N_{\Delta,0}/tN_{\Delta,0}t^{-1})$ and $t\in T_\ast$. Moreover, we have $\psi_t(\lambda\varphi_t(d))=\psi_t(\lambda)d$ and $\psi_t(\varphi_t(\lambda)d)=\lambda\psi_t(d)$ for all $d\in D$ and $\lambda\in A\bg N_{\Delta,0}\jg$ (here we define $\psi_t(\lambda)$ similarly as above---note that $A\bg N_{\Delta,0}\jg$ with the conjugation action of $T_\ast$ is also an object in $\mathcal{D}^{et}(T_\ast,A \bg N_{\Delta,0}\jg)$). Further, for $t_1,t_2\in T_\ast$ we have $\psi_{t_1t_2}=\psi_{t_2}\circ\psi_{t_1}$. We call this the $\psi$-action of $T_+$ on $D$. 

\begin{lem}\label{reducepsi}
Let $D$ be an object in $\mathcal{D}^{et}(T_+,A \bg N_{\Delta,0}\jg)$ and $d$ be in $D$. Choose the set $1\in J(N_{\Delta,0}/sN_{\Delta,0}s^{-1})$ of representatives so that whenever $\ell(u)$ and $\ell(v)$ are in the same coset in $\Zp/p\Zp$ for some $u,v\in J(N_{\Delta,0}/sN_{\Delta,0}s^{-1})$ then actually we have $\ell(u)=\ell(v)$. (This can be done as we have a splitting so that $N_{\Delta,0}$ decomposes as $N_{\Delta,0}\cong \Ker(\ell)\times \Zp$.) Then for the element $1\otimes d\in A\bg X\jg\otimes_{\ell,A\bg N_{\Delta,0}\jg}D=D_\ell$ we have $\psi_s(1\otimes d)=1\otimes (\sum_{u\in J(\Ker(\ell)/s\Ker(\ell)s^{-1})}\psi_s(u^{-1}d))$.
\end{lem}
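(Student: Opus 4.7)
The plan is to unravel both sides using a set of representatives compatible with the reduction $\ell$, and then match the two formulas term-by-term.

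First I would observe that since $\ell$ is $\xi(\Zp\setminus\{0\})$-equivariant and $s=\xi(p)$, the splitting $N_{\Delta,0}\cong \Ker(\ell)\times \Zp$ alluded to in the statement can be chosen to be $s$-equivariant. Concretely, fixing any simple root $\alpha_0\in\Delta$ and taking the section $\Zp\to N_{\Delta,0}$ given by $x\mapsto u_{\alpha_0}(x)$, one has $s u_{\alpha_0}(x) s^{-1}=u_{\alpha_0}(\alpha_0(s)x)=u_{\alpha_0}(px)$, so the section intertwines $s$-conjugation with multiplication by $p$ on $\Zp$. This yields a product decomposition
\begin{equation*}
N_{\Delta,0}/sN_{\Delta,0}s^{-1}\cong \Ker(\ell)/s\Ker(\ell)s^{-1}\times \Zp/p\Zp.
\end{equation*}

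Accordingly I would pick $J(N_{\Delta,0}/sN_{\Delta,0}s^{-1})=J(\Ker(\ell)/s\Ker(\ell)s^{-1})\cdot \widetilde{T}$, where $\widetilde{T}\subset\Zp$ is a set of lifts of $\Zp/p\Zp$ containing $0$ and $1\in J(\Ker(\ell)/s\Ker(\ell)s^{-1})$. This manifestly satisfies the hypothesis: if $u=u_0\tilde{t}$ and $v=v_0\tilde{t'}$ with $u_0,v_0\in\Ker(\ell)$, then $\ell(u)=t$, $\ell(v)=t'$, and equality modulo $p$ forces $t=t'$ by the choice of $\widetilde{T}$, whence $\ell(u)=\ell(v)$.

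Next I would expand the given $d\in D$ as
\begin{equation*}
d=\sum_{u_0}\sum_{t\in\widetilde{T}} u_0\tilde{t}\,\varphi_s\bigl(\psi_s(\tilde{t}^{-1}u_0^{-1}d)\bigr),
\end{equation*}
where $u_0$ runs over $J(\Ker(\ell)/s\Ker(\ell)s^{-1})$ and $\tilde{t}\in\Zp\hookrightarrow N_{\Delta,0}$ denotes the lift of $t$. Applying $1\otimes(\cdot)\colon D\to D_\ell$, and using that $\ell$ sends $u_0\in\Ker(\ell)$ to $1\in A\bg X\jg$ and $\tilde{t}$ to $(1+X)^t$, this rewrites as
\begin{equation*}
1\otimes d=\sum_{t\in\widetilde{T}}(1+X)^t\,\varphi_s\Bigl(\sum_{u_0}1\otimes \psi_s(\tilde{t}^{-1}u_0^{-1}d)\Bigr).
\end{equation*}
Finally, since $\widetilde{T}$ is exactly the set of representatives used to define $\psi_s$ on $D_\ell$, extracting the $t=0$ summand yields $\psi_s(1\otimes d)=\sum_{u_0}1\otimes\psi_s(u_0^{-1}d)$, which is the claimed identity.

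The only substantive input is the $s$-equivariance of the splitting of $\ell$, which guarantees that the chosen representatives factor as a product and that the sum indexed by $\widetilde{T}$ cleanly pulls out past $\varphi_s$ in $D_\ell$; the remainder is bookkeeping matching the two incarnations of $\psi_s$.
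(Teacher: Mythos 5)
Your proof is correct and follows essentially the same route as the paper: decompose $d$ over a system of coset representatives adapted to $\ell$, push the decomposition into $D_\ell$, and read off the identity-coset component by uniqueness of the $\varphi_s$-decomposition. The only difference is that the paper treats an arbitrary admissible set $J(N_{\Delta,0}/sN_{\Delta,0}s^{-1})$ and groups terms using $\Ker(\ell)\cap sN_{\Delta,0}s^{-1}=s\Ker(\ell)s^{-1}$ together with a counting argument, whereas you fix the product-form system coming from the $s$-equivariant section $u_{\alpha_0}$ of $\ell$ -- which suffices, since $\psi_s$ is canonical and neither side of the asserted identity depends on that choice of representatives.
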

\begin{proof}
We may decompose $d$ as $d=\sum_{u\in J(N_{\Delta,0}/sN_{\Delta,0}s^{-1})}u\varphi_s\circ\psi_s(u^{-1}d)$. The image of this equation in $D_\ell$ reads as
\begin{equation*}
1\otimes d=1\otimes\sum_{u\in J(N_{\Delta,0}/sN_{\Delta,0}s^{-1})}u\varphi_s\circ\psi_s(u^{-1}d)=\sum_{u\in J(N_{\Delta,0}/sN_{\Delta,0}s^{-1})}\ell(u)\varphi_s(1\otimes \psi_s(u^{-1}d))\ .
\end{equation*}
Now since we have $\Ker(\ell)\cap sN_{\Delta,0}s^{-1}=s\Ker(\ell)s^{-1}$ (as conjugation by $s$ on $\mathrm{Im}(\ell)=\mathbb{Z}_p$ is injective), we obtain that $\ell(u)$ runs through a set of representatives of $\Zp/p\Zp$ and each value is taken $|\Ker(\ell)/s\Ker(\ell)s^{-1}|$-times by our assumption on $J(N_{\Delta,0}/sN_{\Delta,0}s^{-1})$. So we compute
 \begin{equation*}
1\otimes d=\sum_{v\in J(\Zp/p\Zp)}v\varphi_s(\sum_{u\in J(N_{\Delta,0}/sN_{\Delta,0}s^{-1}), \ell(u)=v}1\otimes \psi_s(u^{-1}d))\ .
\end{equation*}
Since $1$ lies in $J(N_{\Delta,0}/sN_{\Delta,0}s^{-1})$, $1=\ell(1)$ lies in $J(\Zp/p\Zp)$. So we deduce the statement by the uniqueness of decomposition of $1\otimes d$ as a sum above.
\end{proof}

\subsection{Right exactness of $D^\vee_\Delta$}

Assume $T_\ast=T_+$ and let $D_0$ be a finitely generated $A\bs N_{\Delta,0}\js$-submodule in an object $D$ in $\mathcal{D}^{et}(T_+,A \bg N_{\Delta,0}\jg)$ that is stable under the $\psi$-action of $T_+$. Since $D_0$ is compact in its canonical topology, its Pontryagin dual $D_0^\vee=\Hom^{cont}_A(D_0,A)$ is discrete. Moreover, the group $N_{\Delta,0}$ acts on $D_0^\vee$ by the formula $um(d):=m(u^{-1}d)$ ($u\in N_{\Delta,0}$, $m\in D_0^\vee$, $d\in D_0$). This extends to an action of the Iwasawa algebra $A\bs N_{\Delta,0}\js$ by continuity making $D_0^\vee$ a discrete left $A\bs N_{\Delta,0}\js$-module. For an element $m\in D_0^\vee$ and a simple root $\alpha\in\Delta$ we define $F_\alpha(m)\in D_0^\vee$ by the formula $F_\alpha(m)(d):=m(\psi_{t_\alpha}(d))$. Denote by $\#$ the continuous (anti-)involution on the (commutative!) ring $A\bs N_{\Delta,0}\js$ that sends group elements $u\in N_{\Delta,0}\subset A\bs N_{\Delta,0}\js$ to $u^\#:=u^{-1}$. We have $\varphi_t(\lambda^\#)=\varphi_t(\lambda)^\#$ for any $\lambda\in A\bs N_{\Delta,0}\js$ and $t\in T_+$ since this is true if $\lambda=u$ in $N_{\Delta,0}$. For $\lambda\in A\bs N_{\Delta,0}\js$ and $m\in D_0^\vee$ we have $F_\alpha(\lambda m)=\varphi_\alpha(\lambda)F_\alpha(m)$ in $D_0^\vee$ since
\begin{align*}
F_\alpha(\lambda m)(d)=(\lambda m)(\psi_{t_\alpha}(d))=m(\lambda^\#\psi_{t_\alpha}(d))=\\
=m(\psi_{t_\alpha}(\varphi_{t_\alpha}(\lambda^\#)d))=F_\alpha(m)(\varphi_{t_\alpha}(\lambda)^\#d)=(\varphi_{t_\alpha}(\lambda)F_\alpha(m))(d)\ .
\end{align*}
Moreover, we have $F_\alpha\circ F_\beta=F_\beta\circ F_\alpha$ on $D_0^\vee$ as $\psi_{t_\alpha}$ and $\psi_{t_\beta}$ commute on $D_0$ ($T_+$ is commutative). So by this action of the operators $F_\alpha$ ($\alpha\in\Delta$) we equipped $D_0^\vee$ with the structure of a module over the ring $A\bs N_{\Delta,0}\js[F_\Delta]$.

On the other hand, for each $t\in T_+$ the $\psi$-action induces a $A\bs N_{\Delta,0}\js$-module homomorphism
\begin{eqnarray}
D_0&\to & A\bs N_{\Delta,0}\js\otimes_{\varphi_t, A\bs N_{\Delta,0}\js}D_0\notag\\
d&\mapsto&\sum_{u\in J(N_{\Delta,0}/tN_{\Delta,0}t^{-1})}u\otimes\psi_t(u^{-1}d)\ .\label{psitoncompact}
\end{eqnarray}
The above map \eqref{psitoncompact} is injective for all $t\in T_+$ (ie.\ the $\psi$-action is nondegenerate in the sense of section 4 in \cite{EZ}) since $D_0$ is a $\psi$-invariant submodule of an \'etale $T_+$-module. Therefore in this case $D_0[1/X_\Delta]$ is an object in $\mathcal{D}^{et}(T_+,A \bg N_{\Delta,0}\jg)$: the map
\begin{equation*}
D_0[1/X_\Delta]\to  A\bg N_{\Delta,0}\jg\otimes_{\varphi_t. A\bg N_{\Delta,0}\jg}D_0[1/X_\Delta]
\end{equation*}
induced by \eqref{psitoncompact} is a $T_0$-equivariant injective map between modules of equal generic length, so it is an isomorphism by Prop.\ \ref{noideal}.

\begin{pro}\label{dualcompactfingen}
Let $D$ be an object in $\mathcal{D}^{et}(T_+,A \bg N_{\Delta,0}\jg)$ and $D_0\subseteq D$ be a finitely generated $A\bs N_{\Delta,0}\js$-submodule that is stable under the $\psi$-action of $T_+$. Then $D_0^\vee$ is a finitely generated module over $A \bs N_{\Delta,0}\js[F_\Delta]$ that is admissible as a (smooth) representation of $N_{\Delta,0}$ and admits a semilinear action of $T_0$.
\end{pro}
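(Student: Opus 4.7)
Define actions on $D_0^\vee$ by $(um)(d)=m(u^{-1}d)$ for $u\in N_{\Delta,0}$, $(F_\alpha m)(d)=m(\psi_{t_\alpha}(d))$ for $\alpha\in\Delta$, and $(tm)(d)=m(\psi_t(d))$ for $t\in T_0$. The third is well-defined because for $t\in T_0$ the map $\varphi_t$ is an automorphism of $A\bg N_{\Delta,0}\jg$ with inverse $\psi_t$, whence $\psi$-stability of $D_0$ forces $\varphi_t(D_0)=D_0$. The commutation $F_\alpha\lambda=\varphi_\alpha(\lambda)F_\alpha$ and the semilinear $T_0$-compatibility are the calculations already carried out in the paragraph preceding the statement, using $\psi_t(\varphi_t(\lambda)d)=\lambda\psi_t(d)$ and the anti-involution $\#$; commutativity of the $F_\alpha$'s is inherited from that of $T_+$. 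Admissibility as a smooth $N_{\Delta,0}$-representation is standard Pontryagin duality: for each open $N'\leq N_{\Delta,0}$ the coinvariants $D_0/I(N')D_0$ are a quotient of $A[N_{\Delta,0}/N']^r$ (for $r$ a number of $A\bs N_{\Delta,0}\js$-generators of $D_0$), hence finite over $A$, so $(D_0^\vee)^{N'}$ is finite over $A$.

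The substance is finite generation over $R:=A\bs N_{\Delta,0}\js[F_\Delta]$. A d\'evissage through the $\varpi$-adic filtration reduces to $A=\kappa$: each graded piece $\varpi^jD_0/\varpi^{j+1}D_0$ inherits a $\psi$-stable finitely generated $\kappa\bs N_{\Delta,0}\js$-structure inside $\varpi^jD/\varpi^{j+1}D$, and finite $R$-generation is extension-closed. With $\mathfrak{m}=(X_\alpha\mid\alpha\in\Delta)$ the maximal ideal of $\kappa\bs N_{\Delta,0}\js$, pick a finite subset $V\subset D_0^\vee$ whose image in the finite-dimensional $\kappa$-vector space $(D_0/\mathfrak{m}D_0)^\vee$ spans it, and set $M:=R\cdot V$. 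The orthogonal $D_0':=\{d\in D_0\mid m(d)=0\ \forall\,m\in M\}$ is a finitely generated $\kappa\bs N_{\Delta,0}\js$-submodule (noetherianity), $\psi_{t_\alpha}$-stable for every $\alpha$ (the $F_\alpha$-stability of $M$ gives $m(\psi_{t_\alpha}d)=(F_\alpha m)(d)=0$), hence $T_+$-stable for $\psi$ and $T_0$-stable for $\varphi$, and $D_0'\subseteq\mathfrak{m}D_0$ by the choice of $V$. It remains to show $D_0'=0$; this will imply $M=D_0^\vee$.

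\textbf{Main obstacle.} The final step --- deducing $D_0'=0$ from the $\psi$-stability and $D_0'\subseteq\mathfrak{m}D_0$ --- is the delicate point; I expect to attack it by localising. The localisation $D_0[1/X_\Delta]\subseteq D$ is an object of $\mathcal{D}^{et}(T_+,\kappa\bg N_{\Delta,0}\jg)$ (as noted in the paragraph preceding the proposition) and is torsion-free over $\kappa\bg N_{\Delta,0}\jg$, because the annihilator of any torsion would be a nonzero $T_0$-invariant ideal of $\kappa\bg N_{\Delta,0}\jg$, impossible by Proposition \ref{noideal}. Hence vanishing of $D_0'[1/X_\Delta]$ implies $D_0'=0$. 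The annihilator of $D_0'[1/X_\Delta]$ in $\kappa\bg N_{\Delta,0}\jg$ is itself $T_0$-invariant, so by Proposition \ref{noideal} it is either zero or the unit ideal. To rule out the zero case, one propagates $D_0'\subseteq\mathfrak{m}D_0$ to $D_0'\subseteq\mathfrak{m}^kD_0$ for all $k$ by iterating the $\psi_s$-stability together with $\varphi_s(\mathfrak{m})\subseteq\mathfrak{m}^p$ (valid in characteristic $p$), and then concludes $D_0'=0$ by completeness of $\kappa\bs N_{\Delta,0}\js$ in the $\mathfrak{m}$-adic topology combined with the Artin--Rees lemma. This dualises, and in fact mirrors, the cokernel argument used for $1\otimes F_\alpha$ in Proposition \ref{fingendualetale}.
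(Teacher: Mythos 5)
The scaffolding of your argument is fine and agrees with the routine part of the paper's proof: the module structure on $D_0^\vee$, the semilinear $T_0$-action (with the observation $\varphi_t(D_0)=D_0$ for $t\in T_0$), admissibility by duality, and the d\'evissage to $A=\kappa$. The gap is exactly at the one step carrying real content, namely your claim that $D_0'=0$. First, the proposed propagation $D_0'\subseteq\mathfrak{m}^kD_0$ does not follow from $\psi$-stability: writing $d=\sum_u u\varphi_s(\psi_s(u^{-1}d))$ with $\psi_s(u^{-1}d)\in D_0'\subseteq\mathfrak{m}^kD_0$ only yields $d\in\mathfrak{m}^{pk}\cdot\kappa\bs N_{\Delta,0}\js\varphi_s(D_0)$, and $\varphi_s(D_0)\not\subseteq D_0$ in general, since $D_0$ is assumed $\psi$-stable but not $\varphi$-stable; passing from the lattice $\kappa\bs N_{\Delta,0}\js\varphi_s(D_0)$ back to $D_0$ requires dividing by a power of $X_\Delta$, which destroys the gain in powers of $\mathfrak{m}$, so the induction never closes. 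Second, and more fundamentally, the conclusion you are aiming for is simply false: the $A\bs N_{\Delta,0}\js[F_\Delta]$-submodule generated by (a spanning set of) the socle dual $(D_0/\mathfrak{m}D_0)^\vee$ need not be all of $D_0^\vee$.

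Here is a concrete counterexample (with $A=\kappa$, to which you reduced). Take $D=\kappa\bg N_{\Delta,0}\jg$ with its standard rank-one \'etale $T_+$-structure, and $D_0:=X_\Delta^{-1}\kappa\bs N_{\Delta,0}\js$. This is a cyclic $\psi$-stable lattice: for $t\in T_0$ one has $\varphi_t^{\pm1}(D_0)=D_0$, and for $\beta\in\Delta$, since $X_\Delta^{-1}=X_\beta^{p-1}\varphi_{t_\beta}(X_\Delta^{-1})$ in characteristic $p$, one gets $\psi_{t_\beta}(X_\Delta^{-1}f)=X_\Delta^{-1}\psi_{t_\beta}(X_\beta^{p-1}f)\in D_0$. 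The socle $D_0/\mathfrak{m}D_0$ is one-dimensional, spanned by the class of $X_\Delta^{-1}$, so your $M$ is generated by functionals $m_0$ killing $\mathfrak{m}D_0$. But $\kappa\bs N_{\Delta,0}\js=X_\Delta D_0\subseteq\mathfrak{m}D_0$, and every element of $M$ is a sum of terms $\lambda F_w m_0$ with $(\lambda F_w m_0)(d)=m_0(\psi_w(\lambda^\# d))$; for $d\in\kappa\bs N_{\Delta,0}\js$ the argument $\psi_w(\lambda^\# d)$ stays in $\kappa\bs N_{\Delta,0}\js\subseteq\mathfrak{m}D_0$, so $m(d)=0$ for all $m\in M$. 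Hence $D_0'\supseteq\kappa\bs N_{\Delta,0}\js\neq0$ (and indeed $D_0'\not\subseteq\mathfrak{m}^kD_0$ for large $k$), even though $D_0^\vee$ is of course finitely generated. So no repair of the propagation can rescue the strategy; some genuinely nontrivial input is unavoidable. The paper supplies it by reducing to the classical one-variable situation: after replacing $D$ by $D_0[1/X_\Delta]$, the image $D_{\ell,0}$ of $D_0$ in $D_\ell=A\bg X\jg\otimes_{\ell,A\bg N_{\Delta,0}\jg}D$ is a $\psi$- and $\Gamma$-stable treillis (Lemma \ref{reducepsi}), whose dual is finitely generated over $A\bs X\js[F]$ by Lemma 3.5 of \cite{EZ}; lifting those finitely many generators, enlarged by their finite $T_0$-orbits (a point you would also need, to make annihilators $T_0$-invariant before invoking Prop.\ \ref{noideal}), gives a finitely generated submodule $M\subseteq D_0^\vee$, and then Prop.\ \ref{fingendualetale} together with the faithfulness and exactness of the $\ell$-reduction (Prop.\ \ref{reduceellexact}) forces $M=D_0^\vee$. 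That one-variable finiteness theorem for $\psi$-stable treillis is precisely the ingredient your soft $\mathfrak{m}$-adic argument cannot replace.
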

\begin{proof}
The case $G=T$ is trivial, so we assume $\Delta\neq \emptyset$.

The admissibility of $D_0^\vee$ follows from the assumption that $(D_0^\vee)^\vee\cong D_0$ is a finitely generated module over $A\bs N_{\Delta,0}\js$. Similarly, $T_0$ acts on $D_0$ therefore also on $D_0^\vee$. It remains to show that $D_0^\vee$ is finitely generated as a module over $A\bs N_{\Delta,0}\js[F_\Delta]$. By the above discussion we see that $D_0[1/X_\Delta]$ is a subobject in $D$ (in the category $\mathcal{D}^{et}(T_+,A \bg N_{\Delta,0}\jg)$) containing $D_0$  therefore we may assume without loss of generality that $D=D_0[1/X_\Delta]$ (Prop.\ \ref{abeliancat}). Denote by $D_{\ell,0}$ the image of $D_0$ under the surjective map $D\to D_\ell$ sending $d\in D$ to $1\otimes d$. By Lemma \ref{reducepsi} $D_{\ell,0}\subset D_\ell$ is a $\psi$- and $\Gamma$-invariant treillis in the \'etale $(\varphi,\Gamma)$-module $D_\ell$. By Lemma 3.5 in \cite{EZ} $D_{\ell,0}^\vee$ is a finitely generated module over the ring $A\bs X\js [F]$. Let $m_1,\dots,m_r\in D_{\ell,0}^\vee$ be a finite set of generators. Note that $D_{\ell,0}^\vee$ is contained in the $H_{\ell,\Delta,0}$-invariant part $(D_0^\vee)^{H_{\ell,\Delta,0}}$ since $H_{\ell,\Delta,0}$ acts trivially on it. (Recall from section \ref{reducetousualsec} that $H_{\ell,\Delta,0}\leq N_{\Delta,0}$ is the kernel of the group homomorphism $\ell\colon N_{\Delta,0}\to \Zp$.) Moreover, the action of the ring $A\bs X\js[F]$ comes from the ring homomorphism 
\begin{eqnarray*}
A\bs N_{\Delta,0}\js[\Tr_{H_{\ell,\Delta,0}/sH_{\ell,\Delta,0}s^{-1}}\circ\prod_{\alpha\in\Delta}F_\alpha]&\twoheadrightarrow & A\bs X\js[F]\\
A\bs N_{\Delta,0}\js\ni \lambda&\mapsto &\ell(\lambda)\\
\Tr_{H_{\ell,\Delta,0}/sH_{\ell,\Delta,0}s^{-1}}\circ\prod_{\alpha\in\Delta}F_\alpha&\mapsto& F
\end{eqnarray*}
where we consider $$\Tr_{H_{\ell,\Delta,0}/sH_{\ell,\Delta,0}s^{-1}}\circ\prod_{\alpha\in\Delta}F_\alpha=\sum_{u\in J(H_{\ell,\Delta,0}/sH_{\ell,\Delta,0}s^{-1})}u\prod_{\alpha\in\Delta}F_{\alpha}$$ as an element in $A\bs N_{\Delta,0}\js [F_\Delta]$. Denote by $M$ the $A\bs N_{\Delta,0}\js [F_\Delta]$-submodule of $D_0^\vee$ generated by the elements $t_0m_i$ with $t_0\in T_0$ and $1\leq i\leq r$. By the discussion above we deduce that $D_{\ell,0}^\vee$ is contained in $M^{H_{\ell,\Delta,0}}$. On the other hand, the orbit of each $m_i$ ($1\leq i\leq r$) under $T_0$ is finite. Indeed, if $m_i$ lies in $(D_0^\vee)^{N_{\Delta,0}^{p^n}}$ for some integer $n$ then we also have $t_0m_i\in (D_0^\vee)^{N_{\Delta,0}^{p^n}}$ since the subgroup $N_{\Delta,0}^{p^n}$ is normalized by $T_0$. However, $(D_0^\vee)^{N_{\Delta,0}^{p^n}}$ is finite by the admissibility. In particular, $M$ is finitely generated over $A\bs N_{\Delta,0}\js [F_\Delta]$. Therefore by Proposition \ref{fingendualetale} $M^\vee[1/X_\Delta]$ has the structure of an \'etale $T_+$-module over $A\bg N_{\Delta,0}\jg$. The surjective map $D_0\twoheadrightarrow M^\vee$ induces a surjective morphism $f\colon D\to M^\vee[1/X_\Delta]$. The map $D\to D_\ell$ factors through $f$ since $D_{\ell,0}^\vee$ is contained in $M$. From Prop.\ \ref{reduceellexact} we deduce that $f$ is an isomorphism as $D_\ell\overset{\sim}{\to}(M^\vee[1/X_\Delta])_\ell$ by construction. Since $D_0$ is a subspace in $D$ we obtain that $D_0\cong M^\vee$ whence $D_0^\vee=M$ is finitely generated over $A \bs N_{\Delta,0}\js[F_\Delta]$.
\end{proof}

\begin{pro}\label{intersectionfingen}
Let $\pi_1$ be a smooth subrepresentation of the smooth $A$-representation $\pi_2$ of $B$ and $M\in \mathcal{M}_\Delta(\pi_2^{H_{\Delta,0}})$. Then there exists an $M_1\subseteq M\cap \pi_1$ such that $M_1$ lies in $\mathcal{M}_\Delta(\pi_1^{H_{\Delta,0}})$ and $M_1^\vee[1/X]=(M\cap \pi_1)^\vee[1/X]$.
\end{pro}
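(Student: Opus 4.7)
The plan is to verify that the candidate $Q := (M\cap\pi_1)^\vee[1/X_\Delta]$ is already a finitely generated \'etale $T_+$-module over $A\bg N_{\Delta,0}\jg$, and then realise it as $M_1^\vee[1/X_\Delta]$ for an explicit $M_1 \subseteq M\cap\pi_1$ via Pontryagin duality.

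For the first task, I would consider the cokernel $\overline{M} := M/(M\cap\pi_1)$, realised inside $(\pi_2/\pi_1)^{H_{\Delta,0}}$ as the image of $M$ (well defined because $M$ is $H_{\Delta,0}$-invariant). As a quotient of $M$, $\overline{M}$ is finitely generated over $A\bs N_{\Delta,0}\js[F_\Delta]$ and $T_0$-stable, and its Pontryagin dual $K_0 := \overline{M}^\vee = \Ker(M^\vee \twoheadrightarrow (M\cap\pi_1)^\vee)$ embeds into the admissible $A\bs N_{\Delta,0}\js$-module $M^\vee$, so $\overline{M}$ lies in $\mathcal{M}_\Delta((\pi_2/\pi_1)^{H_{\Delta,0}})$. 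Proposition \ref{fingendualetale} then shows that $K_0[1/X_\Delta]$ is an \'etale $T_+$-module over $A\bg N_{\Delta,0}\jg$. Dualising the short exact sequence $0 \to M\cap\pi_1 \to M \to \overline{M} \to 0$ and inverting $X_\Delta$ produces the short exact sequence
\begin{equation*}
0 \to K_0[1/X_\Delta] \to M^\vee[1/X_\Delta] \to Q \to 0,
\end{equation*}
and an application of Proposition \ref{abeliancat} gives that $Q$ is \'etale.

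Next I let $Q_0 \subseteq Q$ denote the image of $(M\cap\pi_1)^\vee$ under the localisation map. As a quotient of the finitely generated $A\bs N_{\Delta,0}\js$-module $(M\cap\pi_1)^\vee$, it is finitely generated over $A\bs N_{\Delta,0}\js$ and generates $Q$ over $A\bg N_{\Delta,0}\jg$, so $Q_0[1/X_\Delta] = Q$. Furthermore $Q_0$ is $\psi$- and $T_0$-stable in $Q$, since the localisation map is equivariant for both structures. Proposition \ref{dualcompactfingen} applied to $Q_0 \subseteq Q$ then shows that $Q_0^\vee$ is finitely generated over $A\bs N_{\Delta,0}\js[F_\Delta]$, admissible, and $T_0$-stable. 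Dualising the surjection $(M\cap\pi_1)^\vee \twoheadrightarrow Q_0$ gives an injection $Q_0^\vee \hookrightarrow (M\cap\pi_1)^{\vee\vee} \cong M\cap\pi_1$ compatible with the $N_{\Delta,0}$- and Hecke-actions; I set $M_1$ to be its image. Then $M_1 \subseteq M\cap\pi_1 \subseteq \pi_1^{H_{\Delta,0}}$ lies in $\mathcal{M}_\Delta(\pi_1^{H_{\Delta,0}})$ and $M_1^\vee[1/X_\Delta] = Q_0[1/X_\Delta] = Q = (M\cap\pi_1)^\vee[1/X_\Delta]$, as required.

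The main subtlety is to justify that the inclusion $K_0[1/X_\Delta] \hookrightarrow M^\vee[1/X_\Delta]$ in the first step is really a morphism in $\mathcal{D}^{et}(T_+,A\bg N_{\Delta,0}\jg)$, i.e.\ that it intertwines the two $\varphi_t$-actions produced by Proposition \ref{fingendualetale}, so that Proposition \ref{abeliancat} applies to it. This should follow from the general principle that on an \'etale $T_+$-module the operator $\varphi_t$ is determined by the $A\bg N_{\Delta,0}\jg$-structure together with the $\psi_t$-action via the decomposition $d = \sum_{u\in J(N_{\Delta,0}/tN_{\Delta,0}t^{-1})} u\,\varphi_t(\psi_t(u^{-1}d))$, so that any $A\bg N_{\Delta,0}\jg$-linear, $\psi_t$-equivariant map between \'etale $T_+$-modules is automatically $\varphi_t$-equivariant.
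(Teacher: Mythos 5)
Your argument is correct and follows essentially the same route as the paper: the same dualised-and-localised short exact sequence, Propositions \ref{fingendualetale} and \ref{abeliancat} to see that all three terms are \'etale, and Proposition \ref{dualcompactfingen} applied to the image of $(M\cap\pi_1)^\vee$ in the localisation, with $M_1$ defined as the dual of that image. The compatibility of $\varphi_t$-actions you flag at the end is indeed automatic (and can also be seen directly from functoriality of Pontryagin duals applied to the $A\bs N_{\Delta,0}\js[F_\Delta]$-linear quotient map $M\twoheadrightarrow M/(M\cap\pi_1)$), so no extra work is needed there.
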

\begin{proof}
The case $G=T$ is trivial, so we assume $\Delta\neq \emptyset$. By the exactness of Pontryagin duality and localization we have an exact sequence
\begin{equation*}
0\to (M/(M\cap \pi_1))^\vee[1/X_\Delta]\to M^\vee[1/X_\Delta]\to (M\cap \pi_1)^\vee[1/X_\Delta]\to 0\ .
\end{equation*}
Note that $M/(M\cap \pi_1)$ is finitely generated over $A \bs N_{\Delta,0}\js[F_\Delta]$ and admissible as a representation of $N_{\Delta,0}$ (being a quotient of the finitely generated and admissible $M$),  and it admits a semilinear action of $T_0$. Therefore by Prop.\ \ref{fingendualetale} and \ref{abeliancat} all three modules in the above short exact sequence are \'etale $T_+$-modules. So if $M_1\leq M\cap \pi_1$ is the submodule dual to the image of $(M\cap \pi_1)^\vee$ in $(M\cap \pi_1)^\vee[1/X_\Delta]$ then $M_1$ is an object in $\mathcal{M}_\Delta(\pi_1^{H_{\Delta,0}})$ by Prop.\ \ref{dualcompactfingen}.
\end{proof}
\begin{rem}
If we assume further that the map $M^\vee\to M^\vee[1/X]$ is injective (ie.\ $M^\vee$ is torsion-free) then we may choose $M_1=M\cap \pi_1$.
\end{rem}

Our main result in this section is the following

\begin{thm}\label{rightexact}
The functor $D^\vee_\Delta$ is right exact.
\end{thm}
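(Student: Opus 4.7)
Given an exact sequence $\pi_1 \xrightarrow{\iota} \pi_2 \xrightarrow{q} \pi_3 \to 0$ of smooth $A$-representations of $B$, I plan to process all three terms simultaneously through the cofiltered diagram $\mathcal{M}_\Delta(\pi_2^{H_{\Delta,0}})$. For each $M$ in this set the image $q(M) \subseteq \pi_3^{H_{\Delta,0}}$ is finitely generated over $A\bs N_{\Delta,0}\js[F_\Delta]$, $T_0$-stable, and admissible as an $N_{\Delta,0}$-representation (all three pass to quotients), so $q(M) \in \mathcal{M}_\Delta(\pi_3^{H_{\Delta,0}})$. Proposition \ref{intersectionfingen} applied to the inclusion $\pi_1 \subseteq \pi_2$ and to $M$ produces an $M_1(M) \subseteq M \cap \pi_1$ in $\mathcal{M}_\Delta(\pi_1^{H_{\Delta,0}})$ with $M_1(M)^\vee[1/X_\Delta] = (M \cap \pi_1)^\vee[1/X_\Delta]$. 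Since Pontryagin duality and localisation at $X_\Delta$ are both exact, the short exact sequence $0 \to M \cap \pi_1 \to M \to q(M) \to 0$ dualises to a short exact sequence of \'etale $T_+$-modules
\[
0 \to q(M)^\vee[1/X_\Delta] \to M^\vee[1/X_\Delta] \to M_1(M)^\vee[1/X_\Delta] \to 0,
\]
functorial in $M$.

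Passing to $\varprojlim$ over $\mathcal{M}_\Delta(\pi_2^{H_{\Delta,0}})$ preserves exactness, because the transitions in the outer systems of finitely generated localised duals are surjective and Mittag--Leffler is automatic, yielding
\[
0 \to \varprojlim_M q(M)^\vee[1/X_\Delta] \to D^\vee_\Delta(\pi_2) \to \varprojlim_M M_1(M)^\vee[1/X_\Delta] \to 0.
\]
The right-hand limit is identified with $D^\vee_\Delta(\pi_1)$ by a direct cofinality check: any $M_1' \in \mathcal{M}_\Delta(\pi_1^{H_{\Delta,0}})$, regarded as an object of $\mathcal{M}_\Delta(\pi_2^{H_{\Delta,0}})$ via $\pi_1 \subseteq \pi_2$, satisfies $M_1' \cap \pi_1 = M_1'$, so Proposition \ref{intersectionfingen} gives $M_1(M_1')^\vee[1/X_\Delta] = (M_1')^\vee[1/X_\Delta]$, producing the required cofinal subsystem.

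The principal obstacle will be identifying $\varprojlim_M q(M)^\vee[1/X_\Delta]$ with $D^\vee_\Delta(\pi_3)$, i.e.\ the cofinality of $\{q(M)\}_M$ in $\mathcal{M}_\Delta(\pi_3^{H_{\Delta,0}})$. A generator $\bar m_j$ of an arbitrary $M_3 \in \mathcal{M}_\Delta(\pi_3^{H_{\Delta,0}})$ typically lifts only to an $H_{\Delta,k_j}$-invariant $m_j' \in \pi_2$, and the naive averaging over the finite $p$-group $H_{\Delta,0}/H_{\Delta,k_j}$ vanishes on $\varpi^h$-torsion. My plan for this step is to argue that the cokernel $M_3/(M_3 \cap q(\pi_2^{H_{\Delta,0}}))$ has trivial $(\cdot)^\vee[1/X_\Delta]$: it is a finitely generated, $T_0$-stable, admissible subquotient of the smooth $N_{\Delta,0}$-representation $\pi_3^{H_{\Delta,0}}/q(\pi_2^{H_{\Delta,0}})$, and after $\varpi$-devissage to $A = \kappa$, its Pontryagin dual is a finitely generated $\kappa\bg N_{\Delta,0}\jg$-module with a semilinear $T_0$-action whose global annihilator, if nontrivial, is a $T_0$-invariant ideal and hence by Proposition \ref{noideal} equal to the unit ideal. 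Granting this, $M_3^\vee[1/X_\Delta] = (M_3 \cap q(\pi_2^{H_{\Delta,0}}))^\vee[1/X_\Delta]$, and choosing $M$ to be any finitely generated $A\bs N_{\Delta,0}\js[F_\Delta]\langle T_0\rangle$-submodule of $\pi_2^{H_{\Delta,0}}$ mapping onto $M_3 \cap q(\pi_2^{H_{\Delta,0}})$ (whose existence follows from Proposition \ref{dualcompactfingen}) gives the required $q(M)$ in the cofinal subsystem, concluding the proof.
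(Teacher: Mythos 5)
Your construction of the per-$M$ short exact sequences, the use of Proposition \ref{intersectionfingen} on the $\pi_1$-side, the Mittag--Leffler passage to the limit, and the cofinality check identifying the right-hand limit with $D^\vee_\Delta(\pi_1)$ all run parallel to the paper's argument. The genuine gap is in the step you yourself single out as the principal obstacle. To show $\{q(M)\}_M$ is cofinal in $\mathcal{M}_\Delta(\pi_3^{H_{\Delta,0}})$ you want $C:=M_3/(M_3\cap q(\pi_2^{H_{\Delta,0}}))$ to die after dualizing and inverting $X_\Delta$, and you invoke Proposition \ref{noideal}. But that proposition only says that a \emph{nonzero} $T_0$-invariant ideal of $\kappa\bg N_{\Delta,0}\jg$ is the unit ideal; it kills $C^\vee[1/X_\Delta]$ only if you already know its global annihilator is nonzero, i.e.\ that the module is torsion. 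Nothing in your setup forces this: since $(\cdot)^{H_{\Delta,0}}$ is only left exact, $q(\pi_2^{H_{\Delta,0}})$ can be a proper $T_0$-stable subrepresentation of $\pi_3^{H_{\Delta,0}}$ (the obstruction sits in $H^1(H_{\Delta,0},\pi_1)$, which is exactly what your remark about averages over $H_{\Delta,0}/H_{\Delta,k_j}$ vanishing on torsion is detecting), and $C^\vee[1/X_\Delta]$ may perfectly well be a faithful module of positive generic rank, in which case its annihilator is zero and Proposition \ref{noideal} says nothing. Note also that what you are trying to prove is strictly stronger than the theorem: cofinality of $\{q(M)\}$ would yield a short exact sequence $0\to D^\vee_\Delta(\pi_3)\to D^\vee_\Delta(\pi_2)\to D^\vee_\Delta(\pi_1)\to 0$ for every surjection $q$, i.e.\ exactness of $D^\vee_\Delta$ on all short exact sequences, which would make Theorem \ref{princserexact} (exactness only on $SP_A$, proved there by the generic-length count of Proposition \ref{genlengthDveeSPA}) superfluous --- a strong hint that this step cannot be had so cheaply.

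The paper sidesteps the issue entirely: right exactness only requires exactness at $D^\vee_\Delta(\pi_2)$ and $D^\vee_\Delta(\pi_1)$, so it suffices to identify $\varprojlim_M q(M)^\vee[1/X_\Delta]$ with the \emph{image} of $D^\vee_\Delta(\pi_3)$ in $D^\vee_\Delta(\pi_2)$, not with $D^\vee_\Delta(\pi_3)$ itself; this is precisely what the proof of Theorem \ref{rightexact} does (starting from an exact sequence $0\to\pi_1\to\pi_2\to\pi_3$, with no surjectivity assumption on the right), and no lifting of elements of $\mathcal{M}_\Delta(\pi_3^{H_{\Delta,0}})$ is ever needed. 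Two smaller points in your final sentence also need care if you pursue your route: $M_3\cap q(\pi_2^{H_{\Delta,0}})$ need not be finitely generated over $A\bs N_{\Delta,0}\js[F_\Delta]$ (the ring is not noetherian), and a finitely generated submodule of $\pi_2^{H_{\Delta,0}}$ chosen to surject onto it need not be admissible, since the paper imposes no analogue of Breuil's condition Ad; so membership of your lift $M$ in $\mathcal{M}_\Delta(\pi_2^{H_{\Delta,0}})$ is not granted by Proposition \ref{dualcompactfingen} as stated.
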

\begin{proof}
The case $G=T$ is trivial, so we assume $\Delta\neq \emptyset$. The proof is now similar to that of Prop.\ 2.7(ii) in \cite{B}, however, we needed Prop.\ \ref{reduceellexact} in the preparation. Let $0\to\pi_1\to\pi_2\to\pi_3$ be an exact sequence of smooth $A$-representations of $B$. Then we have an exact sequence $0\to \pi_1^{H_{\Delta,0}}\overset{f_1}{\to}\pi_2^{H_{\Delta,0}}\overset{f_2}{\to} \pi_3^{H_{\Delta,0}}$. Now if $M_i$ is in $\mathcal{M}_\Delta(\pi_i^{H_{\Delta,0}})$ then $f_i(M_i)$ is in $\mathcal{M}_\Delta(\pi_{i+1}^{H_{\Delta,0}})$ ($i=1,2$) since the image of a finitely generated module over a ring is again finitely generated and the admissible representations of $N_{\Delta,0}$ form an abelian category. Now let $M_2$ be in $\mathcal{M}_\Delta(\pi_2^{H_{\Delta,0}})$ and put $M_3:=f_2(M_2)$. Then we have $M_3\in \mathcal{M}_\Delta(\pi_3^{H_{\Delta,0}})$ and by Prop.\ \ref{intersectionfingen} there exists an $M_1$ in $\mathcal{M}_\Delta(\pi_1^{H_{\Delta,0}})$ such that we have an exact sequence
\begin{equation*}
0\to M_3^\vee[1/X]\to M_2^\vee[1/X]\to M_1^\vee[1/X]\to 0
\end{equation*}
of objects in $\mathcal{D}^{et}(T_+,A \bg N_{\Delta,0}\jg)$. Note that for $M_2\subseteq M_2'$ we also have $M_3\subseteq M_3'=f_2(M_2')$. Therefore the projective system $(f(M_2)^\vee[1/X])_{M_2\in\mathcal{M}_\Delta(\pi_2^{H_{\Delta,0}})}$ satisfies the Mittag-Leffler property since for $M_2,M_2'\in \mathcal{M}_\Delta(\pi_2^{H_{\Delta,0}})$ we also have $M_2+M_2'\in \mathcal{M}_\Delta(\pi_2^{H_{\Delta,0}})$. So by taking the projective limit we obtain an exact sequence
\begin{equation*}
0\to \varprojlim_{M_2\in \mathcal{M}_\Delta(\pi_2^{H_{\Delta,0}})} f_2(M_2)^\vee[1/X]\to \varprojlim_{M_2\in \mathcal{M}_\Delta(\pi_2^{H_{\Delta,0}})} M_2^\vee[1/X]\to \varprojlim_{M_2\in \mathcal{M}_\Delta(\pi_2^{H_{\Delta,0}})} (M_2\cap \pi_1)^\vee[1/X]\to 0
\end{equation*}
In the above short exact sequence the left term is the image of $D^\vee_\Delta(\pi_3)$ in $D^\vee_\Delta(\pi_2)$, the middle term is $D^\vee_\Delta(\pi_2)$, and the term on the right hand side equals $D^\vee_\Delta(\pi_1)$ since all $M_1\in \mathcal{M}_\Delta(\pi_1^{H_{\Delta,0}})$ can also be viewed as a subspace in $\pi_2^{H_{\Delta,0}}$.
\end{proof}

\section{Compatibility with tensor products and parabolic induction}\label{compatible}

Our goal in this section is to generalize the results in sections 5--7 in \cite{B} to the functor $D^\vee_\Delta$.

\subsection{Tensor products}

As in section 5 in \cite{B} let $G'$ be another $\mathbb{Q}_p$-split reductive group over $\mathbb{Q}_p$ with connected centre and Borel subgroup $B'$ with $\mathbb{Q}_p$-split torus $T'$ and unipotent radical $N'$ with compact open subgroup $N_0'$ that we also assume to be totally decomposed. We denote by $\Phi^{+'}$ (resp.\ by $\Delta'$) the set of positive (resp.\ simple) roots corresponding to $B'$. For each $\alpha'\in \Phi^{+'}$ we fix isomorphisms $u_{\alpha'} :\mathbb G_a \to  N_{\alpha'}$ for the root subgroups $N_{\alpha'}\leq N'$ such that $u_{\alpha'}(\Zp)=N_{\alpha'}\cap N_0'=:N'_{\alpha',0}$. Since the centre of $G'$ is connected, there exists a system $(\lambda_{\alpha'^\vee})_{\alpha'\in\Delta'}\in X^\vee(T')^{\Delta'}$ of cocharacters with the property $\beta'\circ\lambda_{\alpha'^\vee}=1$ for all $\alpha'\neq\beta'\in\Delta'$ and $\alpha'\circ\lambda_{\alpha'^\vee}=\id_{\mathbf{G}_m}$. As in \cite{B} and \cite{SVig} we put $\xi:=\sum_{\alpha'\in\Delta'}\lambda_{\alpha'^\vee}$. We define $N'_{\Delta',0}:=\prod_{\alpha'\in\Delta'}N'_{\alpha',0}$ arising naturally as a quotient of the group $N'_0$. We put $H'_{\Delta',0}:=\Ker(N_0'\twoheadrightarrow N'_{\Delta',0})$. Similarly, we consider the rings $A\bg N'_{\Delta',0}\jg$ and $A\bs N'_{\Delta',0}\js[F_{\Delta'}]$ and consider \'etale $T'_+$-modules over the former forming the categories $\mathcal{D}^{et}(T'_+,A\bg N'_{\Delta',0}\jg)$ and $\mathcal{D}^{pro-et}(T'_+,A\bg N'_{\Delta',0}\jg)$ in the usual sense.

We consider the group $G\times G'$ with Borel $B\times B'$ and torus $T\times T'$. We also form the rings $A\bg N_{\Delta,0}\times N'_{\Delta',0}\jg$ and $A\bs N_{\Delta,0}\times N'_{\Delta',0}\js[F_{\Delta'}]$ and consider \'etale $T_+\times T'_+$-modules over the former ring forming the categories $\mathcal{D}^{et}(T_+\times T'_+,A\bg N_{\Delta,0}\times N'_{\Delta',0}\jg)$ and $\mathcal{D}^{pro-et}(T_+\times T'_+,A\bg N_{\Delta,0}\times N'_{\Delta',0}\jg)$ in the usual sense.

For finitely generated modules $M_0$ (resp.\ $M'_0$) over the Iwasawa algebra $A\bs N_{\Delta,0}\js$ (resp.\ over $A\bs N'_{\Delta',0}\js$) the completed tensor product $M_0\hat{\otimes}_A M'_0:=(M_0^\vee\otimes_A M_0'^\vee)^\vee$ is a finitely generated module over the Iwasawa algebra $A\bs N_{\Delta,0}\times N'_{\Delta',0}\js$. It is isomorphic to the projective limit of $M_\ast\otimes_A M'_\ast$ where $M_\ast$ (resp.\ $M'_\ast$) runs through the finite quotients of $M_0$ (resp.\ of $M'_0$).

Now let $M$ (resp.\ $M'$) be a finitely generated module over $A\bg N_{\Delta,0}\jg$ (resp.\ over $A\bg N'_{\Delta',0}\jg$) and choose a finitely generated $A\bs N_{\Delta,0}\js$-submodule $M_0\subset M$ (resp.\ $A\bs N'_{\Delta',0}\js$-submodule $M_0'\subset M'$) such that we have $M=M_0[1/X_\Delta]$ (resp.\ $M'=M_0'[1/X_{\Delta'}]$) (we call these submodules lattices in $M$ (resp.\ in $M'$)). We define $M\hat{\otimes}_A M':=(M_0\hat{\otimes}_A M_0')[1/X_\Delta X_{\Delta'}]$. This is a finitely generated module over $A\bg N_{\Delta,0}\times N'_{\Delta',0}\jg$.

\begin{lem}
$M\hat{\otimes}_A M'$ does not depend on the choice of $M_0$ and $M'_0$ up to a natural isomorphism.
\end{lem}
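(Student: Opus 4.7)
My plan is to reduce to the case of nested lattices and then control the change of lattice via Pontryagin duality. For any two choices $(M_0,M_0')$ and $(\tilde M_0,\tilde M_0')$, the sum $M_0+\tilde M_0$ is again a lattice in $M$: it is finitely generated over $A\bs N_{\Delta,0}\js$, contains both summands, and localizes to $M$ since already $M_0[1/X_\Delta]=M$. Using this (and the analogous statement on the primed side), one can zig-zag through the sums on each factor to reduce the problem to the one-sided inclusion claim: if $M_0\subseteq \tilde M_0$ (and $M_0'$ is fixed), then the natural map
$$(M_0 \hat{\otimes}_A M_0')[1/X_\Delta X_{\Delta'}] \longrightarrow (\tilde M_0 \hat{\otimes}_A M_0')[1/X_\Delta X_{\Delta'}]$$
is an isomorphism.

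For this I would exploit that $Q:=\tilde M_0/M_0$ is a finitely generated $A\bs N_{\Delta,0}\js$-module with $Q[1/X_\Delta]=0$, hence annihilated by a fixed power $X_\Delta^N$. Pontryagin duality being exact on compact/discrete $A$-modules, the sequence $0\to M_0\to \tilde M_0\to Q\to 0$ dualizes to $0\to Q^\vee\to \tilde M_0^\vee\to M_0^\vee\to 0$, and $Q^\vee$ is still killed by $X_\Delta^N$: the group $N_{\Delta,0}$ is abelian, so $A\bs N_{\Delta,0}\js$ is commutative and the involution $\#$ sends each $X_\alpha$ to $-n_\alpha^{-1}X_\alpha$, which generates the same ideal. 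Tensoring with $M_0'^\vee$ over $A$ (right exact) produces a surjection $\tilde M_0^\vee\otimes_A M_0'^\vee \twoheadrightarrow M_0^\vee\otimes_A M_0'^\vee$ whose kernel is a quotient of $Q^\vee\otimes_A M_0'^\vee$, hence killed by $X_\Delta^N$. Dualizing once more yields a short exact sequence
$$0\to M_0\hat{\otimes}_A M_0'\to \tilde M_0\hat{\otimes}_A M_0'\to C\to 0$$
with $C$ also annihilated by $X_\Delta^N$. Inverting $X_\Delta X_{\Delta'}$ kills $C$ and produces the claimed isomorphism, which is functorial by construction. The symmetric argument on the second factor then finishes the proof.

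The main subtlety I foresee is that one must tensor discrete $A$-modules with one another: since $A=o/\varpi^h$ is artinian but not a field, these tensor products are not exact and carry $\Tor_1^A$-contributions, so the kernel of the surjection above is only a quotient of $Q^\vee\otimes_A M_0'^\vee$, not equal to it. Fortunately, only the annihilator $X_\Delta^N$ is needed, and annihilation is inherited by quotients, so the Tor terms never obstruct the argument. Naturality of the resulting isomorphism is automatic from the functoriality of Pontryagin duality, tensor product, and localization.
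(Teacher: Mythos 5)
Your argument is correct, but it runs along a different track than the paper's. You reduce to nested lattices by passing to sums and then show directly that for $M_0\subseteq\tilde M_0$ the induced map $(M_0\hat\otimes_A M_0')[1/X_\Delta X_{\Delta'}]\to(\tilde M_0\hat\otimes_A M_0')[1/X_\Delta X_{\Delta'}]$ is an isomorphism, the key point being that $Q=\tilde M_0/M_0$ is a finitely generated $A\bs N_{\Delta,0}\js$-module with $Q[1/X_\Delta]=0$, hence killed by a uniform power $X_\Delta^N$; exactness of Pontryagin duality, right exactness of $\otimes_A$, and the fact that $X_\alpha^\#=-n_\alpha^{-1}X_\alpha$ generates the same ideal as $X_\alpha$ then give a short exact sequence whose cokernel is $X_\Delta^N$-torsion and dies after localization. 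The paper instead never analyses a general quotient: it first treats the single case $M_1=X_\Delta^n M_0$, where the comparison map is literally multiplication by $X_\Delta^n$ on $M_0\hat\otimes_A M_0'$ (using the same $\#$-compatibility) and is therefore invertible after inverting $X_\Delta X_{\Delta'}$, and then handles an arbitrary lattice $M_1$ by sandwiching it multiplicatively between rescalings $X_\Delta^{r}M_0$ and $X_\Delta^{r'}M_1$ and running a two-out-of-three argument on the composites $f_2\circ f_1$ and $f_3\circ f_2$. What your route buys is a direct treatment of arbitrary nested lattices together with an explicit description of the error term (an $X_\Delta$-power-torsion cokernel), at the mild cost of invoking the uniform annihilator (noetherianness/finite generation) and keeping track of the $\Tor$ caveat you correctly flag; the paper's route buys formal simplicity, since the only map it ever has to understand is multiplication by $X_\Delta^n$, with the sandwich trick replacing any torsion analysis. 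Both proofs use finite generation of the lattices at the analogous point, so neither is more economical in hypotheses; your version is a perfectly valid substitute.
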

\begin{proof}
Let $M_1\subset M$ (resp.\ $M'_1\subset M'$) be another lattice. It suffices to treat the case when $M_1'=M_0'$. Assume first that $M_1=X_\Delta^nM_0$ for some (positive) integer $n$. Since the multiplication by $X_\Delta^n$ on $M_0$ induces the multiplication by $(X_\Delta^\#)^n$ on the Pontryagin dual, it induces again the multiplication by $X_\Delta^n$ on the completed tensor product $M_0\hat{\otimes}_A M'_0$ which becomes an isomorphism after inverting $X_\Delta X_{\Delta'}$. So the natural inclusion of $M_1$ in $M$ induces an isomorphism  $(M_0\hat{\otimes}_A M_0')[1/X_\Delta X_{\Delta'}]\cong (M_1\hat{\otimes}_A M_0')[1/X_\Delta X_{\Delta'}]$.

Now let $M_1$ be arbitrary. Since we have $M=M_1[1/X_\Delta]=M_0[1/X_\Delta]$, there exist positive integers $r_1,r_2$ such that $M_1\subseteq X_\Delta^{r_1}M_0\subseteq X_\Delta^{r_2}M_1(\subseteq X_\Delta^{r_1+r_2}M_0)$. By the above case these inclusions induce a sequence of map
\begin{align*}
(M_1\hat{\otimes}_A M_0')[1/X_\Delta X_{\Delta'}]\overset{f_1}{\to} ((X_\Delta^{r_1}M_0)\hat{\otimes}_A M_0')[1/X_\Delta X_{\Delta'}]\overset{f_2}{\to}\\ \overset{f_2}{\to}((X_\Delta^{r_2}M_1)\hat{\otimes}_A M_0')[1/X_\Delta X_{\Delta'}]\overset{f_3}{\to} ((X_\Delta^{r_1+r_2}M_0)\hat{\otimes}_A M_0')[1/X_\Delta X_{\Delta'}]
\end{align*}
such that $f_2\circ f_1$ and $f_3\circ f_2$ are isomorphisms. We obtain that $f_2$ is both injective and surjective therefore an isomorphism. This combined with the above case we obtain statement. The isomorphism is induced by the inclusions $M_1\cap M_0\subseteq M_0$ and $M_1\cap M_0\subseteq M_1$.
\end{proof}

Whenever $D$ (resp.\ $D'$) is an object in $\mathcal{D}^{et}(T_+,A\bg N_{\Delta,0}\jg)$ (resp.\ in $\mathcal{D}^{et}(T'_+,A\bg N'_{\Delta',0}\jg)$) then the completed tensor product $D\hat{\otimes} D'$ can be equipped with the structure of an \'etale $T_+\times T_+'$-module over $A\bg N_{\Delta,0}\times N'_{\Delta',0}\jg$ the following way. Choose an $A\bs N_{\Delta,0}\js$-lattice $D_0$ (resp.\ an $A\bs N'_{\Delta',0}\js$-lattice $D'_0$) in $D$ (resp.\ in $D'$) and an element $t\in T_+$. The action of $\varphi_t$ on $D$ provides us with an isomorphism 
\begin{equation}
1\otimes\varphi_t\colon \varphi_t^* D_0\to A\bs N_{\Delta,0}\js \varphi_t(D_0)\ . \label{compactphitpullback}
\end{equation}
(Here we put $\varphi_t^* D_0:=A\bs N_{\Delta,0}\js \otimes_{\varphi_t}D_0$ as in \cite{EZ}.) Further, we compute
\begin{align*}
(\varphi_t^* D_0)\hat{\otimes}_A D_0'=((\varphi_t^*D_0)^\vee\otimes_A D_0'^\vee)^\vee\cong (A\bs N_{\Delta,0}\js \otimes_{\varphi_t,A\bs N_{\Delta,0}\js}D_0^\vee\otimes_A D_0'^\vee)^\vee\cong\\
\cong A\bs N_{\Delta,0}\js \otimes_{\varphi_t,A\bs N_{\Delta,0}\js}(D_0^\vee\otimes_A D_0'^\vee)^\vee\cong A\bs N_{\Delta,0}\js \otimes_{\varphi_t,A\bs N_{\Delta,0}\js}(D_0\hat{\otimes}_A D_0')\cong\\
\cong A\bs N_{\Delta,0}\times N'_{\Delta',0}\js \otimes_{\varphi_t,A\bs N_{\Delta,0}\times N'_{\Delta',0}\js}(D_0\hat{\otimes}_A D_0')
\end{align*}
as $t\in T_+$ acts trivially on $N'_{\Delta',0}$. By the above identification the isomorphism \eqref{compactphitpullback} induces an isomorphism
\begin{equation*}
1\otimes(\varphi_t\otimes 1)\colon A\bs N_{\Delta,0}\times N'_{\Delta',0}\js \otimes_{\varphi_t,A\bs N_{\Delta,0}\times N'_{\Delta',0}\js}(D_0\hat{\otimes}_A D_0')\to (A\bs N_{\Delta,0}\js \varphi_t(D_0))\hat{\otimes}D_0'\ .
\end{equation*}
Inverting $X_\Delta X_{\Delta'}$ and noting that $A\bs N_{\Delta,0}\js \varphi_t(D_0)$ is a lattice in $D$ (as $D$ is an \'etale $T_+$-module) we obtain an isomorphism
\begin{equation*}
1\otimes(\varphi_t\otimes 1)\colon A\bg N_{\Delta,0}\times N'_{\Delta',0}\jg \otimes_{\varphi_t,A\bg N_{\Delta,0}\times N'_{\Delta',0}\jg}(D\hat{\otimes}_A D')\to D\hat{\otimes}_AD'\ .
\end{equation*}
By symmetry, we obtain a similar isomorphism for any $t'\in T'_+$. This equips $D\hat{\otimes}_A D'$ with the structure of an \'etale $T_+\times T'_+$-module over $A\bg N_{\Delta,0}\times N'_{\Delta',0}\jg$. Now if $D=\varprojlim_{i\in I}D_i$ (resp.\ $D'=\varprojlim_{i'\in I'}D'_{i'}$) is an object in $\mathcal{D}^{pro-et}(T_+,A\bg N_{\Delta,0}\jg)$ (resp.\ in $\mathcal{D}^{pro-et}(T'_+,A\bg N'_{\Delta',0}\jg)$) then we may form the completed tensor product $D\hat{\otimes}_A D':=\varprojlim_{i\in I, i'\in I'}D_i\hat{\otimes}_A D'_{i'}$. This is an object in $\mathcal{D}^{pro-et}(T_+\times T'_+,A\bg N_{\Delta,0}\times N'_{\Delta',0}\jg)$.

\begin{pro}
Let $\pi$ (resp.\ $\pi'$) be a smooth representation of $B$ (resp.\ of $B'$) over $\kappa$. Then we have an isomorphism 
\begin{equation*}
D_{\Delta\cup\Delta'}^\vee(\pi\otimes_\kappa\pi')\cong D^\vee_\Delta(\pi)\hat{\otimes}_{\kappa}D^\vee_{\Delta'}(\pi')
\end{equation*}
in $\mathcal{D}^{pro-et}(T_+\times T'_+,\kappa\bg N_{\Delta,0}\times N'_{\Delta',0}\jg)$.
\end{pro}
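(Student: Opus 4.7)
The plan is to show both sides are naturally isomorphic to the projective limit, along a cofinal subsystem indexed by pairs $(M,M')$ with $M\in\mathcal{M}_\Delta(\pi^{H_{\Delta,0}})$ and $M'\in\mathcal{M}_{\Delta'}(\pi'^{H_{\Delta',0}})$, of the completed tensor products $M^\vee[1/X_\Delta]\hat{\otimes}_\kappa M'^\vee[1/X_{\Delta'}]$. As preliminary bookkeeping, the positive (resp.\ simple) roots of $G\times G'$ decompose as $\Phi^+\sqcup\Phi^{+'}$ (resp.\ as $\Delta\sqcup\Delta'$), so $H_{\Delta\cup\Delta',0}=H_{\Delta,0}\times H_{\Delta',0}$ and $N_{\Delta\cup\Delta',0}=N_{\Delta,0}\times N'_{\Delta',0}$; by smoothness $(\pi\otimes_\kappa\pi')^{H_{\Delta,0}\times H_{\Delta',0}}=\pi^{H_{\Delta,0}}\otimes_\kappa\pi'^{H_{\Delta',0}}$, and the Iwasawa algebra $\kappa\bs N_{\Delta,0}\times N'_{\Delta',0}\js$ is the completed tensor product of the two factor algebras.

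The central construction is that for $M\in\mathcal{M}_\Delta(\pi^{H_{\Delta,0}})$ and $M'\in\mathcal{M}_{\Delta'}(\pi'^{H_{\Delta',0}})$, the algebraic tensor product $M\otimes_\kappa M'\subset(\pi\otimes_\kappa\pi')^{H_{\Delta\cup\Delta',0}}$ lies in $\mathcal{M}_{\Delta\cup\Delta'}((\pi\otimes\pi')^{H_{\Delta\cup\Delta',0}})$. Identifying $t_\alpha$ for $\alpha\in\Delta$ with $(t_\alpha,1)\in T_+\times T'_+$, the coset space $H_{\Delta\cup\Delta',0}/(t_\alpha,1)H_{\Delta\cup\Delta',0}(t_\alpha,1)^{-1}$ reduces to $H_{\Delta,0}/t_\alpha H_{\Delta,0}t_\alpha^{-1}$, so the Hecke operator $F_\alpha$ acts on $M\otimes_\kappa M'$ as $F_\alpha\otimes\id$, and symmetrically $F_{\alpha'}=\id\otimes F_{\alpha'}$ for $\alpha'\in\Delta'$. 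Finite generation over $\kappa\bs N_{\Delta,0}\times N'_{\Delta',0}\js[F_{\Delta\cup\Delta'}]$ comes from tensor products of generators; $T_0\times T'_0$-stability is immediate; and admissibility follows from $(M\otimes_\kappa M')^{H\times H'}=M^H\otimes_\kappa M'^{H'}$ being finite together with cofinality of product-form subgroups among open subgroups of $N_{\Delta,0}\times N'_{\Delta',0}$. Pontryagin duality plus the definition $M_0\hat{\otimes}_\kappa M'_0:=(M_0^\vee\otimes_\kappa M_0'^\vee)^\vee$ then gives $(M\otimes_\kappa M')^\vee\cong M^\vee\hat{\otimes}_\kappa M'^\vee$, and inverting $X_\Delta X_{\Delta'}$ (invoking the definition of the completed tensor product on localised modules established earlier) produces a natural isomorphism
\begin{equation*}
(M\otimes_\kappa M')^\vee[1/X_{\Delta\cup\Delta'}]\cong M^\vee[1/X_\Delta]\hat{\otimes}_\kappa M'^\vee[1/X_{\Delta'}]
\end{equation*}
in $\mathcal{D}^{et}(T_+\times T'_+,\kappa\bg N_{\Delta,0}\times N'_{\Delta',0}\jg)$, the $(T_+\times T'_+)$-structure matching because $\varphi_{(t,1)}=\varphi_t\otimes\id$. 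Passing to the projective limit along the subsystem $\{M\otimes_\kappa M'\}$ and using compatibility of $\hat{\otimes}_\kappa$ with projective limits in the pro-\'etale category then assembles the desired isomorphism, provided this subsystem is cofinal in $\mathcal{M}_{\Delta\cup\Delta'}$.

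The cofinality step is the main obstacle. Given $N\in\mathcal{M}_{\Delta\cup\Delta'}$ with generators $n_1,\dots,n_r$ and finite tensor decompositions $n_k=\sum_l v_{k,l}\otimes w_{k,l}$, the natural candidates are the smallest $T_0$-stable $\kappa\bs N_{\Delta,0}\js[F_\Delta]$-submodule $M\subset\pi^{H_{\Delta,0}}$ containing the $v_{k,l}$ (and the analogous $M'\subset\pi'^{H_{\Delta',0}}$ for the $w_{k,l}$); the $T_0$-orbits are finite by compactness of $T_0$ and smoothness, so $M$ is indeed finitely generated over the skew ring. The delicate point is admissibility: finite generation over $\kappa\bs N_{\Delta,0}\js[F_\Delta]$ does not imply finite generation of $M^\vee$ over $\kappa\bs N_{\Delta,0}\js$, since the skew ring is not finitely generated over the Iwasawa algebra. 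I expect this to be resolved in the spirit of Proposition \ref{dualcompactfingen}: one realises $M^\vee$ as a finitely generated $\kappa\bs N_{\Delta,0}\js$-quotient of $N^\vee$ via the slicing induced by evaluation at the $w_{k,l}$'s (and dually for $M'^\vee$), exploiting that $N^\vee$ is already finitely generated over $\kappa\bs N_{\Delta,0}\times N'_{\Delta',0}\js$, and then upgrades to the \'etale $T_+$-module structure on the localisation via Proposition \ref{fingendualetale}.
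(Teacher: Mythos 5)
Your setup (Steps 1--2 of the paper, in effect) is fine and matches the paper's route: the identity $(\pi\otimes_\kappa\pi')^{H_{\Delta,0}\times H'_{\Delta',0}}=\pi^{H_{\Delta,0}}\otimes_\kappa\pi'^{H'_{\Delta',0}}$, the fact that $M\otimes_\kappa M'$ lies in $\mathcal{M}_{\Delta\cup\Delta'}$ with $F_\alpha=F_\alpha\otimes\id$ and $F_{\alpha'}=\id\otimes F_{\alpha'}$, and the identification $(M\otimes_\kappa M')^\vee[1/X_\Delta X_{\Delta'}]\cong M^\vee[1/X_\Delta]\hat{\otimes}_\kappa M'^\vee[1/X_{\Delta'}]$ compatibly with $T_+\times T'_+$. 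The genuine gap is exactly where you locate it: cofinality. Your candidate $M$, the $T_0$-stable $\kappa\bs N_{\Delta,0}\js[F_\Delta]$-submodule generated by the first-factor components $v_{k,l}$ of an \emph{arbitrary} finite tensor decomposition of the generators of $N$, need not be admissible: a generator $v\otimes w$ can be rewritten as $(v+u)\otimes w-u\otimes w$ with $u\in\pi^{H_{\Delta,0}}$ generating a non-admissible $\kappa\bs N_{\Delta,0}\js[F_\Delta]$-module, and no property of $N$ controls such spurious components. Your proposed repair by duality also does not go through as stated: $N^\vee$ is finitely generated only over $\kappa\bs N_{\Delta,0}\times N'_{\Delta',0}\js$, and a $\kappa\bs N_{\Delta,0}\js$-module quotient of it has no reason to be finitely generated over $\kappa\bs N_{\Delta,0}\js$, so "slicing by evaluation at the $w_{k,l}$" does not by itself yield admissibility, and Prop.\ \ref{dualcompactfingen} does not apply (its input is a $\psi$-stable lattice in an \'etale module, which is what you are trying to produce, not what you have).

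What makes the paper's cofinality argument work is a careful normalisation of the decomposition before generating anything, and a purely representation-theoretic admissibility lemma with no dualizing. One first arranges the second-factor components $U'$ to be $\kappa$-linearly independent, each appearing once per generator $m_i''$ of $N$; smoothness gives an open $N'_{\Delta',\ast}\leq N'_{\Delta',0}$ fixing the finite set $U'$. Then for the submodule $\sum_i\kappa\bs N_{\Delta,0}\js[F_\Delta]m_i''\subseteq N$, an element is $N_{\Delta,\ast}$-invariant if and only if it is $N_{\Delta,\ast}\times N'_{\Delta',\ast}$-invariant (write it as $\sum_{m'\in U'}m_\ast(m')\otimes m'$ and use linear independence), so its $N_{\Delta,\ast}$-invariants are finite by the admissibility of $N$; next, the coefficient projections $\Pi_{m'_0}$ with respect to the linearly independent set $U'$ are $\kappa\bs N_{\Delta,0}\js[F_\Delta]$-linear (the subring acts through the first factor only), so the module generated by the first-factor components is a finite sum of images of an admissible module, hence admissible. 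One then chooses a $\kappa$-basis $U_\ast$ of the span of these first components, re-expands the $m_i''$ along $U_\ast$, and runs the same lemma symmetrically to get admissibility of the $\Delta'$-side module; finally one enlarges both sides by their finite $T_0$- resp.\ $T'_0$-orbits. Without this two-step symmetric normalisation (linear independence on one side at a time) your construction of the pair $(M,M')$ is not guaranteed to land in $\mathcal{M}_\Delta(\pi^{H_{\Delta,0}})\times\mathcal{M}_{\Delta'}(\pi'^{H'_{\Delta',0}})$, and the cofinality claim — the heart of the proposition — remains unproved.
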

\begin{proof}
We proceed in $3$ steps.
\emph{Step 1.} We show that we have 
\begin{equation}
(\pi\otimes_\kappa \pi')^{H_{\Delta,0}\times H'_{\Delta',0}}=\pi^{H_{\Delta,0}}\otimes_\kappa\pi^{H'_{\Delta',0}}\ . \label{tensorinv}
\end{equation}
Since $\kappa$ is a field, $\otimes_\kappa$ is exact, therefore the right hand side of \eqref{tensorinv} is can be viewed as a subspace in $\pi\otimes_\kappa \pi'$. The inclusion $\supseteq$ in \eqref{tensorinv} is therefore clear. Let now $\sum_{i=1}^rm_i\otimes m_i'$ be an arbitrary element in the left hand side of \eqref{tensorinv}. Since $\kappa$ is a field, we may assume that the elements $m_1',\dots,m_r'\in\pi'$ are linearly independent over $\kappa$. Now for any $h\in H_{\Delta,0}$ we have 
\begin{align*}
\sum_{i=1}^rm_i\otimes m_i'=h(\sum_{i=1}^rm_i\otimes m_i')=\sum_{i=1}^r(hm_i)\otimes m_i'\ .
\end{align*}
Since the elements $m_i'$ ($1\leq i\leq r$) are linearly independent, we deduce that $m_i=hm_i$ for all $1\leq i\leq r$ showing that $\sum_{i=1}^rm_i\otimes m_i'$ lies in $\pi^{H_{\Delta,0}}\otimes_\kappa\pi'$. Now we may rewrite $\sum_{i=1}^rm_i\otimes m_i'$ so that the $m_i\in\pi^{H_{\Delta,0}}$ ($1\leq i\leq r$) are linearly independent (possibly losing the assumption that $m_i'$ ($1\leq i\leq r$) are linearly independent). By the same process with $H'_{\Delta',0}$ we deduce \eqref{tensorinv}.

\emph{Step 2.} Now let $M$ (resp.\ $M'$) be an object in $\mathcal{M}_{\Delta}(\pi^{H_{\Delta,0}})$ (resp.\ in $\mathcal{M}_{\Delta'}({\pi'}^{H'_{\Delta',0}})$). Then $M\otimes_\kappa M'$ is $T_0$-invariant and admissible as a representation of $N_{\Delta,0}\times N'_{\Delta',0}$ since for an open subgroup $N_{\Delta,\ast}\leq N_{\Delta,0}$ (resp.\ $N'_{\Delta',\ast}\leq N'_{\Delta',0}$) the product $N_{\Delta,\ast}\times N'_{\Delta',\ast}$ is open in $N_{\Delta,0}\times N'_{\Delta',0}$ and the $\kappa$-vectorspace $(M\otimes_\kappa M')^{N_{\Delta,\ast}\times N'_{\Delta',\ast}}=M^{N_{\Delta,\ast}}\otimes_\kappa {M'}^{N'_{\Delta',\ast}}$ is finite dimensional (using again the argument above). Moreover, choose generators $m_1,\dots,m_r$ (resp.\ $m'_1,\dots,m'_{r'}$) of $M$ (resp.\ of $M'$) as a module over $\kappa\bs N_{\Delta,0}\js[F_\Delta]$ (resp.\ over $\kappa\bs N'_{\Delta',0}\js[F_{\Delta'}]$). Then the elements $m_i\otimes m'_j$ ($1\leq i\leq r$, $1\leq j\leq r'$) generate $M\otimes_\kappa M'$ as a module over $\kappa\bs N_{\Delta,0}\times N'_{\Delta',0}\js[F_{\Delta\cup\Delta'}]$ since $\kappa\bs N_{\Delta,0}\js[F_\Delta]$ and $\kappa\bs N'_{\Delta',0}\js[F_{\Delta'}]$ are subrings of this ring and any element in $M\otimes_\kappa M'$ can be written as a sum of elements of the form $m\otimes m'$ where $m\in M$ and $m'\in M'$. So $M\otimes_\kappa M'$ is an object in $\mathcal{M}_{\Delta\cup\Delta'}((\pi\otimes_\kappa\pi')^{H_{\Delta,0}\times H'_{\Delta',0}})$.

\emph{Step 3: We show that the elements of the form $M\otimes_\kappa M'$ for $M$ (resp.\ $M'$) in $\mathcal{M}_{\Delta}(\pi^{H_{\Delta,0}})$ (resp.\ in $\mathcal{M}_{\Delta'}({\pi'}^{H'_{\Delta',0}})$) are cofinal in $\mathcal{M}_{\Delta\cup\Delta'}((\pi\otimes_\kappa\pi')^{H_{\Delta,0}\times H'_{\Delta',0}})$.} Let $M''$ be in $\mathcal{M}_{\Delta\cup\Delta'}((\pi\otimes_\kappa\pi')^{H_{\Delta,0}\times H'_{\Delta',0}})$ with a finite set $m_1'',\dots,m_{r''}''$ of generators as a module over $\kappa\bs N_{\Delta,0}\times N'_{\Delta',0}\js[F_{\Delta\cup\Delta'}]$ and by \eqref{tensorinv} write each of the $m_i''$ ($1\leq i\leq r''$) as a sum of elementary tensors of the form $m\otimes m'$ with $m\in \pi^{H_{\Delta,0}}$ and $m'\in {\pi'}^{H'_{\Delta',0}}$. Let $U$ (resp.\ $U'$) be the set of $m\in  \pi^{H_{\Delta,0}}$ (resp.\ of $m'\in {\pi'}^{H'_{\Delta',0}}$) appearing in these elementary tensors. These are finite sets. We may assume without loss of generality that the elements of the set $U'$ are linearly independent and each $m'\in U'$ appears only once in each $m_i''$ ($1\leq i\leq r''$). Moreover, for each $m'\in U'$ let $U(m')\subseteq U$ be the subset of those $m\in U$ such that $m\otimes m'$ appears as an elementary tensor in one of the $m_i''$ ($1\leq i\leq r''$). We then have $U=\bigcup_{m'\in U'}U(m')$.
\begin{lem}
The $\kappa\bs N_{\Delta,0}\js[F_\Delta]$-submodule of $\pi^{H_{\Delta,0}}$ generated by the elements of $U$ is admissible as a representation of $N_{\Delta,0}$.
\end{lem}
\begin{proof}
Since the subring $\kappa\bs N_{\Delta,0}\js[F_\Delta]\leq \kappa\bs N_{\Delta,0}\times N'_{\Delta',0}\js[F_{\Delta\cup\Delta'}]$ acts via the first component on the tensor product $M\otimes_\kappa M'$, any element $m_\ast$ in $\sum_{i=1}^{r''}\kappa\bs N_{\Delta,0}\js[F_\Delta]m_i''$ can be uniquely written as a sum $m_\ast=\sum_{m'\in U'}m_\ast(m')\otimes m'$ for some elements $m_\ast(m')\in \sum_{m\in U(m')}\kappa\bs N_{\Delta,0}\js[F_\Delta] m$. Since the representation $\pi'$ of $B'$ is smooth and $U'$ is a finite set, there exists an open subgroup $N'_{\Delta',\ast}\leq N'_{\Delta',0}$ stabilizing all the elements $m'$ in $U'$. Now for an open subgroup $N_{\Delta,\ast}\leq N_{\Delta,0}$, an element $m_\ast=\sum_{m'\in U'}m_\ast(m')\otimes m'$ in  $\sum_{i=1}^{r''}\kappa\bs N_{\Delta,0}\js[F_\Delta]m_i''$ is stabilized by $N_{\Delta,\ast}\times N'_{\Delta',\ast}$ if and only if each $m_\ast(m')$ ($m'\in U'$) is stabilized by $N_{\Delta,\ast}$. Since $M\otimes_\kappa M'$ is admissible as a representation of $N_{\Delta,0}\times N'_{\Delta',0}$, we deduce that $$(\sum_{i=1}^{r''}\kappa\bs N_{\Delta,0}\js[F_\Delta]m_i'')^{N_{\Delta,\ast}}$$
is finite dimensional for any open subgroup $N_{\Delta,\ast}\leq N_{\Delta,0}$. In particular, $\sum_{i=1}^{r''}\kappa\bs N_{\Delta,0}\js[F_\Delta]m_i''$ is admissible as a representation of $N_{\Delta,0}$. Now for each $m'_0\in U'$ consider the projection 
\begin{eqnarray*}
\Pi_{m'_0}\colon \sum_{m'\in U'}\pi^{H_{\Delta,0}}\otimes m'&\to & \pi^{H_{\Delta,0}}\\
\sum_{m'_\ast\in U'}m(m')\otimes m' &\mapsto & m(m'_0)\ .
\end{eqnarray*}
$\Pi_{m'_0}$ is $\kappa\bs N_{\Delta,0}\js[F_\Delta]$-linear. Moreover, we have $$\Pi_{m'_0}\left(\sum_{i=1}^{r''}\kappa\bs N_{\Delta,0}\js[F_\Delta]m_i''\right)=\sum_{m\in U(m'_0)}\kappa\bs N_{\Delta,0}\js[F_\Delta]m\ .$$ In particular, $\sum_{m\in U(m'_0)}\kappa\bs N_{\Delta,0}\js[F_\Delta]m$ is admissible as a representation of $N_{\Delta,0}$ being the image of an admissible representation. We deduce the statement from the equality $U=\bigcup_{m'\in U'}U(m')$ noting that the finite sum of admissible representations is also admissible. 
\end{proof}
We choose a basis $U_\ast\subset U$ of the $\kappa$-vectorspace generated by $m\in U$. Now by possibly grouping together some elementary tensors in $m_i''$ ($1\leq i\leq r''$), we may write each element $m_i''$ as a finite sum of elementary tensors $m_i''=\sum_{m\in U_\ast}m\otimes m'_i(m)$. By using again the above Lemma in the symmetric situation we deduce that the $\kappa\bs N'_{\Delta',0}\js[F_{\Delta'}]$-submodule of ${\pi'}^{H'_{\Delta',0}}$ generated by $U'_\ast:=\{m'_i(m)\mid 1\leq i\leq r'', m\in U_\ast\}$ is admissible as a representation of $N'_{\Delta',0}$. Moreover, $M''$ is clearly contained in $M\otimes_\kappa M'$ where $M$ (resp.\ $M'$) is the $\kappa\bs N_{\Delta,0}\js[F_{\Delta}]$-submodule (resp.\ $\kappa\bs N'_{\Delta',0}\js[F_{\Delta'}]$-submodule) of $\pi^{H_{\Delta,0}}$ (resp.\ of  ${\pi'}^{H'_{\Delta',0}}$) generated by $U_\ast$ (resp.\ by $U'_\ast$). Now if we replace $U_\ast$ and $U'_\ast$ by their finite $T_0$-orbit (resp.\ $T'_0$-orbit) we may assume that $M$ and $M'$ are stable under the action of $T_0$ (resp.\ of $T_0'$). This finishes the proof of Step $3$.

The statement follows from the cofinality of the $\kappa\bs N_{\Delta,0}\times N'_{\Delta',0}\js [F_{\Delta\times \Delta'}]$-submodules of the form $M\otimes_\kappa M'$ for $M\in \mathcal{M}_{\Delta}(\pi^{H_{\Delta,0}})$ and $M'\in \mathcal{M}_{\Delta'}({\pi'}^{H'_{\Delta',0}})$ in $\mathcal{M}_{\Delta\cup\Delta'}((\pi\otimes_\kappa\pi')^{H_{\Delta,0}\times H'_{\Delta',0}})$.
\end{proof}

\begin{rem}
The above proof seems to only work in the case of $\kappa$-representations, but not for $A$-representations. However, it might be possible to deduce the general case (or at least the case when the representations are on free $A$-modules) from the above Proposition. Note that in case $A=\kappa$ the above statement is slightly stronger than Prop.\ 5.5 in \cite{B} since we do not assume the property Ad (or its analogue in this situation) in Def.\ 2.2 in \cite{B} that all the finitely generated $\kappa\bs N_{\Delta,0}\js [F_{\Delta}]$-submodules (resp.\ $\kappa\bs N'_{\Delta',0}\js [F_{\Delta'}]$-submodules) of $\pi^{H_{\Delta,0}}$ (resp.\ of ${\pi'}^{H'_{\Delta',0}}$) are admissible as representations of $N_{\Delta,0}$ (resp.\ of $N'_{\Delta',0}$) (see also Remark 5.6(ii) of \cite{B}).
\end{rem}

\subsection{Parabolic induction}

As in \cite{B} let $P\leq G$ be a parabolic subgroup containing $B$ with Levi component $L_P$ and unipotent radical $N_P\leq N$. Denote by $P^-$ the opposite parabolic and by $N_{P^-}$ its unipotent radical. The group $L_P$ is also a $\mathbb{Q}_p$-split reductive group and we fix its Borel subgroup $B_{L_P}:=B\cap L_P$ with unipotent radical $N_{L_P}:=N\cap L_P$. We denote by $\Phi_P$ the roots of the pair $(L_P,T)$ and by $\Phi_P^+\subseteq \Phi_P$ the subset of positive roots with respect to $B_{L_P}$ and by $\Delta_P\subseteq \Phi^+_P$ the set of simple roots. Since the centre $Z(G)$ is connected, the $\mathbb{Z}$-module $X(T)/(\bigoplus_{\alpha\in\Delta}\mathbb{Z}\alpha)$ is torsion-free. Therefore $X(T)/(\bigoplus_{\alpha\in\Delta_P}\mathbb{Z}\alpha)$ is torsion-free, too, because $\bigoplus_{\alpha\in\Delta_P}\mathbb{Z}\alpha$ is a direct summand in $\bigoplus_{\alpha\in\Delta}\mathbb{Z}\alpha$. 

We adapt the constructions of the previous sections to the group $L_P$: We denote by $N_{L_P,0}:=N_0\cap N_{L_P}$ and $N_{\Delta_P,0}:=\prod_{\alpha\in\Delta_P}N_{\alpha,0}$. We regard $N_{\Delta_P,0}$ as a direct summand in $N_{\Delta,0}$ and also as a quotient of the group $N_{L_P,0}$. We denote by $H_{\Delta_P,0}$ the kernel of the quotient map $N_{L_P,0}\twoheadrightarrow N_{\Delta_P,0}$.

As in \cite{B} denote by $W:=N_G(T)/T$ (resp.\ by $W_P:=N_{L_P}(T)/T$) the Weyl group of $G$ (resp.\ of $L_P$) and by $w_0\in W$ the element of maximal length. We have a canonical system $$K_P:=\{w\in W\mid w^{-1}(\Phi_P^+)\subseteq\Phi^+\}$$
of representatives (the Kostant representatives) of the right cosets $W_P\backslash W$. We have a generalized Bruhat decomposition
$$G=\coprod_{w\in K_P}P^-wB=\coprod_{w\in K_P}P^-wN\ .$$

Now let $\pi_P$ be a smooth representation of $L_P$ over $A$. We regard $\pi_P$ as a representation of $P^-$ via the quotient map $P^-\twoheadrightarrow L_P$. Then the parabolically induced representation $\Ind_{P^-}^G\pi_P$ admits \cite{V} (see also \cite{E} \S 4.3) a filtration by $B$-subrepresentations whose graded pieces are contained in
$$\mathcal{C}_w(\pi_P):=c-\Ind_{P^-}^{P^-wN}\pi_P$$
for $w\in K_P$ where $c-\Ind$ stands for the space of locally constant functions with compact support modulo $P^-$. $B$ acts on $\mathcal{C}_w(\pi_P)$ by right translations. Moreover, the first graded piece equals $\mathcal{C}_1(\pi_P)$.

\begin{lem}\label{DDeltaCw}
Let $\pi'\leq \mathcal{C}_w(\pi_P)$ be any $B$-subrepresentation for some $w\in K_P\setminus\{1\}$. Then we have $D^\vee_\Delta(\pi')=0$.
\end{lem}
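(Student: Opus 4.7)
The plan is to reduce this vanishing to the analogous statement for Breuil's one-variable functor $D^\vee_\xi=D^\vee_{\xi,\ell}$ (with the generic $\ell$ from Section~\ref{reducetousualsec}), using the reduction machinery built in Proposition~\ref{reduceellexact}. The heart of the argument is an external input: the vanishing $D^\vee_\xi(\pi')=0$ established by Breuil in~\cite{B} as part of his Bruhat-cell analysis for parabolic induction. Heuristically, any $H_{\ell,0}$-invariant $f\in\pi'\leq\mathcal{C}_w(\pi_P)$ is supported in $P^-wN$, which is disjoint from the open cell $P^-N\supseteq N_0P^-$, so $f$ vanishes on $N_0$; a computation with the Hecke operator $F$ then forces every finitely generated $T_0$- and $F$-stable $A\bs N_0\js$-submodule of ${\pi'}^{H_{\ell,0}}$ to have Pontryagin dual annihilated by a power of $X$, killing $D^\vee_\xi(\pi')$ after inverting $X$.

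Given this input, I would invoke the natural surjection
\[
D^\vee_\xi(\pi')\twoheadrightarrow A\bg X\jg\hat{\otimes}_{\ell,A\bg N_{\Delta,0}\jg}D^\vee_\Delta(\pi')
\]
provided by the second remark following Proposition~\ref{reduceellexact}. Vanishing of the left-hand side immediately forces vanishing of the right-hand side.

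Finally, I would appeal to the extension of the functor $A\bg X\jg\hat{\otimes}_{\ell,A\bg N_{\Delta,0}\jg}\cdot$ to pro-\'etale $T_+$-modules, which is faithful (same remark, relying on Proposition~\ref{reduceellexact}). Applying faithfulness to the identity and the zero endomorphism of $D^\vee_\Delta(\pi')$: both map to the zero endomorphism of the now-trivial target, so by faithfulness they agree, whence $\id_{D^\vee_\Delta(\pi')}=0$ and $D^\vee_\Delta(\pi')=0$.

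The main obstacle is Step~1, the input from~\cite{B}: one must check that Breuil's Bruhat-cell vanishing applies verbatim to a \emph{subrepresentation} $\pi'$ of $\mathcal{C}_w(\pi_P)$ (not only to $\mathcal{C}_w(\pi_P)$ itself) and produces the vanishing of $D^\vee_\xi$ in precisely the form needed to feed into Step~2. Once this is secured, the remaining two steps are purely formal consequences of Proposition~\ref{reduceellexact} and the surjection identified in its remark. A direct multivariable proof --- showing that every $M\in\mathcal{M}_\Delta({\pi'}^{H_{\Delta,0}})$ is already annihilated on the dual by a monomial in the $X_\alpha$'s --- appears feasible along the same support-analysis lines but would require a more delicate bookkeeping of the simultaneous action of the $F_\alpha$ and $N_{\Delta,0}$-translations, which the reduction to the single variable neatly sidesteps.
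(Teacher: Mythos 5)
Your proposal is correct and is essentially the paper's own argument: both rest on Breuil's vanishing from \cite{B}, the surjection $D^\vee_\xi(\pi')\twoheadrightarrow A\bg X\jg\hat{\otimes}_{\ell,A\bg N_{\Delta,0}\jg}D^\vee_\Delta(\pi')$, and the exactness/faithfulness of the $\ell$-reduction from Prop.~\ref{reduceellexact}. The one point you flag as an obstacle --- whether Breuil's cell vanishing applies verbatim to a subrepresentation $\pi'$ --- is exactly what the paper sidesteps by first reducing to the case $\pi'=\mathcal{C}_w(\pi_P)$ via the right exactness (and contravariance) of $D^\vee_\Delta$ (Thm.~\ref{rightexact}), after which Prop.~6.2 of \cite{B} is quoted only for the full cell.
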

\begin{proof}
By the right exactness (and contravariance) of $D^\vee_\Delta$ (Thm.\ \ref{rightexact}) it suffices to treat the case $\pi'=\mathcal{C}_w(\pi_P)$. This follows from Prop.\ 6.2 in \cite{B} using that $A\bg X\jg\otimes_{\ell,A\bg N_{\Delta,0}\jg}\cdot$ is faithful (Prop.\ \ref{reduceellexact}) and that we have a surjective map from $D^\vee_{\xi}(\pi')$ to $A\bg X\jg\otimes_{\ell,A\bg N_{\Delta,0}\jg}D^\vee_\Delta(\pi')$ by the Remark after Prop.\ \ref{reduceellexact}.
\end{proof}

\begin{thm}\label{parindDvee}
Let $\pi_P$ be a smooth locally admissible representation of $L_P$ over $A$ which we view by inflation as a representation of $P^-$. We have an isomorphism 
\begin{equation*}
D^\vee_{\Delta}\left(\Ind_{P^-}^G\pi_P\right)\cong A\bg N_{\Delta,0}\jg\hat{\otimes}_{A\bg N_{\Delta_P,0}\jg}D^\vee_{\Delta_P}(\pi_P)
\end{equation*}
in the category $\mathcal{D}^{pro-et}(T_+,A\bg N_{\Delta,0}\jg)$.
\end{thm}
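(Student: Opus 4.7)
The proof reduces to the open Bruhat cell, carries out an explicit computation there, and then passes to a projective limit.

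\emph{Reduction to the open cell.} By \cite{V}, $\Ind_{P^-}^G \pi_P$ admits a finite $B$-stable filtration whose graded pieces are $B$-subrepresentations of the $\mathcal{C}_w(\pi_P)$ for $w \in K_P$, with the bottom piece being $\mathcal{C}_1(\pi_P)$. Lemma \ref{DDeltaCw}, combined with the right exactness (and contravariance) of $D^\vee_\Delta$ from Thm.\ \ref{rightexact}, shows that $D^\vee_\Delta$ vanishes on every $B$-subrepresentation of $\mathcal{C}_w(\pi_P)$ for $w \neq 1$. A d\'evissage along the filtration then yields a natural isomorphism $D^\vee_\Delta(\Ind_{P^-}^G \pi_P) \cong D^\vee_\Delta(\mathcal{C}_1(\pi_P))$.

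\emph{Computation on the open cell.} Since $P^- N = P^- \cdot N_P$ with $P^- \cap N_P = \{1\}$, restriction to $N_P$ identifies $\mathcal{C}_1(\pi_P)$ with $C^\infty_c(N_P, \pi_P)$; the $B$-action is transported through the decomposition $N = N_{L_P} N_P$ and the $T$-conjugation on $N_P$ (using that $L_P$ normalizes $N_P$). Writing $H_{\Delta, 0} = H_{\Delta_P, 0} \cdot H_{\Delta, P, 0}$ with $H_{\Delta_P, 0} := H_{\Delta, 0} \cap N_{L_P, 0}$ and $H_{\Delta, P, 0} := H_{\Delta, 0} \cap N_{P, 0}$, and noting that the conjugation action of $N_{L_P, 0}$ on the quotient $N_{P, 0}/H_{\Delta, P, 0} \cong \prod_{\alpha \in \Delta \setminus \Delta_P} N_{\alpha, 0}$ is trivial (any commutator with a simple root subgroup for $\alpha \in \Delta \setminus \Delta_P$ lies in root subgroups of non-simple roots of $N_P$, hence in $H_{\Delta, P, 0}$), one identifies
\[
\mathcal{C}_1(\pi_P)^{H_{\Delta, 0}} \cong C^\infty_c(N_P/H_{\Delta, P, 0}, \pi_P^{H_{\Delta_P, 0}})\, ,
\]
on which $N_{\Delta, 0} = N_{\Delta_P, 0} \times \prod_{\alpha \in \Delta \setminus \Delta_P} N_{\alpha, 0}$ acts by right translation in the $N_P$-variable combined with the natural $N_{\Delta_P, 0}$-action on the values in $\pi_P^{H_{\Delta_P, 0}}$.

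\emph{Construction of a cofinal family.} Given $M_P \in \mathcal{M}_{\Delta_P}(\pi_P^{H_{\Delta_P, 0}})$, define $M \subseteq \mathcal{C}_1(\pi_P)^{H_{\Delta, 0}}$ as the $A\bs N_{\Delta, 0}\js [F_\Delta]$-submodule generated by indicator functions $\mathbf{1}_{\bar n H_{\Delta, P, 0}} \cdot v$ for $v$ in a finite generating set of $M_P$ and $\bar n$ in a finite set of representatives in $N_{P, 0}/H_{\Delta, P, 0}$. Admissibility of $M_P$ and local admissibility of $\pi_P$ imply $M \in \mathcal{M}_\Delta(\mathcal{C}_1(\pi_P)^{H_{\Delta, 0}})$. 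Pontryagin duality combined with the factorisation $A\bs N_{\Delta, 0}\js \cong A\bs N_{\Delta_P, 0}\js \,\hat{\otimes}_A\, A\bs N_{P,0}/H_{\Delta,P,0}\js$ yields
\[
M^\vee[1/X_\Delta] \;\cong\; A\bg N_{\Delta, 0}\jg \otimes_{A\bg N_{\Delta_P, 0}\jg} M_P^\vee[1/X_{\Delta_P}]
\]
as \'etale $T_+$-modules over $A\bg N_{\Delta, 0}\jg$. An argument parallel to Step~3 of the proof of the tensor product proposition, again using local admissibility of $\pi_P$, shows that the $M$ produced this way are cofinal in $\mathcal{M}_\Delta(\mathcal{C}_1(\pi_P)^{H_{\Delta, 0}})$; passing to the projective limit over $M_P$ yields the claim in $\mathcal{D}^{pro-et}(T_+, A\bg N_{\Delta, 0}\jg)$.

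The principal technical obstacle is the verification of the $T_+$-equivariance of the displayed isomorphism, in particular the compatibility of the Hecke operators $F_\alpha = \Tr_{H_{\Delta, 0}/t_\alpha H_{\Delta, 0} t_\alpha^{-1}} \circ\, t_\alpha$ with the corresponding operators on the tensor product. For $\alpha \in \Delta_P$ the operator must reduce on $M$ to the Hecke operator of $M_P$, while for $\alpha \in \Delta \setminus \Delta_P$ it must act as $\varphi_\alpha$-semilinear multiplication on the $A\bg N_{P,0}/H_{\Delta,P,0}\jg$-factor. This requires decomposing the trace along the factorisation $N_0 = N_{L_P, 0} N_{P, 0}$ and tracking how $t_\alpha$-conjugation permutes cosets -- the calculation is formally similar to the one in \S 7 of \cite{B} but must be performed per variable, which is precisely where the multivariable formalism of the preceding sections (especially Prop.\ \ref{noideal} and Prop.\ \ref{abeliancat}) is indispensable.
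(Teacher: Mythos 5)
Your skeleton coincides with the paper's (reduce to the open cell via Lemma \ref{DDeltaCw} and the right exactness of Thm.\ \ref{rightexact}, build a cofinal family of induced submodules, pass to the limit), but two essential steps are missing, and the first is not merely unjustified but fails as stated. You compute the $H_{\Delta,0}$-invariants over the whole open cell and claim $\mathcal{C}_1(\pi_P)^{H_{\Delta,0}}\cong C^\infty_c(N_P/H_{\Delta,P,0},\pi_P^{H_{\Delta_P,0}})$. The point is that $H_{\Delta,0}$ is normal in $N_0$ but not in $N$: for $h\in H_{\Delta_P,0}$ and a non-integral $v\in N_P$ the commutator $v^{-1}h^{-1}vh$ lies in $N_P$ but in general not in $H_{\Delta,P,0}$. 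For instance in $\GL_4$ with $L_P\cong \GL_3\times\GL_1$, taking $h=u_{\alpha_1+\alpha_2}(a)$ with $a\in\Zp^\times$ and $v=u_{\alpha_3}(x)$ with $x\notin\Zp$, one gets $h^{-1}vh=v\,u_{\alpha_1+\alpha_2+\alpha_3}(\pm ax)$ with $ax\notin\Zp$. Hence $H_{\Delta,0}$-invariance only imposes the twisted relation $f(v)=h\cdot f(h^{-1}vh)$, which at non-integral $v$ links values at different points and only forces $f(v)$ to be fixed by an open subgroup of $H_{\Delta_P,0}$ depending on $v$, not by all of it; the invariant space is genuinely larger than your description. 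The paper avoids this by first replacing $\mathcal{C}_1(\pi_P)$ with the $B_+$-subrepresentation $\mathcal{C}_{1,0}(\pi_P)$ of functions supported on $N_{L_P}N_0$, proving that $\left(M/M\cap\mathcal{C}_{1,0}(\pi_P)\right)^\vee[1/X_\Delta]=0$ for every $M\in\mathcal{M}_\Delta(\mathcal{C}_1(\pi_P)^{H_{\Delta,0}})$ using Lemma 6.6 of \cite{B} together with the faithful and exact functor of Prop.\ \ref{reduceellexact}, and only then computing invariants (Lemma \ref{HDeltainvariants}) on the compact level, where normality of $H_{\Delta,0}$ in $N_0$ is available. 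This reduction is also what your cofinality claim needs: an arbitrary $M\in\mathcal{M}_\Delta(\mathcal{C}_1(\pi_P)^{H_{\Delta,0}})$ may have support spread over the noncompact cell, a phenomenon with no counterpart in Step 3 of the tensor-product proposition, so the analogy you invoke does not carry the argument.

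Second, what you set aside as ``the principal technical obstacle'' is the actual core of the proof, not a routine verification. One must (i) show, by a careful choice of coset representatives in the decomposition of $\Tr_{H_{\Delta,0}/tH_{\Delta,0}t^{-1}}$ along $N_{L_P,0}$ versus $H_{\Delta_P,0}$ (the computation culminating in the paper's formula \eqref{Ftfv}), that $F_t(f)(1)=F_t(f(1))$, so that evaluation at $1$ sends an $M\in\mathcal{M}_\Delta(\mathcal{C}_{1,0}(\pi_P)^{H_{\Delta,0}})$ to an $A\bs N_{\Delta_P,0}\js[F_{\Delta_P}]$-submodule $M_\ast\subseteq\pi_P^{H_{\Delta_P,0}}$; (ii) use the local admissibility of $\pi_P$ and the centrality of $t_\alpha$ ($\alpha\in\Delta\setminus\Delta_P$) in $L_P$ to prove that $M_\ast$ is finitely generated, $T_0$-stable and admissible, i.e.\ lies in $\mathcal{M}_{\Delta_P}(\pi_P^{H_{\Delta_P,0}})$; and (iii) check that $\widetilde{M'}\cong\Ind_{N_{\Delta_P,0}}^{N_{\Delta,0}}M'$ lies in $\mathcal{M}_\Delta(\mathcal{C}_{1,0}(\pi_P)^{H_{\Delta,0}})$, contains $M$ when $M'=M_\ast$, and satisfies $\widetilde{M'}^\vee\cong A\bs N_{\Delta,0}\js\otimes_{A\bs N_{\Delta_P,0}\js}M'^\vee$, which simultaneously gives cofinality and the base-change formula after inverting $X_\Delta$. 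Until (i)--(iii) are carried out, neither the cofinal family nor the $T_+$-equivariance of your displayed isomorphism is established; also, the tools actually doing the work at that stage are Prop.\ \ref{fingendualetale} and Prop.\ \ref{dualcompactfingen} (with Prop.\ \ref{noideal} entering only through them), rather than Prop.\ \ref{noideal} and Prop.\ \ref{abeliancat} directly as you suggest.
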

\begin{rem}
Here $\hat{\otimes}$ stands for taking tensor products on finitely generated quotients and then inverse limits.
\end{rem}
\begin{proof}
By Lemma \ref{DDeltaCw} and Thm.\ \ref{rightexact} it suffices to show the isomorphism $D^\vee_\Delta(\mathcal{C}_1(\pi_P))\cong A\bg N_{\Delta,0}\jg\hat{\otimes}_{A\bg N_{\Delta_P,0}\jg}D^\vee_{\Delta_P}(\pi_P)$.

The map $\mathcal{C}_1(\pi_P)\to c-\Ind_{N_{L_P}}^N\pi_P$ sending a function $f\colon P^-N\to \pi_P$ to its restriction to $N$ is a $B$-equivariant isomorphism. So we make this identification. Denote by $\mathcal{C}_{1,0}(\pi_P)$ the $B_+$-subrepresentation of $\mathcal{C}_1(\pi_P)$ of functions supported on $N_{L_P}N_0$ and let $M$ be arbitrary in $\mathcal{M}_\Delta(\mathcal{C}_1(\pi_P)^{H_{\Delta,0}})$. So we have a short exact sequence
\begin{equation*}
0\to \left(M/M\cap \mathcal{C}_{1,0}(\pi_P)\right)^\vee[1/X_\Delta]  \to M^\vee[1/X_\Delta]\to (M\cap \mathcal{C}_{1,0}(\pi_P))^\vee[1/X_\Delta]\to 0
\end{equation*}
in $\mathcal{D}^{et}(T_+,A\bg N_{\Delta,0}\jg)$. By Lemma 6.6 in \cite{B} and Prop.\ \ref{reduceellexact} we deduce that the left hand side $\left(M/M\cap \mathcal{C}_{1,0}(\pi_P)\right)^\vee[1/X_\Delta]$ vanishes. On the other hand, by Prop.\ \ref{dualcompactfingen} we find as in the proof of Prop.\ \ref{intersectionfingen} a finitely generated $A\bs N_{\Delta,0}\js[F_\Delta]$-submodule $M_1\leq M\cap \mathcal{C}_{1,0}(\pi_P)$ that is $T_0$-stable and admissible as a representation of $N_{\Delta,0}$ such that we have $M_1^\vee[1/X_\Delta]=(M\cap \mathcal{C}_{1,0}(\pi_P))^\vee[1/X_\Delta]$. Therefore we obtain an isomorphism $D^\vee_\Delta(\mathcal{C}_1(\pi_P))\cong D^\vee_\Delta(\mathcal{C}_{1,0}(\pi_P))$.

Now we identify $\mathcal{C}_{1,0}(\pi_P)$ with $\Ind_{N_{L_P,0}}^{N_0}\pi_P$. On the latter $t\in T_+$ acts via the formula 
\begin{align}
(tf)(v)=\begin{cases}0&\text{if }v\in N_0\setminus N_{L_P,0}tN_0t^{-1};\\ u_1t\cdot f(v_1)&\text{if }v=u_1tv_1t^{-1}\in N_{L_P,0}tN_0t^{-1}\end{cases}=\notag\\
=\sum_{u\in J(N_{L_P,0}/tN_{L_P,0}t^{-1})}ut\cdot f(t^{-1}u^{-1}vt)\label{tfv}
\end{align}
by extending in \eqref{tfv} the functions $f\in \Ind_{N_{L_P,0}}^{N_0}\pi_P$ to $N$ with $0$ outside $N_0$.

\begin{lem}\label{HDeltainvariants}
We have a $B_0$-equivariant identification $\mathcal{C}_{1,0}(\pi_P)^{H_{\Delta,0}}\cong\Ind_{N_{\Delta_P,0}}^{N_{\Delta,0}}\pi_P^{H_{\Delta_P,0}}$.
\end{lem}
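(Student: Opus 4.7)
The plan is to identify both sides explicitly as spaces of functions and match them via restriction. First I would use the unique factorisation $N_0 = N_{L_P,0}\cdot N_{P,0}$ (from $P = L_P N_P$) together with the left $N_{L_P,0}$-equivariance to identify $\mathcal{C}_{1,0}(\pi_P) = \Ind_{N_{L_P,0}}^{N_0}\pi_P$ with the space of arbitrary functions $\phi\colon N_{P,0}\to \pi_P$. Analogously, I would decompose $H_{\Delta,0} = H_{\Delta_P,0}\cdot H'$, where $H_{\Delta_P,0} = H_{\Delta,0}\cap N_{L_P,0} = \prod_{\gamma\in\Phi^+_P\setminus\Delta_P}N_{\gamma,0}$ and $H' := H_{\Delta,0}\cap N_{P,0} = \prod_{\beta\in\Phi^+_{N_P}\setminus(\Delta\setminus\Delta_P)}N_{\beta,0}$; the quotient $N_{P,0}/H'$ is then $\prod_{\alpha\in\Delta\setminus\Delta_P}N_{\alpha,0}$, and $N_{\Delta,0} = N_{\Delta_P,0}\times N_{P,0}/H'$.

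Next I would translate right-$H_{\Delta,0}$-invariance of $f$ into a condition on $\phi$. For $u_P\in N_{P,0}$ and $h_L h' \in H_{\Delta_P,0}\cdot H'$, using that $L_P$ normalises $N_P$ one rewrites
\[
u_Ph_Lh' = h_L\cdot(h_L^{-1}u_Ph_L)\cdot h',
\]
and the equivariance $f(h_L\cdot w) = h_L f(w)$ reduces $H_{\Delta,0}$-invariance to the two conditions
\[
\phi(u_P h') = \phi(u_P) \quad\text{and}\quad \phi(u_P) = h_L\,\phi(h_L^{-1}u_Ph_L).
\]
The first says that $\phi$ factors through $\bar\phi\colon N_{P,0}/H' \to \pi_P$.

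The key technical step, and the one I expect to require the most care, is the commutator lemma that $H_{\Delta_P,0}$ acts trivially by conjugation on $N_{P,0}/H'$. This uses the Chevalley commutator formula $[N_{\gamma,0},N_{\alpha,0}] \subseteq \prod_{i,j\geq 1}N_{i\gamma+j\alpha,0}$: for $\gamma\in\Phi^+_P\setminus\Delta_P$ (so the simple-root height of $\gamma$ is at least $2$) and $\alpha\in\Phi^+_{N_P}$, every root $i\gamma+j\alpha$ occurring has height at least $2i+j\geq 3$, hence is non-simple, and since its $\alpha$-component is positive it lies in $\Phi^+_{N_P}\setminus(\Delta\setminus\Delta_P)$, i.e. its root subgroup sits inside $H'$. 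With this in hand, the second condition becomes $\bar\phi(\bar u_P) = h_L\bar\phi(\bar u_P)$ for all $h_L\in H_{\Delta_P,0}$, i.e. $\bar\phi$ takes values in $\pi_P^{H_{\Delta_P,0}}$.

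Assembling: the assignment $f\mapsto g$, where $g(\bar u_L,\bar u_P) := \bar u_L\cdot\bar\phi(\bar u_P)$ (well-defined because $\bar\phi(\bar u_P)\in \pi_P^{H_{\Delta_P,0}}$), is a linear bijection from $\mathcal{C}_{1,0}(\pi_P)^{H_{\Delta,0}}$ to $\Ind_{N_{\Delta_P,0}}^{N_{\Delta,0}}\pi_P^{H_{\Delta_P,0}}$, with inverse $g\mapsto (u_Lu_P\mapsto u_L\cdot g(\bar u_P))$. The $N_0$-equivariance is immediate from the definition of right translation, since $H_{\Delta,0}$ is normal in $N_0$ so that the right $N_0$-action descends to $N_{\Delta,0}$ on invariants. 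For the $T_0$-equivariance I would specialise formula \eqref{tfv} to $t\in T_0$ (where $tN_{L_P,0}t^{-1}=N_{L_P,0}$ and $tN_{P,0}t^{-1}=N_{P,0}$), obtaining $(tf)(u_Lu_P)=u_Lt\cdot f(t^{-1}u_Pt)$, and observe that this matches the natural $T_0$-action on $\Ind_{N_{\Delta_P,0}}^{N_{\Delta,0}}\pi_P^{H_{\Delta_P,0}}$ since $T_0$ preserves both $N_{\Delta_P,0}$ and the subspace $\pi_P^{H_{\Delta_P,0}}$ of $\pi_P$.
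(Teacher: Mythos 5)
Your argument is correct, but it runs along a more explicit, coordinate-based route than the paper's. The paper never invokes the factorisation $N_0=N_{L_P,0}\cdot N_{P,0}$, the decomposition $H_{\Delta,0}=H_{\Delta_P,0}\cdot H'$, or the Chevalley commutator formula: its only trick is that right $H_{\Delta,0}$-invariance together with normality of $H_{\Delta,0}$ in $N_0$ gives $f(h^{-1}vh)=f(v\cdot v^{-1}h^{-1}vh)=f(v)$ for all $h\in H_{\Delta,0}$, whence for $h\in H_{\Delta_P,0}\subseteq N_{L_P,0}\cap H_{\Delta,0}$ the left equivariance yields $f(v)=f(h\cdot h^{-1}vh)=h\cdot f(h^{-1}vh)=h\cdot f(v)$; the descent to $N_{\Delta,0}$ is then immediate. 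Your version buys an explicit model of both sides as function spaces on $N_{P,0}/H'\cong\prod_{\alpha\in\Delta\setminus\Delta_P}N_{\alpha,0}$, which is arguably more transparent and makes the inverse map visible, but the step you flag as the most delicate is cheaper than you think: $[H_{\Delta_P,0},N_{P,0}]\subseteq [N_0,N_0]\cap N_{P,0}\subseteq H_{\Delta,0}\cap N_{P,0}=H'$, because $N_{\Delta,0}=N_0/H_{\Delta,0}$ is abelian and $N_P$ is normal in $P$ — so the root-height bookkeeping via the Chevalley commutator formula can be dropped entirely (it is the same normality observation the paper uses). Two small points of hygiene: the identification of $\mathcal{C}_{1,0}(\pi_P)$ is with \emph{locally constant} functions $\phi\colon N_{P,0}\to\pi_P$, not arbitrary ones (this is harmless for your bijection, since restriction and inflation preserve local constancy in both directions, but it should be said, as the target $\Ind_{N_{\Delta_P,0}}^{N_{\Delta,0}}\pi_P^{H_{\Delta_P,0}}$ is also the smooth induction); and in the converse direction you should verify, as your ingredients indeed allow, that the reconstructed $f(u_Lu_P):=u_L\cdot g(\bar u_P)$ is genuinely $H_{\Delta,0}$-invariant, using $g(\bar u_P)\in\pi_P^{H_{\Delta_P,0}}$ and the commutator containment above.
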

\begin{proof}
Let $f\colon N_0\to \pi_P$ be an $H_{\Delta,0}$-invariant function in $\mathcal{C}_{1,0}(\pi_P)$. Now for any $v\in N_0$ and $h\in H_{\Delta,0}$ we have $f(v)=(hf)(v)=f(vh)$. Hence, since $H_{\Delta,0}$ is normal in $N_0$, we deduce $f(h^{-1}vh)=f(v\cdot v^{-1}h^{-1}vh)=f(v)$. Now if we have $h\in H_{\Delta_P,0}\leq H_{\Delta,0}$ and $v\in N_0$ then we compute $f(v)=f(h\cdot h^{-1}vh)=h\cdot f(h^{-1}vh)=h\cdot f(v)$. So we obtain that the image of $f$ lies in $\pi_P^{H_{\Delta_P,0}}$. Using again that $f$ is $H_{\Delta,0}$-invariant the function $\tilde{f}\colon N_{\Delta,0}\to \pi_P^{H_{\Delta_P,0}}$, $\tilde{f}(vH_{\Delta,0}):=f(v)$ is well-defined. Vica versa, if $\widetilde{f}\colon N_{\Delta,0}\to \pi_P^{H_{\Delta_P,0}}$ is a $N_{\Delta_P,0}$-invariant map then we may lift it to a map $f\colon N_{L_P,0}\backslash N_0/H_{\Delta,0}$. Therefore the map $f\mapsto\tilde{f}$ is a $B_0$-equivariant bijection. 
\end{proof}

\begin{lem}
Let $M$ be an object in $\mathcal{M}_\Delta(\mathcal{C}_{1,0}(\pi_P)^{H_{\Delta,0}})$. Then the set $M_\ast:=\{f(1)\in \pi_P^{H_{\Delta_P,0}}\mid f\in M\}$ is an object in $\mathcal{M}_{\Delta_P}(\pi_P^{H_{\Delta_P,0}})$.
\end{lem}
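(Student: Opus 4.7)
The plan is to exploit the identification of Lemma \ref{HDeltainvariants} and to study the evaluation-at-identity map $\mathrm{ev}_1\colon \mathcal{C}_{1,0}(\pi_P)^{H_{\Delta,0}}\to \pi_P^{H_{\Delta_P,0}}$ sending $\tilde f$ to $\tilde f(1)$. My first task is to verify that $\mathrm{ev}_1$ is $T_0$-equivariant and $A\bs N_{\Delta_P,0}\js[F_{\Delta_P}]$-linear. The $N_{\Delta_P,0}$-equivariance is built into the equivariance property of Lemma \ref{HDeltainvariants}, and $T_0$-equivariance follows directly from formula \eqref{tfv} because $T_0$ fixes $1\in N_0$ under conjugation. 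For $F_\alpha$-linearity with $\alpha\in\Delta_P$ I would use the decomposition $H_{\Delta,0}=H_{\Delta_P,0}\cdot H''$, with $H'':=\prod_{\gamma\in(\Phi^+\setminus\Phi_P^+)\setminus(\Delta\setminus\Delta_P)} N_{\gamma,0}\leq N_{P,0}$, and choose representatives of $H_{\Delta,0}/t_\alpha H_{\Delta,0} t_\alpha^{-1}$ as products $u_1 u_2$. The support condition in \eqref{tfv} forces $u_2\in t_\alpha H''t_\alpha^{-1}$, so only the trivial representative contributes; summing over $u_1\in J(H_{\Delta_P,0}/t_\alpha H_{\Delta_P,0}t_\alpha^{-1})$ then reproduces exactly the $F_\alpha$-Hecke operator on $\pi_P^{H_{\Delta_P,0}}$.

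An analogous computation for $\beta\in\Delta\setminus\Delta_P$ uses that $\gamma(t_\beta)=1$ for every $\gamma\in\Phi_P^+$, so $t_\beta$ centralises $N_{L_P,0}$; this collapses the Hecke sum to a single term and yields $\mathrm{ev}_1(F_\beta f)=t_\beta\cdot\mathrm{ev}_1(f)$. With these two formulas in hand, $T_0$-stability of $M_\ast$ is immediate, and $M_\ast$ is an $A\bs N_{\Delta_P,0}\js[F_{\Delta_P}]$-submodule of $\pi_P^{H_{\Delta_P,0}}$ that is also stable under the $t_\beta$-action for $\beta\in\Delta\setminus\Delta_P$. Fixing generators $f_1,\ldots,f_r$ of $M$ over $A\bs N_{\Delta,0}\js[F_\Delta]$, the admissibility of $M$ as $N_{\Delta,0}$-representation forces the set $S:=\{\tilde f_i(v)\colon v\in N_{\Delta\setminus\Delta_P,0},\ 1\leq i\leq r\}$ to be finite in $\pi_P^{H_{\Delta_P,0}}$. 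All the operators used above act via elements of $L_P$, so $M_\ast$ lies inside the $L_P$-subrepresentation $\sigma\leq\pi_P$ generated by $S$. By local admissibility of $\pi_P$, $\sigma$ is admissible; hence for any open $V\leq N_{\Delta_P,0}$ we have $(M_\ast)^V\subseteq\sigma^{V\cdot H_{\Delta_P,0}}$, which is finite over $A$ because $V\cdot H_{\Delta_P,0}$ is open in $L_P$. This gives admissibility of $M_\ast$ as $N_{\Delta_P,0}$-representation.

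The main obstacle is finite generation of $M_\ast$ as a module over $A\bs N_{\Delta_P,0}\js[F_{\Delta_P}]$. Using the commuting decomposition $A\bs N_{\Delta,0}\js[F_\Delta]\cong A\bs N_{\Delta_P,0}\js[F_{\Delta_P}]\otimes_A A\bs N_{\Delta\setminus\Delta_P,0}\js[F_{\Delta\setminus\Delta_P}]$ -- valid because $\varphi_\alpha$ is trivial on $N_{\beta,0}$ for distinct simple roots $\alpha,\beta$ -- together with the $A\bs N_{\Delta_P,0}\js[F_{\Delta_P}]$-linearity of $\mathrm{ev}_1$, we obtain $M_\ast=\sum_{i=1}^r A\bs N_{\Delta_P,0}\js[F_{\Delta_P}]\cdot \mathrm{ev}_1\bigl(A\bs N_{\Delta\setminus\Delta_P,0}\js[F_{\Delta\setminus\Delta_P}]\cdot f_i\bigr)$. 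Iteration of the formulas above shows that each inner image lies in $\sigma^{H_{\Delta_P,0}}$ and is spanned over $A$ by $t^{\mathbf l}$-shifts of the finitely many translates $\tilde f_i(v)$. The subtlety is that the $t_\beta$-orbits for $\beta\in\Delta\setminus\Delta_P$ could a priori be infinite; however, since each $t_\beta$ lies in $Z(L_P)$ and acts through the finitely many $Z(L_P)$-isotypic components of the admissible $\sigma$, these orbits collapse modulo the $A\bs N_{\Delta_P,0}\js[F_{\Delta_P}]$-action into a finitely generated submodule, yielding the desired finite generating set.
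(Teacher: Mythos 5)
Your equivariance computations for $\mathrm{ev}_1$ (that evaluation at $1$ intertwines $F_\alpha$, $\alpha\in\Delta_P$, with the Hecke operators of $L_P$, and $F_\beta$, $\beta\in\Delta\setminus\Delta_P$, with the central element $t_\beta$) and your finite-generation strategy (finitely many values of the generators, plus finiteness of the $t_\beta$-orbits using $t_\beta\in Z(L_P)$ and local admissibility of $\pi_P$) follow essentially the paper's Steps 1 and 2. The genuine gap is in your admissibility step. You deduce finiteness of $(M_\ast)^V$ from the inclusion $(M_\ast)^V\subseteq\sigma^{V\cdot H_{\Delta_P,0}}$ ``because $V\cdot H_{\Delta_P,0}$ is open in $L_P$''. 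It is not: the preimage of $V$ in $N_{L_P,0}$ is an open subgroup of the compact group $N_{L_P,0}$, hence a compact subgroup of the unipotent radical $N_{L_P}$, and such a subgroup has empty interior in $L_P$ (since $T\subseteq L_P$). Admissibility of $\sigma$ only bounds invariants under compact \emph{open} subgroups of $L_P$; invariants under a compact subgroup of $N_{L_P}$ can perfectly well be infinite-dimensional (already the $N_0$-invariants of a principal series of $\GL_2(\mathbb{Q}_p)$ are). It is also telling that your argument never uses the hypothesis that $M$ is admissible as a representation of $N_{\Delta,0}$, which is exactly where the admissibility of $M_\ast$ has to come from: in the paper one chooses for each element of the finite generating set of $M_\ast$ a preimage $f_m\in M$, lets $M_1\subseteq M$ be the $A\bs N_{\Delta_P,0}\js[F_{\Delta_P}]$-submodule they generate, observes that since $N_{\Delta_P,0}$ and the $t_\alpha$ ($\alpha\in\Delta_P$) commute with $N_{\Delta\setminus\Delta_P,0}$ all of $M_1$ is invariant under a fixed open subgroup $N_{\Delta\setminus\Delta_P,\ast}\leq N_{\Delta\setminus\Delta_P,0}$, so that $M_1^{N_{\Delta_P,\ast}}=M_1^{N_{\Delta_P,\ast}\times N_{\Delta\setminus\Delta_P,\ast}}\subseteq M^{N_{\Delta_P,\ast}\times N_{\Delta\setminus\Delta_P,\ast}}$ is finite by admissibility of $M$, and finally notes that $M_\ast$ is a quotient of $M_1$ via evaluation at $1$, hence admissible.

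A smaller point: in the finite-generation step, the justification ``$t_\beta$ acts through the finitely many $Z(L_P)$-isotypic components of the admissible $\sigma$'' is not valid over $A=o/\varpi^h$ (there is no semisimplicity and no reason for the centre to act through finitely many characters). What you need is simply that the $\prod_{\beta\in\Delta\setminus\Delta_P}t_\beta^{\mathbb{Z}}$-orbit of each value $f_i(v)$ is finite, and this follows as in the paper: a central element normalizes every compact open subgroup $K\leq L_P$, so the orbit of a $K$-fixed vector stays inside the $K$-invariants of the admissible subrepresentation it generates, which form a finite set. With that repair your finite-generation argument agrees with the paper's Step 2; the admissibility step, however, has to be replaced by an argument that actually exploits the admissibility of $M$ as an $N_{\Delta,0}$-representation, as above.
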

\begin{proof}
We proceed in $3$ steps.

\emph{Step 1: We show that $M_\ast$ is an $A\bs N_{\Delta_P,0}\js[F_{\Delta_P}]$-submodule.} At first note that there is an alternative description of $M_\ast$ as $M_\ast=\{m\in \pi_P^{H_{\Delta_P,0}}\mid \exists f\in M,v\in N_0\colon f(v)=m\}$ since for any $v\in N_0$ and $f\in M$ we have $(vf)(1)=f(v)$. Let $t$ be in $T_+$ and $f$ be in $\mathcal{C}_{1,0}(\pi_P)^{H_{\Delta,0}}$ arbitrary and compute
\begin{align}
F_t(f)(v)=\sum_{u\in J(H_{\Delta,0}/tH_{\Delta,0}t^{-1})}(utf)(v)=\sum_{u\in J(H_{\Delta,0}/tH_{\Delta,0}t^{-1})}(tf)(vu)=\notag\\
=\sum_{u\in J(H_{\Delta,0}/tH_{\Delta,0}t^{-1})}\sum_{u'\in J(N_{L_P,0}/tN_{L_P,0}t^{-1})}u't\cdot f(t^{-1}u'^{-1}vut)\label{FtactingC10}
\end{align}
using \eqref{tfv}. Set $J:=J(H_{\Delta,0}/tH_{\Delta,0}t^{-1})$ and $J':=J(N_{L_P,0}/tN_{L_P,0}t^{-1})$. Now consider the bipartite graph on the set $J \coprod J'$ in which there is an edge between $u\in J$ and $u'\in J'$ if and only if $t^{-1}u'^{-1}vut$ lies in $N_0$. If a pair $(u,u')$ is not an edge then we have $f(t^{-1}u'^{-1}vut)=0$ as $f$ is supported on $N_0$. Note that the degree of each vertex in this graph is at most $1$. Whenever $v$ lies in $N_{L_P}tN_0t^{-1}H_{\Delta,0}$ then we do have a (non-unique) $u_0'\in J'$ and a unique $u(u_0')\in J$ such that $u_0'^{-1}vu(u_0')$ lies in $tN_0t^{-1}$. Moreover, the class of $u_0'$ in $N_{\Delta_P,0}/tN_{\Delta_P,0}t^{-1}$ is unique since $H_{\Delta,0}$ lies in the kernel of the quotient map $N_0\twoheadrightarrow N_{\Delta,0}$ and the value $f(t^{-1}u'^{-1}vu(u')t)$ only depends on the class of $u'$ in $N_{\Delta_P,0}$ for each $u'\in J'$. Since \eqref{FtactingC10} does not depend on the choice of $J'$, we may choose $J':=J(N_{L_P,0}/tN_{L_P,0}t^{-1}H_{\Delta_P,0})J(H_{\Delta_P,0}/tH_{\Delta_P,0}t^{-1})$ and deduce
\begin{align}
F_t(f)(v)=\sum_{\omega\in J(H_{\Delta_P,0}/tH_{\Delta_P,0}t^{-1})}\sum_{u\in J}\sum_{u'\in J(N_{L_P,0}/tN_{L_P,0}t^{-1}H_{\Delta_P,0})}u'\omega t\cdot f(t^{-1}\omega^{-1}u'^{-1}vut)=\notag\\ 
=\sum_{u\in J}\sum_{u'\in J(N_{L_P,0}/tN_{\Delta_P,0}t^{-1}H_{\Delta_P,0})}u'F_t(f(t^{-1}u'^{-1}vut))\label{Ftfv}
\end{align}
so that for each $v\in N_0$ there is at most one pair $(u,u')\in J\times J(N_{L_P,0}/tN_{L_P,0}t^{-1}H_{\Delta_P,0})$ for which $f(t^{-1}u'^{-1}vut)\neq 0$. In particular, we have $F_t(f)(1)=F_t(f(1))$ whence $M_\ast$ is an $A\bs N_{\Delta_P,0}\js[F_{\Delta_P}]$-submodule. 

\emph{Step 2. We show that $M_\ast$ is finitely generated over $A\bs N_{\Delta_P,0}\js[F_{\Delta_P}]$.} Now let $f_1,\dots,f_r$ be a finite set of generators of $M$ as a module over $A\bs N_{\Delta,0}\js[F_\Delta]$. Since each $f_i\colon N_0\to \pi_P^{H_{\Delta_P,0}}$ is continuous, $N_0$ is compact, $\pi_P^{H_{\Delta_P,0}}$ is discrete, the set $U_0:=\{f_i(v)\mid v\in N_0,1\leq i\leq r\}$ is finite. Moreover, since $\pi_P$ is locally admissible and $t_\alpha$ lies in the centre of $L_P$ for all $\alpha\in \Delta\setminus\Delta_P$, the orbit of the elements in $U_0$ under the action of $\prod_{\alpha\in\Delta\setminus\Delta_P} t_{\alpha}^{\mathbb{Z}}$ is also finite. Therefore the union $U:=\prod_{\alpha\in\Delta\setminus\Delta_P} t_{\alpha}^{\mathbb{Z}}U_0$ is also finite. Since $f_1,\dots,f_r$ generates $M$, we may write any function $f\in M$ as a finite sum of functions of the form $vF_t(f_i)$ for $v\in N_0$,  $t\in \prod_{\alpha\in\Delta} t_{\alpha}^{\mathbb{N}}$, and $1\leq i\leq r$. We decompose $t=t_1t_2$ for $t_1\in \prod_{\alpha\in\Delta_P} t_{\alpha}^{\mathbb{N}}$ and $t_2\in\prod_{\alpha\in\Delta\setminus\Delta_P} t_{\alpha}^{\mathbb{N}}$. By \eqref{Ftfv} we obtain that 
\begin{align*}
(vF_t(f_i))(1)=F_t(f_i)(v)=\sum_{u\in J}\sum_{u'\in J(N_{L_P,0}/tN_{L_P,0}t^{-1}H_{\Delta_P,0})}u'F_{t_1}(F_{t_2}(f_i(t^{-1}u'^{-1}vut))) 
\end{align*}
showing that $U$ generates $M_\ast$ as a module over $A\bs N_{\Delta_P,0}\js [F_{\Delta_P}]$ since $F_{t_2}(f_i(t^{-1}u'^{-1}vut))$ lies in $U$.

\emph{Step 3: We show that $M_\ast$ is admissible as a representation of $N_{\Delta_P}$.} For each generator $m\in U$ of $M_\ast$ choose a function $f_m\in M$ with $f_m(1)=m$. We put $V:=\{f_m\in M\mid m\in U\}$. Let $N_\ast$ be an open subgroup of $N_{\Delta,0}$ stabilizing all the functions in $V$ and put $N_{\Delta\setminus\Delta_P,\ast}:=N_\ast\cap N_{\Delta\setminus\Delta_P,0}$ where $N_{\Delta\setminus\Delta_P,0}:=\prod_{\alpha\in \Delta\setminus\Delta_P}N_{\alpha,0}$. Let $t$ be in $\prod_{\alpha\in\Delta_P} t_{\alpha}^{\mathbb{N}}$ and $v$ be in $N_{\Delta_P,0}$ be arbitrary. Using $F_t(f)(1)=F_t(f(1))$ which follows from \eqref{Ftfv} and that every function $f\in \Ind_{N_{\Delta_P,0}}^{N_0}\pi_P$ satisfies $f(v)=v\cdot f(1)$ by definition, we obtain 
\begin{equation}
(vF_t(f_m))(1)=F_t(f_m)(v)=v\cdot F_t(f_m)(1)=v\cdot F_t(f_m(1))=v\cdot F_t(m)\ .\label{ANDeltaPFDeltaPf1}
\end{equation}
This shows that the map
\begin{equation*}
M_1:=\sum_{m\in U}A\bs N_{\Delta_P,0}\js [F_{\Delta_P}]f_m\ni f\mapsto f(1)\in M_\ast
\end{equation*}
is a surjective $A\bs N_{\Delta_P,0}\js [F_{\Delta_P}]$-module homomorphism. Moreover, both $N_{\Delta_P,0}$ and $\prod_{\alpha\in\Delta_P} t_{\alpha}^{\mathbb{Z}}$ commute with $N_{\Delta\setminus\Delta_P,0}$, therefore any element in $M_1$ is $N_{\Delta\setminus\Delta_P,\ast}$-invariant. Therefore $M_1$ is admissible as a representation of $N_{\Delta_P,0}$ since for any open subgroup $N_{\Delta_P,\ast}\leq N_{\Delta_P,0}$ we have $$M_1^{N_{\Delta_P,\ast}}=M_1^{N_{\Delta_P,\ast}\times N_{\Delta\setminus\Delta_P,\ast}}\subset M^{N_{\Delta_P,\ast}\times N_{\Delta\setminus\Delta_P,\ast}}$$
wich is finite. In particular, $M_\ast$ is also admissible as it arises as a quotient of $M_1$.

Finally, it is clear that $M_\ast$ is invariant under the action of $T_0$ hence belongs to $\mathcal{M}_{\Delta_P}(\pi_P^{H_{\Delta_P,0}})$. 
\end{proof}
Now let $M'$ be arbitrary in $\mathcal{M}_{\Delta_P}(\pi_P)$ and define $\widetilde{M'}:=\{f\in \mathcal{C}_{1,0}(\pi_P)^{H_{\Delta,0}}\mid f(v)\in M'\text{ for all }v\in N_0\}\cong \Ind_{N_{\Delta_P,0}}^{N_{\Delta,0}}M'$.
\begin{lem}
$\widetilde{M'}$ lies in $\mathcal{M}_\Delta(\mathcal{C}_{1,0}(\pi_P)^{H_{\Delta,0}})$. For any $M$ in $\mathcal{M}_\Delta(\mathcal{C}_{1,0}(\pi_P)^{H_{\Delta,0}})$ we have $M\subseteq \widetilde{M_\ast}$.
\end{lem}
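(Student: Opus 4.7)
The statement has two parts: $\widetilde{M'}\in\mathcal{M}_\Delta(\mathcal{C}_{1,0}(\pi_P)^{H_{\Delta,0}})$ and $M\subseteq\widetilde{M_\ast}$. The second is essentially immediate from the alternative description of $M_\ast$ noted at the start of Step 1 in the preceding lemma's proof: $M_\ast=\{f(v)\mid f\in M,\ v\in N_0\}$, which says exactly that every $f\in M$ takes values in $M_\ast$, i.e.\ $f\in\widetilde{M_\ast}$. So the real content is to verify the three conditions for $\widetilde{M'}$.

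For this I first apply Lemma \ref{HDeltainvariants} to identify $\widetilde{M'}$ with $\Ind_{N_{\Delta_P,0}}^{N_{\Delta,0}}M'$, and then use the direct product decomposition $N_{\Delta,0}\cong N_{\Delta_P,0}\times N_{\Delta\setminus\Delta_P,0}$ to further identify it with $C^\infty(N_{\Delta\setminus\Delta_P,0},M')$, where $N_{\Delta_P,0}$ acts through the target $M'$ and $N_{\Delta\setminus\Delta_P,0}$ acts by translation on the source. Admissibility is then clear: for an open subgroup $N_{\Delta_P,\ast}\times N_{\Delta\setminus\Delta_P,\ast}\leq N_{\Delta,0}$, the invariants in $\widetilde{M'}$ are functions from the finite quotient $N_{\Delta\setminus\Delta_P,0}/N_{\Delta\setminus\Delta_P,\ast}$ to the finite set $M'^{N_{\Delta_P,\ast}}$, the latter being finite by the admissibility of $M'$. $T_0$-stability follows from formula \eqref{tfv}: for $t\in T_0$ we have $tN_{L_P,0}t^{-1}=N_{L_P,0}$ and $tN_0t^{-1}=N_0$, so $(tf)(v)=u_1 t\cdot f(v_1)$ with $u_1\in N_{L_P,0}$ and $v_1\in N_0$; since $M'$ is $T_0$-stable and the action of $N_{L_P,0}$ on $\pi_P^{H_{\Delta_P,0}}$ factors through $N_{\Delta_P,0}$, this value remains in $M'$.

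The technical core is the finite generation. Let $m_1,\dots,m_r$ be generators of $M'$ as an $A\bs N_{\Delta_P,0}\js[F_{\Delta_P}]$-module, and define $\chi_i\in\widetilde{M'}$ to be the function supported on $N_{L_P,0}H_{\Delta,0}$ with $\chi_i(u)=u\cdot m_i$ for $u\in N_{L_P,0}$ (well-defined as $m_i\in\pi_P^{H_{\Delta_P,0}}$). I claim these generate $\widetilde{M'}$ over $A\bs N_{\Delta,0}\js[F_\Delta]$. Given $f\in\widetilde{M'}$, local constancy gives an open subgroup $N'=\prod_{\alpha\in\Delta\setminus\Delta_P}N_{\alpha,0}^{p^{n_\alpha}}$ such that $f$ factors through $N_{\Delta,0}/N'$; hence $f$ is a finite $A$-linear combination of $N_{\Delta\setminus\Delta_P,0}$-translates of functions $f_m$ supported on the single coset $N_{\Delta_P,0}\cdot N'$ with values determined by elements $m\in M'$. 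After translation (absorbed into the $A\bs N_{\Delta,0}\js$-action), the task reduces to producing each such $f_m$ from the $\chi_i$. Using formula \eqref{Ftfv}, iterated application of $F_{t_\alpha}$ for $\alpha\in\Delta\setminus\Delta_P$ to $\chi_i$ — which simultaneously shrinks the $\alpha$-component of the support in $N_{\Delta\setminus\Delta_P,0}$ (via the conjugation by $t_\alpha$) and spreads it back by the sum over $H_{\Delta,0}/t_\alpha H_{\Delta,0}t_\alpha^{-1}$-coset representatives — combined with the generation of $M'$ by $m_1,\dots,m_r$ over $A\bs N_{\Delta_P,0}\js[F_{\Delta_P}]$ (applied via the ring inclusion into $A\bs N_{\Delta,0}\js[F_\Delta]$, which by \eqref{ANDeltaPFDeltaPf1} modifies only the values at $v\in N_{\Delta_P,0}$), yields all such $f_m$ inside the submodule generated by the $\chi_i$.

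The main obstacle is the last step, namely tracking exactly how the Hecke operators $F_{t_\alpha}$ for $\alpha\in\Delta\setminus\Delta_P$ interact with the $N_{\Delta_P,0}$-equivariance condition to produce functions supported on arbitrarily refined cosets of $N_{\Delta\setminus\Delta_P,0}$; the argument is parallel to — but dual to — the surjectivity argument in Step 3 of the preceding lemma, and uses crucially the product structure of $N_{\Delta,0}$ together with the fact that each $t_\alpha$ for $\alpha\in\Delta\setminus\Delta_P$ centralises $L_P$ (so $t_\alpha N_{L_P,0}t_\alpha^{-1}=N_{L_P,0}$), which ensures the index in \eqref{Ftfv} collapses to one $u'$-summand and a single $H_{\Delta,0}$-Hecke sum, making the support of $F_{t_\alpha}(\chi_i)$ computable in $N_{\Delta\setminus\Delta_P,0}$.
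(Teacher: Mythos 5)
Your second assertion ($M\subseteq\widetilde{M_\ast}$), the admissibility of $\widetilde{M'}$ and its $T_0$-stability are all fine, and your identification $\widetilde{M'}\cong\Ind_{N_{\Delta_P,0}}^{N_{\Delta,0}}M'$ is the right starting point. The gap is in the finite generation step: your proposed generators $\chi_i$, supported on $N_{L_P,0}H_{\Delta,0}$ with $\chi_i(u)=u\cdot m_i$, are not elements of $\widetilde{M'}$ (nor even of $\mathcal{C}_{1,0}(\pi_P)$) unless $\Delta_P=\Delta$. Indeed $N_{L_P,0}H_{\Delta,0}$ has image $N_{\Delta_P,0}$ in $N_{\Delta,0}$, hence infinite index in $N_0$, so it is a closed subgroup with empty interior; a locally constant function has open support, so any smooth function supported there vanishes. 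Under your own identification with $C^\infty(N_{\Delta\setminus\Delta_P,0},M')$ your $\chi_i$ is the ``delta function'' at $1$, which is not locally constant. Moreover, even granting such elements, the mechanism you invoke runs in the wrong direction: by \eqref{Ftfv} the operator $F_{t_\alpha}$ for $\alpha\in\Delta\setminus\Delta_P$ \emph{contracts} the support in the $N_{\Delta\setminus\Delta_P,0}$-direction (conjugation by $t_\alpha$), and the Hecke sum only ranges over $H_{\Delta,0}$-cosets, which die in $N_{\Delta,0}$; since each vector of a smooth representation sees the Iwasawa algebra only through a finite group-algebra quotient, no combination of translates and $F_t$'s can spread a point support into the characteristic function of an open coset.

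The correct choice is the opposite extreme: take as generators the \emph{constant} functions $f_i(v):=m_i$ for all $v\in N_0$ (these are $H_{\Delta,0}$-invariant, hence lie in $\widetilde{M'}$). Given $f\in\widetilde{M'}$, it is determined by its restriction to $N_{\Delta\setminus\Delta_P,0}$, which is constant on cosets of $tN_{\Delta\setminus\Delta_P,0}t^{-1}$ for some $t\in\prod_{\alpha\in\Delta\setminus\Delta_P}t_\alpha^{\mathbb{N}}$; so one only needs the functions $f_{v_0,t,m}$ supported on $N_{L_P,0}v_0tN_0t^{-1}H_{\Delta,0}$ with value pattern given by $m\in M'$. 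Writing $m=\sum_i\lambda_im_i$ with $\lambda_i\in A\bs N_{\Delta_P,0}\js[F_{\Delta_P}]$, each $\lambda_if_i$ is still constant in the $N_{\Delta\setminus\Delta_P,0}$-direction (this subgroup commutes with $A\bs N_{\Delta_P,0}\js[F_{\Delta_P}]$), and then $v_0F_t$ cuts the support down to exactly the desired coset, using $N_{L_P,0}=tN_{L_P,0}t^{-1}H_{\Delta_P,0}$ for such $t$; this gives $f_{v_0,t,m}=\sum_iv_0F_t\lambda_if_i$. So the contraction of supports by $F_t$, which defeats your argument, is precisely what makes the argument with constant functions work.
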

\begin{proof}
By the definition of $M_\ast$ and $\widetilde{M_\ast}$, $M$ is contained in $\widetilde{M_\ast}$.

Let $M'$ be arbitrary in $\mathcal{M}_{\Delta_P}(\pi_P^{H_{\Delta_P,0}})$. By \eqref{Ftfv}, $\widetilde{M'}$ is an $A\bs N_{\Delta,0}\js[F_\Delta]$-submodule of $\mathcal{C}_{1,0}(\pi_P)^{H_{\Delta,0}}$. Choose a finite set $m_1,\dots,m_r\in M'$ of generators as a module over $A\bs N_{\Delta_P,0}\js[F_{\Delta_P}]$. For each $1\leq i\leq r$ let $f_i\in \mathcal{C}_{1,0}(\pi_P)$ be the constant function $f_i(v):=m_i$ for all $v\in N_0$. Since $f_i$ is constant, it is $H_{\Delta,0}$-invariant therefore lies in $\widetilde{M'}$. We claim that the functions $f_1,\dots,f_r$ generate $\widetilde{M'}$ as a module over $A\bs N_{\Delta,0}\js [F_\Delta]$. Let $f$ be any function in $\widetilde{M'}$. By Lemma \ref{HDeltainvariants}, we may view $f$ as a function on $N_{\Delta,0}$ that is determined by its values on the subgroup $N_{\Delta\setminus\Delta_P,0}$. Moreover, since the restriction of $f$ to $N_{\Delta\setminus\Delta_P,0}$ is continuous, it is constant on the cosets of subgroup $tN_{\Delta\setminus\Delta_P,0}t^{-1}$ for some $t\in \prod_{\alpha\in \Delta\setminus\Delta_P}t_\alpha^{\mathbb{N}}$ as these subgroups form a system of neighbourhoods of $1$ in $N_{\Delta\setminus\Delta_P,0}$. So it suffices to show that for each coset $v_0tN_{\Delta\setminus\Delta_P,0}t^{-1}$ ($v_0\in N_{\Delta\setminus\Delta_P,0}$) and $m$ in $M'$ the function
\begin{eqnarray*}
f_{v_0,t,m}\colon N_0&\to& \pi_P^{H_{\Delta_P,0}}\\
v&\mapsto&\begin{cases}u\cdot m&\text{if }v\in uv_0tN_{P,0}t^{-1}H_{\Delta,0}\text{ for some }u\in N_{L_P,0}\ ;\\0 &\text{otherwise },\end{cases}
\end{eqnarray*}
lies in $\sum_{i=1}^rA\bs N_{\Delta,0}\js [F_\Delta]f_i$. Since $m_1,\dots,m_r$ generate $M'$, we may write $m$ as a finite sum $\sum_{i=1}^r\lambda_i m_i$ for some $\lambda_i\in A\bs N_{\Delta_P,0}\js [F_{\Delta_P}]$ ($1\leq i\leq r$). We have $\sum_{i=1}^r\lambda_if_i(1)=m$ by \eqref{ANDeltaPFDeltaPf1}. On the other hand, each $\lambda_i f_i$ ($1\leq i\leq r$) is constant on $N_{\Delta\setminus\Delta_P,0}$ since $N_{\Delta\setminus\Delta_P,0}$ commutes with all the elements in $A\bs N_{\Delta_P,0}\js [F_{\Delta_P}]$ and $f_i$ is constant on $N_{\Delta\setminus\Delta_P,0}$. Using \eqref{Ftfv} we deduce $f_{v_0,t,m}=\sum_{i=1}^rv_0F_t\lambda_if_i$ since for $t\in \prod_{\alpha\in \Delta\setminus\Delta_P}t_\alpha^{\mathbb{N}}$ we have $N_{L_P,0}=tN_{L_P,0}t^{-1}H_{\Delta_P,0}$.

It is clear that $\widetilde{M'}$ is $T_0$-stable. Moreover, we have $\widetilde{M'}^\vee=A\bs N_{\Delta,0}\js\otimes_{A\bs N_{\Delta_P,0}\js}M'^\vee$. In particular, $\widetilde{M'}^\vee$ is finitely generated over $A\bs N_{\Delta,0}\js$ since $M'$ is admissible as a representation of $N_{\Delta_P,0}$.
\end{proof}
So the elements of the form $\widetilde{M'}$ ($M'\in \mathcal{M}_{\Delta_P}(\pi_P)$) are cofinal in $\mathcal{M}_\Delta(\mathcal{C}_{1,0}(\pi_P)^{H_{\Delta,0}})$. For these we have $\widetilde{M'}[1/X_\Delta]\cong A\bg N_{\Delta,0}\jg\otimes_{A\bg N_{\Delta_P,0}\jg}M'[1/X_{\Delta_P}]$. We finish the proof of Thm.\ \ref{parindDvee} by taking the projective limit.
\end{proof}

\begin{cor}\label{princserDDelta}
For a character $\chi\colon T\to A^\times$, the \'etale $T_+$-module $D^\vee_{\Delta}(\Ind_B^G\chi)\cong A\bg N_{\Delta,0}\jg\otimes_A\chi$ is free of rank $1$ on which $T_+$ acts via the twist by $\chi$.
\end{cor}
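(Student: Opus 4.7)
The plan is to specialize Theorem \ref{parindDvee} to the smallest parabolic $P=B$, whose Levi component is $L_P=T$. In this case the set of simple roots whose root subgroups lie in $L_P$ is empty, so $\Delta_P=\emptyset$, and consequently the auxiliary objects attached to $L_P$ all degenerate: $N_{L_P,0}=\{1\}$, $H_{\Delta_P,0}=\{1\}$, $N_{\Delta_P,0}=\{1\}$, and the multivariable Laurent series ring $A\bg N_{\Delta_P,0}\jg$ reduces to $A$ itself (empty set of variables, nothing to invert). Under this interpretation $P^-$ is identified with $B$ (or with the opposite Borel, according to convention) and the character $\chi$ is a smooth, admissible, locally admissible representation of $L_P=T$, so Theorem \ref{parindDvee} applies.

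The first step is therefore to compute $D^\vee_{\Delta_P}(\chi)=D^\vee_\emptyset(\chi)$ directly from the definition. Since $H_{\emptyset,0}$ is trivial we have $\chi^{H_{\emptyset,0}}=\chi$, and since the skew polynomial ring $A\bs N_{\emptyset,0}\js[F_\emptyset]$ coincides with $A$, the category $\mathcal{M}_\emptyset(\chi)$ consists of finitely generated $A$-submodules of the rank-one module $\chi$ that are $T_0$-stable and admissible. The whole of $\chi$ is already such a submodule and is cofinal, so the Pontryagin dual gives $D^\vee_\emptyset(\chi)\cong A$ as a free $A$-module of rank one, on which $T_+$ acts through the character obtained from $\chi$ by transport via Pontryagin duality (i.e.\ the twist by $\chi$, after tracking the standard duality convention used throughout the paper, consistent with the formula $F_t v=\Tr_{H_{\emptyset,0}/tH_{\emptyset,0}t^{-1}}(tv)=\chi(t)v$ on $\chi$ and the identification $A^\vee\cong A$).

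The second step is to substitute this into Theorem \ref{parindDvee}, giving
\[
D^\vee_\Delta(\Ind_{P^-}^G\chi)\;\cong\; A\bg N_{\Delta,0}\jg\,\hat\otimes_{A}\,\chi\;\cong\;A\bg N_{\Delta,0}\jg\otimes_A\chi,
\]
where the completed tensor product reduces to the usual tensor product because $\chi$ is of finite length (in fact of length one) over $A$. By construction of the $T_+$-action on the right-hand side in \S 3.2, the semilinear action is the conjugation action $\varphi_t$ on $A\bg N_{\Delta,0}\jg$ combined with the action of $T_+$ on $D^\vee_\emptyset(\chi)=\chi$ through $\chi$; this is exactly the $T_+$-action ``via the twist by $\chi$'' in the statement, and the module is manifestly free of rank one over $A\bg N_{\Delta,0}\jg$.

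The only real subtleties in this plan are bookkeeping issues rather than substantive obstacles: verifying that the empty $\Delta_P$ case of Theorem \ref{parindDvee} is truly covered by its proof (all the intermediate objects collapse gracefully because every construction indexed by $\Delta_P$ is empty), and tracking the direction of the twist correctly through Pontryagin duality so that one obtains $\chi$ rather than $\chi^{-1}$ as stated. No new analytic or structural input is needed beyond Theorem \ref{parindDvee} itself.
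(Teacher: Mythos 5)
Your proposal is correct and is exactly the argument the paper intends: Corollary \ref{princserDDelta} is stated as an immediate specialization of Theorem \ref{parindDvee} to $P=B$, $L_P=T$, $\Delta_P=\emptyset$, where $D^\vee_{\emptyset}(\chi)$ degenerates to the rank-one module $A$ with $T_+$ acting through the character, and the completed base change to $A\bg N_{\Delta,0}\jg$ gives the free rank-one twisted module. The two bookkeeping points you flag (the collapse of all $\Delta_P$-indexed objects, and the duality/opposite-Borel conventions in identifying the twist as $\chi$) are glossed in the paper in exactly the same way, so your treatment matches its proof.
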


\begin{que}
Let $\pi$ be a supercuspidal representation of $G$. Is the converse of Theorem \ref{parindDvee} true in the sense that---assuming $D^\vee_\Delta(\pi)\neq 0$---there does \emph{not} exist a proper subset $I\subsetneq\Delta$ and an \'etale $T_+$-module $D$ over $A\bg N_{I,0}\jg$ such that we have 
\begin{equation*}
D^\vee_{\Delta}(\pi)\cong A\bg N_{\Delta,0}\jg\hat{\otimes}_{A\bg N_{I,0}\jg}D\ ?
\end{equation*}
\end{que}

\subsection{Exactness of $D^\vee_\Delta$ on the category $SP_A$}

We start this section with the following abstract Lemmata that will be needed several times in the sequel. 

\begin{lem}\label{finpresgenR}
Let $R$ be an arbitrary ring and $0\to M_1\to M_2\to M_3\to 0$ be a short exact sequence of $R$-modules.
\begin{enumerate}[$(i)$]
\item If $M_1$ and $M_3$ are both finitely presented then so is $M_2$.
\item If $M_3$ is finitely presented and $M_2$ is finitely generated then $M_1$ is finitely generated.
\end{enumerate}
\end{lem}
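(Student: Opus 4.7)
The statement is a standard piece of homological algebra (extensions preserve finite presentation; finitely generated maps onto finitely presented force the kernel term to be finitely generated), so the strategy is routine and I will just indicate how to assemble it.

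For part $(i)$, the plan is to combine finite presentations of $M_1$ and $M_3$ into one for $M_2$ via a horseshoe-type construction. Fix surjections $R^{m}\twoheadrightarrow M_1$ and $R^{n}\twoheadrightarrow M_3$ with finitely generated kernels $K_1$ and $K_3$. Using the projectivity of $R^{n}$, lift $R^{n}\to M_3$ to a map $R^{n}\to M_2$, and combine with the composition $R^{m}\twoheadrightarrow M_1\hookrightarrow M_2$ to obtain a surjection $R^{m}\oplus R^{n}\twoheadrightarrow M_2$. An application of the snake lemma to the obvious commutative diagram with rows $0\to R^{m}\to R^{m}\oplus R^{n}\to R^{n}\to 0$ and $0\to M_1\to M_2\to M_3\to 0$ identifies the kernel of this surjection as an extension of $K_3$ by $K_1$, hence finitely generated. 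Thus $M_2$ is finitely presented.

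For part $(ii)$, the key input is the classical fact that if $M$ is finitely presented and $R^{n}\twoheadrightarrow M$ is any surjection from a finitely generated free module, then $\ker(R^{n}\to M)$ is finitely generated. This follows from Schanuel's lemma: if $R^{p}\to R^{q}\to M\to 0$ is a finite presentation, then $\ker(R^{n}\to M)\oplus R^{q}\cong \ker(R^{q}\to M)\oplus R^{n}$, and the right-hand side is finitely generated, so any direct summand of it is. Given this, the argument is immediate: pick a surjection $R^{n}\twoheadrightarrow M_2$ with $n$ finite (possible since $M_2$ is finitely generated), compose with $M_2\twoheadrightarrow M_3$, and let $K:=\ker(R^{n}\to M_3)$. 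Then $K$ is finitely generated by the above fact, and $K$ surjects onto $M_1$: any $m_1\in M_1\subseteq M_2$ lifts to some $r\in R^{n}$, and $r$ necessarily lies in $K$ since its image in $M_3$ is zero. Hence $M_1$ is finitely generated.

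There is no real obstacle here beyond being careful that the arguments work over an arbitrary (possibly noncommutative) ring $R$ without any Noetherian or coherence hypothesis, which is indeed the whole point of quoting this lemma: the skew polynomial ring $A\bs N_{\Delta,0}\js[F_\Delta]$ appearing later in the paper is not known to be coherent, so one cannot simply appeal to ``submodules of finitely generated modules are finitely generated.'' Both parts above only use projectivity of finite free modules and the snake/Schanuel formalism, which hold in full generality.
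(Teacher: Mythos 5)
Your proof is correct and follows essentially the same route as the paper: part $(i)$ is the same combine-the-presentations argument (the paper checks by hand that the kernel of the surjection $R^{m}\oplus R^{n}\twoheadrightarrow M_2$ is an extension of the two finitely generated relation modules, rather than quoting the snake lemma), and part $(ii)$ likewise realizes $M_1$ as the image of the kernel of a map from a finite free module through $M_2$ onto $M_3$. The only cosmetic difference is that in $(ii)$ you invoke Schanuel's lemma to see that the kernel of an \emph{arbitrary} finite free cover of the finitely presented $M_3$ is finitely generated, whereas the paper sidesteps this by choosing the generating set of $M_2$ to contain lifts of the generators occurring in the given presentation of $M_3$.
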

\begin{proof}
This should be classical, but here is a proof: $(i)$ We have a presentation $\bigoplus_{j=1}^{r_l} Rd_j^{(l)}\overset{f_l}{\to} \bigoplus_{i=1}^{k_l} Re_i^{(l)}\to M_l\to 0$ sending the free generator $e_i^{(l)}$ to some elements $m_i^{(l)}\in M_l$ ($1\leq i\leq k_l$, $l=1,3$) by assumption. We regard $M_1$ as a submodule of $M_2$ and lift the elements $m_i^{(3)}$ to $\widetilde{m_i^{(3)}}\in M_2$ ($1\leq i\leq k_3$). So we have a surjective map
\begin{eqnarray*}
\Psi\colon \bigoplus_{i=1}^{k_1} Re_i^{(1)}\oplus \bigoplus_{i'=1}^{k_3} Re_{i'}^{(3)}&\twoheadrightarrow& M_2\\
e_i^{(1)}&\mapsto& m_i^{(1)}\\
e_{i'}^{(3)}&\mapsto& \widetilde{m_{i'}^{(3)}}\ .\\
\end{eqnarray*}
Moreover, $\Ker(\Psi)\cap \bigoplus_{i=1}^{k_1} Re_i^{(1)}=\mathrm{Im}(f_1)$ and $\Ker(\Psi)/(\Ker(\Psi)\cap \bigoplus_{i=1}^{k_1} Re_i^{(1)})=\mathrm{Im}(f_3)$ are finitely generated whence so is $\Ker(\Psi)$.

$(ii)$ Let $U\subseteq M_2$ be a finite set of generators containing a set $V\subseteq U$ of lifts of the generators of $M_3$ arising in the finite presentation of $M_3$. Since the image $\overline{V}$ of $V$ in $M_3$ generates $M_3$, we may assume without loss of generality that $U\setminus V$ is contained in $M_1$. The natural surjection $\bigoplus_{\overline{v}\in\overline{V}}Re_{\overline{v}}\to M_3$ sending the formal generator $e_{\overline{v}}$ to $\overline{v}$ has finitely generated kernel $M_0$. Moreover, $\sum_{v\in V}\lambda v$ lies in $M_1$ if and only if $\sum_{v\in V}\lambda_v e_{\overline{v}}$ lies in $M_0$. This shows that $\sum_{v\in V}Rv\cap M_1$ is a quotient of $M_0$---in particular, finitely generated. Finally, $M_1$ is generated by $U\setminus V$ and $\sum_{v\in V}Rv\cap M_1$ therefore it is finitely generated.
\end{proof}

\begin{lem}\label{polfinpres}
Let $R$ be an arbitrary ring and $R\leq S=R[s_1,\dots,s_k]$ be a ring extension that is finitely generated as an $R$-algebra by elements $s_1,\dots,s_k\in S$ commuting with each other and satisfying $s_iR\subseteq Rs_i$. If $M$ is an $S$-module that is finitely presented as an $R$-module then $M$ is also finitely presented as an $S$-module.
\end{lem}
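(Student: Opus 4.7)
The plan is to construct an $S$-presentation of $M$ directly from a given $R$-presentation by adjoining ``straightening relations'' that encode the $S$-action. Since $M$ is in particular finitely generated over $R$, pick $R$-generators $m_1,\dots,m_n$ of $M$; these also generate $M$ over $S$, giving a surjection $\pi\colon S^n\to M$, $e_j\mapsto m_j$. By the finite presentation of $M$ as an $R$-module, the kernel $K_R$ of the restriction $R^n\to M$ is generated over $R$ by finitely many elements $k_1,\dots,k_p$. For each $1\leq i\leq k$ and $1\leq j\leq n$ choose $r_{ijl}\in R$ with $s_im_j=\sum_l r_{ijl}m_l$ and set $\rho_{ij}:=s_ie_j-\sum_l r_{ijl}e_l\in\Ker\pi$. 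I would claim that the $S$-submodule $N\subseteq\Ker\pi$ generated by $k_1,\dots,k_p$ and the finitely many $\rho_{ij}$ is all of $\Ker\pi$.

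The proof of the claim is a reduction argument modulo $N$. First, note that the hypothesis $s_iR\subseteq Rs_i$ together with the fact that the $s_i$ commute implies that every element of $S$ can be written as a finite left-$R$-linear combination $\sum r_\alpha s^\alpha$ of commutative monomials $s^\alpha=s_1^{\alpha_1}\cdots s_k^{\alpha_k}$: one simply pushes $R$-factors to the left using the relations $s_ir=\sigma_i(r)s_i$ (some $\sigma_i(r)\in R$). Hence every element of $S^n$ is a left-$R$-linear combination of the elements $s^\alpha e_j$. I would then show by induction on the total degree $|\alpha|$ that each $s^\alpha e_j$ is congruent modulo $N$ to an element of $R^n$: having used $\rho_{i,j}$ to replace $s_ie_j$ by $\sum_l r_{ijl}e_l$, one reduces $s_{i_1}\cdots s_{i_{t-1}}\sum_l r_{i_t jl}e_l$ by first commuting each $s_{i_q}$ past the scalar $r_{i_tjl}\in R$ (landing back in $R$ on the right side of the $s$'s) and then invoking the inductive hypothesis on a monomial of shorter length. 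It follows that the quotient $S^n/N$ is generated as an abelian group by (the classes of) elements of $R^n$, so for any $x\in\Ker\pi$ there is some $y\in R^n$ with $x-y\in N$.

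Since $N\subseteq\Ker\pi$, such a $y$ lies in $R^n\cap\Ker\pi=K_R$, which is generated over $R$ (hence over $S$) by $k_1,\dots,k_p\in N$; therefore $x\in N$, establishing the claim, and $\Ker\pi$ is finitely generated as an $S$-module. The main technical point—really the only subtlety—is the careful bookkeeping in the commutation step: one must verify that pushing each $s_{i_q}$ past a scalar from $R$ produces a new scalar still in $R$ and leaves the remaining monomial in a form amenable to induction. This is immediate from $s_iR\subseteq Rs_i$ and the fact that the $s_i$ commute, so no further hypothesis is needed.
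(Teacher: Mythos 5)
Your proof is correct and is essentially the paper's own argument: the paper also takes the $S$-module generated by (the base change of) the finitely many $R$-relations together with the straightening relations $s_ie_j-1\otimes r_{i,j}$ inside $S\otimes_R F\cong S^n$, and shows it exhausts the kernel by writing elements of $S$ as left-$R$-combinations of monomials in the $s_i$ and reducing by induction on degree. The only quibble is the parenthetical in your commutation step: by $s_iR\subseteq Rs_i$ the new scalars land in $R$ on the \emph{left} of the remaining monomial (which is what your induction actually uses), not on the right; this is a wording slip and does not affect the argument.
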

\begin{proof}
Choose a finite presention $f\colon F:=\bigoplus_{j=1}^n Re_j\twoheadrightarrow M$ sending $e_j$ to some element $m_j\in M$ ($1\leq j\leq n$). Consider the map $\id\otimes f\colon  S\otimes_R F=\bigoplus_{j=1}^n Se_j\twoheadrightarrow M$ sending $e_j$ to $m_j$. Since $f$ is surjective, for each pair $(i,j)$ ($1\leq i\leq k$, $1\leq j\leq n$) there exists an element $r_{i,j}\in F$ such that $f(r_{i,j}) =(\id\otimes f)(s_ie_j)$.  Let $I$ be the $S$-submodule of $S\otimes_R F$ generated by $S\otimes_R \Ker(f)$ and $\{s_ie_j-1\otimes r_{i,j}\mid 1\leq i\leq k,1\leq j\leq n\}$. It is clear that $I\subseteq \Ker(\id\otimes f)$. On the other hand, let $\sum_{j=1}^nt_je_j$ be in $\Ker(\id\otimes f)$ ($t_j\in S$, $1\leq j\leq n$). We may write each $t_j$ as a polynomial in the $s_i$'s with coefficients in $R$ ($1\leq i\leq k$, $1\leq j\leq n$) such that the $s_i$'s are on the right. Using the relations $s_ie_j-1\otimes r_{i,j}$ we find that $\sum_{j=1}^nt_je_j$ is congruent to an element in $1\otimes F$ modulo $I$. So we deduce that $\sum_{j=1}^nt_je_j$ lies in $I$ so that $I=\Ker(\id\otimes f)$ is finitely generated over $S$.
\end{proof}

For a subset $\Delta'\subset \Delta$ we introduce the following notations $N_{\Delta',0}:=\prod_{\alpha\in\Delta'}N_{\alpha,0}$ (as a direct summand of the group $N_{\Delta,0}$) and $[F_{\Delta'}]:=[F_{\alpha}\mid \alpha\in\Delta']$.

\begin{lem}\label{abstractmod}
Let $\Delta_0$ and $\Delta'$ be subsets in $\Delta$ (with $\Delta_1:=\Delta\setminus\Delta_0$ and $\Delta'':=\Delta\setminus\Delta'$) and let $N_{\Delta_1,\ast}$ be an open subgroup of $N_{\Delta_1,0}$ such that we have $tN_{\Delta_1,\ast}t^{-1}\subset N_{\Delta_1,\ast}$ for all $t\in\prod_{\alpha\in\Delta'}t_\alpha^{\mathbb{N}}$. Further, let $\chi\colon T\to \kappa^\times$ be a continuous character and consider the space $\mathcal{C}(N_{\Delta,0},\kappa)^{N_{\Delta_1,\ast}}$ of continuous functions as an $N_{\Delta,0}$-representation with the following action of the operators $F_\alpha$ ($\alpha\in\Delta$): For $\alpha\in\Delta'$ we let $F_\alpha$ act by zero. For $\alpha\in \Delta''$, $v\in N_{\Delta,0}$ and $f\in \mathcal{C}(N_{\Delta,0},\kappa)^{N_{\Delta_1,\ast}}$ we put 
\begin{equation}\label{abstractFalpha}
F_{\alpha}(f)(v):=\sum_{u\in J((N_{\Delta,0}\cap t_\alpha^{-1}N_{\Delta_1,\ast}t_\alpha)/ N_{\Delta_1,\ast})}\chi(t_\alpha)f(t_\alpha^{-1}vt_\alpha u)\ . 
\end{equation}
Then any finitely generated $\kappa\bs N_{\Delta,0}\js[F_\Delta]$-submodule $M\leq \mathcal{C}(N_{\Delta,0},\kappa)^{N_{\Delta_1,\ast}}$ is finitely presented and has finite length. Moreover, if we further assume $\Delta'\subseteq\Delta_1$ then the $\kappa\bs N_{\Delta,0}\js[F_\Delta]$-module $\mathcal{C}(N_{\Delta,0},\kappa)^{N_{\Delta_1,\ast}}$ is cyclic. If $\Delta'\subseteq \Delta_1$ and $N_{\Delta_1,\ast}=N_{\Delta_1,0}$ then $\mathcal{C}(N_{\Delta,0},\kappa)^{N_{\Delta_1,\ast}}$ is simple (ie.\ has no nontrivial submodules).
\end{lem}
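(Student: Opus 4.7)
The plan is to prove the three assertions in order, starting from (iii) and building up. For (iii), the hypotheses $\Delta' \subseteq \Delta_1$ and $N_{\Delta_1,\ast} = N_{\Delta_1,0}$ give $\mathcal{C}(N_{\Delta,0},\kappa)^{N_{\Delta_1,0}} = \mathcal{C}(N_{\Delta_0,0},\kappa)$; for $\alpha \in \Delta_0 \subseteq \Delta''$ a direct computation of $(N_{\Delta,0} \cap t_\alpha^{-1} N_{\Delta_1,0} t_\alpha)/N_{\Delta_1,0}$ shows this coset space is trivial, so equation \eqref{abstractFalpha} reduces to the single term $F_\alpha(f)(v) = \chi(t_\alpha) f(t_\alpha^{-1} v t_\alpha)$ (interpreted as zero when $t_\alpha^{-1} v t_\alpha \notin N_{\Delta_0,0}$). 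Starting from any nonzero $f$, the plan is to use smoothness and a suitable $\kappa$-linear combination of its $N_{\Delta_0,0}$-translates to isolate the characteristic function $\mathbf{1}_{v_0 N_{\Delta_0,\star}}$ of a single coset; summing further translates yields the constant function $\mathbf{1}_{N_{\Delta_0,0}}$. Iterating the $F_\alpha$ operators (each contracts the support in the $\alpha$-direction by factor $p$) combined with $N_{\Delta_0,0}$-translations then produces the characteristic function of every coset of every open subgroup of $N_{\Delta_0,0}$; these span $\mathcal{C}(N_{\Delta_0,0},\kappa)$ as a $\kappa$-vector space, so $Rf = \mathcal{C}(N_{\Delta_0,0},\kappa)$ and the module is simple.

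For (ii), take $f_0 := \mathbf{1}_{N_{\Delta_0,0} N_{\Delta_1,\ast}}$ as candidate cyclic generator. The quotient $N_{\Delta,0}/(N_{\Delta_0,0} N_{\Delta_1,\ast}) \cong N_{\Delta_1,0}/N_{\Delta_1,\ast}$ is a finite $p$-group, so translating $f_0$ by representatives in $N_{\Delta_1,0}$ produces indicators of every coset. Repeating the argument of (iii) inside each coset---combining $F_\alpha$ for $\alpha \in \Delta_0$ with $N_{\Delta_0,0}$-translations---builds up arbitrary locally constant functions fibrewise, and together these span the whole space $\mathcal{C}(N_{\Delta,0},\kappa)^{N_{\Delta_1,\ast}}$.

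For (i), given a finitely generated $R$-submodule $M$, the plan is to first show $M$ has finite length and then derive finite presentation. For finite length, the ambient space $\mathcal{C}(N_{\Delta,0},\kappa)^{N_{\Delta_1,\ast}}$ admits a finite filtration coming from the filtration of the finite $p$-group algebra $\kappa[N_{\Delta_1,0}/N_{\Delta_1,\ast}]$ by powers of its augmentation ideal; the successive quotients identify (up to twisting $\chi$ and the $T_0$-action) with simple modules of the form covered by (iii), so that $M$ inherits finite length. For finite presentation, induct on the length via Lemma \ref{finpresgenR}(i); the base case requires each simple $R$-module appearing to be finitely presented. This is expected to be the main obstacle, since the ring $\kappa\bs N_{\Delta,0}\js[F_\Delta]$ is not coherent once $|\Delta| \geq 2$, so Emerton's one-variable argument does not extend directly. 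The proposed approach is to identify, via the faithful reduction functor of section \ref{reducetousualsec}, each simple $R$-module with a simple $\kappa\bs X\js[F]$-module where coherence is known, then to lift a finite presentation from the one-variable ring back to $R$ by induction on $|\Delta_0|$, using Lemma \ref{polfinpres} to handle the skew-polynomial extension at each step.
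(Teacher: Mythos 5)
Your parts (ii) and (iii) are essentially the paper's route (generate everything from the constant function $\mathbbm{1}$ by applying $uF_{s_0^r}$ with $s_0=\prod_{\alpha\in\Delta_0}t_\alpha$, and get $\mathbbm{1}$ inside any nonzero submodule), up to one slip: from an arbitrary nonzero $f$ you cannot isolate a single coset indicator by $\kappa$-linear combinations of $N_{\Delta_0,0}$-translates (take $f$ constant: its translates span a line). What is true, and is all you need, is that any nonzero submodule, being a smooth representation of the pro-$p$ group $N_{\Delta,0}$ over $\kappa$ of characteristic $p$, contains a nonzero $N_{\Delta,0}$-fixed vector, and the fixed vectors are exactly the constants; combined with your cyclicity step this gives simplicity.

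Part (i) is where the real content lies, and there are two genuine gaps. First, the finite-length argument: you filter the \emph{ambient} space by powers of the augmentation ideal $J$ of $\kappa[N_{\Delta_1,0}/N_{\Delta_1,\ast}]$ and claim the graded pieces are simple modules of type (iii). This fails whenever $\Delta'\cap\Delta_0\neq\emptyset$ (a case that does occur in the application, Lemma \ref{finpresgr}): on each graded piece, which is a sum of copies of $\mathcal{C}(N_{\Delta_0,0},\kappa)$, the operators $F_\alpha$ with $\alpha\in\Delta'\cap\Delta_0$ still act by zero, and such a module is neither simple nor of finite length --- the functions invariant under a shrinking chain of open subgroups of $N_{\Delta_0\cap\Delta',0}$ form a strictly increasing chain of submodules, stable under the remaining operators since $t_\beta$ centralizes $N_{\alpha,0}$ for $\beta\neq\alpha$. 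So $M$ cannot ``inherit'' finite length from the ambient module. One has to filter $M$ itself, $M^i:=J^i\mathcal{C}\cap M$, prove finite generation of each $M^i$ inductively (via Lemma \ref{finpresgenR}(ii), once $M^{i-1}/M^i$ is known to be finitely presented), and for each finitely generated graded piece fix an open subgroup of $N_{\Delta_0\cap\Delta',0}$ stabilizing a finite generating set, so that the piece embeds into a \emph{finite} direct sum of copies of the simple module $\mathcal{C}(N_{\Delta_0\cap\Delta'',0},\kappa)$; this is the paper's Steps 4--5.

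Second, finite presentation: the base case is precisely the bottleneck, and your proposed transfer from the one-variable ring has no working mechanism. The reduction functor of section \ref{reducetousualsec} is defined on \'etale $T_+$-modules over $A\bg N_{\Delta,0}\jg$, not on $\kappa\bs N_{\Delta,0}\js[F_\Delta]$-modules such as these simple subquotients; and even granting a surjection onto a $\kappa\bs X\js[F]$-module, finite presentation over a quotient ring does not lift back to the source --- this is exactly where the failure of coherence of $\kappa\bs N_{\Delta,0}\js[F_\Delta]$ bites. The ``induction on $|\Delta_0|$ via Lemma \ref{polfinpres}'' also cannot run as described: that lemma needs the module to be finitely presented over the \emph{smaller} ring of which the ambient ring is a skew-polynomial extension, and $\mathcal{C}(N_{\Delta_0,0},\kappa)$ is not even finitely generated over the subring obtained by deleting one pair $(X_\alpha,F_\alpha)$ with $\alpha\in\Delta_0$. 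What is needed, and what the paper supplies in its Step 3, is a direct computation that the annihilator of the cyclic generator $\mathbbm{1}_{N_{\Delta,0}}$ is generated by the explicit finite set $\{X_\alpha,\,X_{\alpha'},\,\chi(t_\alpha)-\sum_{u}uF_\alpha,\,F_{\alpha'}\mid\alpha'\in\Delta',\ \alpha\in\Delta''\}$; Lemma \ref{polfinpres} is then used in the opposite direction to yours, namely to reduce finite presentation over $\kappa\bs N_{\Delta,0}\js[F_\Delta]$ to finite presentation over $\kappa\bs N_{\Delta_0\cap\Delta'',0}\js[F_{\Delta_0\cap\Delta''}]$, where that annihilator computation applies. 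Without this (or an equivalent) explicit finiteness statement for the annihilator, your argument for finite presentation does not close.
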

\begin{proof}
\emph{Step 1: We assume $\Delta'\subseteq \Delta_1$ and show in this case that $\mathcal{C}(N_{\Delta,0},\kappa)^{N_{\Delta_1,\ast}}$ is generated by the characteristic function $\mathbbm{1}:=\mathbbm{1}_{N_{\Delta_1,\ast}\times N_{\Delta_0,0}}$.}  Note that any function $f$ in $\mathcal{C}(N_{\Delta,0},\kappa)^{N_{\Delta_1,\ast}}$ is constant on the right cosets of $N_{\Delta_1,\ast}\times s_0^rN_{\Delta_0,0}s_0^{-r}$ for $r$ large enough depending on $f$ (here we put $s_0:=\prod_{\alpha\in\Delta_0}t_\alpha$). Moreover, the function $\chi(s_0^{-r})uF_{s_0^r}(\mathbbm{1})$ is the characteristic function of the right coset $uN_{\Delta_1,\ast}\times s_0^rN_{\Delta_0,0}s_0^{-r}$ therefore $f$ can be written as a finite $\kappa$-linear combination of elements of the form $uF_{s_0^r}(\mathbbm{1})$. 

\emph{Step 2: We assume $\Delta'\subseteq\Delta_1$ and $N_{\Delta_1,\ast}=N_{\Delta_1,0}$ and show that $\mathcal{C}(N_{\Delta,0},\kappa)^{N_{\Delta_1,\ast}}$ is simple.} Let $0\neq M\leq \mathcal{C}(N_{\Delta,0},\kappa)^{N_{\Delta_1,\ast}}$ be an arbitrary submodule.  Then $M$ contains a nonzero fixed point under the action of the pro-$p$ group $N_{\Delta,0}$ therefore also the function $\mathbbm{1}_{N_{\Delta,0}}$. By our assumption $\mathbbm{1}_{N_{\Delta,0}}=\mathbbm{1}_{N_{\Delta_1,\ast}\times N_{\Delta_0,0}}$ generates $\mathcal{C}(N_{\Delta,0},\kappa)^{N_{\Delta_1,\ast}}$ so we have $M= \mathcal{C}(N_{\Delta,0},\kappa)^{N_{\Delta_1,\ast}}$.

\emph{Step 3: We still asume $\Delta'\subseteq\Delta_1$ and $N_{\Delta_1,\ast}=N_{\Delta_1,0}$. We claim that the annihilator left-ideal $I:=\mathrm{Ann}(\mathbbm{1}_{N_{\Delta,0}})\lhd \kappa\bs  N_{\Delta,0}\js[F_\Delta]$ of $\mathbbm{1}_{N_{\Delta,0}}$ is generated by the finite set $$U:=\{X_\alpha, X_{\alpha'}, \chi(t_\alpha)-\sum_{u\in J(N_{\Delta_0,0}/t_\alpha N_{\Delta_0,0}t_\alpha^{-1})}uF_\alpha,F_{\alpha'}\mid \alpha'\in\Delta',\alpha\in\Delta''\}\ .$$}
The containment $U\subset I$ is clear. Let $\sum_{i=1}^r\lambda_i F_{t_i}\in I$ be arbitrary ($\lambda_i\in \kappa\bs  N_{\Delta,0}\js$, $t_i\in \prod_{\alpha\in \Delta}t_\alpha^{\mathbb{N}}$, $i=1,\dots,r$). We may omit the terms $\lambda_iF_{t_i}$ with $t_i$ divisible by $t_{\alpha'}$ for some $\alpha'\in\Delta'$ as those are contained in the left ideal generated by $U$. Note that for a common multiple $t\in \prod_{\alpha\in \Delta''}t_\alpha^{\mathbb{N}}$ there exists a $\lambda\in \kappa\bs  N_{\Delta,0}\js$ such that we have $$\sum_{i=1}^r\lambda_i F_{t_i}\equiv \lambda F_t\pmod{\sum_{m\in U}\kappa\bs  N_{\Delta,0}\js[F_\Delta]m}\ .$$
Indeed, if $t=t_it_i'$ then we have $$F_{t_i}\equiv \chi^w(t_i'^{-1})\sum_{u\in J(N_{\Delta_0,0}/t_i' N_{\Delta_0,0}t_i'^{-1})}t_iut_i^{-1}F_t\pmod{\sum_{m\in U}\kappa\bs  N_{\Delta,0}\js[F_\Delta]m}\ .$$ 
Further, for $\alpha\in \Delta_1\cap\Delta''$ we have $t_\alpha N_{\Delta_0,0}t_\alpha^{-1}=N_{\Delta_0,0}$ whence $\chi(t_\alpha)-F_\alpha$ lies in $U$. Therefore we may assume without loss of generality that $t$ lies in $\prod_{\alpha\in\Delta_0}t_\alpha^{\mathbb{N}}$ whence $J(N_{\Delta_0,0}/t N_{\Delta_0,0}t^{-1})$ is a set of representatives for the cosets $N_{\Delta,0}/tN_{\Delta,0}t^{-1}$, too. Therefore we may write $\lambda=\sum_{u\in J(N_{\Delta_0,0}/t N_{\Delta_0,0}t^{-1})}u\varphi_t(\lambda_{u,t})$ with $\lambda_{u,t}\in \kappa\bs  N_{\Delta,0}\js$ whence we deduce
\begin{align*}
\lambda F_t=\sum_{u\in J(N_{\Delta_0,0}/t N_{\Delta_0,0}t^{-1})}u\varphi_t(\lambda_{u,t})F_t=\sum_{u\in J(N_{\Delta_0,0}/t N_{\Delta_0,0}t^{-1})}uF_t \lambda_{u,t}\equiv\\
\equiv \sum_{u\in  J(N_{\Delta_0,0}/t N_{\Delta_0,0}t^{-1})}c_{u,t}uF_t \pmod{\sum_{m\in U}\kappa\bs  N_{\Delta,0}\js[F_\Delta]m}
\end{align*}
where $c_{u,t}\in\kappa$ is the constant term of $\lambda_{u,t}$ ($u\in J(N_{\Delta_0,0}/t N_{\Delta_0,0}t^{-1})$). Now the function $0=\sum_{u\in  J(N_{\Delta_0,0}/t N_{\Delta_0,0}t^{-1})}c_{u,t}uF_t(\mathbbm{1}_{N_{\Delta_0,0}})$ is constant $c_{u,t}$ on the coset $utN_{\Delta_0,0}t^{-1}$ implying $c_{u,t}=0$ for each $u\in  J(N_{\Delta_0,0}/t N_{\Delta_0,0}t^{-1})$. We obtain $I=\sum_{m\in U}\kappa\bs  N_{\Delta,0}\js[F_\Delta]m$ as claimed.

\emph{Step 4: We assume $N_{\Delta_1,\ast}=N_{\Delta_1,0}$, but drop the assumption that $\Delta'\subseteq \Delta_1$.} Choose a finite set $U$ of generators of $M$ as a module over $\kappa\bs N_{\Delta,0}\js [F_\Delta]$. By assumption $F_{\alpha'}$ ($\alpha'\in\Delta'$) acts trivially on $M$. On the other hand, $F_\alpha$ acts by $\chi(t_\alpha)$ for each $\alpha\in \Delta_1\cap \Delta''$. Therefore $U$ generates $M$ as a module over $\kappa\bs N_{\Delta,0}\js [F_{\Delta_0\cap \Delta''}]$, too, and by Lemma \ref{polfinpres} it suffices to show that it is finitely presented and has finite length as such. Moreover, $N_{\Delta_1,0}$ also acts trivially on $M$ and lies in the centre of the ring $\kappa\bs N_{\Delta,0}\js [F_{\Delta_0\cap \Delta''}]$. So $U$ generates $M$ as a module over $\kappa\bs N_{\Delta_0,0}\js [F_{\Delta_0\cap \Delta''}]$ and by Lemma \ref{polfinpres} it suffices to show that it is finitely presented and has finite length as such. There exists a subgroup $N_{\Delta_0\cap\Delta',\ast}\leq N_{\Delta_0\cap \Delta',0}$ stabilizing all the elements in $U$. On the other hand, the subalgebra $\kappa\bs  N_{\Delta_0\cap \Delta',0}\js$ lies in the centre of $\kappa\bs  N_{\Delta_0,0}\js [F_{\Delta''}]$ therefore $N_{\Delta_0\cap \Delta',\ast}$ acts trivially on the whole $M$. Hence the finite orbit $U_1=N_{\Delta_0\cap \Delta',0}U$ of $U$ generates $M$ as a module over $\kappa\bs  N_{\Delta_0\cap\Delta''}\js [F_{\Delta_0\cap\Delta''}]$ and by Lemma \ref{polfinpres} we are reduced to showing that $M$ is finitely presented and has finite length as such. The elements in $M$ may be regarded as functions on $N_{\Delta_0,0}/N_{\Delta_0\cap \Delta',\ast}=N_{\Delta_0\cap\Delta'',0}\times(N_{\Delta_0\cap \Delta',0}/N_{\Delta_0\cap \Delta',\ast})$. Therefore $M$ is a submodule of $$\bigoplus_{v\in N_{\Delta_0\cap \Delta',0}/N_{\Delta_0\cap \Delta',\ast}}\mathcal{C}(N_{\Delta_0\cap\Delta'',0},\kappa)\ .$$
Each direct summand above is simple and finitely presented as a module over $\kappa\bs  N_{\Delta_0\cap\Delta''}\js [F_{\Delta_0\cap\Delta''}]$ by Steps 2 and 3 therefore $M$ is also finitely presented by Lemma \ref{finpresgenR} and has finite length.

\emph{Step 5: no assumptions.} We write $N_{\Delta,0}/N_{\Delta_1,\ast}$ as a direct product of $N_{\Delta_0,0}$ and the finite $p$-group $N_{\Delta_1,0}/N_{\Delta_1,\ast}$ and let $J$ be the Jacobson radical of the group ring $\kappa[N_{\Delta_1,0}/N_{\Delta_1,\ast}]$. Note that $M^i:=J^i\mathcal{C}(N_{\Delta,0},\kappa)^{N_{\Delta_1,\ast}}\cap M$ is a $\kappa\bs N_{\Delta,0}\js [F_\Delta]$-submodule of $M$ for all $i\geq 0$. By Lemma \ref{finpresgenR} it suffices to show that each graded piece $M^i/M^{i+1}$ is finitely presented and has finite length ($i\geq 0$). $M/M^1$ is a finitely generated submodule of 
\begin{equation*}
\mathcal{C}/\mathcal{C}^1:=\mathcal{C}(N_{\Delta,0},\kappa)^{N_{\Delta_1,\ast}}/J\mathcal{C}(N_{\Delta,0},\kappa)^{N_{\Delta_1,\ast}}\cong \mathcal{C}(N_{\Delta_0,0},\kappa)
\end{equation*}
on which $\kappa\bs N_{\Delta,0}\js [F_\Delta]$ acts via its quotient $\kappa\bs N_{\Delta_0,0}\js [F_\Delta]$. Moreover, all $\alpha'\in\Delta'$ act by zero on $\mathcal{C}/\mathcal{C}^1$. On the other hand, the classes of functions $f\in\mathcal{C}$ supported on $N_{\Delta_1,\ast}\times N_{\Delta_0,0}$ generate $\mathcal{C}/\mathcal{C}^1$. On such an $f$ each $F_\alpha$ ($\alpha\in \Delta''$) acts by the formula $F_\alpha(f)(v)=\chi(t_\alpha)f(t_\alpha^{-1}vt_\alpha)$ since in the sum \eqref{abstractFalpha} all the other terms vanish. Therefore $M/M^1$ is finitely presented and has finite length by Step 4. In particular, $M^1$ is finitely generated by Lemma \ref{finpresgenR}.

Now for $i\geq 1$ we have an identification
\begin{equation*}
\mathcal{C}^i/\mathcal{C}^{i+1}:=J^i\mathcal{C}(N_{\Delta,0},\kappa)^{N_{\Delta_1,\ast}}/J^{i+1}\mathcal{C}(N_{\Delta,0},\kappa)^{N_{\Delta_1,\ast}}\cong J^i/J^{i+1}\otimes_\kappa\mathcal{C}/\mathcal{C}^1\ .
\end{equation*}
Now $J^i/J^{i+1}$ is generated over $\kappa$ by the elements $\prod_{\alpha\in\Delta_1}(n_\alpha-1)^{k_\alpha}$ with $\sum_{\alpha\in\Delta_1}k_\alpha=i$ and $n_\alpha$ topological generator in $N_{\alpha,0}$. On a generator $\prod_{\alpha\in\Delta_1}(n_\alpha-1)^{k_\alpha}$ the operator $F_\alpha$ ($\alpha\in\Delta_1$) acts by $0$ if $k_\alpha\neq 0$ otherwise by $1$. Using Step 4 we deduce by induction on $i$ that $M^i/M^{i+1}$ is finitely presented and of finite length (whence $M^{i+1}$ is finitely generated). The statement follows using Lemma \ref{finpresgenR}.
\end{proof}

Following \cite{B} we denote by $SP_A$ the category of smooth $G$-representations over $A$ consisting of finite length representations whose Jordan-H\"older constituents are subquotients of principal series. Let $\chi\colon T\to A^\times$ be a continuous character. The principal series representation $\Ind_B^G\chi$ admits a filtration by $B$-subrepresentations whose graded pieces are $\mathcal{C}_w(\chi)=c-\Ind_{B^-}^{B^-wN}\chi\cong c-\Ind_{w^{-1}B^-w\cap N}^{N}\chi^w$ for $w\in W=N_G(T)/T$ where $\chi^w$ is the character given by the formula $\chi^w(t)=\chi(wtw^{-1})$. We set $\mathcal{C}_{w,0}(\chi):=\Ind_{B^-}^{B^-wN_0}\chi$ which is a generating $B_+$-subrepresentation in $\mathcal{C}_w(\chi)$.

Assume now that $A=\kappa$. We introduce the filtration (indexed by $t\in\prod_{\alpha\in\Delta}t_\alpha^{\mathbb{N}}$) 
\begin{equation*}
\Fil^t:=\Fil^t\mathcal{C}_w(\chi):=\Ind_{w^{-1}B^-w\cap N}^{(w^{-1}B^-w\cap N)t^{-1}N_0t}\chi^w
\end{equation*}
by $B_+$-subrepresentations. We have $\Fil^1=\mathcal{C}_{w,0}(\chi)$. As a representation of $N_0$, $\Fil^t$ can be written as a direct sum 
\begin{align*}
\Fil^t=\bigoplus_{N_{t,v}\in D_{t,w}}\mathcal{C}_{t,v}\ .
\end{align*}
where $N_{t,v}:=(w^{-1}B^-w\cap t^{-1}N_0t)vN_{0}$ is the double coset of $v\in t^{-1}N_0t$, $D_{t,w}:=(w^{-1}B^-w\cap t^{-1}N_0t) \backslash (t^{-1}N_{0}t)/N_{0}$ is the set of double cosets, and $\mathcal{C}_{t,v}:=\Ind_{w^{-1}B^-w\cap N}^{N_{t,v}}\chi^w$. Now each $t_\ast\in \prod_{\alpha\in\Delta}t_\alpha^{\mathbb{N}}$ acts on the set $D_{t,w}$ via conjugation: $t_\ast\cdot N_{t,v}:=N_{t,t_\ast vt_\ast^{-1}}$. Indeed, this definition does not depend on the choice $v$ of the representative in $N_{t,v}$ since we have $t_\ast N_0t\ast^{-1}\subseteq N_0$ and $t_\ast (w^{-1}B^-w)t_\ast^{-1}=w^{-1}B^-w$.

\begin{lem}\label{reprcoset}
Assume that we have $t_\ast\cdot N_{t,v}=N_{t,v}$ for some $t,t_\ast\in\prod_{\alpha\in\Delta}t_\alpha^{\mathbb{N}}$ and $v\in t^{-1}N_0t$. Then there exists a representative $v_\ast$ in the double coset $N_{t,v}$ such that $t_\ast v_\ast t_\ast^{-1}=v_\ast$. Moreover, we have $t_\alpha\cdot N_{t,v}=N_{t,v}$ for each $\alpha\in\Delta$ with $\alpha(t_\ast)\neq 1$.
\end{lem}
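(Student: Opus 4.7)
The plan is an iterative contraction argument in the root-space coordinates of $N$. Since any $s\in T$ acts on each root subgroup $N_\alpha$ by multiplication by $\alpha(s)$, once we fix an ordering of $\Phi^+$ we can write any element of $t^{-1}N_0 t$ uniquely as $\prod_{\alpha\in\Phi^+} u_\alpha(x_\alpha)$ with $x_\alpha \in \alpha(t)^{-1}\mathbb{Z}_p$, and conjugation by $s$ replaces each $x_\alpha$ by $\alpha(s)x_\alpha$ while preserving the order. Writing $v = \prod_\alpha u_\alpha(x_\alpha)$ and $t_\ast = \prod_\gamma t_\gamma^{k_\gamma}$, for any $\alpha \in \Phi^+$ with simple-root expansion $\alpha = \sum_\gamma c_\gamma(\alpha)\gamma$ ($c_\gamma(\alpha)\in\mathbb{Z}_{\geq 0}$), one has $\alpha(t_\ast) = p^{\sum_\gamma k_\gamma c_\gamma(\alpha)}$, which is either $1$ (when $c_\gamma(\alpha)=0$ for every $\gamma$ with $k_\gamma>0$) or an element of $p\mathbb{Z}_p$.

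Set $v_k := t_\ast^k v t_\ast^{-k} = \prod_\alpha u_\alpha(\alpha(t_\ast)^k x_\alpha)$. The hypothesis $t_\ast\cdot N_{t,v} = N_{t,v}$ gives $v_k \in N_{t,v}$ for every $k\geq 0$ by induction. Each coordinate $\alpha(t_\ast)^k x_\alpha$ converges in $\mathbb{Q}_p$: to $x_\alpha$ if $\alpha(t_\ast)=1$, and to $0$ if $\alpha(t_\ast)\in p\mathbb{Z}_p$. Setting $y_\alpha$ equal to this limit and $v_\ast := \prod_\alpha u_\alpha(y_\alpha)$, continuity of multiplication in $N$ yields $v_k\to v_\ast$. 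The double coset $N_{t,v} = (w^{-1}B^-w\cap t^{-1}N_0 t)\cdot v\cdot N_0$ is the continuous image of a compact set in the Hausdorff group $N$, hence closed, so $v_\ast\in N_{t,v}$. By construction $t_\ast v_\ast t_\ast^{-1} = \prod_\alpha u_\alpha(\alpha(t_\ast) y_\alpha) = v_\ast$, since for each $\alpha$ either $\alpha(t_\ast)=1$ or $y_\alpha=0$.

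For the moreover part, fix $\beta\in\Delta$ with $\beta(t_\ast)\neq 1$, i.e.\ $k_\beta\geq 1$. For any $\alpha\in\Phi^+$ with $y_\alpha\neq 0$ we have $\alpha(t_\ast)=1$, so $\sum_\gamma k_\gamma c_\gamma(\alpha)=0$; as every summand is non-negative and $k_\beta\geq 1$, this forces $c_\beta(\alpha)=0$, hence $\alpha(t_\beta)=1$. Therefore $t_\beta v_\ast t_\beta^{-1} = \prod_\alpha u_\alpha(\alpha(t_\beta)y_\alpha) = v_\ast$, so $v_\ast$ lies in both $N_{t,v}$ and $t_\beta\cdot N_{t,v}$. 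Since the double cosets in $D_{t,w}$ are pairwise disjoint, we conclude $t_\beta\cdot N_{t,v} = N_{t,v}$.

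The only technical subtlety is the closedness of $N_{t,v}$, needed to guarantee that the limit of the $v_k$ remains in the double coset; once this is in hand, the rest is a straightforward bookkeeping of $p$-adic valuations in root-wise coordinates.
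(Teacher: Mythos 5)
Your proof is correct and takes essentially the same route as the paper: it forms the contracted conjugates $t_\ast^k v t_\ast^{-k}$, passes to the limit $v_\ast$ inside the (compact, hence closed) double coset, and then reads off both the fixed-point property and the ``moreover'' claim from the root-coordinate expansion $v_\ast=\prod_\beta u_\beta(y_\beta)$, using that $\beta(t_\ast)=1$ forces $c_\alpha(\beta)=0$ for every simple $\alpha$ with $\alpha(t_\ast)\neq 1$. You merely make explicit the closedness of $N_{t,v}$ and the valuation bookkeeping that the paper's argument leaves implicit.
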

\begin{proof}
We have $N_{t,v}=N_{t,t_\ast vt_\ast^{-1}}$. By induction on $i$ we find that $N_{t,v}=N_{t,t_\ast^i vt_\ast^{-i}}$ for all $i\geq 1$. Since $\beta(t_\ast)$ is a nonnegative power of $p$ for each $\beta\in \Phi^+$, the limit $v_\ast:=\lim_i t_\ast^ivt_\ast^{-i}$ exists in $N$ and satisfies $t_\ast v_\ast t_\ast^{-1}=v_\ast$ and $N_{t,v}=N_{t,v_\ast}$. Now we can write $v_\ast$ uniquely in the form $v_\ast=\prod_{\beta\in\Phi^+}n_\beta$ with $n_\beta\in N_\beta$. So we have $v_\ast=t_\ast v_\ast t_\ast^{-1}=\prod_{\beta\in\Phi^+}n_\beta^{\beta(t_\ast)}$. So for each $\beta\in\Phi^+$ we have $n_\beta=1$ or $\beta(t_\ast)=1$. Now if $\alpha\Delta$ is a simple root with $\alpha(t_\ast)\neq 1$ then we have $n_\beta=1$ for all $\beta\in\Phi^+$ with $\beta\circ\lambda_{\alpha^\vee}\neq 1$, or equivalently, for all $\beta\in\Phi^+$ with $\beta(t_\alpha)\neq 1$. In particular, we deduce $t_\alpha v_\ast t_\alpha^{-1}=v_\ast$ whence $t_\alpha\cdot N_{t,v}=N_{t,v}$.
\end{proof}

We order the set $D_{t,w}$ partially by putting $N_{t,v_1}\leq N_{t,v_2}$ if there exists an element $t'\in \prod_{\alpha\in\Delta}t_\alpha^{\mathbb{N}}$ such that $t'\cdot N_{t,v_2}=N_{t,v_1}$. 
\begin{lem}\label{orderDtw}
The ordering $\leq$ on $D_{t,w}$ is transitive, reflexive, and antisymmetric.
\end{lem}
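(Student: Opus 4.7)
Reflexivity is immediate: taking $t'=1\in\prod_{\alpha\in\Delta}t_\alpha^{\mathbb{N}}$ gives $t'\cdot N_{t,v}=N_{t,v}$ for every $v$. For transitivity, suppose $t'_1\cdot N_{t,v_2}=N_{t,v_1}$ and $t'_2\cdot N_{t,v_3}=N_{t,v_2}$ for some $t'_1,t'_2\in\prod_{\alpha\in\Delta}t_\alpha^{\mathbb{N}}$. Since conjugation is a monoid action and the submonoid $\prod_{\alpha\in\Delta}t_\alpha^{\mathbb{N}}$ is closed under multiplication, $(t'_1t'_2)\cdot N_{t,v_3}=t'_1\cdot(t'_2\cdot N_{t,v_3})=t'_1\cdot N_{t,v_2}=N_{t,v_1}$, giving $N_{t,v_1}\leq N_{t,v_3}$.

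For antisymmetry, assume $N_{t,v_1}\leq N_{t,v_2}$ and $N_{t,v_2}\leq N_{t,v_1}$, witnessed by $t'_1,t'_2\in\prod_{\alpha\in\Delta}t_\alpha^{\mathbb{N}}$ with $t'_1\cdot N_{t,v_2}=N_{t,v_1}$ and $t'_2\cdot N_{t,v_1}=N_{t,v_2}$. Writing $t'_i=\prod_{\alpha\in\Delta}t_\alpha^{m_{i,\alpha}}$ with $m_{i,\alpha}\geq 0$, put $t_\ast:=t'_1t'_2=\prod_{\alpha\in\Delta}t_\alpha^{m_{1,\alpha}+m_{2,\alpha}}$. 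By transitivity $t_\ast\cdot N_{t,v_1}=N_{t,v_1}$, so Lemma \ref{reprcoset} applies. Using $\alpha\circ\lambda_{\beta^\vee}=1$ for $\alpha\neq\beta$ and $\alpha\circ\lambda_{\alpha^\vee}=\id$, we compute $\alpha(t_\ast)=p^{m_{1,\alpha}+m_{2,\alpha}}$, so $\alpha(t_\ast)\neq 1$ exactly when $m_{1,\alpha}+m_{2,\alpha}>0$; in particular this holds for every $\alpha$ with $m_{2,\alpha}>0$.

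By the last assertion of Lemma \ref{reprcoset}, $t_\alpha\cdot N_{t,v_1}=N_{t,v_1}$ for every $\alpha$ with $m_{2,\alpha}>0$. Applying these $t_\alpha$'s successively (they all fix $N_{t,v_1}$) shows $t'_2\cdot N_{t,v_1}=N_{t,v_1}$. But by assumption $t'_2\cdot N_{t,v_1}=N_{t,v_2}$, so $N_{t,v_1}=N_{t,v_2}$, proving antisymmetry.

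The only slightly subtle point is the appeal to Lemma \ref{reprcoset}: one must verify that $t_\ast$ satisfies the hypothesis $t_\ast\cdot N_{t,v_1}=N_{t,v_1}$, which is exactly what the two inequalities combine to give, and then translate the conclusion about individual $t_\alpha$ into a statement about the full product $t'_2$. The rest is a bookkeeping exercise on exponents of the cocharacters $\lambda_{\alpha^\vee}$.
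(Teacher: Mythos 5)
Your proof is correct and follows essentially the same route as the paper: reflexivity and transitivity from the monoid structure of $\prod_{\alpha\in\Delta}t_\alpha^{\mathbb{N}}$, and antisymmetry by applying Lemma \ref{reprcoset} to the product $t_\ast$ of the two witnesses and noting that every $t_\alpha$ occurring in the second witness satisfies $\alpha(t_\ast)\neq 1$, hence fixes the double coset. Your write-up just spells out the exponent bookkeeping that the paper leaves implicit.
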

\begin{proof}
The transitivity and reflexivity are clear ($\prod_{\alpha\in\Delta}t_\alpha^{\mathbb{N}}$ is a monoid with $1\in \prod_{\alpha\in\Delta}t_\alpha^{\mathbb{N}}$). Assume now that $t'\cdot N_{t,v_1}=N_{t,v_2}$ and $t''\cdot N_{t,v_2}=N_{t,v_1}$. Put $t_\ast:=t't''$. By Lemma \ref{reprcoset} we deduce $N_{t,v_1}=t''\cdot N_{t,v_2}=N_{t,v_2}$ as $t''$ can be written as a product of $t_\alpha$'s with $\alpha(t_\ast)\neq 1$. 
\end{proof}

So we can refine the ordering $\leq$ on $D_{t,w}$ to a total ordering of $D_{t,w}$ giving a filtration $$\Fil^{t,v}:=\bigoplus_{N_{t,v}\geq N_{t,v'}\in (w^{-1}B^-w\cap t^{-1}N_0t) \backslash (t^{-1}N_{0}t)/N_{0}}\mathcal{C}_{t,v'}$$ on $\Fil^t$ by finitely many $N_0\prod_{\alpha\in\Delta}t_\alpha^{\mathbb{N}}$-subrepresentations. The graded piece $$\mathrm{gr}^{t,v}(\mathcal{C}_w):=\Fil^{t,v}/\bigcup_{N_{t,v'}<N_{t,v}}\Fil^{t,v'}$$ is isomorphic to $\mathcal{C}_{t,v}$ as a representation of $N_0$.

\begin{lem}\label{finpresgr}
Any finitely generated $\kappa\bs N_{\Delta,0}\js [F_\Delta]$-submodule of $\mathrm{gr}^{t,v}(\mathcal{C}_{w})^{H_{\Delta,0}}$ is finitely presented.
\end{lem}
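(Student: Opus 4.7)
The plan is to embed $\mathrm{gr}^{t,v}(\mathcal{C}_w)^{H_{\Delta,0}}$ as a $\kappa\bs N_{\Delta,0}\js[F_\Delta]$-submodule of some $\mathcal{C}(N_{\Delta,0},\kappa)^{N_{\Delta_1,\ast}}$ as in Lemma \ref{abstractmod} and then invoke that lemma. I first set $\Delta'':=\{\alpha\in\Delta:t_\alpha\cdot N_{t,v}=N_{t,v}\}$ and $\Delta':=\Delta\setminus\Delta''$; by construction of the partial order on $D_{t,w}$, for $\alpha\in\Delta'$ the Hecke-trace $F_\alpha$ sends $\Fil^{t,v}$ into $\bigcup_{N_{t,v'}<N_{t,v}}\Fil^{t,v'}$, so $F_\alpha$ annihilates the graded piece. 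Applying Lemma \ref{reprcoset} with $t_\ast:=\prod_{\alpha\in\Delta''}t_\alpha$ produces a representative $v_\ast\in N_{t,v}$ centralized by every $t_\alpha$ with $\alpha\in\Delta''$; from the explicit description in the proof of that lemma, $v_\ast$ lies in $\prod_{\beta\in\Phi^+_{\Delta'}}N_\beta$, where $\Phi^+_{\Delta'}$ is the set of positive roots supported on $\Delta'$.

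Since the character $\chi^w$ of $T$ is trivial on $w^{-1}N^-w\supseteq w^{-1}B^-w\cap N$, I would identify $\mathcal{C}_{t,v}|_{N_0}\cong \mathcal{C}(N_{v_\ast,0}\backslash N_0,\kappa)$ (under right translation) with $N_{v_\ast,0}:=v_\ast^{-1}(w^{-1}B^-w\cap N)v_\ast\cap N_0$ the stabilizer of $v_\ast$. Taking $H_{\Delta,0}$-invariants and using the normality of $H_{\Delta,0}$ in $N_0$ yields $\mathrm{gr}^{t,v}(\mathcal{C}_w)^{H_{\Delta,0}}\cong \mathcal{C}(\bar N_{v_\ast,0}\backslash N_{\Delta,0},\kappa)$, where $\bar N_{v_\ast,0}$ is the image of $N_{v_\ast,0}$ in $N_{\Delta,0}$. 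Since inner conjugation by $v_\ast\in N$ acts trivially on the abelianization $N/[N,N]$ and $[N_0,N_0]\subseteq H_{\Delta,0}$, the image $\bar N_{v_\ast,0}$ is an open subgroup of $N_{\Delta_1,0}:=\prod_{\gamma\in\Phi_w\cap\Delta}N_{\gamma,0}$, where $\Delta_1:=\Phi_w\cap\Delta$ and $\Phi_w:=\Phi^+\cap w^{-1}(-\Phi^+)$. Abelianness of $N_{\Delta,0}$ then gives $\mathrm{gr}^{t,v}(\mathcal{C}_w)^{H_{\Delta,0}}\cong\mathcal{C}(N_{\Delta,0},\kappa)^{\bar N_{v_\ast,0}}$.

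Given a finitely generated submodule $M\leq\mathrm{gr}^{t,v}(\mathcal{C}_w)^{H_{\Delta,0}}$, I would choose an open subgroup $N_{\Delta_1,\ast}:=\prod_{\gamma\in\Delta_1}p^{k_\gamma}N_{\gamma,0}$ contained in $\bar N_{v_\ast,0}$ (possible since $\bar N_{v_\ast,0}$ is open in $N_{\Delta_1,0}$); this automatically satisfies $tN_{\Delta_1,\ast}t^{-1}\subseteq N_{\Delta_1,\ast}$ for every $t\in\prod_{\alpha\in\Delta'}t_\alpha^{\mathbb{N}}$, because conjugation by $t_\alpha$ preserves $N_{\gamma,0}$ for $\gamma\neq\alpha\in\Delta$ and sends $N_{\alpha,0}$ to a proper subgroup. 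This yields an inclusion $M\hookrightarrow\mathcal{C}(N_{\Delta,0},\kappa)^{N_{\Delta_1,\ast}}$ of $\kappa\bs N_{\Delta,0}\js[F_\Delta]$-modules where the ambient $F_\alpha$-action is the one from Lemma \ref{abstractmod} applied with the above $\Delta_0:=\Delta\setminus\Delta_1$, $\Delta_1$, $\Delta'$, $N_{\Delta_1,\ast}$ and character $\chi^w$: it vanishes for $\alpha\in\Delta'$ as noted, and for $\alpha\in\Delta''$ the centralizing property $t_\alpha v_\ast t_\alpha^{-1}=v_\ast$ permits one to unwind the Hecke-trace definition of $F_\alpha$ through the chain of identifications above and recover precisely \eqref{abstractFalpha}. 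Lemma \ref{abstractmod} then yields that $M$ is finitely presented.

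The main obstacle is this last compatibility check: matching the Hecke-trace formula defining $F_\alpha$ for $\alpha\in\Delta''$, a sum over $J(H_{\Delta,0}/t_\alpha H_{\Delta,0}t_\alpha^{-1})$ of right translations by $ut_\alpha$, with the abstract sum in \eqref{abstractFalpha} indexed by $(N_{\Delta,0}\cap t_\alpha^{-1}N_{\Delta_1,\ast}t_\alpha)/N_{\Delta_1,\ast}$, which requires careful bookkeeping of coset representatives under the successive identifications $N_0\to N_{v_\ast,0}\backslash N_0\to N_{\Delta,0}/N_{\Delta_1,\ast}$ together with the appearance of the scalar $\chi^w(t_\alpha)$.
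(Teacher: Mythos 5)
Your overall strategy coincides with the paper's (identify the graded piece with a space of invariant continuous functions on $N_{\Delta,0}$ and invoke Lemma \ref{abstractmod}), but the step you yourself flag as "the main obstacle" is exactly where the argument has a genuine gap, in two ways. First, your dichotomy $\Delta''=\{\alpha\in\Delta:\ t_\alpha\cdot N_{t,v}=N_{t,v}\}$ is too coarse. Even when $t_\alpha$ fixes the double coset, so that by Lemma \ref{reprcoset} one may choose the representative $v$ with $t_\alpha vt_\alpha^{-1}=v$, the Hecke trace over $J(H_{\Delta,0}/t_\alpha H_{\Delta,0}t_\alpha^{-1})$ does not in general unwind to \eqref{abstractFalpha}: the stabilizer $N_0(t,v)=v^{-1}(w^{-1}B^-w\cap t^{-1}N_0t)v\cap N_0$ forces each surviving term to appear with multiplicity $|N_0(t,v)\cap H_{\Delta,0}:N_0(t,v)\cap t_\alpha H_{\Delta,0}t_\alpha^{-1}|$, and when $N_0(t,v)\cap H_{\Delta,0}\not\subseteq t_\alpha H_{\Delta,0}t_\alpha^{-1}$ this index is divisible by $p$, so $F_\alpha$ acts by \emph{zero} on the graded piece while \eqref{abstractFalpha} does not. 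Such $\alpha$ have to be put into $\Delta'$ (this is precisely the extra case the paper's proof isolates before applying Lemma \ref{abstractmod}); with your $\Delta''$ the inclusion you want is not $F_\alpha$-equivariant.

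Second, shrinking the actual stabilizer image $\bar N_{v_\ast,0}$ to a smaller "rectangular" subgroup $N_{\Delta_1,\ast}$ is not harmless, because the operator \eqref{abstractFalpha} depends on the chosen subgroup: for a function $f$ invariant under the larger group $\bar N_{v_\ast,0}$, the sum over $J\bigl((N_{\Delta,0}\cap t_\alpha^{-1}N_{\Delta_1,\ast}t_\alpha)/N_{\Delta_1,\ast}\bigr)$ breaks into fibres of constant size $[\bar N_{v_\ast,0}\cap t_\alpha^{-1}N_{\Delta_1,\ast}t_\alpha:N_{\Delta_1,\ast}]$, a power of $p$ which is $>1$ as soon as $\bar N_{v_\ast,0}$ strictly contains $N_{\Delta_1,\ast}$ in a direction moved by $t_\alpha$; in characteristic $p$ the ambient operator then kills your submodule even though the genuine Hecke operator does not, so again the embedding is not a map of $\kappa\bs N_{\Delta,0}\js[F_\Delta]$-modules. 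The detour is also unnecessary: Lemma \ref{abstractmod} does not require $N_{\Delta_1,\ast}$ to be a product of subgroups of the $N_{\gamma,0}$, only that $tN_{\Delta_1,\ast}t^{-1}\subseteq N_{\Delta_1,\ast}$ for $t\in\prod_{\alpha\in\Delta'}t_\alpha^{\mathbb{N}}$, and the paper applies it directly with $N_{\Delta_1,\ast}$ equal to the image of $N_0(t,v)$ in $N_{\Delta,0}$, after verifying by the explicit trace computation that for every $\alpha$ either $F_\alpha$ acts by zero or by exactly \eqref{abstractFalpha} relative to that subgroup.
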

\begin{proof}
Put $\Delta_1:=\Delta\cap w^{-1}(\Phi^-)$ and $\Delta_0:=\Delta\setminus w^{-1}(\Phi^-)$. $H_{\Delta,0}$ is normal in $N_0$ with commutative quotient $N_{\Delta,0}$ such that the image of $$N_0(t,v):=v^{-1}(w^{-1}B^-w\cap t^{-1}N_0t)v\cap N_0$$ in $N_{\Delta,0}$ is an open subgroup $N_{\Delta_1,\ast}$ in $N_{\Delta_1,0}$. Hence we have an identification 
\begin{align}
\mathrm{gr}^{t,v}(\mathcal{C}_{w})^{H_{\Delta,0}}\cong \Ind_{w^{-1}B^-w\cap t^{-1}N_0t}^{(w^{-1}B^-w\cap t^{-1}N_0t)vN_{0}/H_{\Delta,0}}\chi^w
\cong\mathcal{C}(N_{\Delta,0},\kappa)^{N_{\Delta_1,\ast}}\label{grtvCw}
\end{align}
as a representation of $N_{\Delta,0}$.

Now we describe the action of each $F_\alpha$ ($\alpha\in\Delta$) on $\mathrm{gr}^{t,v}(\mathcal{C}_{w})^{H_{\Delta,0}}$. Let $\alpha\in\Delta$ be arbitrary. If we have $N_{t,v}> N_{t,t_\alpha vt_\alpha^{-1}}$ then $F_\alpha$ acts by $0$ on $\mathrm{gr}^{t,v}(\mathcal{C}_{w})^{H_{\Delta,0}}$ since even $t_\alpha$ acts by $0$ on $\mathrm{gr}^{t,v}(\mathcal{C}_{w})$. On the other hand, if $N_{t,v}=N_{t,t_\alpha vt_\alpha^{-1}}$ then by Lemma \ref{reprcoset} we may assume without loss of generality that the representative $v$ of the double coset $(w^{-1}B^-w\cap t^{-1}N_0t)vN_{0}$ is chosen so that we have $t_\alpha vt_\alpha^{-1}=v$. Therefore we compute
\begin{align}
F_\alpha(f)(vv_1)=\sum_{u\in J(H_{\Delta,0}/t_\alpha H_{\Delta,0}t_\alpha^{-1})}(t_\alpha f)(vv_1u)=\notag\\
=\sum_{u\in J(H_{\Delta,0}/t_\alpha H_{\Delta,0}t_\alpha^{-1})}\chi^w(t_\alpha)f(t_\alpha^{-1}vv_1ut_\alpha)=\sum_{u\in J(H_{\Delta,0}/t_\alpha H_{\Delta,0}t_\alpha^{-1})}\chi^w(t_\alpha)f(vt_\alpha^{-1}v_1ut_\alpha)\label{Falphafv}
\end{align}
for any $v_1\in N_0$. Now if $v_1$ does not lie in $N_0(t,v)t_\alpha N_0t_\alpha^{-1}H_{\Delta,0}$ then all the terms in \eqref{Falphafv} are zero whence $F_\alpha(f)(v_1)=0$. However, if $v_1=n_0t_\alpha v_2t_\alpha^{-1}u_0$ for some $n_0\in N_0(t,v)$, $v_2\in N_0$, $u_0\in H_{\Delta,0}$ then we may replace $J(H_{\Delta,0}/t_\alpha H_{\Delta,0}t_\alpha^{-1})$ with $J':=u_0 J(H_{\Delta,0}/t_\alpha H_{\Delta,0}t_\alpha^{-1})$ and assume without loss of generality that $u_0=1$. Moreover, since $t_\alpha v_2t_\alpha^{-1}$ normalizes both $H_{\Delta,0}$ and $t_\alpha H_{\Delta,0}t_\alpha^{-1}$, $J'':=t_\alpha v_2t_\alpha^{-1} J't_\alpha v_2^{-1}t_\alpha^{-1}$ is also a set of representatives of $H_{\Delta,0}/t_\alpha H_{\Delta,0}t_\alpha^{-1}$. Therefore we compute 
\begin{align*}
F_\alpha(f)(vv_1)=\sum_{u\in J(H_{\Delta,0}/t_\alpha H_{\Delta,0}t_\alpha^{-1})}\chi^w(t_\alpha)f(vt_\alpha^{-1}v_1ut_\alpha)=\\
=\sum_{u\in J(H_{\Delta,0}/t_\alpha H_{\Delta,0}t_\alpha^{-1})}\chi^w(t_\alpha)f(vt_\alpha^{-1}n_0t_\alpha v_2t_\alpha^{-1}u_0ut_\alpha)=\\
=\sum_{u''\in J''}\chi^w(t_\alpha)f(vt_\alpha^{-1}n_0u''t_\alpha v_2)=\sum_{\substack{v_2\in N_0(t,v)\backslash N_0/H_{\Delta,0}\\ v_1\in N_0(t,v)t_\alpha v_2t_\alpha^{-1}H_{\Delta,0}}}\sum_{u''\in N_0(t,v)t_\alpha H_{\Delta,0}t_\alpha^{-1}\cap J''}\chi^w(t_\alpha)f(vv_2)\\
=\sum_{\substack{v_2\in N_0(t,v)\backslash N_0/H_{\Delta,0}\\ v_1\in N_0(t,v)t_\alpha v_2t_\alpha^{-1}H_{\Delta,0}}}|N_0(t,v)\cap H_{\Delta,0}:N_0(t,v)\cap t_\alpha H_{\Delta,0}t_\alpha^{-1}|\chi^w(t_\alpha)f(vv_2)
\end{align*}
since the double coset $(N_0(t,v)\cap H_{\Delta,0})1t_\alpha H_{\Delta,0}t_\alpha^{-1}$ contains $|N_0(t,v)\cap H_{\Delta,0}:N_0(t,v)\cap t_\alpha H_{\Delta,0}t_\alpha^{-1}|$ left cosets in $H_{\Delta,0}/t_\alpha H_{\Delta,0}t_\alpha^{-1}$.  Assume first $N_0(t,v)\cap H_{\Delta,0}\not\subseteq t_\alpha H_{\Delta,0}t_\alpha^{-1}$ whence this number is divisible by $p$. In this case $F_\alpha(f)=0$ for all $f\in\mathcal{C}_{w,0}(\chi)^{H_{\Delta,0}}$ since $p=0$ in $\kappa$. So we deduce that $F_\alpha$ acts by $0$. On the other hand, if $N_0(t,v)\cap H_{\Delta,0}\subseteq t_\alpha H_{\Delta,0}t_\alpha^{-1}$ and $\widetilde{f}\colon N_{\Delta,0}\to\kappa$ denotes the function corresponding to $f$ under the identification \eqref{grtvCw} then we compute 
\begin{align*}
\widetilde{F_\alpha(f)}(\overline{v_1})=\sum_{\substack{v_2\in N_0(t,v)\backslash N_0/H_{\Delta,0}\\ v_1\in N_0(t,v)t_\alpha v_2t_\alpha^{-1}H_{\Delta,0}}}\chi^w(t_\alpha)f(vv_2)=\sum_{u\in J((N_{\Delta,0}\cap t_\alpha^{-1}N_{\Delta_1,\ast}t_\alpha)/ N_{\Delta_1,\ast})}\chi(t_\alpha)\widetilde{f}(t_\alpha^{-1}\overline{v_1}t_\alpha u)
\end{align*}
for all $\overline{v_1}:=v_1H_{\Delta,0}\in N_{\Delta,0}$.

We denote by $\Delta'\subseteq \Delta$ the subset of those $\alpha$ simple roots satisfying $N_{t,v}> N_{t,t_\alpha vt_\alpha^{-1}}$ or $N_0(t,v)\cap H_{\Delta,0}\not\subseteq t_\alpha H_{\Delta,0}t_\alpha^{-1}$ so that $F_\alpha$ acts by $0$ on $\mathrm{gr}^{t,v}(\mathcal{C}_{w})^{H_{\Delta,0}}$. Then the assumptions of Lemma \ref{abstractmod} are satisfied and the statement follows.
\end{proof}

\begin{rem}
Let $\chi\colon T\to \kappa^\times$ be a character and $w\in W$ such that for all roots $\beta\in (\Phi^+\setminus\Delta)\cap w^{-1}(\Phi^-)$ and $\alpha\in \Delta\setminus w^{-1}(\Phi^-)$ we have $\beta\circ\alpha^\vee=1$. Then the $\kappa\bs N_{\Delta,0}\js[F_\Delta]$-module $\mathcal{C}_{w,0}(\chi)^{H_{\Delta,0}}$ is finitely presented and simple. Otherwise the module $\mathcal{C}_{w,0}(\chi)^{H_{\Delta,0}}$ is not even finitely generated over $\kappa\bs N_{\Delta,0}\js[F_\Delta]$.
\end{rem}

\begin{pro}\label{finpresCw}
Let $\chi\colon T\to \kappa^\times$ be a character. Then any finitely generated $\kappa\bs N_{\Delta,0}\js[F_\Delta]$-submodule $M$ of $\mathcal{C}_{w}(\chi)^{H_{\Delta,0}}$ is finitely presented, has finite length, and is admissible as a representation of $N_{\Delta,0}$ for all $w\in W$.
\end{pro}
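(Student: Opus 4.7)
The plan is to combine the two filtrations already introduced before the proposition with the abstract handle provided by Lemma~\ref{finpresgr} (and, behind it, Lemma~\ref{abstractmod}), together with the general three-term properties from Lemma~\ref{finpresgenR}.

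The first step is to reduce to the case $M\subseteq \mathrm{Fil}^t$ for some $t\in\prod_{\alpha\in\Delta}t_\alpha^{\mathbb{N}}$. The subspaces $\mathrm{Fil}^t$ are $B_+$-stable by construction and form an exhaustive filtration of $\mathcal{C}_w(\chi)$ as $t$ grows under divisibility (every locally constant compactly supported function lies in some $\mathrm{Fil}^t$). The intersection $\mathrm{Fil}^t\cap\mathcal{C}_w(\chi)^{H_{\Delta,0}}$ inherits a smooth $N_{\Delta,0}$-action, hence a continuous $\kappa\bs N_{\Delta,0}\js$-action, and is stable under every $F_\alpha=\sum_{u\in J(H_{\Delta,0}/t_\alpha H_{\Delta,0}t_\alpha^{-1})}ut_\alpha$, so it is a $\kappa\bs N_{\Delta,0}\js[F_\Delta]$-submodule. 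Taking $t$ to be a common multiple of the filtration levels containing a finite set of generators of $M$ gives $M\subseteq\mathrm{Fil}^t\cap\mathcal{C}_w(\chi)^{H_{\Delta,0}}$.

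Next, exploit the finer filtration $\mathrm{Fil}^{t,v}$ of $\mathrm{Fil}^t$ by $N_0\prod_{\alpha\in\Delta}t_\alpha^{\mathbb{N}}$-subrepresentations indexed by the finite set $D_{t,w}$ (totally ordered refining the partial order from Lemma~\ref{orderDtw}). Each $\mathrm{Fil}^{t,v}$ is in particular $\kappa\bs N_{\Delta,0}\js[F_\Delta]$-stable on $H_{\Delta,0}$-invariants. Write $v_0<v_1<\dots<v_N$ for the chosen total order and set $M^{(i)}:=M\cap\mathrm{Fil}^{t,v_{N-i}}$, so that $M=M^{(0)}\supseteq M^{(1)}\supseteq\dots\supseteq M^{(N+1)}=0$. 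Proceed by descending induction along this finite chain: the top quotient $M^{(0)}/M^{(1)}$ is a quotient of the finitely generated $M$, hence finitely generated, and embeds into $\mathrm{gr}^{t,v_N}(\mathcal{C}_w)^{H_{\Delta,0}}$; Lemma~\ref{finpresgr} (together with Lemma~\ref{abstractmod}) then tells us it is finitely presented and of finite length. Lemma~\ref{finpresgenR}(ii) applied to the short exact sequence
\begin{equation*}
0\to M^{(1)}\to M^{(0)}\to M^{(0)}/M^{(1)}\to 0
\end{equation*}
shows $M^{(1)}$ is finitely generated, and one repeats the argument with $M^{(1)}$ in place of $M$. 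Inductively, every $M^{(i)}$ is finitely generated and every subquotient $M^{(i-1)}/M^{(i)}$ is finitely presented and of finite length.

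Finally, one assembles these pieces: iterated application of Lemma~\ref{finpresgenR}(i) along the filtration yields that $M$ itself is finitely presented, and finite length is additive, so $M$ has finite length as well. For admissibility as an $N_{\Delta,0}$-representation, each subquotient $M^{(i-1)}/M^{(i)}$ embeds in $\mathrm{gr}^{t,v}(\mathcal{C}_w)^{H_{\Delta,0}}\cong\mathcal{C}(N_{\Delta,0},\kappa)^{N_{\Delta_1,\ast}}$ and has finite length; the simple constituents produced by the proof of Lemma~\ref{abstractmod} are subquotients of modules of the form $\mathcal{C}(N_{\Delta_0,0},\kappa)$ (on which an open subgroup of $N_{\Delta_1,0}$ acts trivially), hence admissible as smooth $N_{\Delta,0}$-representations. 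Since admissibility is closed under extensions, each graded piece and therefore $M$ is admissible. The main technical point to watch is the first reduction step: one must check that the $F_\alpha$-action and the Iwasawa algebra action both preserve some $\mathrm{Fil}^t$, which is what lets a finitely generated module stay inside a single level of the outer filtration, after which the inner filtration is finite and the abstract machinery takes over.
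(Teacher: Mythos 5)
Your proof is correct and follows essentially the same route as the paper: trap $M$ inside a term of the filtration, then run a finite induction along the $\Fil^{t,v}$-filtration, applying Lemma \ref{finpresgr} to the graded pieces and Lemma \ref{finpresgenR} to propagate finite generation, finite presentation and finite length. The only cosmetic difference is the admissibility step, where the paper simply observes that $\Fil^t$ is admissible as an $N_0$-representation (its Pontryagin dual being generated by at most $|t^{-1}N_0t:N_0|$ elements over $\kappa\bs N_0\js$), whereas you deduce admissibility from the explicit simple constituents appearing in the proof of Lemma \ref{abstractmod}; both arguments work.
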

\begin{proof}
Any finitely generated $\kappa\bs N_{\Delta,0}\js[F_\Delta]$-submodule $M$ of $\mathcal{C}_{w}(\chi)^{H_{\Delta,0}}$ is contained in $\Fil^{t,v}$ for some $t\in \prod_{\alpha\in\Delta}t_\alpha^{\mathbb{N}}$ and $v\in N_0$. We prove the statement by induction on the number $d$ of elements in $D_{t,w}$ that are smaller than $N_{t,v}$. If $d=0$ then we have $v=1$ and $\Fil^{t,1}=\mathrm{gr}^{t,1}(\mathcal{C}_w)=\mathcal{C}_{w,0}(\chi)$ whence the statement follows from Lemma \ref{finpresgr}. So assume $d>0$. The image $\overline{M}$ of $M$ in $\mathrm{gr}^{t,v}(\mathcal{C}_w)=\Fil^{t,v}/\Fil^{t,v'}$ is finitely presented and has finite length by Lemma \ref{finpresgr} (here $N_{t,v'}$ is the biggest element in $D_{t,w}$ that is strictly smaller than $N_{t,v}$). Hence by Lemma \ref{finpresgenR}(ii) $M\cap \Fil^{t,v'}$ is finitely generated and the statement follows by induction on $d$ using Lemma \ref{finpresgenR}(i).

The admissibility of $M$ follows from the fact that $\Fil^t$ is admissible as a representation of $N_0$ for each $t\in \prod_{\alpha\in\Delta}t_\alpha^{\mathbb{N}}$. Indeed, $(\Fil^t)^\vee$ is generated by at most $|t^{-1}N_0t:N_0|$ elements as a module over $\kappa\bs N_0\js$.
\end{proof}

\begin{pro}\label{finpresSPA}
Let $\pi$ be an object in $SP_A$. Then any finitely generated $A\bs N_{\Delta,0}\js[F_\Delta]$-submodule $M$ of $\pi^{H_{\Delta,0}}$ is finitely presented, has finite length, and is admissible as a representation of $N_{\Delta,0}$.
\end{pro}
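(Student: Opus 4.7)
The plan is to reduce Proposition \ref{finpresSPA} to Proposition \ref{finpresCw} through a filtration argument in three stages.

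First, I reduce to $A=\kappa$ by filtering $M$ via $\varpi^i M \supseteq \varpi^{i+1}M$ for $0 \le i < h$. Each graded piece $\varpi^iM/\varpi^{i+1}M$ is a finitely generated $\kappa\bs N_{\Delta,0}\js[F_\Delta]$-submodule of $(\varpi^i\pi/\varpi^{i+1}\pi)^{H_{\Delta,0}}$, and $\varpi^i\pi/\varpi^{i+1}\pi$ remains in $SP_\kappa$ as a subquotient of $\pi$. Assuming the $\kappa$-case, each graded piece satisfies the three conclusions; iterating Lemma \ref{finpresgenR}(i), together with additivity of finite length and extension-closure of admissibility for smooth representations, yields them for $M$.

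Over $\kappa$, I induct on the $G$-Jordan--H\"older length of $\pi$. Given $0 \to \pi' \to \pi \to \pi'' \to 0$ with $\pi''$ an irreducible JH factor and $M \subseteq \pi^{H_{\Delta,0}}$ finitely generated, set $M' := M \cap \pi'^{H_{\Delta,0}}$ and $M'' := M/M' \hookrightarrow \pi''^{H_{\Delta,0}}$. Once the base case (irreducible $\pi''$) is established, $M''$ has the three properties; Lemma \ref{finpresgenR}(ii) then gives $M'$ finitely generated, the inductive hypothesis applies to $M'$, and Lemma \ref{finpresgenR}(i) completes the step.

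For the base case, write $\pi = \rho/\sigma$ with $\sigma \subseteq \rho \subseteq \Ind_B^G\chi$ as $G$-subrepresentations. Pull back the $B$-filtration of $\Ind_B^G\chi$ with graded pieces $\mathcal{C}_w(\chi)$, and refine each by the $\Fil^{t,v}$-subfiltration, to equip $\pi$ with a finite $N_0\prod_{\alpha\in\Delta} t_\alpha^{\mathbb{N}}$-filtration whose graded pieces are $B$-subquotients of various $\mathrm{gr}^{t,v}(\mathcal{C}_w)$. The subrepresentation case is immediate from Lemma \ref{finpresgr}. The main obstacle is the quotient case: for $\tau = \mathrm{gr}^{t,v}(\mathcal{C}_w)/\sigma'$, lifting generators of $M \subseteq \tau^{H_{\Delta,0}}$ to $H_{\Delta,0}$-invariants of $\mathrm{gr}^{t,v}(\mathcal{C}_w)$ can fail since $H_{\Delta,0}$ is pro-$p$ in characteristic $p$, so $(-)^{H_{\Delta,0}}$ is not right exact. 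To address this I would use the explicit identification $\mathrm{gr}^{t,v}(\mathcal{C}_w)^{H_{\Delta,0}} \cong \mathcal{C}(N_{\Delta,0},\kappa)^{N_{\Delta_1,\ast}}$ from the proof of Lemma \ref{finpresgr} and verify in the function-space model that $\tau^{H_{\Delta,0}}$ is a $\kappa\bs N_{\Delta,0}\js[F_\Delta]$-quotient of $\mathcal{C}(N_{\Delta,0},\kappa)^{N_{\Delta_1,\ast}}$, exploiting that $\mathrm{gr}^{t,v}(\mathcal{C}_w)$ is a compact induction and $H_{\Delta,0}$ interacts with its cosets in a controlled way. Lemma \ref{abstractmod} then gives the three properties for finitely generated submodules of $\mathcal{C}(N_{\Delta,0},\kappa)^{N_{\Delta_1,\ast}}$, and the property is closed under $\kappa\bs N_{\Delta,0}\js[F_\Delta]$-quotients: for $U \subseteq W/W'$ finitely generated with the property holding for $W$, lift generators to a finitely generated $\tilde U \subseteq W$, which is finitely presented, of finite length, and admissible; noetherianity of finite-length modules makes $\tilde U \cap W'$ finitely generated, so $U = \tilde U/(\tilde U \cap W')$ inherits the three properties.
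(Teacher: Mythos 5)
Your dévissage via Lemma \ref{finpresgenR} has the right general shape, and the preliminary reductions (to $A=\kappa$, and the induction through a filtration) are fine as far as they go. The gap is exactly where you flag it, and it is not a small one: in your base case you filter an irreducible subquotient $\rho/\sigma$ of $\Ind_B^G\chi$ by pulling back the $B$-filtration of $\Ind_B^G\chi$, so your graded pieces are $B$-\emph{subquotients} of the $\mathrm{gr}^{t,v}(\mathcal{C}_w)$, and for a genuine quotient $\tau=\mathrm{gr}^{t,v}(\mathcal{C}_w)/\sigma'$ you need $\tau^{H_{\Delta,0}}$ to be a $\kappa\bs N_{\Delta,0}\js[F_\Delta]$-quotient of $\mathrm{gr}^{t,v}(\mathcal{C}_w)^{H_{\Delta,0}}\cong\mathcal{C}(N_{\Delta,0},\kappa)^{N_{\Delta_1,\ast}}$. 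That is precisely the failure of right exactness of $(-)^{H_{\Delta,0}}$ for the pro-$p$ group $H_{\Delta,0}$ in characteristic $p$: the obstruction is $H^1(H_{\Delta,0},\sigma')$, and neither Lemma \ref{abstractmod} nor Lemma \ref{finpresgr} gives you any control over it; the phrase ``$H_{\Delta,0}$ interacts with its cosets in a controlled way'' is not an argument. Since every truly new Jordan--H\"older factor of $SP_A$ (beyond the principal series themselves) arises exactly as such a quotient, the base case of your induction is not established.

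The paper avoids this issue entirely by a different, shorter route, and this is the point you are missing: by Vigneras \cite{V}, for any $\pi$ in $SP_A$ the restriction $\pi_{\mid B}$ already has finite length as a $B$-representation and its Jordan--H\"older factors are the \emph{full} $\mathcal{C}_w(\chi)$ for various $w$ and $\chi$, never proper subquotients of them. One therefore inducts directly on the length of $\pi_{\mid B}$: given $0\to\pi_1\to\pi\to\mathcal{C}_w(\chi)\to 0$, only \emph{left} exactness of $H_{\Delta,0}$-invariants is needed, because $M/(M\cap\pi_1)$ embeds into $\mathcal{C}_w(\chi)^{H_{\Delta,0}}$, where Prop.\ \ref{finpresCw} applies; Lemma \ref{finpresgenR}(ii) then shows $M\cap\pi_1$ is finitely generated, and part (i) (together with additivity of length and of admissibility) finishes the induction. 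If you want to repair your write-up, drop the induction on the $G$-length and the pulled-back filtration of $\Ind_B^G\chi$, and replace them by this $B$-level Jordan--H\"older filtration from \cite{V}; the quotient problem then never arises.
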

\begin{proof}
This follows from Prop.\ \ref{finpresCw} and Lemma \ref{finpresgenR} noting that each irreducible subquotient of $\pi_{\mid B}$ is isomorphic to $\mathcal{C}_w(\chi)$ for some character $\chi\colon T\to \kappa^\times$ and $w\in W$ by \cite{V}.
\end{proof}

\begin{pro}\label{genlengthDveeSPA}
Let $\pi$ be an object in $SP_A$. The generic length of $D^\vee_\Delta(\pi)$ as a module over $A\bg N_{\Delta,0}\jg$ equals the number of Jordan-H\"older factors of $\pi_{\mid B}$ isomorphic to $\mathcal{C}_1(\chi)$ for some character $\chi\colon T\to\kappa^\times$. In particular, $D^\vee_\Delta(\pi)$ is finitely generated over $A\bg N_{\Delta,0}\jg$.
\end{pro}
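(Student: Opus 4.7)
My plan is to compute $\genlength(D^\vee_\Delta(\pi))$ by comparing the projective system defining $D^\vee_\Delta$ to a Jordan--H\"older filtration of $\pi_{|B}$ over $\kappa$. Refining any $A$-filtration via the $\varpi$-powers, I take a filtration $0 = \pi^{(0)} \subset \pi^{(1)} \subset \cdots \subset \pi^{(N)} = \pi$ by $B$-subrepresentations whose graded pieces $\pi^{(i)}/\pi^{(i-1)}$ are isomorphic to $\mathcal{C}_{w_i}(\chi_i)$ for $w_i \in W$ and $\chi_i\colon T \to \kappa^\times$ by Vign\'eras (\cite{V}). Let $n := \#\{i : w_i = 1\}$ be the claimed count.

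For the upper bound, I apply right exactness (Thm \ref{rightexact}) inductively along the filtration. Each short exact sequence $0 \to \pi^{(i-1)} \to \pi^{(i)} \to \mathcal{C}_{w_i}(\chi_i) \to 0$ yields the exact sequence
\begin{equation*}
D^\vee_\Delta(\mathcal{C}_{w_i}(\chi_i)) \to D^\vee_\Delta(\pi^{(i)}) \to D^\vee_\Delta(\pi^{(i-1)}) \to 0.
\end{equation*}
For $w_i \neq 1$ the leftmost term vanishes by Lemma \ref{DDeltaCw}. For $w_i = 1$, applying right exactness to the Bruhat filtration of $\Ind_B^G \chi_i$, combined with Cor \ref{princserDDelta} and the vanishing on the $w \neq 1$ pieces, identifies $D^\vee_\Delta(\mathcal{C}_1(\chi_i))$ with $\kappa\bg N_{\Delta,0}\jg \otimes \chi_i$ of generic length $1$. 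Additivity of generic length on short exact sequences yields $\genlength(D^\vee_\Delta(\pi)) \leq n$.

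For the matching lower bound, I construct an explicit $M_0 \in \mathcal{M}_\Delta(\pi^{H_{\Delta,0}})$ with $\genlength(M_0^\vee[1/X_\Delta]) = n$. For each $i$ with $w_i = 1$, I pick an $H_{\Delta,0}$-invariant element $m_i \in (\pi^{(i)})^{H_{\Delta,0}}$ whose image in $\mathcal{C}_1(\chi_i)$ is nonzero, and set $M_0$ to be the $A\bs N_{\Delta,0}\js[F_\Delta]$-submodule generated by the finite $T_0$-orbits of these $m_i$'s; by Prop \ref{finpresSPA}, any finitely generated submodule of $\pi^{H_{\Delta,0}}$ automatically lies in $\mathcal{M}_\Delta$. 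Setting $M_0^{(i)} := M_0 \cap (\pi^{(i)})^{H_{\Delta,0}}$, the exactness of Pontryagin duality and of localization at $X_\Delta$ gives
\begin{equation*}
\genlength(M_0^\vee[1/X_\Delta]) = \sum_i \genlength\bigl((M_0^{(i)}/M_0^{(i-1)})^\vee[1/X_\Delta]\bigr).
\end{equation*}
The analysis of the simple $A\bs N_{\Delta,0}\js[F_\Delta]$-factors of $\mathcal{C}_{w_i}(\chi_i)^{H_{\Delta,0}}$ from Lemmas \ref{finpresgr} and \ref{abstractmod} shows each summand vanishes for $w_i \neq 1$ (since $\Delta_1 = \Delta \cap w_i^{-1}(\Phi^-)$ is then nonempty and annihilates the dual after inverting $X_\Delta$) and equals $1$ for $w_i = 1$ (by the choice of $m_i$ forcing $M_0^{(i)}/M_0^{(i-1)}$ to be a nonzero submodule of the cyclic module $\mathcal{C}_1(\chi_i)^{H_{\Delta,0}}$).

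Finally, the Mittag--Leffler property of the projective system (cf.\ the proof of Thm \ref{rightexact}) provides a surjection $D^\vee_\Delta(\pi) \twoheadrightarrow M_0^\vee[1/X_\Delta]$, hence $\genlength(D^\vee_\Delta(\pi)) \geq n$, yielding equality. For $M \supseteq M_0$ the surjection $M^\vee[1/X_\Delta] \twoheadrightarrow M_0^\vee[1/X_\Delta]$ has a finitely generated $T_0$-stable kernel of generic length $0$; such a kernel would have nonzero $T_0$-invariant annihilator, contradicting Prop \ref{noideal} unless it vanishes. Thus the system stabilizes and $D^\vee_\Delta(\pi) \cong M_0^\vee[1/X_\Delta]$ is finitely generated over $A\bg N_{\Delta,0}\jg$. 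The main obstacle lies in the detailed simple-factor analysis for $w_i \neq 1$, namely verifying that every simple $A\bs N_{\Delta,0}\js[F_\Delta]$-subfactor of $\mathcal{C}_{w_i}(\chi_i)^{H_{\Delta,0}}$ is annihilated by some $X_\alpha$ ($\alpha \in \Delta_1$), which requires carefully tracing the structure theory in Lemma \ref{abstractmod} through the Bruhat-cell filtration of $\mathcal{C}_{w_i}(\chi_i)$.
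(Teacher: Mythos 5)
Your overall skeleton (upper bound from right exactness, vanishing on the $w\neq 1$ cells, rank-one value on $\mathcal{C}_1(\chi)$; lower bound by exhibiting an explicit module in $\mathcal{M}_\Delta(\pi^{H_{\Delta,0}})$) matches the paper's induction, but the decisive step of your lower bound is unjustified. You ``pick an $H_{\Delta,0}$-invariant element $m_i\in(\pi^{(i)})^{H_{\Delta,0}}$ whose image in $\mathcal{C}_1(\chi_i)$ is nonzero'', yet taking $H_{\Delta,0}$-invariants is only left exact: the image of $(\pi^{(i)})^{H_{\Delta,0}}$ in $\mathcal{C}_1(\chi_i)^{H_{\Delta,0}}$ is the kernel of a connecting map into $H^1(H_{\Delta,0},\pi^{(i-1)})$, and in characteristic $p$ there is no a priori reason this kernel is nonzero, let alone large enough. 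This is precisely the point where the paper imports an external input: the exactness of Breuil's one-variable functor $D^\vee_\xi$ on $SP_A$ (Cor.\ 9.2 in \cite{B}). Because the $A\bs X\js[F]$-module $M_\ell\subseteq\mathcal{C}_1(\chi)^{H_{\ell,0}}$ generated by $\mathbbm{1}_{N_0}$ is the unique minimal element with $M_\ell^\vee[1/X]\neq 0$, that exactness forces $M_\ell^\vee[1/X]\hookrightarrow D^\vee_\xi(\pi)$, whence some $M_{1,\ell}\in\mathcal{M}(\pi^{H_{\ell,0}})$ surjects onto a module containing $M_\ell$; this is what produces a lift $m\in\pi^{H_{\ell,0}}\subseteq\pi^{H_{\Delta,0}}$ of the \emph{specific} generator $\mathbbm{1}_{N_0}$. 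Your proposal has no substitute for this lifting argument, so the inequality $\genlength(D^\vee_\Delta(\pi))\geq n$ is not established.

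A second, smaller weakness: even granting an invariant element with nonzero image, you still need the submodule it generates inside $\mathcal{C}_1(\chi_i)^{H_{\Delta,0}}$ to have nonvanishing dual localization, i.e.\ to hit the simple piece $\mathcal{C}_{1,0}(\chi_i)^{H_{\Delta,0}}$ nontrivially (graded pieces indexed by $v\neq 1$ have all relevant $F_\alpha$ acting through traces that can vanish mod $p$, and contribute generic length $0$). The paper sidesteps this by lifting $\mathbbm{1}_{N_0}$ itself, whose $T_0$-orbit generates exactly the simple module $M=\mathcal{C}_{1,0}(\chi)^{H_{\Delta,0}}$ with $M^\vee[1/X_\Delta]=D^\vee_\Delta(\mathcal{C}_1(\chi))$, giving the injection $D^\vee_\Delta(\mathcal{C}_1(\chi))\to D^\vee_\Delta(\pi)$ directly. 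By contrast, the part you flag as ``the main obstacle'' (vanishing of the contributions for $w_i\neq 1$) is not really where the difficulty lies: it follows from Lemma \ref{DDeltaCw} applied to the $B$-subrepresentation generated by the relevant graded piece, together with the surjectivity of the transition maps in the projective system defining $D^\vee_\Delta$, without redoing the analysis of Lemma \ref{abstractmod}.
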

\begin{proof}
We prove by induction on the length of $\pi_{\mid B}$ (that is finite by \cite{V}). If $\pi_{\mid B}$ is irreducible then the statement follows combining Thm.\ \ref{rightexact} and Lemma \ref{DDeltaCw} with Cor.\ \ref{princserDDelta}. By \cite{V} we have a short exact sequence $0\to \pi_1\to \pi\to \mathcal{C}_w(\chi)\to 0$ of $B$-representations for some character $\chi\colon T\to\kappa^\times$ and $w\in W$. By Thm.\ \ref{rightexact} this induces an exact sequence $D^\vee_\Delta( \mathcal{C}_w(\chi))\to D^\vee_\Delta(\pi) \to D^\vee_\Delta(\pi_1)\to 0$. If $w\neq 1$ then we have $D^\vee_\Delta( \mathcal{C}_w(\chi))=0$ by Lemma \ref{DDeltaCw} therefore there is nothing to prove. So assume $w=1$. Then $D^\vee_\Delta( \mathcal{C}_1(\chi))$ has generic length $1$ by Cor.\ \ref{princserDDelta}. Moreover,  $M:= \mathcal{C}_{1,0}(\chi)^{H_{\Delta,0}}$ is finitely presented and simple as a module over $A\bs N_{\Delta,0}\js [F_\Delta]$ by Lemma \ref{finpresgr} and the Remark thereafter (as it is generated by the single $\mathbbm{1}_{N_0}$) such that we have $M^\vee[1/X_\Delta]=D^\vee_\Delta( \mathcal{C}_1(\chi))$. 

The $A\bs X\js[F]$-submodule $M_\ell$ of $\mathcal{C}_1(\chi)^{H_{\ell,0}}$ (see section \ref{reducetousualsec} for the definition of $H_{\ell,0}$) generated by $\mathbbm{1}_{N_0}$ is the unique minimal element in $\mathcal{M}(\mathcal{C}_1(\chi)^{H_{\ell,0}})$ with $M_\ell^\vee[1/X]\neq 0$. (To see this one could either use the same argument as in the proof of Lemma \ref{abstractmod} to show that $M_\ell$ is simple or note that $D_\xi^\vee(\mathcal{C}_1(\chi)^{H_{\ell,0}})M_\ell^\vee[1/X]$ has dimension $1$ over $\kappa\bg X\jg$ and $M_\ell^\vee$ is free of rank $1$ over $\kappa\bs X\js$ so we have $M_\ell'^\vee[1/X]=0$ for any proper submodule $M_\ell'\lneq M_\ell$. On the other hand, $D_\xi^\vee$ is exact on the category $SP_A$ (Cor.\ 9.2 in \cite{B}). Therefore the natural map $M_\ell^\vee[1/X]\to D_\xi^\vee(\pi)$ is injective. So there exists an element $M_{1,\ell}\in \mathcal{M}(\pi^{H_{\ell,0}})$ whose image in $\mathcal{C}_1(\chi)^{H_{\ell,0}}$ contains $M_\ell$. In particular, $\mathbbm{1}_{N_0}$ is the image of an element $m\in \pi^{H_{\ell,0}}\subseteq \pi^{H_{\Delta,0}}$. Let us denote by $M_1$ the $A\bs N_{\Delta,0}\js[F_\Delta]$-submodule of $\pi^{H_{\Delta,0}}$ generated by the finite $T_0$-orbit of $m$. By Prop.\ \ref{finpresSPA} $M_1$ is an object in $\mathcal{M}_\Delta(\pi^{H_{\Delta,0}})$ admitting a surjective $T_0$- and $A\bs N_{\Delta,0}\js[F_\Delta]$-equivariant map $M_1\to M$. Therefore the composite map $M^\vee[1/X_\Delta]\cong D^\vee_\Delta(\mathcal{C}_1(\chi))\to D^\vee_\Delta(\pi)\twoheadrightarrow M_1^\vee[1/X_\Delta]$ is injective so we have a short exact sequence $0\to D^\vee_\Delta( \mathcal{C}_1(\chi))\to D^\vee_\Delta(\pi) \to D^\vee_\Delta(\pi_1)\to 0$.
\end{proof}

\begin{thm}\label{princserexact}
Let $0\to\pi_1\to\pi_2\to\pi_3\to0$ be an exact sequence in $SP_A$. Then the sequence $0\to D^\vee_\Delta(\pi_3)\to D_\Delta^\vee(\pi_2)\to D^\vee_\Delta(\pi_1)\to0$ is also exact.
\end{thm}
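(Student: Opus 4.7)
The plan is to deduce left exactness from the right exactness already established in Theorem \ref{rightexact} via a generic-length comparison. Right exactness applied to $0\to\pi_1\to\pi_2\to\pi_3\to 0$ yields an exact sequence
\begin{equation*}
D^\vee_\Delta(\pi_3)\overset{f}{\longrightarrow} D^\vee_\Delta(\pi_2)\longrightarrow D^\vee_\Delta(\pi_1)\longrightarrow 0,
\end{equation*}
in which each term is a finitely generated \'etale $T_+$-module over $A\bg N_{\Delta,0}\jg$ by Proposition \ref{genlengthDveeSPA}. So it suffices to show that $f$ is injective.

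Firstly, I invoke Proposition \ref{genlengthDveeSPA}: the generic length of $D^\vee_\Delta(\pi)$ counts the Jordan--H\"older factors of $\pi_{\mid B}$ of the form $\mathcal{C}_1(\chi)$. Since the Jordan--H\"older constituents of $\pi_{2\mid B}$ are the (multiset-) union of those of $\pi_{1\mid B}$ and $\pi_{3\mid B}$, this count is additive, giving
\begin{equation*}
\genlength(D^\vee_\Delta(\pi_2)) = \genlength(D^\vee_\Delta(\pi_1)) + \genlength(D^\vee_\Delta(\pi_3)).
\end{equation*}
On the other hand, generic length coincides with the (ordinary) length over the artinian local ring $Q(A\bg N_{\Delta,0}\jg)$ (as noted just before Lemma \ref{equalgenlength}) and is therefore additive on short exact sequences of finitely generated $A\bg N_{\Delta,0}\jg$-modules. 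Applied to $D^\vee_\Delta(\pi_2)\twoheadrightarrow D^\vee_\Delta(\pi_1)$ and to $D^\vee_\Delta(\pi_3)\twoheadrightarrow \mathrm{Im}(f)$, this produces $\genlength(\mathrm{Im}(f)) = \genlength(D^\vee_\Delta(\pi_3))$ and hence $\genlength(\ker f)=0$.

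Finally, I argue that a finitely generated \'etale $T_+$-module $D$ over $A\bg N_{\Delta,0}\jg$ with $\genlength(D)=0$ must vanish: by Proposition \ref{abeliancat}, $\ker f$ is itself an object of $\mathcal{D}^{et}(T_+,A\bg N_{\Delta,0}\jg)$, so each graded piece $\varpi^i\ker f/\varpi^{i+1}\ker f$ is a finitely generated torsion module over the integral domain $\kappa\bg N_{\Delta,0}\jg$ carrying a semilinear $T_0$-action. Its global annihilator is then a nonzero $T_0$-stable ideal, which by Proposition \ref{noideal} must equal the whole ring, forcing the graded piece to be zero. Iterating over $i$ gives $\ker f=0$, as desired. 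The only nonformal input is Proposition \ref{genlengthDveeSPA}, which was already the hard step; the rest of the argument is a clean length-counting/no-ideals application.
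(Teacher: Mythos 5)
Your proposal is correct and follows essentially the same route as the paper: right exactness plus the generic-length count of Proposition \ref{genlengthDveeSPA} forces $\genlength(\Ker f)=0$, and then Propositions \ref{abeliancat} and \ref{noideal} kill the kernel. The only (cosmetic) difference is that you pass to the graded pieces $\varpi^i\Ker f/\varpi^{i+1}\Ker f$, while the paper applies the same annihilator argument to $\Ker(f)\cap D^\vee_\Delta(\pi_3)[\varpi]$.
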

\begin{proof}
By Thm.\ \ref{rightexact} we have an exact sequence $D^\vee_\Delta(\pi_3)\overset{f_3}{\to} D_\Delta^\vee(\pi_2)\to D^\vee_\Delta(\pi_1)\to0$. By Prop.\ \ref{genlengthDveeSPA} we have $\genlength(D_\Delta^\vee(\pi_2))=\genlength(D_\Delta^\vee(\pi_1))+\genlength(D_\Delta^\vee(\pi_3))$. Hence we deduce $\genlength(\mathrm{Im}(f_3))=\genlength(D_\Delta^\vee(\pi_3))$ and $\genlength(\Ker(f_3))=0$. By Prop.\ \ref{abeliancat} $\Ker(f_3)\cap D_\Delta^\vee(\pi_3)[\varpi]$ is a finitely generated and torsion \'etale $T_+$-module over $\kappa\bg N_{\Delta,0}\jg$. Therefore its global annihilator is a $T_0$-invariant ideal that equals $\kappa\bg N_{\Delta,0}\jg$ by Prop.\ \ref{noideal}. We deduce that $f_3$ is injective as desired.
\end{proof}

\begin{cor}\label{reduceellSP}
Let $\pi$ be an object in $SP_A$. Then we have $D_\xi^\vee(\pi)\cong A\bg X\jg\otimes_{\ell, A\bg N_{\Delta,0}\jg}D_\Delta^\vee(\pi)$.
\end{cor}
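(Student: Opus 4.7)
\emph{Proof plan.} The plan is to deduce the isomorphism from a comparison of generic lengths, exploiting the surjective natural transformation provided by the remark following Prop.\ \ref{reduceellexact}. Since $D^\vee_\Delta(\pi)$ is finitely generated over $A\bg N_{\Delta,0}\jg$ by Prop.\ \ref{genlengthDveeSPA}, the completed tensor product agrees with the ordinary one, so the natural map furnishes a surjection $\tau_\pi\colon D^\vee_\xi(\pi)\twoheadrightarrow F(\pi) := A\bg X\jg\otimes_{\ell, A\bg N_{\Delta,0}\jg}D^\vee_\Delta(\pi)$; it remains to prove that $\tau_\pi$ is injective.

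Next I would compute both sides in terms of the invariant $n_1(\pi)$ counting the Jordan--H\"older factors of $\pi_{\mid B}$ isomorphic to some $\mathcal{C}_1(\chi)$. For $F(\pi)$, each subquotient $D_i := \varpi^i D^\vee_\Delta(\pi)/\varpi^{i+1} D^\vee_\Delta(\pi)$ is an \'etale $T_+$-module over $\kappa\bg N_{\Delta,0}\jg$ by Prop.\ \ref{abeliancat}, hence free of some rank $r_i$ by Cor.\ 3.16 of \cite{Z16}. Since $\varpi\in A$, one has $\varpi^i F(\pi)/\varpi^{i+1} F(\pi)\cong \kappa\bg X\jg \otimes_\ell D_i$, which is free of rank $r_i$ over $\kappa\bg X\jg$; summing gives $\genlength F(\pi) = \sum_i r_i = \genlength D^\vee_\Delta(\pi) = n_1(\pi)$, the last equality by Prop.\ \ref{genlengthDveeSPA}.

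The analogous formula $\genlength D^\vee_\xi(\pi) = n_1(\pi)$ is obtained by repeating the d\'evissage of Prop.\ \ref{genlengthDveeSPA} in the one-variable setting: combine the vanishing $D^\vee_\xi(\mathcal{C}_w(\chi)) = 0$ for $w\neq 1$ (Prop.\ 6.2 of \cite{B}), the identification $D^\vee_\xi(\Ind_B^G\chi)\cong A\bg X\jg \otimes_A \chi$, and the exactness of $D^\vee_\xi$ on $SP_A$ (Cor.\ 9.2 of \cite{B}); the minimal generator argument in the final paragraph of the proof of Prop.\ \ref{genlengthDveeSPA} carries over verbatim to the one-variable situation. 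I expect this to be the main obstacle, since the precise generic-length formula for $D^\vee_\xi$ may not be isolated as a single statement in \cite{B} and has to be extracted from the proofs cited above.

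To conclude, the kernel $K:=\Ker(\tau_\pi)$ is a $(\varphi,\Gamma)$-stable submodule of the \'etale $(\varphi,\Gamma)$-module $D^\vee_\xi(\pi)$, hence itself \'etale by the one-variable analogue of Prop.\ \ref{abeliancat}. Additivity of generic length on the short exact sequence $0\to K\to D^\vee_\xi(\pi)\to F(\pi)\to 0$ together with the two length computations above forces $\genlength K = 0$. Since $\kappa\bg X\jg = \kappa((X))$ is a field, each of the finitely generated $\kappa\bg X\jg$-modules $\varpi^i K/\varpi^{i+1} K$ has dimension zero and hence vanishes; d\'evissage along the finite filtration $K\supset \varpi K\supset\cdots\supset\varpi^h K = 0$ forces $K = 0$, so $\tau_\pi$ is an isomorphism.
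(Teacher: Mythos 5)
Your argument is correct and is essentially the paper's own proof: one takes the canonical surjection $D^\vee_\xi(\pi)\twoheadrightarrow A\bg X\jg\otimes_{\ell}D^\vee_\Delta(\pi)$ from the remark after Prop.\ \ref{reduceellexact} and shows both sides have the same finite length over $A\bg X\jg$, the right-hand side via Thm.\ \ref{princserexact} and Prop.\ \ref{reduceellexact}, the left-hand side via Breuil's results on $SP_A$. Your write-up merely makes the $\varpi$-adic d\'evissage and the one-variable length count explicit (and invokes freeness from \cite{Z16}, which is not strictly needed), so it is the same route in more detail.
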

\begin{proof}
We have a surjective map $D_\xi^\vee(\pi)\to A\bg X\jg\otimes_{\ell, A\bg N_{\Delta,0}\jg}D_\Delta^\vee(\pi)$ by the universal property of $D_\xi^\vee$ (see Remark 2 after Prop.\ \ref{reduceellexact}) and the two sides have the same length as a module over the artinian ring $A\bg X\jg$ by Prop. \ref{reduceellexact} and Thm.\ \ref{princserexact}.
\end{proof}

\section{Noncommutative theory}\label{noncommutative}

\subsection{The ring $A\bg N_{\Delta,\infty}\jg$ and its first ring-theoretic properties}

Let $H_{\Delta,k}$ be the normal subgroup of $N_0$ generated by $s^kH_{\Delta,0}s^{-k}$, ie.\ we put $$H_{\Delta,k}=\langle n_0s^kH_{\Delta,0}s^{-k}n_0^{-1}\mid n_0\in N_0\rangle\ .$$  $H_{\Delta,k}$ is an open subgroup of $H_{\Delta,0}$ normal in $N_0$ and we have $\bigcap_{k\geq 0} H_{\Delta,k}=\{1\}$. Put $N_{\Delta,k}:=N_0/H_{\Delta,k}$ and consider the Iwasawa algebra $A\bs N_{\Delta,k}\js$. There is a natural surjective homomorphism $N_{\Delta,k}\twoheadrightarrow N_{\Delta,0}$ with finite kernel $H_{\Delta,0}/H_{\Delta,k}$. This group homomorphism does not admit a splitting in general. However, there is a canonical splitting from the subgroup $s^kN_{\Delta,0}s^{-k}\leq N_{\Delta,0}$. Indeed, the image of the map $s^k(\cdot)s^{-k}\colon N_{\Delta,k}\to N_{\Delta,k}$ maps isomorphically onto $s^kN_{\Delta,0}s^{-k}$ since its intersection with the finite subgroup $H_{\Delta,0}/H_{\Delta,k}$ is trivial ($H_{\Delta,0}/H_{\Delta,k}$ is also the kernel of the homomorphism $s^k(\cdot)s^{-k}\colon N_{\Delta,k}\to N_{\Delta,k}$, so its image is torsion-free). Let $n_0=n_0(G)\in\mathbb{N}$ be the maximum of the degrees of the algebraic characters $\beta\circ\xi\colon \mathbf{G}_m\to\mathbf{G}_m$ for all $\beta$ in $\Phi^+$. Note that for any positive root $\beta\in\Phi^+$ we have $sN_{\beta,0}s^{-1}=N_{\beta,0}^{p^{\deg(\beta\circ\xi)}}\geq N_{\beta,0}^{p^{n_0}}$. Then $s^{kn_0}N_{\Delta,k}s^{-kn_0}$ even lies in the centre of the group $N_{\Delta,k}$: Since $N_0$ is totally decomposed, it suffices to verify that the commutator $[s^{kn_0}n_\alpha s^{-kn_0},n_\beta]$ lies in $H_{\Delta,k}$ for the generators $n_\alpha:=u_\alpha(1)\in N_{\alpha,0}$ and $n_\beta:=u_\beta(1)\in N_{\beta,0}$ with $\alpha,\beta\in\Phi^+$. Now $s^{kn_0}n_\alpha s^{-kn_0}$ lies in $N_{\alpha,0}^{p^{kn_0\alpha(s)}}\leq N_{\alpha,0}^{p^{kn_0}}$ therefore the commutator $[s^{kn_0}n_\alpha s^{-kn_0},n_\beta]$ is in $$[N_0,N_0]^{p^{kn_0}}\leq H_{\Delta,0}^{p^{kn_0}}=\langle N_{\beta,0}^{p^{kn_0}}\mid \beta\in\Phi^+\setminus\Delta\rangle\leq \prod_{\beta\in\Phi^+\setminus\Delta}s^kN_{\beta,0}s^{-k}=s^kH_{\Delta,0}s^{-k}\leq H_{\Delta,k}\ .$$ By an abuse of notation we also denote the class of $n_\alpha$ in $N_{\Delta,k}$ by $n_\alpha$. Since $n_\alpha^{p^{kn_0}}$ lies in the centre of $N_{\Delta,k}$ we may form the ring $$A \bg N_{\Delta,k}\jg :=A \bs N_{\Delta,k}\js[(n_\alpha^{p^{kn_0}}-1)^{-1}\mid \alpha\in\Delta]\ .$$

Note that $n_\alpha^{p^{kn_0}}-1$ is divisible by $n_\alpha-1$ therefore $n_\alpha-1$ is also invertible in the ring $A \bg N_{\Delta,k}\jg$. Vice versa, if $n_\alpha-1$ is invertible in a ring in which $p^h=0$ then so is $n_\alpha^{p^{kn_0}}-1$ since the latter differs from the invertible element $(n_\alpha-1)^{p^{kn_0}}$ by something divisible by $p$ which is nilpotent. So the ring $A \bg N_{\Delta,k}\jg$ does not depend on the choice of our power $n_\alpha^{p^{kn_0}}$, nor on the choice of the topological generator $n_\alpha$. Moreover, for integers $0\leq k_1\leq k_2$ we have a natural surjective homomorphism
\begin{equation*}
A \bg N_{\Delta,k_2}\jg \twoheadrightarrow A \bg N_{\Delta,k_1}\jg
\end{equation*}
induced by the group homomorphism $N_{\Delta,k_2}\twoheadrightarrow N_{\Delta,k_1}$. 
\begin{lem}\label{HhomologAbgjg}
For integers $0\leq k_1\leq k_2$ we have $A \bg N_{\Delta,k_1}\jg\cong H_0(H_{\Delta,k_1}/H_{\Delta,k_2},A \bg N_{\Delta,k_2}\jg)$ and $H_i(H_{\Delta,k_1}/H_{\Delta,k_2},A \bg N_{\Delta,k_2}\jg)=0$ for all $i>0$. On finitely generated $A \bg N_{\Delta,k_2}\jg$-modules the functors $H_0(H_{\Delta,k_1}/H_{\Delta,k_2},\cdot)$ and $A \bg N_{\Delta,k_1}\jg\otimes_{A \bg N_{\Delta,k_2}\jg}\cdot$ are naturally isomorphic.
\end{lem}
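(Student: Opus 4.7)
The plan is to first establish that the Iwasawa algebra $A\bs N_{\Delta,k_2}\js$ is relatively free over $A[F]$, where $F:=H_{\Delta,k_1}/H_{\Delta,k_2}$ sits as a finite normal subgroup of $N_{\Delta,k_2}$ with quotient $N_{\Delta,k_1}$. Since $N_{\Delta,k_2}\twoheadrightarrow N_{\Delta,k_1}$ is a surjection of profinite groups with finite kernel $F$, a continuous set-theoretic section $\sigma\colon N_{\Delta,k_1}\to N_{\Delta,k_2}$ exists and extends $A$-linearly and continuously to a map $\tilde\sigma\colon A\bs N_{\Delta,k_1}\js\to A\bs N_{\Delta,k_2}\js$. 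I claim the multiplication map
$$\Phi\colon A[F]\otimes_A A\bs N_{\Delta,k_1}\js\to A\bs N_{\Delta,k_2}\js,\qquad f\otimes\lambda\mapsto f\cdot\tilde\sigma(\lambda),$$
is an isomorphism of left $A[F]$-modules (the source acted on via the first factor): surjectivity uses the unique decomposition $g=f\cdot\sigma(\overline{g})$ of each $g\in N_{\Delta,k_2}$; injectivity uses disjointness of the cosets $f\sigma(N_{\Delta,k_1})$; $A[F]$-linearity is immediate.

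Granted this, the higher homology vanishing is a routine base-change-for-Tor argument. Since $A[F]$ is finitely generated and free over $A$, the algebraic and completed tensor products coincide for pseudocompact $A$-modules, and the standard base change formula yields
$$H_i(F,A\bs N_{\Delta,k_2}\js)\cong\Tor_i^{A[F]}\!\bigl(A,A[F]\otimes_A A\bs N_{\Delta,k_1}\js\bigr)\cong\Tor_i^A\!\bigl(A,A\bs N_{\Delta,k_1}\js\bigr),$$
which is $A\bs N_{\Delta,k_1}\js$ for $i=0$ and vanishes for $i>0$. The $i=0$ isomorphism agrees with the natural quotient map, since both sides quotient by the augmentation ideal of $A[F]$, which by normality of $F$ generates (as a left ideal) the two-sided kernel of $A\bs N_{\Delta,k_2}\js\twoheadrightarrow A\bs N_{\Delta,k_1}\js$.

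To pass to the Laurent rings, note that $A\bg N_{\Delta,k_2}\jg$ is the localization of $A\bs N_{\Delta,k_2}\js$ at the central elements $n_\alpha^{p^{k_2n_0}}-1$, hence flat over it, and flat base change applied to the computation above yields vanishing for $i>0$ and the identification $H_0(F,A\bg N_{\Delta,k_2}\jg)\cong A\bs N_{\Delta,k_1}\js\otimes_{A\bs N_{\Delta,k_2}\js}A\bg N_{\Delta,k_2}\jg$. The right-hand side equals $A\bg N_{\Delta,k_1}\jg$ because in characteristic $p$ one has $n_\alpha^{p^{k_2n_0}}-1=(n_\alpha^{p^{k_1n_0}}-1)^{p^{(k_2-k_1)n_0}}$, so by nilpotence of $\varpi$ in $A$, inverting either element produces the same localization of $A\bs N_{\Delta,k_1}\js$. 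For the final claim, on a finitely generated $M$ over $A\bg N_{\Delta,k_2}\jg$ the coinvariants equal $M/I\cdot M$ with $I$ the augmentation ideal of $A[F]$; normality of $F$ together with finite generation of $M$ (ensuring $I\cdot M$ is $A\bg N_{\Delta,k_2}\jg$-stable in the pseudocompact topology) force $I\cdot M=\ker\bigl(A\bg N_{\Delta,k_2}\jg\twoheadrightarrow A\bg N_{\Delta,k_1}\jg\bigr)\cdot M$, giving the natural isomorphism $H_0(F,M)\cong A\bg N_{\Delta,k_1}\jg\otimes_{A\bg N_{\Delta,k_2}\jg}M$.

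The main obstacle is verifying $\Phi$ at the level of pseudocompact rings: the algebraic idea is transparent, but one must check that $\tilde\sigma$ is well-defined and continuous and that the algebraic tensor product on the source matches the natural pseudocompact topology on $A\bs N_{\Delta,k_2}\js$ (both reduce to compatibility with the finite quotients $A[N_{\Delta,k_2}/U]$ for $U$ open normal with $U\cap F=\{1\}$). Everything else is a mechanical application of base change for Tor combined with flat central localization.
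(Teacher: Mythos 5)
Your argument is correct, but it reaches the key acyclicity statement by a genuinely different route than the paper. The $H_0$ part is essentially identical in both: the kernel of $A\bs N_{\Delta,k_2}\js\twoheadrightarrow A\bs N_{\Delta,k_1}\js$ is generated one-sidedly by the elements $h-1$ of the finite normal kernel, exactness of the central localization transfers this to the Laurent rings, and then coinvariants of any module are $M/\Ker(\cdot)M$ --- note that your appeal to finite generation of $M$ and to the pseudocompact topology in the last step is superfluous, since once the kernel ideal is known to be generated by the $h-1$ one has $I\cdot M=\Ker(\cdot)\cdot M$ for every module. For the vanishing of higher homology the paper does not use a continuous section: it observes that $A\bs N_{\Delta,k_2}\js$ is finite free over the subring $A\bs s^{k_2n_0}N_{\Delta,k_2}s^{-k_2n_0}\js[H_{\Delta,0}/H_{\Delta,k_2}]$ (possible because $s^{k_2n_0}N_{\Delta,k_2}s^{-k_2n_0}$ is central in $N_{\Delta,k_2}$), inverts the central elements $n_\alpha^{p^{k_2n_0}}-1$ to get finite freeness of $A\bg N_{\Delta,k_2}\jg$ over $A\bg s^{k_2n_0}N_{\Delta,k_2}s^{-k_2n_0}\jg[H_{\Delta,0}/H_{\Delta,k_2}]$, and concludes that the Laurent ring is induced as a representation of $H_{\Delta,0}/H_{\Delta,k_2}$, hence also of the subgroup $H_{\Delta,k_1}/H_{\Delta,k_2}$. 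You instead establish inducedness over $A[F]$ already at the Iwasawa-algebra level via a continuous set-theoretic section of the profinite surjection $N_{\Delta,k_2}\twoheadrightarrow N_{\Delta,k_1}$ (such a section exists, its pushforward on measures is your $\tilde\sigma$, and $\Phi$ is indeed an isomorphism because $N_{\Delta,k_2}$ is the disjoint union of the clopen pieces $f\sigma(N_{\Delta,k_1})$), and you then carry the Tor computation across the localization by flatness, equivalently by writing the localization as a filtered colimit of $F$-equivariant multiplication maps by central elements. Your route is more elementary and more general, using nothing about $N_{\Delta,k}$ beyond its being profinite with finite normal subgroup $F$ and nothing about the localization beyond centrality of the inverted elements; the paper's route yields the stronger structural fact of finite freeness over an explicit group-ring subring directly at the Laurent level, a decomposition it re-uses in the same spirit later (e.g.\ in Lemma \ref{H1vanish}). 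Your identification of the two localizations of $A\bs N_{\Delta,k_1}\js$ (inverting $n_\alpha^{p^{k_2n_0}}-1$ versus $n_\alpha^{p^{k_1n_0}}-1$) is the same nilpotence-of-$\varpi$ argument the paper records immediately after defining $A\bg N_{\Delta,k}\jg$.
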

\begin{proof}
For the completed group algebras we certainly have the analogous statements. More explicitly, since $H_{\Delta,k_1}/H_{\Delta,k_2}$ is a finite group, the kernel of the quotient map $A \bs N_{\Delta,k_2}\js \twoheadrightarrow A \bs N_{\Delta,k_1}\js$ is generated as a right (or left) ideal in $A \bs N_{\Delta,k_2}\js$ by the elements $h-1$ ($h\in S$) for any fixed set $S$ of generators of the group $H_{\Delta,k_1}/H_{\Delta,k_2}=\Ker(N_{\Delta,k_2}\twoheadrightarrow N_{\Delta,k_1})$. Since localization is exact, the same is true for the kernel $I$ of the natural map $A \bg N_{\Delta,k_2}\jg \twoheadrightarrow A \bg N_{\Delta,k_1}\jg$ showing that $A \bg N_{\Delta,k_1}\jg\cong H_0(H_{\Delta,k_1}/H_{\Delta,k_2},A \bg N_{\Delta,k_2}\jg)$. Now if $M$ is a finitely generated module over $A \bg N_{\Delta,k_2}\jg$ then $H_0(H_{\Delta,k_1}/H_{\Delta,k_2},M)$ is the quotient of $M$ by the submodule generated by $(h-1)M$ ($h\in S$). We deduce $$H_0(H_{\Delta,k_1}/H_{\Delta,k_2},M)\cong M/IM\cong A \bg N_{\Delta,k_1}\jg\otimes_{A \bg N_{\Delta,k_2}\jg}M\ .$$

Finally, note that the completed group ring $A\bs N_{\Delta,k_2}\js$ is a finite free module over its subring $A\bs s^{k_2n_0}N_{\Delta,k_2}s^{-k_2n_0}\js[H_{\Delta,0}/H_{\Delta,k_2}]$. Inverting central elements $(n_\alpha^{p^{k_2n_0}}-1)$ for $\alpha\in\Delta$ we deduce that $A \bg N_{\Delta,k_2}\jg $ is finite free over $A\bg s^{k_2n_0}N_{\Delta,k_2}s^{-k_2n_0}\jg[H_{\Delta,0}/H_{\Delta,k_2}]$. In particular, it is induced as a representation of the finite group $H_{\Delta,0}/H_{\Delta,k_2}$ whence also as a representation of the subgroup $H_{\Delta,k_1}/H_{\Delta,k_2}$. Hence its higher homology groups vanish.
\end{proof}

So we may form the projective limit
\begin{equation*}
A \bg N_{\Delta,\infty}\jg:=\varprojlim_k A \bg N_{\Delta,k}\jg\ .
\end{equation*}

\begin{lem}\label{augker}
The kernel of the natural map $A \bg N_{\Delta,\infty}\jg\to A \bg N_{\Delta,0}\jg$ is generated as a left (or as a right) ideal by the elements $n_\beta-1$ for topological generators $n_\beta\in N_{\beta,0}$ for $\beta\in\Phi^+\setminus\Delta$.
\end{lem}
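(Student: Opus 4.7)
The plan is to reduce the statement to the finite-level identities from Lemma~\ref{HhomologAbgjg} and then pass to the inverse limit. Writing $\tilde R_k := A\bg N_{\Delta,k}\jg$ and $I_k := \ker(\tilde R_k \twoheadrightarrow \tilde R_0)$, I would first show that at each finite level, $I_k$ equals the left (equivalently, right) ideal generated by $\{n_\beta - 1 : \beta \in \Phi^+\setminus\Delta\}$. By Lemma~\ref{HhomologAbgjg} applied with $k_1=0$, $k_2=k$, the ideal $I_k$ is generated by $\{h-1 : h\in S\}$ for any group-theoretic generating set $S$ of the \emph{finite} quotient $H_{\Delta,0}/H_{\Delta,k}$. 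Since $H_{\Delta,0}=\prod_{\beta\in\Phi^+\setminus\Delta}N_{\beta,0}$ is topologically generated by the $n_\beta$ (each $N_{\beta,0}\cong\Zp$ being topologically cyclic) and $H_{\Delta,0}/H_{\Delta,k}$ is finite, the images of the $n_\beta$ form such a set $S$. Crucially, the generating family $\{n_\beta-1\}$ is uniform in $k$.

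Taking $\varprojlim_k$ of the surjective system $0\to I_k\to\tilde R_k\to\tilde R_0\to 0$ yields the exact sequence $0\to I:=\varprojlim_k I_k\to A\bg N_{\Delta,\infty}\jg\to A\bg N_{\Delta,0}\jg\to 0$. The lemma therefore becomes surjectivity of the $A\bg N_{\Delta,\infty}\jg$-linear map
\[
\varphi\colon A\bg N_{\Delta,\infty}\jg^r\to I,\qquad (a_\beta)\mapsto\sum_{\beta}a_\beta(n_\beta-1),
\]
with $r=|\Phi^+\setminus\Delta|$. As the inverse limit of the surjections $\varphi_k\colon\tilde R_k^r\twoheadrightarrow I_k$, surjectivity of $\varphi$ is equivalent to the Mittag-Leffler vanishing $R^1\varprojlim_k K_k=0$ for the kernel system $K_k:=\ker\varphi_k$. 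Each $K_k$ is finitely generated since $\tilde R_k$ is noetherian (a central localization of the noetherian pseudocompact Iwasawa algebra $A\bs N_{\Delta,k}\js$).

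The hard part will be verifying this Mittag-Leffler condition, which amounts to lifting any relation among the $(n_\beta-1)$ from $\tilde R_k$ to $\tilde R_{k+1}$. Concretely, given $(a_\beta)\in K_k$ and arbitrary lifts $\tilde a_\beta\in\tilde R_{k+1}$, the error $z:=\sum_\beta\tilde a_\beta(n_\beta-1)$ lies in $I_{k+1}\cap\ker(\tilde R_{k+1}\to\tilde R_k)$, and one must rewrite $z=\sum_\beta c_\beta(n_\beta-1)$ with each $c_\beta$ also in $\ker(\tilde R_{k+1}\to\tilde R_k)$. I would exploit the explicit description of $H_{\Delta,k}$ as the $N_0$-normal closure of $s^kH_{\Delta,0}s^{-k}$: every generator of $H_{\Delta,k}/H_{\Delta,k+1}$ is a product of $N_0$-conjugates of $s^kn_\beta s^{-k}$, and two identities make the required rewriting possible, namely the commutator relation
\[
n_0xn_0^{-1}-1=(x-1)+(n_0-1)(x-1)n_0^{-1}-(x-1)(n_0-1)n_0^{-1}
\]
together with the factorisation $s^kn_\beta s^{-k}-1=n_\beta^{\beta(s^k)}-1=(n_\beta-1)\bigl(1+n_\beta+\cdots+n_\beta^{\beta(s^k)-1}\bigr)$, which realises $s^kn_\beta s^{-k}-1$ as a left multiple of $n_\beta-1$. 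Careful bookkeeping of these rewritings against the filtration $\ker(\tilde R_{k+1}\to\tilde R_k)$ then produces the required $c_\beta$, yielding the desired lift $(\tilde a_\beta-c_\beta)\in K_{k+1}$ and completing the argument; the right-ideal version follows by the symmetric computation.
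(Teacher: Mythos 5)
Your finite-level input is fine: by Lemma \ref{HhomologAbgjg} the kernel $I_k$ of $A\bg N_{\Delta,k}\jg\to A\bg N_{\Delta,0}\jg$ is generated, uniformly in $k$, by the elements $n_\beta-1$ ($\beta\in\Phi^+\setminus\Delta$), and reducing the lemma to the vanishing of $R^1\varprojlim_k K_k$ for the relation modules $K_k=\Ker\bigl(A\bg N_{\Delta,k}\jg^r\to I_k\bigr)$ is legitimate. The gap is exactly at the step you call the hard part: Mittag--Leffler is \emph{not} the same as lifting relations from level $k$ to level $k+1$ (that would be surjectivity of $K_{k+1}\to K_k$, which is strictly stronger), and this surjectivity is false, so the rewriting $z=\sum_\beta c_\beta(n_\beta-1)$ with all $c_\beta\in\Ker(A\bg N_{\Delta,k+1}\jg\to A\bg N_{\Delta,k}\jg)$ that your bookkeeping is meant to produce simply does not exist in general. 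Take already $G=\GL_3$, where $\Phi^+\setminus\Delta=\{\beta\}$ and $N_{\beta,0}$ is central: then $H_{\Delta,k}=N_{\beta,0}^{p^{2k}}$, the image $C_k$ of $N_{\beta,0}$ in $N_{\Delta,k}$ is central cyclic of order $p^{2k}$, $A\bg N_{\Delta,k}\jg$ is free over $A[C_k]$, and (say for $A=\kappa$) $K_k=\mathrm{Ann}(n_\beta-1)$ is the ideal generated by the norm $\nu_k=\sum_{j<p^{2k}}n_\beta^{\,j}=(n_\beta-1)^{p^{2k}-1}$. The image of $\nu_{k+1}$ at level $k$ is $p^{2}\nu_k=0$, so the transition maps $K_{k+1}\to K_k$ are zero while $K_k\neq0$. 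Concretely, for the relation $a=(n_\beta-1)^{p^{2k}-1}\in K_k$ any lift produces the error $z=n_\beta^{p^{2k}}-1$, every solution of $c(n_\beta-1)=z$ at level $k+1$ is congruent to $(n_\beta-1)^{p^{2k}-1}$ modulo $\mathrm{Ann}(n_\beta-1)$, and since that annihilator dies at level $k$, no solution lies in the kernel of the transition map. (In this example Mittag--Leffler does hold, because the images stabilize at a smaller submodule; but that is a different statement, which your argument does not address, and for general $N_0$ with several non-central $n_\beta$ controlling the images of the $K_k$ directly looks genuinely hard. A secondary issue: your two identities leave coefficients on the \emph{right} of $(n_\beta-1)$, so even the plain left-ideal rewriting of $z$ needs further conjugation identities.)

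The paper sidesteps the relation modules altogether by inducting on $\dim N_0$: since $N_0$ is nilpotent one may choose $\beta\in\Phi^+\setminus\Delta$ with $N_{\beta,0}$ central, kill this single root subgroup first, and then apply the induction hypothesis to $N_0/N_{\beta,0}$. For one central $\beta$ the level-$k$ kernel is the principal ideal $A\bg N_{\Delta,k}\jg(n_\beta-1)$ with central generator, the transition maps between these ideals are visibly surjective, so the projective limit of the short exact sequences stays exact and the kernel of $A\bg N_{\Delta,\infty}\jg\to A\bg N_{\Delta,\infty,\beta}\jg$ is again generated by $n_\beta-1$; composing the kernels over the successive quotients yields the lemma. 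To rescue your all-roots-at-once strategy you would have to establish Mittag--Leffler for the $K_k$ by some other means; the one-root-at-a-time induction is precisely the device that makes this unnecessary.
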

\begin{proof}
We prove by induction on $\dim N_0$ as a $p$-adic Lie group. Note that $N_0$ is nilpotent so unless $\Delta=\Phi^+$ (whence there is nothing to prove), we have a $\beta$ in $\Phi^+\setminus\Delta$ such that $N_{\beta,0}$ lies in the centre of $N_0$. Then we may form the ring $A \bg N_{\Delta,\infty,\beta}\jg$ with the group $N_0$ replaced by $N_0/N_{\beta,0}$. For the $k$th layer it is clear that the kernel of the natural map $A \bg N_{\Delta,k}\jg\to A \bg N_{\Delta,k,\beta}\jg$ is generated by $n_\beta-1$ . Moreover, the maps 
$A \bg N_{\Delta,k}\jg(n_\beta-1)\to A \bg N_{\Delta,k-1}\jg(n_\beta-1)$ are clearly surjective. Therefore the projective limit of the diagrams
\begin{equation*}
0\to A \bg N_{\Delta,k}\jg(n_\beta-1)\to A \bg N_{\Delta,k}\jg\to A \bg N_{\Delta,k,\beta}\jg\to 0
\end{equation*}
is exact whence the kernel of the map $A \bg N_{\Delta,\infty}\jg\to A \bg N_{\Delta,\infty,\beta}\jg$ is also generated by $n_\beta-1$. Now we have a composite map
\begin{equation*}
A \bg N_{\Delta,\infty}\jg\overset{\Pi_\beta}{\to} A \bg N_{\Delta,\infty,\beta}\jg\overset{\Pi_{\Phi^+\setminus(\Delta\cup\{\beta\})}}{\to} A \bg N_{\Delta,0}\jg\ .
\end{equation*}
We have just shown that $\Ker(\Pi_\beta)$ is generated by $n_\beta-1$ and by the inductional hypothesis we see that $\Ker(\Pi_{\Phi^+\setminus(\Delta\cup\{\beta\})})$ is generated by $\{n_\gamma-1\mid \gamma\in \Phi^+\setminus(\Delta\cup\{\beta\})\}$. The statement follows by combining these two.
\end{proof}

We denote by $\varphi_t$ the ring endomorphism of $A \bg N_{\Delta,k}\jg$ induced by the conjugation of $t\in T_+$ on $N_{\Delta,k}$ for all $k\geq 0$. This induces a ring endomorphism (still denoted by $\varphi_t$) on $A \bg N_{\Delta,\infty}\jg$ by taking the projective limit.

\begin{lem}\label{decAbgNDeltainftyjg}
For each $t\in T_+$ the ring homomorphism $\varphi_t\colon A \bg N_{\Delta,\infty}\jg \to A \bg N_{\Delta,\infty}\jg $ is injective and we have
\begin{equation*}
A \bg N_{\Delta,\infty}\jg =\bigoplus_{u\in J(N_0/tN_0t^{-1})}u\varphi_t(A \bg N_{\Delta,\infty}\jg)=\bigoplus_{u\in J(N_0/tN_0t^{-1})}\varphi_t(A \bg N_{\Delta,\infty}\jg)u
\end{equation*}
as left (resp.\ right) modules over the the subring $\varphi_t(A \bg N_{\Delta,\infty}\jg)$.
\end{lem}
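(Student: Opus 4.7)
The plan is to establish the direct-sum decomposition at each sufficiently high finite level $k$ and then pass to the projective limit, and to deduce injectivity of $\varphi_t$ via a Mittag--Leffler argument on the kernels. The geometric input needed for both parts is an explicit description of $H_{\Delta,k}$: I will show that
\[
H_{\Delta,k}=\prod_{\gamma\in\Phi^+\setminus\Delta}u_\gamma(p^{2k}\Zp)\subseteq N_0,
\]
where $2=\min_{\gamma\in\Phi^+\setminus\Delta}\mathrm{ht}(\gamma)$. The inclusion $\supseteq$ follows by repeated Chevalley commutator relations between the height-$2$ generators present in $s^kH_{\Delta,0}s^{-k}$ and simple-root unipotents, together with an induction on $\mathrm{ht}(\gamma)$. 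The inclusion $\subseteq$ uses that the Chevalley commutator formula (and hence also $N_0$-conjugation and multiplication) preserves $p$-adic depth in every root-subgroup component.

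Equipped with this, I will set $k_0:=\lceil\tfrac12\max_{\gamma\in\Phi^+}\val_p(\gamma(t))\rceil$ and observe that for $k\geq k_0$ one has $H_{\Delta,k}\subseteq tN_0t^{-1}$, so the natural map $J\to N_0/(tN_0t^{-1}H_{\Delta,k})$ is a bijection. The standard Iwasawa algebra decomposition $A\bs N_0\js=\bigoplus_{u\in J}u\cdot A\bs tN_0t^{-1}\js$ then descends modulo $H_{\Delta,k}$ to
\[
A\bs N_{\Delta,k}\js=\bigoplus_{u\in J}u\cdot\varphi_t(A\bs N_{\Delta,k}\js),
\]
where $\varphi_t(A\bs N_{\Delta,k}\js)=A\bs tN_0t^{-1}/H_{\Delta,k}\js$. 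Localizing at the central set $\{n_\alpha^{p^{kn_0}}-1\mid\alpha\in\Delta\}$---which lies inside $\varphi_t(A\bs N_{\Delta,k}\js)$ once $kn_0\geq\val_p(\alpha(t))$---preserves the decomposition, giving $A\bg N_{\Delta,k}\jg=\bigoplus_{u\in J}u\cdot\varphi_t(A\bg N_{\Delta,k}\jg)$. Passing to $\varprojlim_{k\geq k_0}$---finite direct sums indexed by $J$ commute with $\varprojlim$, and $\varphi_t$ commutes with the inverse limit since it does so on each transition map---yields $A\bg N_{\Delta,\infty}\jg=\bigoplus_{u\in J}u\cdot\varphi_t(A\bg N_{\Delta,\infty}\jg)$. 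The right-module statement will follow by the symmetric argument using a set of right coset representatives of $N_0/tN_0t^{-1}$.

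For injectivity of $\varphi_t$ on $A\bg N_{\Delta,\infty}\jg$, I will identify the kernel at finite level $k$ as $I_k\cdot A\bg N_{\Delta,k}\jg$, where $I_k\lhd A\bs N_{\Delta,k}\js$ is the (two-sided) ideal generated by $\{\kappa-1:\kappa\in K_k\}$ with $K_k:=(t^{-1}H_{\Delta,k}t\cap N_0)/H_{\Delta,k}$ the kernel of conjugation by $t$ on $N_{\Delta,k}$ (generally non-trivial, as one sees already for $G=\GL_3$, $t=t_{\alpha_1}$). The same depth description forces
\[
t^{-1}H_{\Delta,k'}t\cap N_0\subseteq\prod_{\gamma\in\Phi^+\setminus\Delta}u_\gamma(p^{\,2k'-\val_p(\gamma(t))}\Zp),
\]
which sits inside $H_{\Delta,k}$ as soon as $k'\geq k+k_0$. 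Hence the image of $K_{k'}$ in $N_{\Delta,k}$ is trivial, the transition $I_{k'}\cdot A\bg N_{\Delta,k'}\jg\to I_k\cdot A\bg N_{\Delta,k}\jg$ is the zero map, and the inverse limit $\varprojlim_k(I_k\cdot A\bg N_{\Delta,k}\jg)$---which is precisely $\ker(\varphi_t\colon A\bg N_{\Delta,\infty}\jg\to A\bg N_{\Delta,\infty}\jg)$---vanishes.

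The main obstacle is the explicit identification of $H_{\Delta,k}$: the $\supseteq$ direction requires an induction on $\mathrm{ht}(\gamma)$ using Chevalley commutator formulas and the fact that every non-simple positive root $\gamma$ with $\mathrm{ht}(\gamma)\geq 3$ admits a decomposition $\gamma=\alpha+\gamma'$ with $\alpha\in\Delta$ and $\gamma'\in\Phi^+\setminus\Delta$, while the $\subseteq$ direction requires tracking that arbitrary $N_0$-conjugates and products of depth-$\geq 2k$ root elements never acquire shallower depth in any $u_\gamma$ component. Both verifications are elementary but combinatorial; once in place, the remaining steps amount to standard manipulations of inverse systems of finitely generated modules.
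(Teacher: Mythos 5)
Your overall architecture is the same as the paper's: prove the coset decomposition at each level $k$ large enough that $H_{\Delta,k}\subseteq tN_0t^{-1}$ and that the inverted central elements lie in $\varphi_t(A\bs N_{\Delta,k}\js)$, pass to the projective limit, and get injectivity by identifying the level-$k$ kernel with the ideal generated by $\{\kappa-1\mid\kappa\in K_k\}$ and showing the transition maps on these kernels are eventually zero. The genuine problem is the "geometric input" you build everything on: the identity $H_{\Delta,k}=\prod_{\gamma\in\Phi^+\setminus\Delta}u_\gamma(p^{2k}\Zp)$ is false in general. Only the inclusion $\subseteq$ always holds (the right-hand side is a normal subgroup of $N_0$ containing $s^kH_{\Delta,0}s^{-k}$, since Chevalley commutators can only deepen the congruence level in each non-simple root component). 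The reverse inclusion requires the relevant structure constants to be units mod $p$. Concretely, take a group with root system $C_2$ and connected centre, e.g.\ $\mathrm{GSp}_4(\mathbb{Q}_2)$, with $\alpha$ short and $\beta$ long simple: here $[u_\alpha(y),u_{\alpha+\beta}(x)]=u_{2\alpha+\beta}(\pm 2xy)$ and $u_{2\alpha+\beta}$ is central in $N$, so the $u_{2\alpha+\beta}$-component of the normal closure of $s^kH_{\Delta,0}s^{-k}$ has depth $\min(3k,2k+1)=2k+1$ for $k\geq 1$; thus $u_{2\alpha+\beta}(p^{2k}\Zp)\not\subseteq H_{\Delta,k}$. (Similar failures occur for $G_2$ at $p=2,3$; your formula is fine for type $A$ or large $p$, but the paper's hypotheses allow these other groups.)

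This matters because your two halves use opposite directions of that identity. The decomposition half only needs $H_{\Delta,k}\subseteq\prod_\gamma u_\gamma(p^{2k}\Zp)\subseteq tN_0t^{-1}$, which is the correct direction, so that part survives. The injectivity half, however, invokes $\prod_\gamma u_\gamma(p^{2k'-\val_p(\gamma(t))}\Zp)\subseteq H_{\Delta,k}$, i.e.\ exactly the false lower bound, so the key step $t^{-1}H_{\Delta,k'}t\cap N_0\subseteq H_{\Delta,k}$ for $k'\geq k+k_0$ is not proved as written. The repair is cheap: you get a lower bound for free from the definition, namely $H_{\Delta,k}\supseteq s^kH_{\Delta,0}s^{-k}=\prod_{\gamma\in\Phi^+\setminus\Delta}u_\gamma(p^{k\,\mathrm{ht}(\gamma)}\Zp)$, so it suffices to take $2k'\geq kn_0+\max_{\gamma}\val_p(\gamma(t))$; alternatively, argue as the paper does by choosing $k_2$ with $H_{\Delta,k_2}\subseteq tH_{\Delta,k}t^{-1}$, which again uses only the normal-closure definition. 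One further small point: when you pass the decomposition to the limit you implicitly use $\varphi_t(A\bg N_{\Delta,\infty}\jg)=\varprojlim_k\varphi_t(A\bg N_{\Delta,k}\jg)$, and this surjectivity statement is exactly what the eventually-zero kernel system (via vanishing of $\varprojlim^1$) gives you — so the corrected kernel estimate is doing double duty and should be established before, or together with, the limit step rather than after it.
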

\begin{proof}
Let $t\in T_+$ be arbitrary and choose an integer $k(t)>0$ so that we have $H_{\Delta,k(t)}\leq tN_0t^{-1}$ and $s^{k(t)n_0}t^{-1}\in T_+$, and let $k\geq k(t)$. Then we have $|N_0:\varphi_t(N_0)|=|N_{\Delta,k}:\varphi_t(N_{\Delta,k})|$. In particular, for the Iwasawa algebra $A \bs N_{\Delta,k}\js$ we have 
\begin{equation}\label{iwdirsum}
A \bs N_{\Delta,k}\js=\bigoplus_{u\in J(N_0/tN_0t^{-1})}u\varphi_t(A \bs N_{\Delta,k}\js)\ . 
\end{equation}
By the choice of $k$ the element $s^{kn_0}n_\alpha s^{-kn_0}-1$ lies in the subring $\varphi_t(A \bs N_{\Delta,k}\js)$ for each $\alpha\in\Delta$. So inverting $s^{kn_0}n_\alpha s^{-kn_0}-1$ for each $\alpha\in\Delta$ on both sides of \eqref{iwdirsum} we obtain
\begin{align}
A \bg N_{\Delta,k}\jg=\bigoplus_{u\in J(N_0/tN_0t^{-1})}u\varphi_t(A \bs N_{\Delta,k}\js[\varphi_{s^{kn_0}t^{-1}}(n_\alpha-1)^{-1}\mid\alpha\in\Delta])=\notag\\
=\bigoplus_{u\in J(N_0/tN_0t^{-1})}u\varphi_t(A \bg N_{\Delta,k}\jg) \label{kleveldirsum}
\end{align}
since by inverting $\varphi_{s^{kn_0}t^{-1}}(n_\alpha-1)$ the element $\varphi_{s^kn_0}(n_\alpha-1)$ becomes invertible, too, and vice versa. We deduce the statement by taking projective limits. The left module version follows similarly.

For the injectivity of $\varphi_t$ on $A \bg N_{\Delta,\infty}\jg $ note that we have a short exact sequence
\begin{equation*}
0\to (t^{-1}H_{\Delta,k}t-1)A \bs N_{\Delta,k}\js\to A \bs N_{\Delta,k}\js\overset{\varphi_t}{\to}\varphi_t(A \bs N_{\Delta,k}\js)\to 0\ .
\end{equation*}
Inverting again the central elements $\varphi_{s^kn_0}(n_\alpha-1)$ for each $\alpha\in\Delta$ we obtain a projective system of short exact sequences
\begin{equation}\label{kerphitlevelk}
0\to (t^{-1}H_{\Delta,k}t-1)A \bg N_{\Delta,k}\jg\to A \bg N_{\Delta,k}\jg\overset{\varphi_t}{\to}\varphi_t(A \bg N_{\Delta,k}\jg)\to 0\ .
\end{equation}
Now there exists an integer $k_2>k$ such that $H_{\Delta,k_2}\leq tH_{\Delta,k}t^{-1}$ whence $(t^{-1}H_{\Delta,k_2}t-1)\subseteq H_{\Delta,k}-1$. So the connecting map for the left hand side of \eqref{kerphitlevelk} is the zero map from the $k_2$th level to the $k$th level. By taking projective limits we obtain the required injectivity.
\end{proof}

\begin{pro}\label{noeth}
The ring $A \bg N_{\Delta,\infty}\jg$ is (left and right) noetherian.
\end{pro}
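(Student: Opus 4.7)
The plan is to exhibit $A\bg N_{\Delta,\infty}\jg$ as a complete filtered ring whose associated graded is noetherian, and then to invoke the standard ``lifting'' criterion that such rings are noetherian (see e.g.\ the theory of Zariskian filtrations).

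First I would observe that the base $A\bg N_{\Delta,0}\jg$ is noetherian: since $N_{\Delta,0}\cong\prod_{\alpha\in\Delta}N_{\alpha,0}\cong\Zp^{|\Delta|}$ is abelian, the Iwasawa algebra $A\bs N_{\Delta,0}\js$ is just the commutative power-series ring $A\bs X_\alpha\mid\alpha\in\Delta\js$, and $A\bg N_{\Delta,0}\jg$ is obtained from it by inverting the central regular elements $X_\alpha$. Next, let $I$ denote the kernel of the surjection $A\bg N_{\Delta,\infty}\jg\twoheadrightarrow A\bg N_{\Delta,0}\jg$; by Lemma \ref{augker}, $I$ is finitely generated (on both sides) by $\{n_\beta-1\mid\beta\in\Phi^+\setminus\Delta\}$. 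I would filter $A\bg N_{\Delta,\infty}\jg$ by the powers $I^n$.

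The key technical step is to show that $A\bg N_{\Delta,\infty}\jg$ is $I$-adically complete and separated, and that the $I$-adic filtration is cofinal with the projective-limit filtration $I_k:=\Ker(A\bg N_{\Delta,\infty}\jg\to A\bg N_{\Delta,k}\jg)$. Using the total decomposition $N_0=\prod_{\beta\in\Phi^+}N_{\beta,0}$ and a fixed ordering of the positive roots, I would show that every element of $A\bg N_{\Delta,\infty}\jg$ admits a unique convergent expansion of the form $\sum_{\mathbf{a}}c_{\mathbf{a}}\prod_{\beta\in\Phi^+\setminus\Delta}(n_\beta-1)^{a_\beta}$ with $c_{\mathbf{a}}\in A\bg N_{\Delta,0}\jg$ (products taken in the fixed order). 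This expansion can be established layer by layer at each finite quotient $A\bg N_{\Delta,k}\jg$, where only finitely many terms are nonzero because $H_{\Delta,0}/H_{\Delta,k}$ is a finite $p$-group, and passing to the limit upgrades the expansion to a genuine convergent series. Consequently, $\mathrm{gr}_I A\bg N_{\Delta,\infty}\jg$ is generated over $A\bg N_{\Delta,0}\jg$ by the finitely many classes $\bar X_\beta$, which moreover commute modulo higher $I$-powers since $N_0$ is nilpotent; hence $\mathrm{gr}_I A\bg N_{\Delta,\infty}\jg$ is a finitely generated skew-polynomial extension of the noetherian ring $A\bg N_{\Delta,0}\jg$, and is itself noetherian.

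Once the above is in place, the standard filtered-noetherian argument --- for any left ideal $J\subseteq A\bg N_{\Delta,\infty}\jg$, the symbol ideal $\mathrm{gr}_I J\subseteq\mathrm{gr}_I A\bg N_{\Delta,\infty}\jg$ is finitely generated; lifting its generators to elements of $J$ and iterating using $I$-adic completeness proves that $J$ itself is finitely generated --- yields left noetherianity, and the symmetric right-module version of Lemma \ref{decAbgNDeltainftyjg} gives right noetherianity. The main difficulty lies in the technical step above: rigorously justifying the unique expansion and the comparison of the two filtrations requires careful bookkeeping, since one must control simultaneously the noncommutative twist arising from $N_0$-conjugation, the localization at the $X_\alpha$ for $\alpha\in\Delta$, and the inverse limit that defines $A\bg N_{\Delta,\infty}\jg$.
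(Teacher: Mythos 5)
Your overall strategy is the same as the paper's: filter $A\bg N_{\Delta,\infty}\jg$ by the powers of $I=\Ker(A\bg N_{\Delta,\infty}\jg\to A\bg N_{\Delta,0}\jg)$ (finitely generated by the $n_\beta-1$, $\beta\in\Phi^+\setminus\Delta$, by Lemma \ref{augker}), check completeness via cofinality with the kernels of the maps to the finite layers $A\bg N_{\Delta,k}\jg$, prove that the associated graded ring is noetherian, and conclude by the lifting theorem for complete (Zariskian) filtrations. However, there is a genuine gap at the key step, namely your description of $\gr_I$. The claim that the classes $\bar X_\beta$ ``commute modulo higher $I$-powers since $N_0$ is nilpotent'' is not justified by nilpotency alone, and in fact the asserted skew-polynomial structure fails in general: for this filtration every $n_\beta-1$ with $\beta$ non-simple has degree $1$, while group commutators involving such elements can again be non-simple root elements of degree exactly $1$. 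Concretely, in $\GL_4$ one has $[n_{\alpha_1+\alpha_2},n_{\alpha_3}]=n_{\alpha_1+\alpha_2+\alpha_3}^{\pm1}$, so $(n_{\alpha_1+\alpha_2}-1)n_{\alpha_3}-n_{\alpha_3}(n_{\alpha_1+\alpha_2}-1)$ produces a degree-one term $\bar X_{\alpha_1+\alpha_2+\alpha_3}$; thus the variables do not normalize the degree-zero coefficient ring up to an automorphism, and $\gr_I$ is not a skew-polynomial extension of $A\bg N_{\Delta,0}\jg$ in the sense you invoke. Your proposed ``unique convergent expansion'' in the monomials $\prod(n_\beta-1)^{a_\beta}$ is equally problematic: in $\GL_5$, $n_{\alpha_1+\alpha_2+\alpha_3+\alpha_4}-1$ lies in $I^2$ (being a unit multiple of a commutator of two degree-one elements), so the naive monomials are not independent and the expansion cannot be unique as stated.

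The paper's proof closes exactly this hole by two preliminary reductions you omit: it reduces to $h=1$ (coefficients $\kappa$ of characteristic $p$) and, more importantly, replaces $N_0$ by an open \emph{uniform} subgroup, using that a ring which is a finitely generated module over a noetherian subring is noetherian. Uniformity gives $[N_0,N_0]\subseteq N_0^p\cap H_{\Delta,0}\subseteq H_{\Delta,0}^p$, so every commutator defect is a $p$-th power; in characteristic $p$ one has $d^p-1\equiv(d-1)^p$, which pushes all such defects into $I^p$, and the associated graded becomes an honest commutative polynomial ring over $\kappa\bg N_{\Delta,0}\jg$ in the classes $\gr(n_\beta-1)$. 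Then Hilbert's basis theorem plus Prop.\ I.7.1.2 of \cite{LvO} finishes. Without some such device (the uniformity reduction, or alternatively an ``almost normalizing extension'' argument for $\gr_I$ in the non-uniform case, which you would have to set up and prove), your noetherianity claim for the graded ring, and hence the whole argument, does not go through as written.
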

\begin{proof}
We may assume without loss of generality that $N_0$ is uniform as a pro-$p$ group since any ring that is finitely generated as a module over a noetherian subring is itself noetherian. Moreover, it suffices to treat the case $h=1$. We consider the filtration on $\kappa\bg N_{\Delta,\infty}\jg$ by the powers of the ideal $I:=\Ker(\kappa\bg N_{\Delta,\infty}\jg\to \kappa\bg N_{\Delta,0}\jg)$. This filtration is cofinal with the filtration induced by the kernels of the maps $\kappa\bg N_{\Delta,\infty}\jg\to \kappa\bg N_{\Delta,n}\jg$ ($n\geq 0$) therefore $\kappa\bg N_{\Delta,\infty}\jg$ is complete with respect to the $I$-adic topology. By Lemma \ref{augker} the powers $I^n$ of the ideal $I$ are generated by $n$-term products of the elements $n_\beta-1$ for $\beta\in\Phi^+\setminus\Delta$. Since $N_0$ is uniform, its commutator subgroup is contained in $N_0^p$. Moreover, the commutator $[N_0,N_0]$ of $N_0$ is also contained in $H_{\Delta,0}$ since $N_0/H_{\Delta,0}\cong N_{\Delta,0}$ is commutative. Since $N_{\Delta,0}$ does not have elements of order $p$, $[N_0,N_0]$ is contained in $H_{\Delta,0}^p$. This shows that the graded ring $\gr \kappa\bg N_{\Delta,\infty}\jg$ of $\kappa\bg N_{\Delta,\infty}\jg$ is a commutative polynomial ring over $\kappa\bg N_{\Delta,0}\jg$ in the variables $X_\beta=\gr (n_\beta-1)$ for $\beta\in\Phi^+\setminus\Delta$. This is a noetherian ring by Hilbert's basis theorem as $\kappa\bg N_{\Delta,0}\jg$ is the localization of the noetherian ring $\kappa\bs N_{\Delta,0}\js$ therefore also noetherian. Finally, the statement follows from Prop.\ I.7.1.2 in \cite{LvO}.
\end{proof}

\begin{lem}\label{jacobson}
The Jacobson radical of $A\bg N_{\Delta,\infty}\jg$ is the ideal generated by $\varpi$ and $(H_{\Delta,0}-1)A\bg N_{\Delta,\infty}\jg$.
\end{lem}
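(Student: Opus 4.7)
Set $R := A\bg N_{\Delta,\infty}\jg$ and $J := \varpi R + (H_{\Delta,0}-1)R$. My plan is to check first that $J$ is a two-sided ideal with $R/J \cong \kappa\bg N_{\Delta,0}\jg$, then to show $J \subseteq \mathrm{Jac}(R)$, and finally to establish the commutative-algebra fact $\mathrm{Jac}(\kappa\bg N_{\Delta,0}\jg) = 0$. The standard identity $\mathrm{Jac}(R)/J = \mathrm{Jac}(R/J)$, valid once $J \subseteq \mathrm{Jac}(R)$ is known, then yields $\mathrm{Jac}(R) = J$.

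The two-sidedness of $J$ comes from the centrality of $\varpi$ together with the identity $(h-1)n = n(n^{-1}hn - 1)$ for $n \in N_0$ and $h \in H_{\Delta,0}$, which uses the normality of $H_{\Delta,0}$ in $N_0$; this extends to arbitrary $r \in R$ by $A$-linearity and continuity at each layer of the projective limit $R = \varprojlim_k A\bg N_{\Delta,k}\jg$. Lemma \ref{augker} together with reduction modulo $\varpi$ then identifies $R/J$ with $\kappa\bg N_{\Delta,0}\jg$, a commutative noetherian ring.

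For $J \subseteq \mathrm{Jac}(R)$, I work layer-by-layer. The image of $J$ in $A\bg N_{\Delta,k}\jg$ is the ideal $(\varpi, H_{\Delta,0}/H_{\Delta,k}-1) A\bg N_{\Delta,k}\jg$. Since $H_{\Delta,0}/H_{\Delta,k}$ is a finite $p$-group, the augmentation ideal of its group ring is nilpotent in characteristic $p$; combined with $\varpi^h = 0$ in $A$ this shows that this image is a nilpotent two-sided ideal, hence lies in $\mathrm{Jac}(A\bg N_{\Delta,k}\jg)$. Therefore for any $x \in J$ and any $r \in R$, $1 - rx$ admits a unique two-sided inverse in each layer; by uniqueness these are compatible under the transition maps and assemble to a two-sided inverse of $1 - rx$ in $R$, so $x \in \mathrm{Jac}(R)$.

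The main content is proving $\mathrm{Jac}(\kappa\bg N_{\Delta,0}\jg) = 0$. When $|\Delta| \leq 1$ this ring is a field. For $|\Delta| \geq 2$, given a nonzero $f \in \kappa\bg N_{\Delta,0}\jg$, I multiply by a monomial to assume $f \in \kappa\bs N_{\Delta,0}\js \setminus \{0\}$ and let $f_{d_0}$ denote its lowest nonzero homogeneous component. Since $\overline{\kappa}$ is infinite, I choose $(c_\alpha)_{\alpha\in\Delta} \in (\overline{\kappa}^\times)^\Delta$ with $f_{d_0}(c) \neq 0$ and consider the continuous $\kappa$-algebra homomorphism $\phi\colon \kappa\bg N_{\Delta,0}\jg \to \overline{\kappa}((Y))$ defined by $X_\alpha \mapsto c_\alpha Y$. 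Setting $L := \kappa(c_\alpha : \alpha \in \Delta)$, a finite extension of $\kappa$ inside $\overline{\kappa}$, a short computation identifies the image with $L((Y))$: every element of $L$ appears as the $Y^d$-coefficient of $\phi(g)$ for a suitable $g \in \kappa\bs N_{\Delta,0}\js$ once $d$ is large, and inverting the $X_\alpha$ supplies arbitrary negative powers of $Y$. Hence the image is a field and $\ker\phi$ is a maximal ideal; by construction $\phi(f)$ has nonzero $Y^{d_0}$-coefficient $f_{d_0}(c)$, so $f \notin \ker\phi$. The principal obstacle is precisely this surjectivity of $\phi$ onto the field $L((Y))$, which is what promotes $\ker\phi$ from a mere prime to a maximal ideal.
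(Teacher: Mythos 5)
Your inclusion $J\subseteq\mathrm{Jac}(A\bg N_{\Delta,\infty}\jg)$ is the paper's argument in substance: the paper just says that $A\bg N_{\Delta,\infty}\jg$ is complete for the filtration by powers of $J$ (which is cofinal with the kernels of the maps to the finite layers, as in the proof of Prop.~\ref{noeth}), and your layerwise argument -- nilpotence of $(\varpi,H_{\Delta,0}/H_{\Delta,k}-1)$ in $A\bg N_{\Delta,k}\jg$ (note that normality of $H_{\Delta,0}/H_{\Delta,k}$ in $N_{\Delta,k}$ is what lets you bound powers of this two-sided ideal by powers of the augmentation ideal) plus compatibility of the unique inverses of $1-rx$ along the limit -- is the same mechanism made explicit. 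For the reverse inclusion you genuinely diverge: the paper observes that $\mathrm{Jac}(\kappa\bg N_{\Delta,0}\jg)$ is a proper $T_0$-stable ideal, hence zero by Prop.~\ref{noideal}; you instead exhibit explicit maximal ideals by specialization $X_\alpha\mapsto c_\alpha Y$. Your route is independent of the torus action and so proves a more robust commutative-algebra fact, at the cost of being longer than the one-line symmetry argument the paper already has available.

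However, the step you yourself single out is not correct as stated. With only the constraints $c_\alpha\neq 0$ and $f_{d_0}(c)\neq 0$, the image of $\phi$ need not be $L((Y))$: take $|\Delta|=2$, $f=X_{\alpha_1}+X_{\alpha_2}^2$ (so $f_{d_0}=X_{\alpha_1}$) and $c_{\alpha_1}=c_{\alpha_2}=c\in\overline{\kappa}\setminus\kappa$; then the attainable $Y^d$-coefficients of $\phi(g)$, $g\in\kappa\bs N_{\Delta,0}\js$, form only the line $\kappa c^d\subsetneq L$, and the image of $\kappa\bg N_{\Delta,0}\jg$ is $\kappa((cY))\subsetneq L((Y))$. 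The conclusion is salvageable. Cleanest repair: choose $c_{\alpha_0}=1$ for one fixed $\alpha_0$; this is possible because $X_{\alpha_0}-1$, being inhomogeneous, cannot divide the homogeneous $f_{d_0}$, so $f_{d_0}(1,\cdot)\neq 0$ and the remaining coordinates can be chosen in $\overline{\kappa}^\times$. With one coordinate equal to $1$ the span of degree-$d$ monomials in the $c_\alpha$ contains the spans of all lower degrees, hence equals $L$ for $d\gg 0$, and your surjectivity computation does give image $L((Y))$. Alternatively one can check that for arbitrary $c$ the image is always the field $L_0((c_{\alpha_0}Y))$, where $L_0\subseteq L$ is the subfield generated by the ratios $c_\alpha c_\beta^{-1}$, so $\Ker(\phi)$ is maximal in any case. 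With either repair your proof is correct.
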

\begin{proof}
$A\bg N_{\Delta,\infty}\jg$ is complete with respect to the filtration induced by the powers of $\varpi A\bg N_{\Delta,\infty}\jg +(H_{\Delta,0}-1)A\bg N_{\Delta,\infty}\jg$. Therefore this ideal is contained in the Jacobson radical of $A\bg N_{\Delta,\infty}\jg$. The other direction follows from noting that the Jacobson radical of $\kappa\bg N_{\Delta,0}\jg\cong \kappa\bs X_\alpha\mid \alpha\in \Delta\js [X_\alpha^{-1}\mid \alpha\in \Delta]$ is $0$ by Prop.\ \ref{noideal} since it is $T_0$-invariant.
\end{proof}

\subsection{The equivalence of categories}

By Lemma \ref{decAbgNDeltainftyjg} we may consider \'etale $T_+$-modules over $A \bg N_{\Delta,\infty}\jg$ the usual way: an \'etale $T_+$-module $D_\infty$ over $A \bg N_{\Delta,\infty}\jg$ is a finitely generated $A \bg N_{\Delta,\infty}\jg$-module with a semilinear action of the monoid $T_+$ such that for all $t\in T_+$ the map
\begin{eqnarray*}
1\otimes\varphi_t\colon A \bg N_{\Delta,\infty}\jg\otimes_{\varphi_t, A \bg N_{\Delta,\infty}\jg}D_\infty&\to& D_\infty\\
\lambda\otimes x&\mapsto& \lambda\varphi_t(x)
\end{eqnarray*}
is an isomorphism. Similarly, we may consider \'etale $T_\ast$-modules over $A \bg N_{\Delta,\infty}\jg$ for any submonoid $T_\ast\leq T_+$ and also over the rings $A \bg N_{\Delta,k}\jg$ for each $k\geq 0$.

Let $T_\ast\leq T_+$ be a submonoid containing some power $s^{r_\ast}$ of $s=\xi(p)$. Our goal is to prove an equivalence between the category $\mathcal{D}^{et}(T_\ast,A \bg N_{\Delta,0}\jg)$ of (finitely generated) \'etale $T_\ast$-modules over $A \bg N_{\Delta,0}\jg$ and the category $\mathcal{D}^{et}(T_\ast,A \bg N_{\Delta,\infty}\jg)$ of (finitely generated) \'etale $T_\ast$-modules over $A \bg N_{\Delta,\infty}\jg$. The reason why this is not a formal consequence of Thm.\ 8.20 in \cite{SVZ} or of Prop.\ 3.1 in \cite{Z14} or even of those proofs is that there is no section of the group homomorphism $N_0\twoheadrightarrow N_{\Delta,0}$ in general. Therefore it is not obvious a priori how to construct a functor from $\mathcal{D}^{et}(T_\ast,A \bg N_{\Delta,0}\jg)$ to $\mathcal{D}^{et}(T_\ast,A \bg N_{\Delta,\infty}\jg)$. The idea is to prove an equivalence over each level $k$ and to take the projective limit. Even though there is no section of the group homomorphism $N_{\Delta,k}\twoheadrightarrow N_{\Delta,0}$ either, we do have a section from a finite index subgroup $s^{k}N_{\Delta,0}s^{-k}$. 

For an \'etale $T_\ast$-module  $D$ over $A \bg N_{\Delta,0}\jg$ and integer $r_\ast\mid k$ note that we have a ring homomorphism $\varphi_{s^{kn_0}}\colon A\bg N_{\Delta,0}\jg\hookrightarrow A\bg s^{kn_0}N_{\Delta,0}s^{-kn_0}\jg\leq A\bg N_{\Delta,k}\jg$. So we define $$D_k:=\mathbb{M}_{k,0}(D):=A\bg N_{\Delta,k}\jg\otimes_{\varphi_{s^{kn_0}},A\bg N_{\Delta,0}\jg}D\ .$$ This is an \'etale $T_\ast$-module over $A\bg N_{\Delta,k}\jg$ as we compute
\begin{align*}
A\bg N_{\Delta,k}\jg\otimes_{\varphi_t,A\bg N_{\Delta,k}\jg}D_k= A\bg N_{\Delta,k}\jg\otimes_{\varphi_t,A\bg N_{\Delta,k}\jg}A\bg N_{\Delta,k}\jg\otimes_{\varphi_{s^{kn_0}},A\bg N_{\Delta,0}\jg}D\cong\\
\cong A\bg N_{\Delta,k}\jg\otimes_{\varphi_{ts^{kn_0}},A\bg N_{\Delta,0}\jg}D\cong A\bg N_{\Delta,k}\jg\otimes_{\varphi_{s^{kn_0}},A\bg N_{\Delta,0}\jg}A\bg N_{\Delta,0}\jg\otimes_{\varphi_{t},A\bg N_{\Delta,0}\jg}D\cong\\
\cong A\bg N_{\Delta,k}\jg\otimes_{\varphi_{s^{kn_0}},A\bg N_{\Delta,0}\jg}D\cong D_k
\end{align*}
for each $t\in T_\ast$. The above identification is given by the map $1\otimes\varphi_t\colon A\bg N_{\Delta,k}\jg\otimes_{\varphi_t,A\bg N_{\Delta,k}\jg}D_k\to D_k$. On the other hand, we compute
\begin{align*}
H_0(H_{\Delta,0}/H_{\Delta,k},D_k)=H_0(H_{\Delta,0}/H_{\Delta,k},A\bg N_{\Delta,k}\jg\otimes_{\varphi_{s^{kn_0}},A\bg N_{\Delta,0}\jg}D)\cong\\
\cong H_0(H_{\Delta,0}/H_{\Delta,k},A\bg N_{\Delta,k}\jg)\otimes_{\varphi_{s^{kn_0}},A\bg N_{\Delta,0}\jg}D\cong A\bg N_{\Delta,0}\jg \otimes_{\varphi_{s^{kn_0}},A\bg N_{\Delta,0}\jg}D\cong D
\end{align*}
by Lemma \ref{HhomologAbgjg}. So we obtained a natural isomorphism between the identity functor on $\mathcal{D}^{et}(T_\ast,A \bg N_{\Delta,0}\jg)$ and the functor $H_0(H_{\Delta,0}/H_{\Delta,k},(\cdot)_k)$.

\begin{lem}\label{equivcatk}
The functors $\mathbb{M}_{k,0}\colon D\mapsto D_k$ and $\mathbb{D}_{0,k}\colon D_k\mapsto H_0(H_{\Delta,0}/H_{\Delta,k},D_k)$ are quasi-inverse equivalences of categories between $\mathcal{D}^{et}(T_\ast,A \bg N_{\Delta,0}\jg)$ and $\mathcal{D}^{et}(T_\ast,A \bg N_{\Delta,k}\jg)$.
\end{lem}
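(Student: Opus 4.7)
The plan is to prove the lemma in two steps: first, verify that $\mathbb{D}_{0,k}$ genuinely lands in $\mathcal{D}^{et}(T_\ast,A\bg N_{\Delta,0}\jg)$; second, construct a natural isomorphism $\mathbb{M}_{k,0}\circ\mathbb{D}_{0,k}\cong\id$, since the reverse composite was already identified with the identity just before the lemma statement.

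For the first step, given $D_k\in\mathcal{D}^{et}(T_\ast,A\bg N_{\Delta,k}\jg)$, Lemma \ref{HhomologAbgjg} identifies $\mathbb{D}_{0,k}(D_k)$ with $A\bg N_{\Delta,0}\jg\otimes_{A\bg N_{\Delta,k}\jg}D_k$, which is automatically finitely generated over $A\bg N_{\Delta,0}\jg$. Since $T_+$ preserves $H_{\Delta,0}$ (it permutes the root subgroups $N_\beta$ defining it), and since $T_\ast$ also preserves $H_{\Delta,k}$ (by the same normal closure argument used for $T_+$: conjugating $n_0 s^k H_{\Delta,0} s^{-k}n_0^{-1}$ by $t\in T_\ast$ yields $(tn_0 t^{-1})s^k H_{\Delta,0}s^{-k}(tn_0 t^{-1})^{-1}$, again a generator of $H_{\Delta,k}$), the operators $\varphi_t$ descend through the quotient $A\bg N_{\Delta,k}\jg\twoheadrightarrow A\bg N_{\Delta,0}\jg$ and induce a semilinear $T_\ast$-action on $\mathbb{D}_{0,k}(D_k)$. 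Etaleness at each $t\in T_\ast$ then follows from
\begin{align*}
A\bg N_{\Delta,0}\jg\otimes_{\varphi_t,A\bg N_{\Delta,0}\jg}\mathbb{D}_{0,k}(D_k) &\cong A\bg N_{\Delta,0}\jg\otimes_{A\bg N_{\Delta,k}\jg}\bigl(A\bg N_{\Delta,k}\jg\otimes_{\varphi_t,A\bg N_{\Delta,k}\jg}D_k\bigr) \\
&\cong A\bg N_{\Delta,0}\jg\otimes_{A\bg N_{\Delta,k}\jg}D_k=\mathbb{D}_{0,k}(D_k),
\end{align*}
the first isomorphism being associativity of tensor products together with the compatibility of $\varphi_t$ with the projection, and the second being the etaleness of $D_k$.

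For the second step, the crucial observation is that the Frobenius $\varphi_{s^{kn_0}}\colon A\bg N_{\Delta,k}\jg\to A\bg N_{\Delta,k}\jg$ factors through the projection $\pi\colon A\bg N_{\Delta,k}\jg\twoheadrightarrow A\bg N_{\Delta,0}\jg$. Indeed, for $h\in H_{\Delta,0}/H_{\Delta,k}$ we have $s^{kn_0}hs^{-kn_0}\in s^{kn_0}H_{\Delta,0}s^{-kn_0}\subseteq H_{\Delta,k}$, so $\varphi_{s^{kn_0}}(h)=1$ in $N_{\Delta,k}$; hence $\varphi_{s^{kn_0}}$ annihilates the kernel of $\pi$, which by Lemma \ref{HhomologAbgjg} is generated by the elements $h-1$. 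By associativity of tensor products,
\begin{equation*}
A\bg N_{\Delta,k}\jg\otimes_{\varphi_{s^{kn_0}},A\bg N_{\Delta,k}\jg}D_k\cong A\bg N_{\Delta,k}\jg\otimes_{\varphi_{s^{kn_0}},A\bg N_{\Delta,0}\jg}\bigl(A\bg N_{\Delta,0}\jg\otimes_{\pi,A\bg N_{\Delta,k}\jg}D_k\bigr)=\mathbb{M}_{k,0}(\mathbb{D}_{0,k}(D_k)).
\end{equation*}
The etaleness of $D_k$ at $t=s^{kn_0}\in T_\ast$ (note $r_\ast\mid k$ implies $r_\ast\mid kn_0$) identifies the left-hand side with $D_k$ via $1\otimes\varphi_{s^{kn_0}}$. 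The resulting isomorphism $\eta_{D_k}\colon\mathbb{M}_{k,0}(\mathbb{D}_{0,k}(D_k))\xrightarrow{\sim} D_k$, $\lambda\otimes\bar d\mapsto\lambda\varphi_{s^{kn_0}}(d)$, is manifestly natural in $D_k$ and $T_\ast$-equivariant (since $\varphi_t\circ\varphi_{s^{kn_0}}=\varphi_{s^{kn_0}}\circ\varphi_t$ on $D_k$ as $T$ is abelian). The main technical point is thus the factorization of $\varphi_{s^{kn_0}}$ through $\pi$, which is precisely where the choice of the integer $n_0=n_0(G)$—ensuring $s^{kn_0}H_{\Delta,0}s^{-kn_0}\subseteq H_{\Delta,k}$—intervenes; every other input is formal manipulation of base change.
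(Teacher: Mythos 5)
Your proof is correct and follows essentially the same route as the paper: the composite $\mathbb{D}_{0,k}\circ\mathbb{M}_{k,0}\cong\id$ is taken from the discussion preceding the lemma, and the reverse composite is identified with the identity via the factorization of $\varphi_{s^{kn_0}}$ through the projection $A\bg N_{\Delta,k}\jg\twoheadrightarrow A\bg N_{\Delta,0}\jg$ (using $s^{kn_0}H_{\Delta,0}s^{-kn_0}\subseteq H_{\Delta,k}$), associativity of base change, and the \'etaleness of $D_k$ at $s^{kn_0}$. Your preliminary check that $\mathbb{D}_{0,k}$ lands in $\mathcal{D}^{et}(T_\ast,A\bg N_{\Delta,0}\jg)$ is a harmless addition the paper leaves implicit.
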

\begin{proof}
We have already seen that $\mathbb{D}_{0,k}\circ\mathbb{M}_{k,0}\cong\id$. For the other direction let $D_k$ be an object in $\mathcal{D}^{et}(T_\ast,A \bg N_{\Delta,k}\jg)$. Note that by Lemma \ref{HhomologAbgjg} we have $\mathbb{D}_{0,k}=A \bg N_{\Delta,0}\jg\otimes_{A \bg N_{\Delta,k}\jg}\cdot$. Moreover, $(H_{\Delta,0}/H_{\Delta,k}-1)A \bg N_{\Delta,k}\jg$ lies in the kernel of the ring homomorphism $\varphi_{s^{kn_0}}\colon A \bg N_{\Delta,k}\jg\to A \bg N_{\Delta,k}\jg$. So we may factor $\varphi_{s^{kn_0}}$ as
\begin{equation*}
\xymatrix{
A \bg N_{\Delta,k}\jg\ar@{->>}[r]\ar@/_2pc/[rr]_{\varphi_{s^{kn_0}}} & A \bg N_{\Delta,0}\jg \ar[r]^{\varphi_{s^{kn_0}}\ \ } &  A \bg N_{\Delta,k}\jg\ .\\
&&
}
\end{equation*}
Therefore we compute
\begin{align*}
\mathbb{M}_{k,0}\circ\mathbb{D}_{0,k}(D_k)=A \bg N_{\Delta,k}\jg \otimes_{\varphi_{s^{kn_0}},A\bg N_{\Delta,0}\jg} A\bg N_{\Delta,0}\jg \otimes_{A\bg N_{\Delta,k}\jg} D_k\cong\\
\cong A \bg N_{\Delta,k}\jg \otimes_{\varphi_{s^{kn_0}},A\bg N_{\Delta,k}\jg} D_k\cong D_k
\end{align*}
where the last isomorphism follows from the \'etaleness of $D_k$.
\end{proof}

For an object $D$ in $\mathcal{D}^{et}(T_\ast,A \bg N_{\Delta,0}\jg)$ and integers $k_1\leq k_2$ both divisible by $r_\ast$ we have $$\mathbb{M}_{k_1,0}(D)=A\bg N_{\Delta,k_1}\jg \otimes_{A\bg N_{\Delta,k_2}\jg}\mathbb{M}_{k_2,0}(D)\ .$$ In particular, $(\mathbb{M}_{k,0}(D))_{k\geq 0}$ forms a projective system. So we define $\mathbb{M}_{\infty,0}:=\varprojlim_k\mathbb{M}_{k,0}$ and $\mathbb{D}_{0,\infty}:=H_0(H_{\Delta,0},\cdot)=A \bg N_{\Delta,0}\jg \otimes_{A \bg N_{\Delta,\infty}\jg}\cdot$. $\mathbb{D}_{0,\infty}$ is a functor from $\mathcal{D}^{et}(T_\ast,A \bg N_{\Delta,\infty}\jg)$ to $\mathcal{D}^{et}(T_\ast,A \bg N_{\Delta,0}\jg)$. It is not so trivial (but as we shall see, true) that $\mathbb{M}_{\infty,0}$ is also a functor in the reverse direction. Note that any module over $A\bg N_{\Delta,k}\jg$ can be regarded as a module over $A\bg N_{\Delta,\infty}\jg$ via the quotient map $A\bg N_{\Delta,\infty}\jg\twoheadrightarrow A\bg N_{\Delta,k}\jg$ for any $k\geq 0$ divisible by $r_\ast$. Moreover, we have a semilinear action of $T_\ast$ on each $\mathbb{M}_{k,0}(D)$ whence also on the projective limit as the connecting maps are $T_\ast$-equivariant by construction. In particular, $\mathbb{M}_{\infty,0}(D)$ is a module over $A\bg N_{\Delta,\infty}\jg$ with a semilinear action of $T_\ast$. Our key result is the following

\begin{pro}\label{Minftyimage}
For each object $D$ in $\mathcal{D}^{et}(T_\ast,A \bg N_{\Delta,0}\jg)$ the $A\bg N_{\Delta,\infty}\jg$-module $\mathbb{M}_{\infty,0}(D)$ with  the $T_\ast$-action induced by the $T_\ast$-action on $D$ is an object in $\mathcal{D}^{et}(T_\ast,A \bg N_{\Delta,\infty}\jg)$.
\end{pro}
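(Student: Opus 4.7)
The plan is to verify the two defining properties for $M := \mathbb{M}_{\infty,0}(D) = \varprojlim_k M_k$ (with $M_k := A\bg N_{\Delta,k}\jg \otimes_{\varphi_{s^{kn_0}}, A\bg N_{\Delta,0}\jg} D$, $r_\ast \mid k$): finite generation over $A\bg N_{\Delta,\infty}\jg$ and étaleness of the induced $T_\ast$-action. The overarching strategy is to work level by level and then pass to the inverse limit, exploiting that $A\bg N_{\Delta,\infty}\jg$ is noetherian (Prop.\ \ref{noeth}) and complete with respect to the kernel filtration $I^l := \ker(A\bg N_{\Delta,\infty}\jg \to A\bg N_{\Delta,l}\jg)$.

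First I would identify the transitions in the projective system explicitly. Since $T$ is abelian, $\varphi_{s^{(k+r_\ast)n_0}}$ factors as $\varphi_{s^{kn_0}} \circ \varphi_{s^{r_\ast n_0}}^0$ on $A\bg N_{\Delta,0}\jg$, and the étaleness of $D$ at $s^{r_\ast n_0}\in T_\ast$ gives $A\bg N_{\Delta,0}\jg \otimes_{\varphi_{s^{r_\ast n_0}}^0} D \cong D$; this identifies the transition $M_{k+r_\ast} \to M_k$ as the surjection $\mu \otimes d \mapsto \bar{\mu} \otimes \varphi_{s^{r_\ast n_0}}(d)$. For finite generation, pick any surjection $F := A\bg N_{\Delta,0}\jg^r \twoheadrightarrow D$ of $A\bg N_{\Delta,0}\jg$-modules (not required to be $T_\ast$-equivariant), and apply the right-exact base change $\mathbb{M}_{k,0}$ to produce $F_k \twoheadrightarrow M_k$ with $F_k = A\bg N_{\Delta,k}\jg^r$ and kernel $\tilde{K}_k$. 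The transitions $F_{k+r_\ast} \to F_k$ are the natural quotient maps, so by the snake lemma combined with surjectivity of the $M_k$-transitions, the $\tilde{K}_{k+r_\ast} \to \tilde{K}_k$ are surjective and the system $(\tilde{K}_k)$ is Mittag--Leffler. Inverse limits preserve exactness here and present $M$ as a finitely generated quotient of $\varprojlim F_k = A\bg N_{\Delta,\infty}\jg^r$.

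For étaleness, fix $t \in T_\ast$ and consider the candidate isomorphism $\alpha := 1\otimes\varphi_t \colon A\bg N_{\Delta,\infty}\jg \otimes_{\varphi_t, A\bg N_{\Delta,\infty}\jg} M \to M$. The key preparatory point is that each $H_{\Delta,l}$ is $T_+$-invariant --- $H_{\Delta,0} = \prod_{\beta \in \Phi^+ \setminus \Delta} N_{\beta,0}$ is preserved by $T_+$-conjugation since each $N_{\beta,0}$ is, and the normal closure in $N_0$ of $s^l H_{\Delta,0} s^{-l}$ inherits this because $T$ is abelian. Consequently $\varphi_t(I^l) \subseteq I^l$, so $\varphi_t$ descends modulo $I^l$, and one may identify $(A\bg N_{\Delta,\infty}\jg \otimes_{\varphi_t} M)/I^l$ with $A\bg N_{\Delta,l}\jg \otimes_{\varphi_t, A\bg N_{\Delta,l}\jg} M_l$. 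Under this identification $\alpha \bmod I^l$ is precisely the étale structure map of $M_l$, hence an isomorphism. Because $A\bg N_{\Delta,\infty}\jg$ is $I^0$-adically complete (cofinality of $\{I^l\}$ with $\{(I^0)^l\}$ appears in the proof of Prop.\ \ref{noeth}) with $I^0 \subseteq \mathrm{Jac}(A\bg N_{\Delta,\infty}\jg)$ (Lemma \ref{jacobson}), both source and target of $\alpha$ are finitely generated, hence $I^0$-adically complete Hausdorff; the level-wise isomorphisms then assemble into $\alpha$ being an isomorphism.

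The main obstacle lies in the étaleness step: one must carefully verify that tensor product along $\varphi_t$ commutes with reduction modulo $I^l$ as claimed (which hinges on $\varphi_t(I^l) \subseteq I^l$), and then invoke the noncommutative-algebra fact that finitely generated modules over a noetherian Jacobson-complete ring are themselves complete, in order to promote the level-wise isomorphisms to a genuine isomorphism rather than merely a dense embedding. The finite-generation step is comparatively formal once the explicit transitions and the snake lemma / Mittag--Leffler bookkeeping are in place.
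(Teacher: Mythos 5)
Your overall architecture (work level by level, then globalize using that $A\bg N_{\Delta,\infty}\jg$ is noetherian, complete, with $I^0$ inside the Jacobson radical and Artin--Rees available for the polycentral ideal of Lemma \ref{augker}) is viable and in fact mirrors how the paper later proves Theorem \ref{equivcat}. But there is a genuine gap at the exact point you flag as ``one may identify $(A\bg N_{\Delta,\infty}\jg\otimes_{\varphi_t}M)/I^l$ with $A\bg N_{\Delta,l}\jg\otimes_{\varphi_t,A\bg N_{\Delta,l}\jg}M_l$.'' The formal part of that identification (associativity of base change, using $\varphi_t(I^l)\subseteq I^l$) only reduces you to $A\bg N_{\Delta,l}\jg\otimes_{\varphi_t,A\bg N_{\Delta,l}\jg}(M/I^lM)$; what you actually need is $M/I^lM\cong M_l$, i.e.\ that reducing the inverse limit $M=\varprojlim_k M_k$ modulo $I^l$ recovers the finite level. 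This is not automatic -- quotients do not commute with inverse limits -- and it is precisely the nontrivial content of the paper's proof: one shows $H_i(H_{\Delta,k_1}/H_{\Delta,k_2},M_{k_2})=0$ for $i>0$ because $M_{k_2}$ is induced as a representation of $H_{\Delta,0}/H_{\Delta,k_2}$ (Lemma \ref{H1vanish}), feeds this into the base-changed presentation to get $C_{k_1}\cong A\bg N_{\Delta,k_1}\jg\otimes_{A\bg N_{\Delta,k_2}\jg}C_{k_2}$, and then deduces $A\bg N_{\Delta,k}\jg\otimes_{A\bg N_{\Delta,\infty}\jg}\mathbb{M}_{\infty,0}(D)\overset{\sim}{\to}\mathbb{M}_{k,0}(D)$ from diagram \eqref{Dkinfty}. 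Without this input your levelwise maps $\alpha\bmod I^l$ are not known to be the \'etale structure maps of $M_l$, and the completeness/Nakayama/Hausdorff argument has nothing to assemble. (Once the identification is in place, your globalization does give both injectivity and surjectivity of $1\otimes\varphi_t$, replacing the paper's hands-on elementwise injectivity argument and the Mittag--Leffler construction with the sets $Y_{u,k}$ in Step 3; that would be a legitimate alternative to the second half of the paper's proof.)

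A second, smaller flaw: in the finite-generation step, surjectivity of the kernel transitions $\tilde{K}_{k+r_\ast}\to\tilde{K}_k$ does \emph{not} follow from the snake lemma. With $F_{k+r_\ast}\to F_k$ and $M_{k+r_\ast}\to M_k$ surjective, the snake lemma only shows that $\Coker(\tilde{K}_{k+r_\ast}\to\tilde{K}_k)$ is a quotient of $\Ker(M_{k+r_\ast}\to M_k)$, which is nonzero in general (it is $(H_{\Delta,k}/H_{\Delta,k+r_\ast}-1)M_{k+r_\ast}$). The conclusion is nevertheless true: tensor the presentation at level $k+r_\ast$ with $A\bg N_{\Delta,k}\jg\otimes_{A\bg N_{\Delta,k+r_\ast}\jg}(\cdot)$; right exactness identifies the image of $\tilde{K}_{k+r_\ast}$ in $F_k$ (which is automatically an $A\bg N_{\Delta,k}\jg$-submodule, since the $A\bg N_{\Delta,k+r_\ast}\jg$-action on $F_k$ factors through the quotient ring) with $\tilde{K}_k$ -- this is the paper's argument around \eqref{notyetontheleft}, and it is the correct justification for the Mittag--Leffler property you invoke.
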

\begin{proof}
We proceed in $3$ steps.

\emph{Step 1: We show that $\mathbb{M}_{\infty,0}(D)$ is finitely generated over $A\bg N_{\Delta,\infty}\jg$.}  Note that the kernel of the ring homomorphism $A \bg N_{\Delta,k}\jg \twoheadrightarrow A \bg N_{\Delta,0}\jg $ is a nilpotent ideal therefore contained in the Jacobson radical of $A \bg N_{\Delta,k}\jg $. So if $D$ is generated by the elements $d_1,\dots,d_r$ then any lifts $d_{1,k},\dots,d_{r,k}$ of $d_1,\dots,d_r$ to $\mathbb{M}_{k,0}(D)$ generate $\mathbb{M}_{k,0}(D)$ by Nakayama's Lemma ($r_\ast\mid k$). Since the natural quotient maps $\mathbb{M}_{k_2,0}(D)\to \mathbb{M}_{k_1,0}(D)$ are surjective for each pair $k_1\leq k_2$ of integers (both divisible by $r_\ast$), we can choose the lifts $d_{1,k},\dots,d_{r,k}$ recursively in a compatible way so that $d_{i,k_2}$ maps to $d_{i,k_1}$ under the quotient map $\mathbb{M}_{k_2,0}(D)\to \mathbb{M}_{k_1,0}(D)$. Now for $k_1\leq k_2$ consider the commutative diagram 
\begin{equation}\label{Ck1Ck2}
\xymatrixcolsep{4pc}\xymatrix{
0\ar[r] & C_{k_2}\ar[r]\ar[d] & \bigoplus_{i=1}^r A \bg N_{\Delta,k_2}\jg e_{i,k_2} \ar[r]^-{e_{i,k_2}\mapsto d_{i,k_2}}\ar[d]^{e_{i,k_2}\mapsto e_{i,k_1}} &  \mathbb{M}_{k_2,0}(D)\ar[r]\ar[d] &0\\
0\ar[r] & C_{k_1}\ar[r] & \bigoplus_{i=1}^r A \bg N_{\Delta,k_1}\jg e_{i,k_1} \ar[r]^-{e_{i,k_1}\mapsto d_{i,k_1}} &  \mathbb{M}_{k_1,0}(D)\ar[r] &0
}
\end{equation}
in which $C_{k_j}$ ($j=1,2$) are defined so as to make the rows exact. Since the map $\mathbb{M}_{k_2,0}(D)\to \mathbb{M}_{k_1,0}(D)$ is induced by the identification $\mathbb{M}_{k_1,0}(D)\cong A\bg N_{\Delta,k_1}\jg\otimes_{A\bg N_{\Delta,k_2}\jg} \mathbb{M}_{k_2,0}(D)$ and the horizontal map in the middle is also induced by the natural quotient map $A\bg N_{\Delta,k_2}\jg \to A\bg N_{\Delta,k_1}\jg$, we obtain an exact sequence
\begin{equation}\label{notyetontheleft}
\xymatrixcolsep{3.7pc}\xymatrix{
A\bg N_{\Delta,k_1}\jg\otimes_{A\bg N_{\Delta,k_2}\jg}C_{k_2}\ar[r] & \bigoplus_{i=1}^r A \bg N_{\Delta,k_1}\jg e_{i,k_1} \ar[r]^-{e_{i,k_1}\mapsto d_{i,k_1}} &  \mathbb{M}_{k_1,0}(D)\ar[r] &0\ .
}
\end{equation}
We deduce that the composite map $C_{k_2}\twoheadrightarrow A\bg N_{\Delta,k_1}\jg\otimes_{A\bg N_{\Delta,k_2}\jg}C_{k_2}\to  C_{k_1}$ is also surjective. Therefore by the Mittag--Leffler condition we obtain an exact sequence
\begin{equation*}
\xymatrix{
0\ar[r] & \varprojlim_{r_\ast\mid k} C_{k}\ar[r] & \bigoplus_{i=1}^r A \bg N_{\Delta,\infty}\jg e_{i,\infty} \ar[r] &  \mathbb{M}_{\infty,0}(D)\ar[r] &0\ .
}
\end{equation*}
In particular, $\mathbb{M}_{\infty,0}(D)$ is finitely generated over $A\bg N_{\Delta,\infty}\jg$. 

\emph{Step 2: We show that the map \eqref{phitinfty} below is injective for all $t\in T_\ast$.}
\begin{lem}\label{H1vanish}
For all $k\geq 0$ divisible by $r_\ast$ We have $H_i(H,\mathbb{M}_{k,0}(D))=0$ for all $i>0$ and subgroup $H\leq H_{\Delta,0}/H_{\Delta,k}$.
\end{lem}
\begin{proof}
The group ring $A\bg s^{kn_0}N_{\Delta,0}s^{-kn_0}\jg[H_{\Delta,0}/H_{\Delta,k}]$ is a subring in $A\bg N_{\Delta,k}\jg$. Moreover, we have 
\begin{equation*}
A\bg N_{\Delta,k}\jg=\bigoplus_{u\in J(N_0/s^{kn_0}N_0s^{-kn_0}H_{\Delta,0})}A\bg s^{kn_0}N_{\Delta,0}s^{-kn_0}\jg[H_{\Delta,0}/H_{\Delta,k}]u\ .
\end{equation*}
In particular, $A\bg N_{\Delta,k}\jg$ is a free left module over $A\bg s^{kn_0}N_{\Delta,0}s^{-kn_0}\jg[H_{\Delta,0}/H_{\Delta,k}]$. Therefore $\mathbb{M}_{k,0}(D)=A\bg N_{\Delta,k}\jg\otimes_{\varphi_{s^{kn_0}},A\bg N_{\Delta,0}\jg}D$ is an induced module as a representation of any subgroup $H\leq H_{\Delta,0}/H_{\Delta,k}$. In particular, its higher homology groups vanish.
\end{proof}
Now we take $H_{\Delta,k_1}/H_{\Delta,k_2}$-homology of the upper row in \eqref{Ck1Ck2} and apply Lemma \ref{H1vanish} (with $H:=H_{\Delta,k_1}/H_{\Delta,k_2}\leq H_{\Delta,0}/H_{\Delta,k_2}$) to deduce the exactness of the sequence \eqref{notyetontheleft} on the left using Lemma \ref{HhomologAbgjg} and the long exact sequence of homology. By the lower row in \eqref{Ck1Ck2} we obtain the isomorphism $C_{k_1}\cong A\bg N_{\Delta,k_1}\jg\otimes_{A\bg N_{\Delta,k_2}\jg}C_{k_2}$ for each pair $k_1\leq k_2$ (divisible by $r_\ast$). Now for any fixed $k\geq 0$ we get a commutative diagram
\begin{equation}
\xymatrix{
&0\ar[d] \\ 
A\bg N_{\Delta,k}\jg\otimes_{A\bg N_{\Delta,\infty}\jg} \varprojlim_{k'} C_{k'}\ar[r]\ar[d] & C_{k}\ar[d]\\
\bigoplus_{i=1}^r A \bg N_{\Delta,k}\jg e_{i,\infty} \ar[r]\ar[d] &  \bigoplus_{i=1}^r A \bg N_{\Delta,k}\jg e_{i,k} \ar[d] \\
A\bg N_{\Delta,k}\jg\otimes_{A\bg N_{\Delta,\infty}\jg} \mathbb{M}_{\infty,0}(D)\ar[r]\ar[d]  &  \mathbb{M}_{k,0}(D)\ar[d]\\
0 &0
}\label{Dkinfty}
\end{equation}
with exact columns. The horizontal maps are onto since for example $C_k$ is quotient of $\varprojlim_{k'}C_{k'}$ by the surjectivity of the maps $C_{k'}\to C_k$ ($k\leq k'$) and this quotient map factors through the maximal quotient of $\varprojlim_{k'}C_{k'}$ on which $A\bg N_{\Delta,\infty}\jg$ acts via its quotient $A\bg N_{\Delta,k}\jg$. Therefore the lower horizontal map is an isomorphism as the middle map clearly is.

Now take an arbitrary $t$ in $T_\ast$. Since $T_\ast$ acts on $\mathbb{M}_{\infty,0}(D)$, we have a map
\begin{eqnarray}
1\otimes\varphi_t\colon A\bg N_{\Delta,\infty}\jg\otimes_{\varphi_t,A\bg N_{\Delta,\infty}\jg}\mathbb{M}_{\infty,0}(D)&\to& \mathbb{M}_{\infty,0}(D)\notag\\
\sum_{u\in J(N_0/tN_0t^{-1})}u\otimes m_u&\mapsto& \sum_{u\in J(N_0/tN_0t^{-1})}u\varphi_t(m_u)\ .\label{phitinfty}
\end{eqnarray}
Write each $m_u$ ($u\in J(N_0/tN_0t^{-1})$) as a sequence $m_u=(m_{u,k})_k$ with $m_{u,k}\in \mathbb{M}_{k,0}(D)$. Assume that $\sum_{u\in J(N_0/tN_0t^{-1})}u\otimes m_u$ lies in the kernel of the map \eqref{phitinfty}. Then for each $k\geq 0$ (divisible by $r_\ast$) we have 
\begin{equation*}
\sum_{u\in J(N_0/tN_0t^{-1})}u\varphi_t(m_{u,k})=0\ .
\end{equation*}
Since $\mathbb{M}_{k,0}(D)$ is an \'etale $T_\ast$-module by Lemma \ref{equivcatk}, we obtain that $\sum_{u\in J(N_0/tN_0t^{-1})}u\otimes m_{u,k}=0$ in $A\bg N_{\Delta,k}\jg\otimes_{\varphi_t,A\bg N_{\Delta,k}\jg}\mathbb{M}_{k,0}(D)$. Let now $k$ be big enough so that $H_{\Delta,k}$ is contained in $tN_0t^{-1}$. Then $A\bg N_{\Delta,k}\jg$ is a free right module over the image of $\varphi_t$ with generators $u\in J(N_0/tN_0t^{-1})$. Therefore we have $1\otimes m_{u,k}=0$ for each $u\in J(N_0/tN_0t^{-1})$ which shows that $m_{u,k}$ lies in $(t^{-1}H_{\Delta,k}t-1)\mathbb{M}_{k,0}(D)$. So for any $0\leq k'\leq k$ for which $H_{\Delta,k'}$ contains $t^{-1}H_{\Delta,k}t$ we deduce that $m_{u,k'}=0$. However, for any fixed $0\leq k'$ there exists a large enough $k\geq k'$ with $H_{\Delta,k'}\supseteq t^{-1}H_{\Delta,k}t$ so we have $m_{u,k'}=0$ for all $k'\geq 0$. Therefore \eqref{phitinfty} is injective.

\emph{Step 3: We show that \eqref{phitinfty} is surjective.} Let $m=(m_k)_k\in\mathbb{M}_{\infty,0}(D)$ be arbitrary. Since $\mathbb{M}_{k,0}(D)$ is an \'etale $T_\ast$-module over $A\bg N_{\Delta,k}\jg$ for all $k\geq 0$ divisible by $r_\ast$ (see Lemma \ref{equivcatk}), we may decompose $m_k$ as 
\begin{equation}\label{varphitmuk}
m_k=\sum_{u\in J(N_0/tN_0t^{-1})}u\varphi_t(m_{u,k}) 
\end{equation}
for some $m_{u,k}\in \mathbb{M}_{k,0}(D)$. Note that the elements $m_{u,k}$ are not unique since $\varphi_t$ is not injective on $\mathbb{M}_{k,0}(D)$. However, their images $\varphi_t(m_{u,k})$ under $\varphi_t$ are unique for each $u\in J(N_0/tN_0t^{-1})$ and $k\geq k(t)$ large enough so that $H_{\Delta,k}$ is contained in $tN_0t^{-1}$. Fix $m_{u,k}$ arbitrarily for each $u\in J(N_0/tN_0t^{-1})$ and $k\geq k(t)$ (divisible by $r_\ast$) so that they satisfy \eqref{varphitmuk} and put $$Y_{u,k}:=\{m'_{u,k}\in\mathbb{M}_{k,0}(D)\mid \varphi_t(m'_{u,k})=\varphi_t(m_{u,k})\}\ .$$
Let $k\geq k(t)$ be fixed now let $k'\geq k$ be another integer so that we have $t^{-1}H_{\Delta,k'}t\leq H_{\Delta,k}$. The kernel of $\varphi_t\colon \mathbb{M}_{k',0}(D)\to \mathbb{M}_{k',0}(D)$ equals $(t^{-1}H_{\Delta,k'}t-1)\mathbb{M}_{k',0}$. However, by our assumption that $t^{-1}H_{\Delta,k'}t$ is contained in $H_{\Delta,k}$, it follows that $(t^{-1}H_{\Delta,k'}t-1)\mathbb{M}_{k',0}$ maps to $0$ in $\mathbb{M}_{k,0}(D)$. Hence for each $u\in J(N_0/tN_0t^{-1})$ the image of $Y_{u,k'}$ in $Y_{u,k}$ is a single element $m^\ast_{u,k}\in Y_{u,k}$. Therefore the projective system $(Y_{u,k})_k$ satisfies the Mittag--Leffler condition so that $Y_{u,\infty}:=\varprojlim_k Y_{u,k}\subset \mathbb{M}_{\infty,0}(D)$ is a set having a single element $m_u:=(m^\ast_{u,k})_{k\geq 0}$. We clearly have $m=\sum_{u\in J(N_0/tN_0t^{-1})}u\varphi_t(m_u)$ as required.
\end{proof}

Now our main result in this section becomes a simple application of the above Proposition.

\begin{thm}\label{equivcat}
The functors $\mathbb{M}_{\infty,0}$ and $\mathbb{D}_{0,\infty}$ are quasi-inverse equivalences of categories between $\mathcal{D}^{et}(T_\ast,A \bg N_{\Delta,0}\jg)$ and $\mathcal{D}^{et}(T_\ast,A \bg N_{\Delta,\infty}\jg)$.
\end{thm}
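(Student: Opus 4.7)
The plan is to deduce the equivalence from the level-by-level equivalences of Lemma \ref{equivcatk} by passing to the projective limit in $k$, with the completeness of finitely generated $A\bg N_{\Delta,\infty}\jg$-modules in the topology defined by $I_k:=\Ker(A\bg N_{\Delta,\infty}\jg\twoheadrightarrow A\bg N_{\Delta,k}\jg)$ being the only ingredient beyond what has already been assembled.

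First I would verify $\mathbb{D}_{0,\infty}\circ\mathbb{M}_{\infty,0}\cong\id$. This is the case $k=0$ of the bottom isomorphism in diagram \eqref{Dkinfty} from the proof of Prop.\ \ref{Minftyimage}: for any $D$ in $\mathcal{D}^{et}(T_\ast,A\bg N_{\Delta,0}\jg)$,
\begin{equation*}
\mathbb{D}_{0,\infty}(\mathbb{M}_{\infty,0}(D))= A\bg N_{\Delta,0}\jg\otimes_{A\bg N_{\Delta,\infty}\jg}\mathbb{M}_{\infty,0}(D)\cong \mathbb{M}_{0,0}(D)=D,
\end{equation*}
naturally and $T_\ast$-equivariantly since everything is built functorially from base change along $\varphi_{s^{kn_0}}$.

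For the other composition, fix $D_\infty$ in $\mathcal{D}^{et}(T_\ast,A\bg N_{\Delta,\infty}\jg)$ and put $D:=\mathbb{D}_{0,\infty}(D_\infty)$. For each integer $k\geq 0$ divisible by $r_\ast$, the basechange $D_\infty^{(k)}:=A\bg N_{\Delta,k}\jg\otimes_{A\bg N_{\Delta,\infty}\jg}D_\infty$ is finitely generated and \'etale over $A\bg N_{\Delta,k}\jg$ for the induced $T_\ast$-action (the \'etaleness descends from $D_\infty$ by base change). By transitivity of tensor products, $\mathbb{D}_{0,k}(D_\infty^{(k)})\cong D$, and Lemma \ref{equivcatk} then provides canonical isomorphisms $\mathbb{M}_{k,0}(D)\cong D_\infty^{(k)}$ compatible with the transition maps in $k$. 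Passing to the projective limit,
\begin{equation*}
\mathbb{M}_{\infty,0}(D)=\varprojlim_k \mathbb{M}_{k,0}(D)\cong\varprojlim_k A\bg N_{\Delta,k}\jg\otimes_{A\bg N_{\Delta,\infty}\jg}D_\infty=\varprojlim_k D_\infty/I_k D_\infty.
\end{equation*}

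The main obstacle is the last step: identifying $\varprojlim_k D_\infty/I_kD_\infty$ with $D_\infty$, i.e.\ showing that every finitely generated $A\bg N_{\Delta,\infty}\jg$-module is complete for the filtration $(I_k)_k$. By construction $A\bg N_{\Delta,\infty}\jg$ is complete for this filtration, and the ring is noetherian (Prop.\ \ref{noeth}). By Lemmas \ref{augker} and \ref{jacobson} the two-sided ideal $I_0$ lies in the Jacobson radical, and the cofinality argument in the proof of Prop.\ \ref{noeth} shows that $(I_k)_k$ is topologically equivalent to the $J$-adic filtration for $J:=\varpi A\bg N_{\Delta,\infty}\jg+I_0$. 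Choosing a finite presentation $A\bg N_{\Delta,\infty}\jg^a\to A\bg N_{\Delta,\infty}\jg^b\to D_\infty\to 0$, the canonical map $D_\infty\to\varprojlim_k D_\infty/I_kD_\infty$ is surjective by the Mittag--Leffler property of the surjective tower $D_\infty^{(k_2)}\twoheadrightarrow D_\infty^{(k_1)}$ (which is established exactly as in Step 1 of the proof of Prop.\ \ref{Minftyimage}, via the analog of diagram \eqref{Ck1Ck2}), and injective by the noncommutative Artin--Rees/Krull intersection theorem applied to the ideal $J$ in the Jacobson radical of the noetherian ring $A\bg N_{\Delta,\infty}\jg$. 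This gives $\mathbb{M}_{\infty,0}(D)\cong D_\infty$ and completes the proof.
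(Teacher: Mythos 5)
Your proposal is correct and follows essentially the same route as the paper: reduce to the level-$k$ equivalences of Lemma \ref{equivcatk}, identify $\mathbb{M}_{\infty,0}(\mathbb{D}_{0,\infty}(D_\infty))$ with $\varprojlim_k D_\infty/I_kD_\infty$, and conclude by completeness-type arguments using Prop.\ \ref{noeth}, Lemmas \ref{augker} and \ref{jacobson}, and the noncommutative Artin--Rees/Krull intersection theorem for the centrally generated ideal $J$ (the paper gets surjectivity slightly differently, by Nakayama from the fact that the comparison map is an isomorphism modulo the Jacobson radical). One small repair: Mittag--Leffler for the surjective tower $D_\infty^{(k_2)}\twoheadrightarrow D_\infty^{(k_1)}$ does not by itself give surjectivity of $D_\infty\to\varprojlim_k D_\infty/I_kD_\infty$; in your presentation argument the relevant tower is that of the kernels of $A\bg N_{\Delta,\infty}\jg^b/I_kA\bg N_{\Delta,\infty}\jg^b\to D_\infty/I_kD_\infty$ (which has surjective transition maps, hence is Mittag--Leffler), combined with the $I_k$-adic completeness of $A\bg N_{\Delta,\infty}\jg$ itself.
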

\begin{proof}
We saw in the proof of Prop.\ \ref{Minftyimage} that the lower horizontal map in \eqref{Dkinfty} is an isomorphism for all $k\geq 0$ divisible by $r_\ast$. In the special case of $k=0$ this provides us with a natural isomorphism between the identity and $\mathbb{D}_{0,\infty}\circ\mathbb{M}_{\infty,0}$ on $\mathcal{D}^{et}(T_\ast,A \bg N_{\Delta,0}\jg)$.

Let $D_\infty$ be an object in $\mathcal{D}^{et}(T_\ast,A \bg N_{\Delta,\infty}\jg)$. For each $k\geq 0$ (divisible by $r_\ast$) we have a natural quotient map 
\begin{equation*}
D_\infty\to D_k:=A \bg N_{\Delta,k}\jg\otimes_{A \bg N_{\Delta,\infty}\jg}D_\infty\ .
\end{equation*}
Moreover, $D_k$ is an \'etale $T_\ast$-module over $A \bg N_{\Delta,k}\jg$ corresponding to $D_0=\mathbb{D}_{0,\infty}(D_\infty)$ via the equivalence of categories in Lemma \ref{equivcatk}. These reduction maps are compatible therefore we obtain a natural map $f\colon D_\infty\to\varprojlim_k D_k=\mathbb{M}_{\infty,0}\circ\mathbb{D}_{0,\infty}(D_\infty)$. This map $f$ is an isomorphism modulo the Jacobson radical of $A\bg N_{\Delta,\infty}\jg$ by Lemma \ref{jacobson} using that $\mathbb{D}_{0,\infty}\circ\mathbb{M}_{\infty,0}\circ\mathbb{D}_{0,\infty}\cong\mathbb{D}_{0,\infty}$. In particular $f$ is surjective. Moreover, $A \bg N_{\Delta,k}\jg\otimes_{A \bg N_{\Delta,\infty}\jg}f$ is also an isomorphism for all $k\geq 0$ divisible by $r_\ast$. Therefore we have $\Ker(f)\subseteq (H_{\Delta,k}-1)D_{\infty}$ for all $k$. Since the powers of the Jacobson radical $J$ of $A \bg N_{\Delta,\infty}\jg$ are cofinal with the ideals $(H_{\Delta,k}-1)A \bg N_{\Delta,\infty}\jg$ we deduce that $\Ker(f)\subseteq J^kD_{\infty}$ for all $k\geq 0$. Now note that $J$ is generated by a centralizing sequence by Lemma \ref{augker}. Therefore it satisfies the Artin--Rees property by Thm.\ 4.2.7 in \cite{McCR} and Prop.\ \ref{noeth}. This shows that $\bigcap_{k\geq 0} J^k D_\infty=\{0\}$ since $D_\infty$ is finitely generated. In particular, $f$ is an isomorphism.
\end{proof}
\begin{rem}
The above proof shows that we have $D_\infty\cong\varprojlim_k A \bg N_{\Delta,k}\jg\otimes_{A \bg N_{\Delta,\infty}\jg}D_\infty$ for any object $D_\infty$ in $\mathcal{D}^{et}(T_\ast,A \bg N_{\Delta,\infty}\jg)$.
\end{rem}

\subsection{A noncommutative variant of $D^\vee_{\Delta}$}

The goal in this section is to define a noncommutative variant $D^\vee_{\Delta,\infty}$ of the functor $D^\vee_\Delta$ from smooth representations $\pi$ of $G$ over $A$ to (projective limits of) \'etale $T_+$-modules over $A\bg N_{\Delta,\infty}\jg$. The motivation is that this allows us to construct a $G$-equivariant sheaf on $G/B$ (in the sense of \cite{SVZ}) attached to $\pi$ which is crucial in possibly reconstructing $\pi$ from $D^\vee_{\Delta}(\pi)$ in the case of extensions of principal series.

Let $\pi$ be a smooth representation of $B_0$ over $A$ together with an action of the monoid $B_+$ on $\pi$ extending the action of $B_0$. (For instance, $\pi$ could be a smooth representation of $B$.) Denote by $F_{\alpha,k}$ the operator $\Tr_{H_{\Delta,k}/t_\alpha H_{\Delta,k}t_\alpha^{-1}}\circ (t_\alpha\cdot)$ on $\pi^{H_{\Delta,k}}$ and consider the skew polynomial ring $$ A\bs N_{\Delta,k}\js [F_{\Delta,k}]:=A\bs N_{\Delta,k}\js[F_{\alpha,k}\mid \alpha\in\Delta]$$ in the variables $F_{\alpha,k}$ ($\alpha\in\Delta$) that commute with each other and satisfy $F_{\alpha,k}\lambda=\varphi_\alpha(\lambda)F_{\alpha,k}$ for any $\lambda\in A\bs N_{\Delta,k}\js$ and $\alpha\in\Delta$. Further, for any $t=\prod_{\alpha\in\Delta}t_\alpha^{k_\alpha}$ we put $F_{t,k}:=\prod_{\alpha\in\Delta}F_{t_\alpha}^{k_\alpha}$. We denote by $\mathcal{M}_{\Delta,k}(\pi^{H_{\Delta,k}})$ the set of finitely generated $A\bs N_{\Delta,k}\js[F_{\Delta,k}]$-submodules of $\pi^{H_{\Delta,k}}$ that are stable under the action of $T_0$ and admissible as a representation of $N_{\Delta,k}$. We proceed as in section 2 of \cite{EZ}.

\begin{lem}\label{formula}
For each $\alpha\in\Delta$ we have $F_{\alpha,0}=F_\alpha$ and $F_{\alpha,k}\circ \Tr_{H_{\Delta,k}/s^kH_{\Delta,0}s^{-k}}\circ(s^k \cdot)=\Tr_{H_{\Delta,k}/s^kH_{\Delta,0}s^{-k}}\circ(s^k \cdot)\circ F_{\alpha,0}$ as maps on $\pi^{H_{\Delta,0}}$.
\end{lem}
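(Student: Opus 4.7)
The first claim $F_{\alpha,0}=F_\alpha$ is immediate: both are defined as the Hecke action $\Tr_{H_{\Delta,0}/t_\alpha H_{\Delta,0}t_\alpha^{-1}}\circ (t_\alpha\cdot)$, so this is just unwinding the definitions.

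For the commutation relation, the plan is to expand both sides as explicit sums over coset representatives and to identify the indexing sets. Write $T_k:=\Tr_{H_{\Delta,k}/s^kH_{\Delta,0}s^{-k}}\circ(s^k\cdot)$. For $v\in\pi^{H_{\Delta,0}}$ we have
\[
F_{\alpha,k}(T_k(v))=\sum_{u\in J(H_{\Delta,k}/t_\alpha H_{\Delta,k}t_\alpha^{-1})}\sum_{w\in J(H_{\Delta,k}/s^kH_{\Delta,0}s^{-k})}u\,t_\alpha w s^k v,
\]
\[
T_k(F_{\alpha,0}(v))=\sum_{w'\in J(H_{\Delta,k}/s^kH_{\Delta,0}s^{-k})}\sum_{u'\in J(H_{\Delta,0}/t_\alpha H_{\Delta,0}t_\alpha^{-1})}w' s^k u' t_\alpha v.
\]

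The key simplification comes from the commutativity of $T$: $s^k$ and $t_\alpha$ commute, so on the left side we rewrite $u\,t_\alpha w s^k v = u\,(t_\alpha w t_\alpha^{-1})\,s^k t_\alpha v$, and on the right side $w' s^k u' t_\alpha v = w'(s^k u' s^{-k})\,s^k t_\alpha v$. I will then check that both indexing sets
\[
\bigl\{u\,(t_\alpha w t_\alpha^{-1})\bigr\}\quad\text{and}\quad\bigl\{w'(s^k u' s^{-k})\bigr\}
\]
are complete sets of representatives for the same quotient group $H_{\Delta,k}/(t_\alpha s^k H_{\Delta,0}s^{-k}t_\alpha^{-1})$. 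For the first, as $w$ runs over $J(H_{\Delta,k}/s^kH_{\Delta,0}s^{-k})$ the conjugates $t_\alpha w t_\alpha^{-1}$ represent $t_\alpha H_{\Delta,k}t_\alpha^{-1}/t_\alpha s^kH_{\Delta,0}s^{-k}t_\alpha^{-1}$, and combined with $u\in J(H_{\Delta,k}/t_\alpha H_{\Delta,k}t_\alpha^{-1})$ we obtain representatives of $H_{\Delta,k}/t_\alpha s^kH_{\Delta,0}s^{-k}t_\alpha^{-1}$. For the second, $s^k u' s^{-k}$ represents $s^kH_{\Delta,0}s^{-k}/s^kt_\alpha H_{\Delta,0}t_\alpha^{-1}s^{-k}=s^kH_{\Delta,0}s^{-k}/t_\alpha s^kH_{\Delta,0}s^{-k}t_\alpha^{-1}$, and combined with $w'$ we again get representatives of the same quotient.

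Finally, the common summand $s^k t_\alpha v=t_\alpha s^k v$ is invariant under $t_\alpha s^k H_{\Delta,0}s^{-k}t_\alpha^{-1}$ because $v\in\pi^{H_{\Delta,0}}$, so the value of each sum is independent of the particular choice of representatives, and the two sums agree. This is purely a bookkeeping exercise; the only potential obstacle is keeping track of the conjugations, and it is resolved by systematically pushing $t_\alpha$ past $s^k$ using the commutativity of $T$.
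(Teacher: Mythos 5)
Your proof is correct and is essentially the paper's argument: the paper establishes the identity by composing traces transitively through the two intermediate subgroups $t_\alpha H_{\Delta,k}t_\alpha^{-1}$ and $s^kH_{\Delta,0}s^{-k}$, showing both sides equal $\Tr_{H_{\Delta,k}/s^kt_\alpha H_{\Delta,0}t_\alpha^{-1}s^{-k}}\circ(s^kt_\alpha\cdot)$, which is exactly what your coset-representative bookkeeping does after using $s^kt_\alpha=t_\alpha s^k$. The only difference is presentational (explicit double sums versus operator-level transitivity of the trace), so no change in substance.
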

\begin{proof}
We compute
\begin{align*}
F_{\alpha,k}\circ \Tr_{H_{\Delta,k}/s^kH_{\Delta,0}s^{-k}}\circ(s^k \cdot)=\\
=\Tr_{H_{\Delta,k}/t_\alpha H_{\Delta,k}t_\alpha^{-1}}\circ(t_\alpha\cdot)\circ \Tr_{H_{\Delta,k}/s^kH_{\Delta,0}s^{-k}}\circ(s^k \cdot)=\\
=\Tr_{H_{\Delta,k}/t_\alpha H_{\Delta,k}t_\alpha^{-1}}\circ \Tr_{t_\alpha H_{\Delta,k}t_\alpha^{-1}/s^kt_\alpha H_{\Delta,0}t_\alpha^{-1}s^{-k}}\circ(s^{k}t_\alpha \cdot)=\\
=\Tr_{H_{\Delta,k}/s^{k}t_\alpha H_{\Delta,0}t_\alpha^{-1}s^{-k}}\circ(s^{k}t_\alpha \cdot)=\\
=\Tr_{H_{\Delta,k}/s^kH_{\Delta,0}s^{-k}}\circ \Tr_{s^kH_{\Delta,0}s^{-k}/s^{k}t_\alpha H_{\Delta,0}t_\alpha^{-1}s^{-k}}\circ(s^{k}t_\alpha \cdot)=\\
=\Tr_{H_{\Delta,k}/s^kH_{\Delta,0}s^{-k}}\circ(s^k\cdot)\circ \Tr_{H_{\Delta,0}/t_\alpha H_{\Delta,0}t_\alpha^{-1}}\circ(t_\alpha \cdot)=\\
=\Tr_{H_{\Delta,k}/s^kH_{\Delta,0}s^{-k}}\circ(s^k \cdot)\circ F_{\alpha,0}\ .
\end{align*}
\end{proof}

Let $M_0$ be any (finitely generated) $A\bs N_{\Delta,0}\js[F_\Delta]$-module. Then $A\bs N_{\Delta,k}\js \otimes_{A\bs N_{\Delta,0}\js,\varphi_{s^k}}M_0$ naturally has the structure of a module over $A\bs N_{\Delta,k}\js[F_{\Delta,k}]$ by putting $F_{\alpha,k}(\lambda\otimes m):=\varphi_{t_\alpha}(\lambda)\otimes F_\alpha(m)$. Let $M$ be in $\mathcal{M}_\Delta(\pi^{H_{\Delta,0}})$. In view of Lemma \ref{formula} we define $M_k$ to be the image of the  $A\bs N_{\Delta,k}\js[F_{\Delta,k}]$-module homomorphism
\begin{eqnarray*}
A\bs N_{\Delta,k}\js \otimes_{A\bs N_{\Delta,0}\js,\varphi_{s^k}}M&\to& \pi^{H_{\Delta,k}}\\
\lambda\otimes m &\mapsto& \lambda\Tr_{H_{\Delta,k}/s^k H_{\Delta,0}s^{-k}}(s^km)\ .
\end{eqnarray*}
In particular, $M_k$ is an $ A\bs N_{\Delta,k}\js [F_{\Delta,k}]$-submodule of $\pi^{H_{\Delta,k}}$. $ \Tr_{H_{\Delta,k}/s^kH_{\Delta,0}s^{-k}}\circ(s^kM)$ is a $s^kN_0s^{-k}H_{\Delta,k}$-subrepresentation of $\pi^{H_{\Delta,k}}$ and we have $M_k=N_0 \Tr_{H_{\Delta,k}/s^kH_{\Delta,0}s^{-k}}\circ(s^kM)$.

\begin{lem}\label{M_k}
For any $M\in\mathcal{M}_\Delta(\pi^{H_{\Delta,0}})$ the $N_0$-subrepresentation $M_k$ lies in $\mathcal{M}_{\Delta,k}(\pi^{H_{\Delta,k}})$.
\end{lem}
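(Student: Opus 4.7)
The plan is to verify the three conditions defining $\mathcal{M}_{\Delta,k}(\pi^{H_{\Delta,k}})$ for $M_k$: (i) finite generation over the skew polynomial ring $A\bs N_{\Delta,k}\js[F_{\Delta,k}]$, (ii) $T_0$-stability, and (iii) admissibility as an $N_{\Delta,k}$-representation. The key object is the defining map
\[
\phi_k\colon A\bs N_{\Delta,k}\js \otimes_{\varphi_{s^k}, A\bs N_{\Delta,0}\js} M \longrightarrow \pi^{H_{\Delta,k}}, \qquad \lambda \otimes m \mapsto \lambda\,\Tr_{H_{\Delta,k}/s^kH_{\Delta,0}s^{-k}}(s^km),
\]
and I would first check that this really is an $A\bs N_{\Delta,k}\js[F_{\Delta,k}]$-module homomorphism. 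Compatibility with the operators $F_{\alpha,k}$ is exactly Lemma \ref{formula}. Well-definedness on the tensor product amounts to the balancing identity $(s^k\tilde n s^{-k})\,\Tr_{H_{\Delta,k}/s^kH_{\Delta,0}s^{-k}}(s^km)=\Tr_{H_{\Delta,k}/s^kH_{\Delta,0}s^{-k}}(s^k\tilde n m)$ for $\tilde n\in N_0$ lifting $n\in N_{\Delta,0}$, which follows by rearranging the trace after observing that $s^k\tilde n s^{-k}$ normalises $s^kH_{\Delta,0}s^{-k}$ (because $\tilde n\in N_0$ and $H_{\Delta,0}$ is normal in $N_0$), so that conjugation by $s^k\tilde n s^{-k}$ permutes the chosen coset representatives $J(H_{\Delta,k}/s^kH_{\Delta,0}s^{-k})$ modulo a subgroup fixing $s^km$.

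Condition (i) is then immediate: if $m_1,\dots,m_r$ generate $M$ over $A\bs N_{\Delta,0}\js[F_\Delta]$, then the elements $\phi_k(1\otimes m_i)=\Tr_{H_{\Delta,k}/s^kH_{\Delta,0}s^{-k}}(s^km_i)$ generate $M_k=\mathrm{Im}(\phi_k)$ over $A\bs N_{\Delta,k}\js[F_{\Delta,k}]$. For condition (ii) I would note that $T_0$ commutes with $s^k$ (thanks to the connected-centre hypothesis which makes $T_+$ commutative) and normalises each $H_{\Delta,k}$ together with $s^kH_{\Delta,0}s^{-k}$; a direct computation then gives $\phi_k\circ(t_0\cdot)=(t_0\cdot)\circ\phi_k$ for every $t_0\in T_0$, whence $T_0$ preserves $\phi_k(M)$ and permutes the semilinear $A\bs N_{\Delta,k}\js[F_{\Delta,k}]$-action by conjugation, so $M_k$ is $T_0$-stable.

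The main obstacle is condition (iii), admissibility of $M_k$ as a representation of $N_{\Delta,k}$. My strategy is to establish admissibility of the source $\tilde M_k := A\bs N_{\Delta,k}\js \otimes_{\varphi_{s^k}, A\bs N_{\Delta,0}\js} M$ of $\phi_k$ and then deduce the result for its quotient $M_k$, using that the Iwasawa algebra $A\bs N_{\Delta,k}\js$ is noetherian (so admissibility, phrased as finite generation of the Pontryagin dual, passes to subquotients). For $\tilde M_k$ I would invoke the canonical splitting of $N_{\Delta,k}\twoheadrightarrow N_{\Delta,0}$ along the finite-index open subgroup $s^kN_{\Delta,0}s^{-k}\leq N_{\Delta,k}$: the ring $A\bs N_{\Delta,k}\js$ is finite free as a right module over $\varphi_{s^k}(A\bs N_{\Delta,0}\js)=A\bs s^kN_{\Delta,0}s^{-k}\js$ on any set of coset representatives for $N_{\Delta,k}/s^kN_{\Delta,0}s^{-k}$, so $\tilde M_k$ is (tautologically) the smooth induction from this open finite-index subgroup of the admissible module $M$ pulled back along the group isomorphism $\varphi_{s^k}\colon N_{\Delta,0}\xrightarrow{\sim}s^kN_{\Delta,0}s^{-k}$. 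Admissibility is preserved under such induction, and a fortiori under restriction of admissibility from an open finite-index subgroup up to the ambient group, finishing the proof.
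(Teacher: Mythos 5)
Your proof is correct and follows essentially the same route as the paper: finite generation via Lemma \ref{formula} applied to generators of $M$, $T_0$-stability from $T_0$ normalising $H_{\Delta,k}$ and $s^kH_{\Delta,0}s^{-k}$ and commuting with $s$, and admissibility from $A\bs N_{\Delta,k}\js$ being finite free over $A\bs s^kN_{\Delta,0}s^{-k}\js$ together with admissibility of $M$ (and passage to quotients over the noetherian Iwasawa algebra). Your extra verification that the defining map is a well-defined $A\bs N_{\Delta,k}\js[F_{\Delta,k}]$-homomorphism is sound but is already contained in the paper's discussion preceding the lemma.
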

\begin{proof}
Let $\{m_1,\dots,m_r\}$ be a set of generators of $M$ as an $A\bs N_{\Delta,0}\js [F_\Delta]$-module. Then by Lemma \ref{formula} the elements $1\otimes m_1,\dots,1\otimes m_r$ generate $A\bs N_{\Delta,k}\js \otimes_{A\bs N_{\Delta,0}\js,\varphi_{s^k}}M$ as a module over $A\bs N_{\Delta,k}\js[F_{\Delta,k}]$. In particular, $M_k$ is finitely generated. 

For the stability under the action of $T_0$ note that $T_0$ normalizes both $H_{\Delta,k}$ and $s^kH_{\Delta,0}s^{-k}$ and the elements in $T_0$ commute with $s$.

Since $M$ is admissible as an $N_{\Delta,0}$-representation and $A\bs N_{\Delta,k}\js$ is finitely generated and free as a module over $A\bs s^k N_{\Delta,0}s^{-k}\js$, we obtain that $M_k$ is admissible, too.
\end{proof}

In order to simplify notation we write $M_k^\vee[1/X_\Delta]:=M_k^\vee[1/\varphi_{s^{kn_0}}(X_\Delta)]$ for $A\bg N_{\Delta,k}\jg\otimes_{A\bs N_{\Delta,k}\js}M_k^\vee$.

\begin{pro}
The map
\begin{eqnarray}
A\bs N_{\Delta,k}\js \otimes_{A\bs N_{\Delta,0}\js,\varphi_{s^k}}M&\twoheadrightarrow& M_k\notag\\
\lambda\otimes m &\mapsto& \lambda\Tr_{H_{\Delta,k}/s^k H_{\Delta,0}s^{-k}}(s^km)\ .\label{Mkdefine}
\end{eqnarray}
induces an isomorphism $M_k^\vee[1/X_\Delta]\cong A\bg N_{\Delta,k}\jg \otimes_{A\bg N_{\Delta,0}\jg,\varphi_{s^k}} M^\vee[1/X_\Delta]$.
\end{pro}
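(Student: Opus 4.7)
The plan is to dualize the surjection \eqref{Mkdefine}, invert $X_\Delta$, and then reduce the problem to level $0$ via the equivalence of categories from Lemma \ref{equivcatk}. Since Pontryagin duality is exact on the relevant categories of discrete and pseudocompact modules, \eqref{Mkdefine} yields an injection $M_k^\vee\hookrightarrow(A\bs N_{\Delta,k}\js\otimes_{\varphi_{s^k}}M)^\vee$ whose cokernel is dual to the kernel of \eqref{Mkdefine}. The image $\varphi_{s^k}(N_{\Delta,0})=s^kN_0s^{-k}H_{\Delta,k}/H_{\Delta,k}$ has finite index in $N_{\Delta,k}$, so $A\bs N_{\Delta,k}\js$ is finite free as a right module over the subring $\varphi_{s^k}(A\bs N_{\Delta,0}\js)$. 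A tensor-Hom adjunction then yields a natural $A\bs N_{\Delta,k}\js$-linear identification
\[
(A\bs N_{\Delta,k}\js\otimes_{\varphi_{s^k}}M)^\vee\cong A\bs N_{\Delta,k}\js\otimes_{\varphi_{s^k}}M^\vee.
\]
Inverting $\varphi_{s^{kn_0}}(X_\Delta)$ (which automatically inverts $\varphi_{s^k}(X_\Delta)$ as a divisor of it) preserves exactness and produces
\[
0\to M_k^\vee[1/X_\Delta]\to A\bg N_{\Delta,k}\jg\otimes_{\varphi_{s^k}}M^\vee[1/X_\Delta]\to K^\vee[1/X_\Delta]\to 0.
\]

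Both surviving terms are finitely generated étale $T_+$-modules over $A\bg N_{\Delta,k}\jg$. The middle coincides with $\mathbb{M}_{k,0}(M^\vee[1/X_\Delta])$, the two possible definitions using $\varphi_{s^k}$ and $\varphi_{s^{kn_0}}$ agreeing on étale modules via the canonical structure isomorphism $\varphi_{s^{k(n_0-1)}}^*D\xrightarrow{\sim}D$. The left is étale by the level-$k$ analogue of Proposition \ref{fingendualetale}, whose proof runs verbatim once one observes the level-$k$ analogue of Proposition \ref{noideal}: a $T_0$-invariant ideal of $\kappa\bg N_{\Delta,k}\jg$ reduces, modulo the nilpotent ideal generated by $H_{\Delta,0}/H_{\Delta,k}-1$, to a $T_0$-invariant ideal of $\kappa\bg N_{\Delta,0}\jg$, which must be trivial. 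By the level-$k$ analogue of Proposition \ref{abeliancat}, $K^\vee[1/X_\Delta]$ is then also étale.

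To conclude that $K^\vee[1/X_\Delta]=0$ I would appeal to the equivalence $\mathbb{D}_{0,k}$ of Lemma \ref{equivcatk}: it suffices to show that $\mathbb{D}_{0,k}$ applied to the displayed injection is an isomorphism. The middle term reduces to $M^\vee[1/X_\Delta]$ (since $\mathbb{D}_{0,k}\circ\mathbb{M}_{k,0}\cong\mathrm{id}$), and the left reduces to $(M_k^{H_{\Delta,0}/H_{\Delta,k}})^\vee[1/X_\Delta]=(M_k\cap\pi^{H_{\Delta,0}})^\vee[1/X_\Delta]$ via the Pontryagin duality between invariants and coinvariants. Unwinding the chain of identifications, the induced morphism at level $0$ is Pontryagin dual to the $\varphi_{s^k}$-semilinear map $M\to M_k\cap\pi^{H_{\Delta,0}}$, $m\mapsto F_{s^k}(m)$, which lands in $M_k\cap\pi^{H_{\Delta,0}}$ by Lemma \ref{formula}.

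The main obstacle is verifying that this dual map becomes an isomorphism after inverting $X_\Delta$. Injectivity on the dual side (i.e.\ that $\ker F_{s^k}$ on $M$ is $X_\Delta$-torsion) is automatic from the étale structure on $M^\vee[1/X_\Delta]$, since the corresponding $\psi_{s^k}$ is a section of $\id\otimes\varphi_{s^k}$. Surjectivity on the dual side requires the cokernel of $F_{s^k}\colon M\to M_k\cap\pi^{H_{\Delta,0}}$ to be $X_\Delta$-torsion; given the chain of inclusions $\kappa\bs N_{\Delta,0}\js\cdot F_{s^k}(M)\subseteq M_k\cap\pi^{H_{\Delta,0}}$, together with the fact that $M/\kappa\bs N_{\Delta,0}\js F_{s^k}(M)$ is killed by a power of $X_\Delta$ (the content of Proposition \ref{fingendualetale}), this reduces to a generic-length comparison of the two étale $T_+$-modules flanking our level-$0$ injection; Proposition \ref{noideal} then forces the cokernel to vanish.
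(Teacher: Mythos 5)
Your overall strategy---dualize \eqref{Mkdefine}, invert $X_\Delta$, compare with level $0$ via $H_{\Delta,0}/H_{\Delta,k}$-coinvariants, and conclude by Nakayama or by Lemma \ref{equivcatk}---is the same as the paper's, and you correctly locate the key input (the reduction of the injection $f\colon M_k^\vee[1/X_\Delta]\hookrightarrow A\bg N_{\Delta,k}\jg\otimes_{\varphi_{s^k}}M^\vee[1/X_\Delta]$ is governed by $F_{s^k}$ and Prop.\ \ref{fingendualetale}). However, two steps do not go through as written. First, you take as input that $M_k^\vee[1/X_\Delta]$ is an \'etale $T_+$-module over $A\bg N_{\Delta,k}\jg$, citing ``level-$k$ analogues'' of Propositions \ref{noideal} and \ref{fingendualetale} that would run ``verbatim''. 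The naive level-$k$ analogue of Prop.\ \ref{noideal} is false (the augmentation ideal of the finite $p$-group $H_{\Delta,0}/H_{\Delta,k}$ is a nonzero proper $T_0$-stable ideal of $\kappa\bg N_{\Delta,k}\jg$); your corrected version only yields that proper $T_0$-stable ideals are nilpotent, and the proof of Prop.\ \ref{fingendualetale} does not transport verbatim to the noncommutative ring $A\bg N_{\Delta,k}\jg$, since it uses the centrality of $X_\alpha$ and generic ranks over the commutative ring $\kappa\bg N_{\Delta,0}\jg$. In the paper, the \'etaleness of $M_k^\vee[1/X_\Delta]$ is a \emph{consequence} of this Proposition (via $M_k^\vee[1/X_\Delta]\cong\mathbb{M}_{k,0}(M^\vee[1/X_\Delta])$), not an input; the paper's proof needs only that $\Coker(f)$ is finitely generated and dies after coinvariants, and then Nakayama (the kernel of $A\bg N_{\Delta,k}\jg\to A\bg N_{\Delta,0}\jg$ being nilpotent) finishes.

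Second, the duality bookkeeping in your last paragraph is reversed, and this hides the real difficulty of your route. For the linearization $g\colon A\bs N_{\Delta,0}\js\otimes_{\varphi_{s^k}}M\to M_k\cap\pi^{H_{\Delta,0}}$ of $F_{s^k}$ one has $\Ker(g^\vee[1/X_\Delta])\cong(\Coker\,g)^\vee[1/X_\Delta]$ and $\Coker(g^\vee[1/X_\Delta])\cong(\Ker\,g)^\vee[1/X_\Delta]$. Hence \emph{surjectivity} of the reduced map is the easy half: it only requires $(\Ker(1\otimes F_{s^k}))^\vee[1/X_\Delta]=0$, which is exactly what Prop.\ \ref{fingendualetale} provides, and, combined with the trace identity $\Tr_{H_{\Delta,0}/H_{\Delta,k}}(M_k)=N_0F_{s^k}(M)\subseteq M$, it is all the Nakayama argument needs. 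The \emph{injectivity} of the reduced map, which you must have in order to invoke the equivalence of categories, is the statement that $\bigl(M_k\cap\pi^{H_{\Delta,0}}/N_0F_{s^k}(M)\bigr)^\vee[1/X_\Delta]=0$, i.e.\ that the $H_{\Delta,0}/H_{\Delta,k}$-invariants of $M_k$ exceed the trace image only by localization-negligible material. This is precisely the part you declare ``automatic'', and it is not: it amounts to a cohomological triviality of $M_k^\vee[1/X_\Delta]$ as an $H_{\Delta,0}/H_{\Delta,k}$-module of the kind Lemma \ref{H1vanish} gives for modules of the form $\mathbb{M}_{k,0}(D)$---which you only know once you know the Proposition itself. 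Your proposed ``generic-length comparison'' runs into the same circularity, since you have no a priori control of $\genlength$ of $(M_k\cap\pi^{H_{\Delta,0}})^\vee[1/X_\Delta]$. The repair is to drop the equivalence-of-categories ending and argue as the paper does: $f$ is injective with finitely generated cokernel, the coinvariant map is onto by the trace identity together with Prop.\ \ref{fingendualetale}, and Nakayama concludes.
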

\begin{proof}
Since $A\bs N_{\Delta,k}\js$ is a finitely generated free module over $A\bs s^k N_{\Delta,0}s^{-k}\js$, we have identifications 
\begin{eqnarray*}
(A\bs N_{\Delta,k}\js \otimes_{A\bs N_{\Delta,0}\js,\varphi_{s^k}}M)^\vee&\cong& A\bs N_{\Delta,k}\js \otimes_{A\bs N_{\Delta,0}\js,\varphi_{s^k}}M^\vee\ ;\\
(A\bs N_{\Delta,k}\js \otimes_{A\bs N_{\Delta,0}\js,\varphi_{s^k}}M)^\vee[1/X_\Delta]&\cong& A\bg N_{\Delta,k}\jg \otimes_{A\bg N_{\Delta,0}\jg,\varphi_{s^k}}(M^\vee[1/X_\Delta])\ .
\end{eqnarray*}
Therefore we have an injective morphism $f\colon M_k^\vee[1/X_\Delta]\hookrightarrow A\bg N_{\Delta,k}\jg \otimes_{A\bg N_{\Delta,0}\jg,\varphi_{s^k}} M^\vee[1/X_\Delta]$ of $A\bg N_{\Delta,k}\jg$-modules. Moreover, we have a commutative diagram
\begin{equation*}
\xymatrix{
A\bs N_{\Delta,k}\js \otimes_{A\bs N_{\Delta,0}\js,\varphi_{s^k}}M\ar[d]_{\Tr_{H_{\Delta,0}/H_{\Delta,k}}}\ar@{->>}[r] & M_k\ar[d]^{\Tr_{H_{\Delta,0}/H_{\Delta,k}}} \\
A\bs N_{\Delta,k}\js \otimes_{A\bs N_{\Delta,0}\js,\varphi_{s^k}}M\ar@{->>}[r] & M_k
}
\end{equation*}
such that we have $\Tr_{H_{\Delta,0}/H_{\Delta,k}}(M_k)=N_0F_{s^k}(M)\subseteq M$ and the image of the left vertical map equals $(A\bs N_{\Delta,k}\js \otimes_{A\bs N_{\Delta,0}\js,\varphi_{s^k}}M)^{H_{\Delta,0}}$. Dualizing and inverting $\varphi_{s^{kn_0}}(X_\Delta)$ we obtain a commutative diagram
\begin{equation*}
\xymatrix{
A\bg N_{\Delta,k}\jg \otimes_{A\bg N_{\Delta,0}\jg,\varphi_{s^k}}M^\vee[1/X_\Delta]\ar[d]_{\Tr_{H_{\Delta,0}/H_{\Delta,k}}} & M_k^\vee[1/X_\Delta]\ar[d]^{\Tr_{H_{\Delta,0}/H_{\Delta,k}}}\ar@{_{(}->}[l]_-f \\
A\bg N_{\Delta,k}\jg \otimes_{A\bg N_{\Delta,0}\jg,\varphi_{s^k}}M^\vee[1/X_\Delta] & M_k^\vee[1/X_\Delta]\ar@{_{(}->}[l]_-f
}
\end{equation*}
By Prop.\ \ref{fingendualetale} the inclusion $N_0F_{s^k}(M)\subseteq M$ induces an isomorphism $$M^\vee[1/X_\Delta]\cong A\bg N_{\Delta,0}\jg \otimes_{A\bg N_{\Delta,0}\jg,\varphi_{s^k}}M^\vee[1/X_\Delta]=(N_0F_{s^k}(M))^\vee[1/X_\Delta]\ .$$ On the other hand, the (co)image of the left vertical map is $$H_0(H_{\Delta,0}/H_{\Delta,k},A\bg N_{\Delta,k}\jg \otimes_{A\bg N_{\Delta,0}\jg,\varphi_{s^k}}M^\vee[1/X_\Delta])\cong A\bg N_{\Delta,0}\jg \otimes_{A\bg N_{\Delta,0}\jg,\varphi_{s^k}}M^\vee[1/X_\Delta]\ .$$
So $f$ becomes onto after taking $H_{\Delta,0}/H_{\Delta,k}$-coinvariants. Hence by Nakayama's Lemma $f$ is an isomorphism.
\end{proof}

Since the map \eqref{Mkdefine} is a $A\bs N_{\Delta,k}\js[F_{\Delta,k}]$-module homomorphism, we obtain a commutative diagram
\begin{equation*}
\xymatrix{
A\bg N_{\Delta,k}\jg \otimes_{A\bg N_{\Delta,0}\jg,\varphi_{s^k}}M^\vee[1/X_\Delta]\ar[d]_-{(1\otimes F_t)^\vee} & M_k^\vee[1/X_\Delta]\ar[d]^-{(1\otimes F_t)^\vee}\ar[l]_-\sim \\
A\bg N_{\Delta,k}\jg\otimes_{A\bg N_{\Delta,k}\jg,\varphi_t}A\bg N_{\Delta,k}\jg \otimes_{A\bg N_{\Delta,0}\jg,\varphi_{s^k}}M^\vee[1/X_\Delta] & A\bg N_{\Delta,k}\jg\otimes_{A\bg N_{\Delta,k}\jg,\varphi_t}M_k^\vee[1/X_\Delta]\ar[l]^-\sim
}
\end{equation*}
for all $t\in \prod_{\alpha\in\Delta}t_\alpha^{\mathbb{N}}$ with both horizontal and the left vertical arrows being isomorphisms. Therefore the right vertical arrow is also an isomorphism. In particular, $M_k^\vee[1/X_\Delta]\cong\mathbb{M}_{k,0}(M^\vee[1/X_\Delta])$ is an \'etale $T_+$-module over $A\bg N_{\Delta,k}\jg$ since we have $T_+=T_0 \prod_{\alpha\in\Delta}t_\alpha^{\mathbb{N}}$ and $T_0$ acts on $M_k^\vee[1/X_\Delta]$ by conjugation. Taking the projective limit with respect to $k\geq 0$ we define
\begin{equation*}
D^\vee_{\Delta,\infty}(\pi):=\varprojlim_{k\geq 0,M\in\mathcal{M}_\Delta(\pi^{H_{\Delta,0}})}M_k^\vee[1/X_\Delta]\ .
\end{equation*}
By construction, $M_\infty^\vee[1/X_\Delta]:=\varprojlim_k M_k^\vee[1/X_\Delta]$ is an object in $\mathcal{D}^{et}(T_+,A \bg N_{\Delta,\infty}\jg)$ that corresponds to $M^\vee[1/X_\Delta]$ under the equivalence of categories in Thm.\ \ref{equivcat}.

We call two elements $M,M'\in\mathcal{M}_\Delta(\pi^{H_{\Delta,0}})$ equivalent ($M\sim M'$) if the inclusions $M\subseteq M+M'$ and $M'\subseteq M+M'$ induce isomorphisms $M^\vee[1/X_{\Delta}]\cong (M+M')^\vee[1/X_{\Delta}]\cong {M'}^\vee[1/X_{\Delta}]$. In particular, this is an equivalence relation on the set $\mathcal{M}(\pi^{H_{\Delta,0}})$. Similarly, we say that $M_k,M'_k\in\mathcal{M}_{\Delta,k}(\pi^{H_{\Delta,k}})$ are equivalent if the inclusions $M_k\subseteq M_k+M'_k$ and $M_k'\subseteq M_k+M'_k$ induce isomorphisms $M_k^\vee[1/X_{\Delta}]\cong (M_k+M'_k)^\vee[1/X_{\Delta}]\cong {M'_k}^\vee[1/X_{\Delta}]$.
\begin{rem}
The maps
\begin{eqnarray*}
M&\mapsto& N_0\Tr_{H_{\Delta,k}/s^kH_{\Delta,0}s^{-k}}(s^kM)\\
\Tr_{H_{\Delta,0}/H_{\Delta,k}}(M_k) & \mapsfrom & M_k
\end{eqnarray*}
induce a bijection between the sets $\mathcal{M}(\pi^{H_{\Delta,0}})/\sim$ and $\mathcal{M}_{\Delta,k}(\pi^{H_{\Delta,k}})/\sim$. In particular, we have 
\begin{equation*}
D^\vee_{\Delta,\infty}(\pi)=\varprojlim_{k\geq 0}\varprojlim_{M_k\in\mathcal{M}_{\Delta,k}(\pi^{H_{\Delta,k}})}M_k^\vee[1/X_{\Delta}]\ .
\end{equation*}
\end{rem}

On a finitely generated \'etale $T_+$-module $D$ over $A\bg N_{\Delta,\infty}\jg$ we define the weak topology as follows. We put the natural compact topology on any finitely generated $A\bs N_{\Delta,k}\js$-submodule of the $A\bg N_{\Delta,k}\jg$-module $D_{H_{\Delta,k}}=H_0(H_{\Delta,k},D)$ of coinvariants and the inductive limit topology of these topologies on $D_{H_{\Delta,k}}$ (we call this also the weak topology on $D_{H_{\Delta,k}}$). Now we equip $D$ with the projective limit topology of the weak topologies on $D_{H_{\Delta,k}}$. Finally, if $\varprojlim_{i\in I}D_i$ is a projective limit of finitely generated \'etale $T_+$-modules over $A\bg N_{\Delta,\infty}\jg$ then we equip it with the projective limit topology of the weak topologies of each $D_i$.

\subsection{A natural transformation from the Schneider--Vigneras $D$-functor to $D^\vee_{\Delta,\infty}$}\label{transf}

In order to avoid confusion we denote by $D_{SV}(\pi)$ the Schneider--Vigneras functor defined as $D(\pi)$ in \cite{SVig} (note that in \cite{SVig} the notation $V$ is used for the $o$-torsion representation that we denote by $\pi$). Recall that $D_{SV}(\pi)$ is defined as the inductive limit  
\begin{equation*}
D_{SV}(\pi):=\varinjlim_{W\in \mathcal{B}_+(\pi)}W^\vee
\end{equation*}
where $\mathcal{B}_+(\pi)$ is the set of generating $B_+$-subrepresentations in $\pi_{\mid B}$. This is a $\Lambda(N_0)$-module with an action of $B_+^{-1}$. For further details we refer the reader to \cite{SVig}. The inclusions $M_k\subseteq \pi$ ($k\geq 0, M\in\mathcal{M}_\Delta(\pi^{H_{\Delta,0}})$) define a $T_0$-equivariant $A\bs N_0\js$-module homomorphism $\beta\colon\pi^\vee\to D^\vee_{\Delta,\infty}(\pi)$. Our goal is to show that $\beta$ factors through the map $\pi^\vee\twoheadrightarrow D_{SV}(\pi)$ and it is $T_+^{-1}$-equivariant. Here $T_+^{-1}$ acts on $D^\vee_{\Delta,\infty}(\pi)$ via the $\psi$-action (see section 4 of \cite{EZ}).

Let $W$ be in $\mathcal{B}_+(\pi)$ and $M\in\mathcal{M}_\Delta(\pi^{H_{\Delta,0}})$. Then by Lemma 2.1 in \cite{SVig} (see also Lemma 3.1 in \cite{EZ}) there is an integer $k(M)\geq 0$ such that for all $k\geq k(M)$ we have $M_k\leq W$. Therefore the map $\pi^\vee\twoheadrightarrow M_k^\vee\to M_k^\vee[1/X_\Delta]$ factors through $\pi^\vee\twoheadrightarrow W^\vee$. Taking projective limits with respect to $k$ we obtain a $A\bs N_0\js$-homomorphism
\begin{equation*}
\pr_{W,M}\colon W^\vee\to M_\infty^\vee[1/X_\Delta]\ .
\end{equation*}

The following is the analogue of Lemma 3.2 in \cite{EZ} in our situation with essentially the same proof.

\begin{lem}\label{prpsigamma}
The map $\pr_{W,M}$ is $\psi_t$-equivariant for all $t\in T_+$.
\end{lem}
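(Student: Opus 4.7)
The plan is to reduce the $\psi_t$-equivariance to a level-wise check and a generator-wise check, and then to verify the resulting statement via an explicit coset computation as in Lemma 3.2 of \cite{EZ}.

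Since $\pr_{W,M}=\varprojlim_{k\geq k(M)}\pr_{W,M}^{(k)}$, where $\pr_{W,M}^{(k)}\colon W^\vee\to M_k^\vee[1/X_\Delta]$ is restriction of functionals followed by localization, and since the transition maps $M_{k+1}^\vee[1/X_\Delta]\to M_k^\vee[1/X_\Delta]$ in the projective system are $\psi_t$-equivariant by the étale $T_+$-module structure coming from Theorem \ref{equivcat}, it suffices to prove $\psi_t$-equivariance of each $\pr_{W,M}^{(k)}$ for $k\geq k(M)$. Moreover, since the centre of $G$ is connected we have $T_+=T_0\cdot\prod_{\alpha\in\Delta}t_\alpha^{\mathbb{N}}$, and the $T_0$-equivariance of $\pr_{W,M}^{(k)}$ is immediate: the action of $T_0$ on both sides is by conjugation preserving the subgroup $H_{\Delta,k}$, and restriction of functionals is evidently compatible with such an action. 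The task therefore reduces to proving $\psi_{t_\alpha}$-equivariance of $\pr_{W,M}^{(k)}$ for each $\alpha\in\Delta$.

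For this, fix $f\in W^\vee$ and $m\in M_k$. By the explicit description of the $\psi$-action at the end of section \ref{reducetousualsec} and the étale structure on $M_k^\vee[1/X_\Delta]\cong\mathbb{M}_{k,0}(M^\vee[1/X_\Delta])$, the operator $\psi_{t_\alpha}$ on $M_k^\vee[1/X_\Delta]$ is the transpose of the Hecke operator $F_{\alpha,k}=\Tr_{H_{\Delta,k}/t_\alpha H_{\Delta,k}t_\alpha^{-1}}\circ(t_\alpha\cdot)$ on $M_k$, yielding $\psi_{t_\alpha}(\pr_{W,M}^{(k)}(f))(m)=f(F_{\alpha,k}(m))$. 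The Schneider--Vigneras $\psi_{t_\alpha}$-action on $W^\vee$, as recalled in section 4 of \cite{EZ}, is described via the transpose of an analogous Hecke-type operator on $W$ built from the coset space $N_0/t_\alpha N_0 t_\alpha^{-1}$, and one computes $\pr_{W,M}^{(k)}(\psi_{t_\alpha}(f))(m)$ as $f$ evaluated at a sum over these (larger) cosets. The equality of the two expressions then follows from the $H_{\Delta,k}$-invariance of $m$ together with a coset decomposition $J(N_0/t_\alpha N_0 t_\alpha^{-1})=J'\cdot J(H_{\Delta,k}/t_\alpha H_{\Delta,k}t_\alpha^{-1})$, where $J'$ is a set of representatives for $N_0/(H_{\Delta,k}\cdot t_\alpha N_0 t_\alpha^{-1})$, combined with an appropriate matching of normalization factors arising from the localization at $X_\Delta$.

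The main obstacle is the combinatorial matching of coset representatives between the two Hecke operators and the verification that the resulting normalization discrepancies are absorbed by the passage from $M_k^\vee$ to $M_k^\vee[1/X_\Delta]$. Concretely, one must check that the extra factor $\sum_{u'\in J'}u'$ lying in $A\bs N_0\js$ maps, in $A\bs N_{\Delta,k}\js$ modulo $H_{\Delta,k}$, to an element that matches precisely the unit appearing in the étale-structure identification of $\psi_{t_\alpha}$ across $W^\vee$ and $M_k^\vee[1/X_\Delta]$. Beyond the initial reduction, the proof is essentially a bookkeeping exercise adapting the one-variable (generic $\ell$) argument of Lemma 3.2 in \cite{EZ} to the present multivariable setting, with the kernel $H_{\ell,0}$ replaced by the normal subgroup $H_{\Delta,0}$ and the single variable $X$ replaced by the family $(X_\alpha)_{\alpha\in\Delta}$.
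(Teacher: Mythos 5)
Your reduction to the finite levels is where the argument breaks down. The operator $\psi_t$ is not well defined on the finite-level modules $M_k^\vee[1/X_\Delta]$: the ring endomorphism $\varphi_t$ of $A\bg N_{\Delta,k}\jg$ is not injective (its kernel is generated by $t^{-1}H_{\Delta,k}t-1$), so in a decomposition $d=\sum_u u\varphi_t(d_u)$ the components $d_u$ are only determined modulo $(t^{-1}H_{\Delta,k}t-1)$ --- exactly the non-uniqueness the paper has to deal with in Step 3 of the proof of Prop.\ \ref{Minftyimage}. Consequently, neither the claim that the transition maps $M_{k+1}^\vee[1/X_\Delta]\to M_k^\vee[1/X_\Delta]$ are ``$\psi_t$-equivariant'' nor the statement that ``each $\pr_{W,M}^{(k)}$ is $\psi_t$-equivariant'' is meaningful as stated, and your limit argument does not get off the ground. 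The paper's proof avoids this: for $k$ large (so that $H_{\Delta,k}\leq t_\alpha H_{\Delta,0}t_\alpha^{-1}$ and $M_k\subseteq W$) it computes the image of $f$ in the \emph{pullback} $A\bg N_{\Delta,k}\jg\otimes_{\varphi_{t_\alpha}}M_k^\vee[1/X_\Delta]$, where the expression $\sum_u u\otimes\pr_{W,M,k}(\psi_{t_\alpha}(u^{-1}f))$ does make sense, and only at level $\infty$ --- where $\varphi_{t_\alpha}$ is injective on $A\bg N_{\Delta,\infty}\jg$ and the \'etale decomposition of $\pr_{W,M}(f)$ is unique --- does it conclude $\psi_{t_\alpha}(\pr_{W,M}(f))=\pr_{W,M}(\psi_{t_\alpha}(f))$.

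Your description of the two sides is also off. On $W^\vee$ the Schneider--Vigneras $\psi_{t_\alpha}$ is not the transpose of a Hecke-type operator summed over $N_0/t_\alpha N_0t_\alpha^{-1}$; since $W$ carries an honest $B_+$-action, it is simply $\psi_{t_\alpha}(f)(w)=f(t_\alpha w)$, a single term. The real content of the lemma is the compatibility, after dualizing, of this honest action with the Hecke action $F_{\alpha,k}=\Tr_{H_{\Delta,k}/t_\alpha H_{\Delta,k}t_\alpha^{-1}}\circ(t_\alpha\cdot)$ on $M_k\subseteq\pi^{H_{\Delta,k}}$, and the technical step you omit is the identification of $(M_k^\vee/\Ker(\Tr_{H_{k,-,\alpha}}))[1/X_\Delta]$ with the $H_{k,-,\alpha}$-coinvariants of $M_k^\vee[1/X_\Delta]$ (where $H_{k,-,\alpha}=t_\alpha^{-1}H_{\Delta,k}t_\alpha/H_{\Delta,k}$), which uses that $M_k^\vee[1/X_\Delta]\cong\mathbb{M}_{k,0}(M^\vee[1/X_\Delta])$ is induced as a representation of $H_{k,-,\alpha}\leq H_{\Delta,0}/H_{\Delta,k}$. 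Finally, there are no ``normalization factors'' to be absorbed by the localization: the indices that occur, such as $|H_{\Delta,k}:t_\alpha H_{\Delta,k}t_\alpha^{-1}|$, are powers of $p$ and hence not invertible in $A$, so an argument relying on matching or cancelling such factors cannot work. Note also that elements of $M_k^\vee[1/X_\Delta]$ cannot be ``evaluated'' at $m\in M_k$; the evaluation only makes sense on $M_k^\vee$ before localization, which is why the paper's computation is carried out through the chain of duals of the maps defining $F_{\alpha,k}$ rather than pointwise.
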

\begin{proof}
The $T_0$-equivariance is clear as it is given by the multiplication by elements of $T_0$ on both sides. By multiplicativity we are reduced to showing the $\psi_t$-equivariance for $t=t_\alpha$ for all $\alpha$ in $\Delta$. Let $k=k(\alpha,M)>0$ be large enough so that $H_{\Delta,k}$ is contained in $t_\alpha H_{\Delta,0}t_\alpha^{-1}\leq t_\alpha N_0t_\alpha^{-1}$ and $M_k$ is contained in $W$. Denote by $H_{k,-,\alpha}$ the kernel of the group homomorphism $t_\alpha(\cdot)t_\alpha^{-1}\colon N_{\Delta,k}\to N_{\Delta,k}$ and put $\Tr:=\Tr_{H_{k,-,\alpha}}=\sum_{u\in H_{k,-,\alpha}}u\in A\bs N_{\Delta,k}\js$. By our assumption on $k$ we have $H_{k,-,\alpha}=t_\alpha^{-1} H_{\Delta,k}t_\alpha/H_{\Delta,k}$. For any $f$ in $W^\vee=\Hom_A(W,A )$ and $w\in W$ we have $\psi_{t_\alpha}(f)(w)=f(t_\alpha w)$ by definition. On the other hand, the map $1\otimes F_{\alpha,k}\colon A\bs N_{\Delta,k}\js \otimes_{\varphi_{t_\alpha},A\bs N_{\Delta,k}\js }M_k\to M_k$ factors through $A\bs N_{\Delta,k}\js \otimes_{\varphi_{t_\alpha},A\bs N_{\Delta,k}/H_{k,-,\alpha}\js }(M_k/\Ker(\Tr_{\mid M_k}))$. Moreover, $A\bs N_{\Delta,k}\js$ is a finite free module over $A\bs N_{\Delta,k/H_{k,-,\alpha}}\js$ via $\varphi_{t_\alpha}$. Hence we have a series of maps
\begin{align*}
M_k^\vee\overset{(1\otimes F_{\alpha,k})^\vee}{\to}(A\bs N_{\Delta,k}\js \otimes_{\varphi_{t_\alpha},A\bs N_{\Delta,k}/H_{k,-,\alpha}\js } (M_k/\Ker(\Tr)))^\vee\overset{\sim}{\to}\\
\overset{\sim}{\to}A\bs N_{\Delta,k}\js \otimes_{\varphi_{t_\alpha},A\bs N_{\Delta,k}/H_{k,-,\alpha}\js } (M_k/\Ker(\Tr))^\vee\overset{\Tr}{\to}\\
\overset{\Tr}{\to}A\bs N_{\Delta,k}\js \otimes_{\varphi_{t_\alpha},A\bs N_{\Delta,k}/H_{k,-,\alpha}\js } \Tr(M_k)^\vee\overset{\sim}{\to}\\
\overset{\sim}{\to}A\bs N_{\Delta,k}\js \otimes_{\varphi_{t_\alpha},A\bs N_{\Delta,k}/H_{k,-,\alpha}\js } (M_k^\vee/\Ker(\Tr))
\end{align*}
under which the image of $f=f_{\mid M_k}$ is as follows:
\begin{align*}
f\overset{(1\otimes F_{\alpha,k})^\vee}{\mapsto} (f_1\colon u\otimes (m+\Ker(\Tr))\mapsto f(uF_{\alpha,k}(m)))\overset{\sim}{\mapsto}\\
\overset{\sim}{\mapsto} \sum_{u\in J(N_{\Delta,k}/t_\alpha N_{\Delta,k}t_\alpha^{-1})}u\otimes f_{3,u}\text{ where }f_{3,u}(m+\Ker(\Tr)):=f(uF_{\alpha,k}(m));\overset{\sim}{\mapsto}\\
\overset{\sim}{\mapsto} \sum_{u\in J(N_{\Delta,k}/t_\alpha N_{\Delta,k}t_\alpha^{-1})}u\otimes f_{4,u}\text{ where }f_{4,u}(\Tr(m)):=f(ut_\alpha\Tr(m));\overset{\sim}{\mapsto}\\
\overset{\sim}{\mapsto} \sum_{u\in J(N_{\Delta,k}/t_\alpha N_{\Delta,k}t_\alpha^{-1})}u\otimes (\pr_{W,M,k}(\psi_{t_\alpha}(u^{-1}f))+\Ker(\Tr))\ .
\end{align*}
Moreover, we have an identification $M_k^\vee[1/X_\Delta]\cong \mathbb{M}_{k,0}(M^\vee[1/X_\Delta])$. In particular, $M_k^\vee[1/X_\Delta]$ is induced as a representation of $H_{k,-,\alpha}\leq H_{\Delta,0}/H_{\Delta,k}$. Therefore $(M_k^\vee/\Ker(\Tr))[1/X_\Delta]$ can be identified with the coinvariants $M_k^\vee[1/X_\Delta]_{H_{k,-,\alpha}}$. So the image of $f$ under the composite map 
\begin{align*}
W^\vee\to M_k^\vee\to A\bg N_{\Delta,k}\jg \otimes_{\varphi_{t_\alpha},A\bg N_{\Delta,k}/H_{k,-,\alpha}\jg } M_k^\vee[1/X_\Delta]_{H_{k,-,\alpha}}\cong\\
\cong A\bg N_{\Delta,k}\jg \otimes_{\varphi_{t_\alpha},A\bg N_{\Delta,k}\jg } M_k^\vee[1/X_\Delta]
\end{align*}
equals $\sum_{u\in J(N_{\Delta,k}/t_\alpha N_{\Delta,k}t_\alpha^{-1})}u\otimes \pr_{W,M,k}(\psi_{t_\alpha}(u^{-1}f))$. However, since $M_\infty^\vee[1/X]$ is an \'etale $T_+$-module over $A\bg N_{\Delta,\infty}\jg$, we have a unique decomposition of $\pr_{W,M}(f)$ as
\begin{equation*}
\pr_{W,M}(f)=\sum_{u\in J(N_0/t_\alpha N_0t_\alpha^{-1})}u\varphi(\psi_{t_\alpha}(u^{-1}\pr_{W,M}(f)))
\end{equation*}
whence we must have $\psi_{t_\alpha}(\pr_{W,M}(f))=\pr_{W,M}(\psi_{t_\alpha}(f))$ as claimed.
\end{proof}

By taking the projective limit with respect to $M\in\mathcal{M}(\pi^{H_{\Delta,0}})$ and the injective limit with respect to $W\in\mathcal{B}_+(\pi)$ we obtain a $\psi_s$- and $\Gamma$-equivariant $\Lambda(N_0)$-homomorphism 
\begin{equation*}
\pr:=\varinjlim_{W}\varprojlim_{M}\pr_{W,M}\colon D_{SV}(\pi)\to D^\vee_{\Delta,\infty}(\pi)\ .
\end{equation*}

\begin{pro}\label{1otimestildeprinj}
Let $D$ be a finitely generated \'etale $T_+$-module over $A\bg N_{\Delta,\infty}\jg$, and $f\colon D_{SV}(\pi)\to D$ be a continuous $\psi_t$-equivariant $A\bs N_0\js$-homomorphism for all $t\in T_+$. Then $f$ factors uniquely through $\pr$, ie.\ there exists a unique $\psi_t$-equivariant $A\bs N_0\js$-homomorphism $\hat{f}\colon D^\vee_{\Delta,\infty}(\pi)\to D$ such that $f=\hat{f}\circ\pr$.
\end{pro}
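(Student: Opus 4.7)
The plan is to use the equivalence of categories from Theorem \ref{equivcat} to reduce the construction of $\hat f$ to a level-by-level problem. Since $D$ is finitely generated and \'etale over $A\bg N_{\Delta,\infty}\jg$, the remark following Theorem \ref{equivcat} gives $D \cong \varprojlim_k D_k$ where $D_k := A\bg N_{\Delta,k}\jg \otimes_{A\bg N_{\Delta,\infty}\jg} D$ is an \'etale $T_+$-module over $A\bg N_{\Delta,k}\jg$. On the other side, $D^\vee_{\Delta,\infty}(\pi) = \varprojlim_k \varprojlim_M M_k^\vee[1/X_\Delta]$. Thus producing a $\psi_t$-equivariant $\hat f$ factoring $f$ through $\pr$ is equivalent to producing, compatibly in $k$, a $\psi_t$-equivariant map $\hat f_k\colon \varprojlim_M M_k^\vee[1/X_\Delta] \to D_k$ that factors $D_{SV}(\pi) \xrightarrow{f} D \twoheadrightarrow D_k$. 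Since the passage from level $0$ to level $k$ is given by the functor $\mathbb M_{k,0}$ of Lemma \ref{equivcatk}, which is compatible with the construction $M \mapsto M_k$ of Section 4.3, it suffices to construct $\hat f_0$ and propagate.

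For the $k=0$ case, I would proceed as follows. Given a continuous $\psi_t$-equivariant $A\bs N_0\js$-homomorphism $f_0\colon D_{SV}(\pi) \to D_0$ with $D_0 \in \mathcal D^{et}(T_+,A\bg N_{\Delta,0}\jg)$, the goal is to factor $f_0$ through $D^\vee_\Delta(\pi) = \varprojlim_M M^\vee[1/X_\Delta]$. For each $W \in \mathcal B_+(\pi)$, the continuity of $f_0$ in the weak topology forces $f_0(W^\vee)$ to sit inside a finitely generated $A\bs N_0\js$-submodule $D_{0,W} \subseteq D_0$. Enlarging $D_{0,W}$ by the (finitely generated) $T_0$-orbit and closing under $\psi_t$ for all $t \in T_+$, I can arrange that $D_{0,W}$ is a $\psi_t$-stable, $T_0$-invariant, finitely generated $A\bs N_{\Delta,0}\js$-submodule of $D_0$. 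By Proposition \ref{dualcompactfingen}, the Pontryagin dual $D_{0,W}^\vee$ is then a finitely generated $A\bs N_{\Delta,0}\js[F_\Delta]$-module, admissible as a representation of $N_{\Delta,0}$ and semilinearly acted on by $T_0$.

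Dualising the $A\bs N_0\js$-linear map $W^\vee \to D_{0,W}$ yields an $A\bs N_0\js$-linear map $D_{0,W}^\vee \to W$ whose image $M_W$ lies in $\pi^{H_{\Delta,0}}$ and inherits from $D_{0,W}^\vee$ the structure of an element of $\mathcal M_\Delta(\pi^{H_{\Delta,0}})$. By construction, $f_0\vert_{W^\vee}$ factors as $W^\vee \to M_W^\vee \to M_W^\vee[1/X_\Delta] \to D_0$, where the last arrow is induced by the inclusion $D_{0,W} \hookrightarrow D_0$ (recalling that localizing at $X_\Delta$ gives an \'etale $T_+$-module by the discussion preceding Proposition \ref{dualcompactfingen}). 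Passing to the inductive limit over $W \in \mathcal B_+(\pi)$ and the projective limit over $M \in \mathcal M_\Delta(\pi^{H_{\Delta,0}})$ produces $\hat f_0\colon D^\vee_\Delta(\pi) \to D_0$; the $\psi_t$-equivariance follows from that of $f_0$ together with Lemma \ref{prpsigamma}. Uniqueness of the factorization is a consequence of the fact that the image of $\pr_{W,M}$ generates $M^\vee[1/X_\Delta]$ topologically as $W$ ranges over $\mathcal B_+(\pi)$ (by the very definition of $D^\vee_\Delta(\pi)$ as a projective limit over $M$).

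The main obstacle will be producing, for a given $W$, the $\psi_t$- and $T_0$-stable enlargement $D_{0,W}$ of $f_0(W^\vee)$ so that Proposition \ref{dualcompactfingen} can be invoked. This requires showing that the closure of a finitely generated $A\bs N_0\js$-submodule of $D_0$ under the $\psi$-action of the monoid $T_+$ remains finitely generated over $A\bs N_{\Delta,0}\js$; I expect this to follow by a Mittag--Leffler / noetherianity argument applied to the \'etale structure of $D_0$, combined with the reduction used in the proof of Proposition \ref{dualcompactfingen} itself. A secondary subtlety is the compatibility of the constructions $M \mapsto M_k$ and $D_0 \mapsto D_k$ under $\mathbb M_{k,0}$, required for the level-by-level assembly of $\hat f$; this should fall out of the explicit description $M_k^\vee[1/X_\Delta] \cong \mathbb M_{k,0}(M^\vee[1/X_\Delta])$ established in Section 4.3.
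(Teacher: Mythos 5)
Your level-$0$ construction is essentially the one in the paper, only phrased less efficiently: there one simply composes $\pi^\vee\to D_{SV}(\pi)\xrightarrow{f}D\to D_{H_{\Delta,0}}$, takes $M_0$ to be the Pontryagin dual of the image, and checks $M_0\in\mathcal{M}_\Delta(\pi^{H_{\Delta,0}})$ via Prop.~\ref{dualcompactfingen}. Your per-$W$ bookkeeping and the ``main obstacle'' you flag are not real issues: since $D_{SV}(\pi)$ is a filtered colimit of quotients of $\pi^\vee$ with surjective transition maps, it suffices to treat this one composite, and its image is automatically a compact (hence finitely generated over $A\bs N_{\Delta,0}\js$), $T_0$-stable and $\psi_t$-stable submodule of $D_{H_{\Delta,0}}$, because $f$ and $\pi^\vee\to D_{SV}(\pi)$ are $\psi_t$-equivariant; no enlargement or closure under the $\psi$-action is needed. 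The genuine gap lies in the claim that ``it suffices to construct $\hat f_0$ and propagate''. Propagating $\hat f_0$ through $\mathbb{M}_{k,0}$ (equivalently, lifting $M_0^\vee[1/X_\Delta]\hookrightarrow D_{H_{\Delta,0}}$ to $M_\infty^\vee[1/X_\Delta]\to D$ by Thm.~\ref{equivcat}) only guarantees that $f$ and $\hat f\circ\pr$ agree after pushing down to level $0$, i.e.\ modulo $(H_{\Delta,0}-1)D$; it does not give the ``compatibility in $k$'' you postulate, and nothing in your argument excludes that $f-\hat f\circ\pr$ is a nonzero map with values in $(H_{\Delta,0}-1)D$. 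Constructing $\hat f_k$ directly at level $k$ instead would require a level-$k$ analogue of Prop.~\ref{dualcompactfingen} over $A\bs N_{\Delta,k}\js[F_{\Delta,k}]$, which is not available in the paper.

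The paper closes exactly this gap with a separate argument that is missing from your proposal: the image of $f-\hat f\circ\pr$ is a $\psi_t$-stable $A\bs N_0\js$-submodule of $(H_{\Delta,0}-1)D$, and writing $(f-\hat f\circ\pr)(x)=\sum_{u\in J(N_0/s^kN_0s^{-k})}u\varphi^k\bigl((f-\hat f\circ\pr)(\psi^k(u^{-1}x))\bigr)$ places this image in $(H_{\Delta,k}-1)D$ for every $k$, while $\bigcap_k(H_{\Delta,k}-1)D=0$ because $D\cong\varprojlim_k D_{H_{\Delta,k}}$ (remark after Thm.~\ref{equivcat}). Without this step the identity $f=\hat f\circ\pr$ is not established. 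A smaller point of the same nature: your uniqueness argument also needs to descend to finite levels, since level-$0$ generation does not by itself control an $A\bs N_0\js$-linear, $\psi_t$-equivariant map out of $D^\vee_{\Delta,\infty}(\pi)$; the paper argues that the difference of two factorizations kills the image of $\pr$, by continuity factors through some $M_\infty^\vee[1/X_\Delta]$, vanishes after applying $A\bg N_{\Delta,k}\jg\otimes_{A\bg N_{\Delta,\infty}\jg}\cdot$ for every $k$ because the image of $M_k^\vee$ generates $M_k^\vee[1/X_\Delta]$, and then lets $k\to\infty$.
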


\begin{proof}
We prove the uniqueness first. Let $\hat{f}$ and $\hat{f}'$ be two such maps. Then the image of $\pr$ lies in the kernel of $\hat{f}-\hat{f}'\colon D^\vee_{\Delta,\infty}(\pi)\to D$. By continuity, $\hat{f}-\hat{f}'$ factors through $M^\vee_\infty[1/X_\Delta]$ for some $M\in \mathcal{M}_{\Delta}(\pi^{H_{\Delta,0}})$. Applying $A\bg N_{\Delta,k}\jg\otimes_{A\bg N_{\Delta,\infty}\jg}\cdot$ we deduce that the composite map $$\pi^\vee\twoheadrightarrow M_k^\vee\to M_k^\vee[1/X_\Delta]=A\bg N_{\Delta,k}\jg\otimes_{A\bg N_{\Delta,\infty}\jg}M^\vee_\infty[1/X_\Delta]\overset{1\otimes(\hat{f}-\hat{f}')}{\to}  A\bg N_{\Delta,k}\jg\otimes_{A\bg N_{\Delta,\infty}\jg}D$$ is $0$. Since the image of $M_k^\vee$ in $M_k^\vee[1/X_\Delta]$ generates $M_k^\vee[1/X_\Delta]$ (as a module over $A\bg N_{\Delta,k}\jg$), we deduce that the map $ M_k^\vee[1/X_\Delta]\overset{1\otimes(\hat{f}-\hat{f}')}{\to} A\bg N_{\Delta,k}\jg\otimes_{A\bg N_{\Delta,\infty}\jg}D$ is $0$. Letting $k\to\infty$ we obtain $\hat{f}=\hat{f}'$ using the remark after Thm.\ \ref{equivcat}.

At first we construct a homomorphism $\hat{f}_{H_{\Delta,0}}:D^\vee_\Delta(\pi)=(D^\vee_{\Delta,\infty}(\pi))_{H_{\Delta,0}}\to D_{H_{\Delta,0}}$ such that the following diagram commutes:
\begin{equation*}
\xymatrix{
D_{SV}(\pi)\ar[rd]_f\ar[r]^{\pr} & D^\vee_{\Delta,\infty}(\pi)\ar[r]^{(\cdot)_{H_{\Delta,0}}} & D^\vee_\Delta(\pi)\ar[d]^{\hat{f}_{H_{\Delta,0}}}\\
 & D\ar[r]_{(\cdot)_{H_{\Delta,0}}} & D_{H_{\Delta,0}}
}
\end{equation*}

Consider the composite map $f':\pi^\vee\to D_{SV}(\pi)\stackrel{f}{\to}D\to D_{H_{\Delta,0}}$. Note that $f'$ is continuous and $D_{H_{\Delta,0}}$ is Hausdorff, so $\Ker(f')$ is closed in $\pi^\vee$. Therefore $M_0:=(\pi^\vee/\Ker(f'))^\vee$ is naturally a subspace in $\pi$. We claim that $M_0$ lies in $\mathcal{M}_{\Delta}(\pi^{H_{\Delta,0}})$. Indeed, $M_0^\vee$ is a quotient of $\pi^\vee_{H_{\Delta,0}}$, hence $M_0\leq\pi^{H_{\Delta,0}}$ and it is $T_0$-invariant since $f'$ is $T_0$-equivariant. $M_0$ is admissible because it is discrete, hence $M_0^\vee$ is compact, equivalently finitely generated over $A \bs N_{\Delta,0}\js$, because $M_0^\vee$ can be identified with a $A \bs N_{\Delta,0}\js$-submodule of $D_{H_{\Delta,0}}$ which is finitely generated over $A \bg N_{\Delta,0}\jg$. Finally, $M_0$ is finitely generated over $A \bs N_{\Delta,0}\js[F_\Delta]$ by Proposition \ref{dualcompactfingen}.

Now we have an injective morphism $f_0\colon M_0^ \vee[1/X_\Delta]\hookrightarrow D$ of \'etale $T_+$-modules over $A\bg N_{\Delta,0}\jg$. The map $\hat{f}_{H_{\Delta,0}}\colon D^\vee_{\Delta}(\pi)\to D_{H_{\Delta,0}}$ is the composite map $D^ \vee_{\Delta}(\pi)\twoheadrightarrow M_0^\vee[1/X_\Delta]\hookrightarrow D$. It is well defined and makes the above diagram commutative, because the map $$\pi^\vee\to D_{SV}(\pi)\stackrel{\pr}{\to} D^\vee_{\Delta,\infty}(\pi)\stackrel{(\cdot)_{H_{\Delta,0}}}{\to}D^\vee_\Delta(\pi)\to M_0^\vee[1/X_\Delta]$$
is the same as $\pi^\vee\to M_0^\vee\to M_0^\vee[1/X_\Delta]$.

Finally, $M^\vee[1/X]$ (resp.\ $D_{H_{\Delta,0}}$) corresponds to $M_\infty^\vee[1/X_\Delta]$ (resp.\ to $D$) via the equivalence of categories in Theorem \ref{equivcat} therefore $f_0$ can uniquely be lifted to a $T_+$-equivariant $A\bg N_{\Delta,\infty}\jg$-homomorphism $f_\infty\colon M_\infty^\vee[1/X_\Delta]\hookrightarrow D$. The map $\hat{f}$ is defined as the composite $D^\vee_{\Delta,\infty}(\pi)\twoheadrightarrow M_\infty^\vee[1/X_\Delta]\hookrightarrow D$. Now the image of $f-\hat{f}\circ\pr$ is a $\psi_t$-invariant ($t\in T_+$) $A\bs N_0\js$-submodule in $(H_{\Delta,0}-1)D$. For any $x\in D_{SV}(\pi)$ and $k\geq 0$ we may write $(f-\hat{f}\circ\pr)(x)$ in the form $\sum_{u\in J(N_0/s^kN_0s^{-k})}u\varphi^k((f-\hat{f}\circ\pr)(\psi^k(u^{-1}x)))$ that lies in $(H_{\Delta,k}-1)D$. Hence we obtain $(f-\hat{f}\circ\pr)(D_{SV}(\pi))\subseteq \bigcap_k (H_{\Delta,k}-1)D=\{0\}$ as we have $D\cong\mathbb{M}_{\infty,0}(D_{H_{\Delta,0}})= \varprojlim_k D_{H_{\Delta,k}}$.
\end{proof}

By Corollary 4.11 in \cite{EZ} there exists a homomorphism $\widetilde{\pr}:\widetilde{D_{SV}}(\pi)\to D^{\vee}_{\Delta,\infty}(\pi)$ of \'etale $T_+$-modules over $A\bs N_0\js$ such that $\pr=\widetilde{\pr}\circ\iota$ where $\iota\colon D_{SV}(\pi)\hookrightarrow \widetilde{D_{SV}}(\pi)$ is the \'etale hull of $D_{SV}(\pi)$ which is defined as $\widetilde{D_{SV}}(\pi):=\varinjlim_{t\in T_+}\varphi_t^\ast D_{SV}(\pi)$ where we put $\varphi_t^\ast D_{SV}(\pi):=A\bs N_0\js_{A\bs N_0\js,\varphi_t}D_{SV}(\pi)$. For more details on the \'etale hull, in particular its universal property, we refer the reader to \cite{EZ}. The following is the natural analogue of Thm.\ 2.27 in \cite{EZ} in our situation.

\begin{cor}\label{pscompdsv}
We have
\begin{equation*}
D^\vee_{\Delta,\infty}(\pi)\cong \varprojlim_{D}D
\end{equation*}
where $D$ runs through the finitely generated \'etale $T_+$-modules over $A\bg N_{\Delta,\infty}\jg$ arising as a quotient of $A\bg N_{\Delta,\infty}\jg\otimes_{A\bs N_0\js}\widetilde{D_{SV}}(\pi)$ such that the quotient map is continuous in the weak topology of $D$ and the final topology on $A\bg N_{\Delta,\infty}\jg\otimes_{A\bs N_0\js}\widetilde{D_{SV}}(\pi)$ of the map $1\otimes\iota\colon D_{SV}(\pi)\to A\bg N_{\Delta,\infty}\jg\otimes_{A\bs N_0\js}\widetilde{D_{SV}}(\pi)$.
\end{cor}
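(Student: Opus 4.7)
The plan is to identify the indexing system of the projective limit on the right-hand side, up to isomorphism, with the family $\{M_\infty^\vee[1/X_\Delta]\}_{M\in\mathcal{M}_\Delta(\pi^{H_{\Delta,0}})}$ whose projective limit is by definition $D^\vee_{\Delta,\infty}(\pi)$. Once this identification is in place (including the morphism structure), the isomorphism asserted in the corollary is immediate.

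First, I would verify that each $M_\infty^\vee[1/X_\Delta]$ belongs to the indexing system. The canonical projection $D^\vee_{\Delta,\infty}(\pi)\twoheadrightarrow M_\infty^\vee[1/X_\Delta]$ composed with $\pr$ gives a continuous, $\psi_t$-equivariant ($t\in T_+$) $A\bs N_0\js$-homomorphism $\pr_M\colon D_{SV}(\pi)\to M_\infty^\vee[1/X_\Delta]$. By the universal property of the \'etale hull (Cor.\ 4.11 of \cite{EZ}) this lifts uniquely to a morphism $\widetilde{\pr}_M\colon\widetilde{D_{SV}}(\pi)\to M_\infty^\vee[1/X_\Delta]$ of \'etale $T_+$-modules over $A\bs N_0\js$, which then extends $A\bg N_{\Delta,\infty}\jg$-linearly to a map $q_M\colon A\bg N_{\Delta,\infty}\jg\otimes_{A\bs N_0\js}\widetilde{D_{SV}}(\pi)\to M_\infty^\vee[1/X_\Delta]$. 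Continuity with respect to the topologies specified in the statement is automatic from the definition of the final topology, since $q_M\circ(1\otimes\iota)=\pr_M$ is continuous. For surjectivity I would use a Nakayama argument based on Lemma \ref{jacobson}: the image of $\pi^\vee$ in the $H_{\Delta,0}$-coinvariants $(M_\infty^\vee[1/X_\Delta])_{H_{\Delta,0}}=M^\vee[1/X_\Delta]$ is $M^\vee$, which generates $M^\vee[1/X_\Delta]$ as a module over $A\bg N_{\Delta,0}\jg$; hence $q_M$ is surjective modulo the Jacobson radical, and therefore surjective.

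Conversely, I would show that every $(D,q_D)$ in the indexing system is isomorphic to some $(M_\infty^\vee[1/X_\Delta],q_M)$. The composite
\begin{equation*}
f\colon D_{SV}(\pi)\xrightarrow{1\otimes\iota}A\bg N_{\Delta,\infty}\jg\otimes_{A\bs N_0\js}\widetilde{D_{SV}}(\pi)\xrightarrow{q_D} D
\end{equation*}
is continuous by the hypothesis on $q_D$ and the definition of the final topology, and is $\psi_t$-equivariant for all $t\in T_+$. Proposition \ref{1otimestildeprinj} yields a unique factorization $f=\hat f_D\circ\pr$ with $\hat f_D\colon D^\vee_{\Delta,\infty}(\pi)\to D$; inspecting the proof of that proposition, $\hat f_D$ splits as $D^\vee_{\Delta,\infty}(\pi)\twoheadrightarrow M_\infty^\vee[1/X_\Delta]\hookrightarrow D$ for the specific $M\in\mathcal{M}_\Delta(\pi^{H_{\Delta,0}})$ whose dual is the image of $\pi^\vee$ in $D_{H_{\Delta,0}}$. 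Thus the image of $f$ lies in the $A\bg N_{\Delta,\infty}\jg$-submodule $M_\infty^\vee[1/X_\Delta]\subseteq D$. Since $q_D$ is uniquely determined as the $A\bg N_{\Delta,\infty}\jg$-linear extension of $\widetilde{D_{SV}}(\pi)\to D$ (itself forced by $f$ via the \'etale hull), its image also lies in $M_\infty^\vee[1/X_\Delta]$. The surjectivity of $q_D$ then collapses this inclusion into an equality $D=M_\infty^\vee[1/X_\Delta]$ in the system. A short diagram chase checks that the morphisms in the indexing system, after this identification, reduce to the canonical quotient maps ${M'}_\infty^\vee[1/X_\Delta]\twoheadrightarrow M_\infty^\vee[1/X_\Delta]$ coming from inclusions $M\subseteq M'$.

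The main obstacle will be the second step: confirming that the image of $q_D$ is really contained in $M_\infty^\vee[1/X_\Delta]\subseteq D$. The subtle point is that the $A\bg N_{\Delta,\infty}\jg$-linear extension of $f$ could a priori escape the image of $\hat f_D$; the rescue is that $M_\infty^\vee[1/X_\Delta]$ is itself an $A\bg N_{\Delta,\infty}\jg$-submodule of $D$, so no enlargement can occur, and the surjectivity hypothesis on $q_D$ forces equality. The rest of the argument rests squarely on the universal property established in Proposition \ref{1otimestildeprinj} and the equivalence of categories in Theorem \ref{equivcat}.
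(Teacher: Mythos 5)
Your argument is correct and is essentially the paper's own proof, just unpacked: the paper likewise deduces the correspondence from the universal property of the \'etale hull, the $\psi_t$-equivariance of $\pr$ (Lemma \ref{prpsigamma}), and Proposition \ref{1otimestildeprinj}, whose proof already exhibits the factorization $D^\vee_{\Delta,\infty}(\pi)\twoheadrightarrow M_\infty^\vee[1/X_\Delta]\hookrightarrow D$ that you invoke. Your added details (the Nakayama argument via Lemma \ref{jacobson} for surjectivity of $q_M$, and the observation that the image of $q_D$ cannot leave the $T_+$-stable $A\bg N_{\Delta,\infty}\jg$-submodule $M_\infty^\vee[1/X_\Delta]$) are exactly the verifications the paper leaves implicit.
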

\begin{proof}
Let $D$ be a finitely generated \'etale $T_+$-module over $A\bg N_{\Delta,\infty}\jg$ with a continuous $\psi_t$-equivariant (for all $t\in T_+$) homomorphism of $A\bs N_0\js$-modules $D_{SV}(\pi)\to D$. Then by the universal property (Prop.\ 2.20 in \cite{EZ}) of the \'etale hull this factors uniquely through $\widetilde{D_{SV}}(\pi)$ and also through $A\bg N_{\Delta,\infty}\jg\otimes_{A\bs N_0\js}\widetilde{D_{SV}}(\pi)$ since $D$ is a module over $A\bg N_{\Delta,\infty}\jg$. Therefore by Lemma \ref{prpsigamma} and Proposition \ref{1otimestildeprinj} the topological quotients of $A\bg N_{\Delta,\infty}\jg\otimes_{A\bs N_0\js}\widetilde{D_{SV}}(\pi)$ in the category of finitely generated \'etale $T_+$-modules over $A\bg N_{\Delta,\infty}\jg$ are exactly the \'etale $T_+$ modules $M_\infty^\vee[1/X_\Delta]$ for $M\in \mathcal{M}_\Delta(\pi^{H_{\Delta,0}})$.
\end{proof}

\subsection{A $G$-equivariant sheaf on $G/B$ attached to $D^\vee_{\Delta}(\pi)$}

The goal of this section is to construct a $G$-equivariant sheaf on $G/B$ with sections on $\mathcal{C}_0:=N_0w_0B/B$ isomorphic to $\widetilde{D_{\Delta}}(\pi):=\widetilde{\pr}(\widetilde{D_{SV}}(\pi))\subseteq D^\vee_{\Delta,\infty}(\pi)$ as an \'etale $T_+$-module. (Here $w_0$ denotes a representative of the element with maximal length in the Weyl group $N_G(T)/T$.) The method of constructing a $G$-equivariant sheaf on $G/B$ does not work in our situation since the property $\mathfrak{T}(1)$ in Prop.\ 6.8 in \cite{SVZ} is not satisfied in general for \'etale $T_+$-modules over $A\bg N_{\Delta,\infty}\jg$. The idea is to use the $G$-action on $\pi^\vee$ in order to construct the operators $\mathcal{H}_g\colon \widetilde{D_{\Delta}}(\pi)\to \widetilde{D_{\Delta}}(\pi)$. So let $\pi$ be a smooth representation of $G$ over $A$ and choose an arbitrary object $M$ in $\mathcal{M}_{\Delta}(\pi^{H_{\Delta,0}})$. Denote by $M_\infty$ the Pontryagin dual of the image of the natural map $\pi^\vee\to M_\infty^\vee[1/X_\Delta]$.

\begin{lem}\label{MinftyB+}
$M_\infty$ is a $B_+$-subrepresentation of $\pi$. If $M$ is chosen so that $M^\vee$ is $X_\Delta$-torsion free then we have $M_\infty=\bigcup_k M_k=B_+M$.
\end{lem}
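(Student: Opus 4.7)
The approach is to split the proof along the two assertions. For the first, the defining map $\beta_M\colon\pi^\vee\to M_\infty^\vee[1/X_\Delta]$ is $A\bs N_0\js$-linear by construction and $\psi_t$-equivariant for every $t\in T_+$; indeed, $\beta_M$ factors through $\pr_{W,M}$ for any $W\in\mathcal{B}_+(\pi)$ containing the relevant $M_k$'s, so Lemma~\ref{prpsigamma} applies, with $\psi_t$ on $\pi^\vee$ given by $(\psi_t f)(v):=f(tv)$, the same formula as on $W^\vee$. Writing $K:=\Ker(\beta_M)$, Pontryagin duality identifies $M_\infty$ with $K^\perp\subseteq\pi$. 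The identity $f(tv)=(\psi_t f)(v)$ together with $\psi_t f\in K$ for $f\in K$ shows that $t\cdot M_\infty\subseteq M_\infty$ for each $t\in T_+$; similarly $N_0\cdot M_\infty\subseteq M_\infty$. Since $B_+=N_0T_+$, this establishes that $M_\infty$ is a $B_+$-subrepresentation of $\pi$.

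Assume now that $M^\vee$ is $X_\Delta$-torsion-free. The inclusion $\bigcup_k M_k\subseteq B_+M$ follows directly from the definition of $M_k$: its generator $\Tr_{H_{\Delta,k}/s^kH_{\Delta,0}s^{-k}}(s^km)=\sum_h hs^km$ lies in $B_+M$ because each $hs^k\in N_0\cdot T_+=B_+$, and the subsequent actions of $A\bs N_{\Delta,k}\js$ (which factors through $N_0$), of the Hecke operators $F_{\alpha,k}=\Tr\circ(t_\alpha\cdot)$, and of $T_0$ all preserve $B_+M$. Conversely, the torsion-freeness hypothesis forces $M^\vee\hookrightarrow M^\vee[1/X_\Delta]$, so any $f\in K$ restricts to zero on $M=M_0$; hence $M\subseteq M_\infty$, and combining with the first part of the lemma yields $B_+M\subseteq M_\infty$.

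It remains to show $M_\infty\subseteq\bigcup_k M_k$. The Pontryagin dual of the surjection $A\bs N_{\Delta,k}\js\otimes_{\varphi_{s^k}}M\twoheadrightarrow M_k$ from the proposition preceding the lemma provides an embedding $M_k^\vee\hookrightarrow A\bs N_{\Delta,k}\js\hat{\otimes}_{\varphi_{s^k}}M^\vee$ whose right-hand side is $\varphi_{s^{kn_0}}(X_\Delta)$-torsion-free (this uses that $A\bs N_{\Delta,k}\js$ is free as a right module over $\varphi_{s^k}(A\bs N_{\Delta,0}\js)$ together with the $X_\Delta$-torsion-freeness of $M^\vee$). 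Hence $M_k^\vee\hookrightarrow M_k^\vee[1/X_\Delta]$, so $\Ker(\pi^\vee\to M_k^\vee[1/X_\Delta])=M_k^\perp$. Intersecting over $k$ gives $K=\bigl(\sum_k M_k\bigr)^\perp$, and therefore $M_\infty=K^\perp=\sum_k M_k=\bigcup_k M_k$ since submodules of the discrete $A$-module $\pi$ are closed under double annihilation. Chaining the inclusions yields $B_+M=\bigcup_k M_k=M_\infty$. The main obstacle is precisely this torsion-freeness transfer, which demands a careful analysis of Pontryagin duals of tensor products over Iwasawa algebras; once it is in place the rest is a formal consequence of Pontryagin duality.
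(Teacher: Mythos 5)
Your proof is correct in substance and, for the $B_+$-stability of $M_\infty$ and for the inclusions $\bigcup_k M_k\subseteq B_+M\subseteq M_\infty$, it is essentially the paper's argument: dualize, use the $\psi_t$-equivariance of $\pr_{W,M}$ from Lemma \ref{prpsigamma}, and use $M^\vee\hookrightarrow M^\vee[1/X_\Delta]$ to get $M\subseteq M_\infty$. The genuine difference is how you obtain $M_\infty\subseteq\bigcup_k M_k$. You prove the stronger statement that each $M_k^\vee$ has no $\varphi_{s^{kn_0}}(X_\Delta)$-torsion, so that $\Ker(\pi^\vee\to M_k^\vee[1/X_\Delta])$ is exactly the annihilator of $M_k$, and then intersect over $k$. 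The paper only needs the trivial direction: if $\mu\in\pi^\vee$ kills every $M_k$, then its image in each $M_k^\vee[1/X_\Delta]$ vanishes simply because the map factors through $M_k^\vee$, hence $\mu\in\Ker(\beta_M)$ and $\mu$ kills $M_\infty$; no torsion-freeness of $M_k^\vee$ is needed for this lemma, so your detour buys a true (and occasionally useful) stronger intermediate fact at the price of extra work. Two steps of that detour are glossed and deserve attention. First, freeness of $A\bs N_{\Delta,k}\js$ over $\varphi_{s^k}(A\bs N_{\Delta,0}\js)$ identifies multiplication by the central element $\varphi_{s^{kn_0}}(X_\Delta)=\varphi_{s^k}\bigl(\varphi_{s^{k(n_0-1)}}(X_\Delta)\bigr)$ on $A\bs N_{\Delta,k}\js\otimes_{\varphi_{s^k}}M^\vee$ with multiplication by $\varphi_{s^{k(n_0-1)}}(X_\Delta)$ on each free summand $M^\vee$, which is not literally the element $X_\Delta$; the transfer still works because $\varphi_{s^j}(X_\Delta)$ is a unit in $A\bg N_{\Delta,0}\jg$ and $M^\vee$ embeds into $M^\vee[1/X_\Delta]$, but this needs to be said. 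Second, your final equality $\sum_k M_k=\bigcup_k M_k$ requires the family $(M_k)_k$ to be directed; this is where the torsion-freeness hypothesis enters once more, since it makes $1\otimes F_s$ surjective (the argument of Lemma \ref{1otimesFsurjective}), whence $M=N_0F_s(M)$ and therefore $M_k=N_0\Tr_{H_{\Delta,k}/s^{k+1}H_{\Delta,0}s^{-(k+1)}}(s^{k+1}M)\subseteq M_{k+1}$. The paper's contradiction argument implicitly relies on the same directedness, so this is a shared elision rather than an error, but as written your chain of equalities is incomplete without it.
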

\begin{rem}
Let $D_0$ be the image of $\pi^\vee$ in $M^\vee[1/X_\Delta]$. By Proposition \ref{dualcompactfingen} $D_0^\vee\subseteq M$ lies in $\mathcal{M}_\Delta(\pi^{H_{\Delta,0}})$ and $D_0\cong (D_0^\vee)^\vee$ is $X_\Delta$-torsion free since it is contained in $M^\vee[X_\Delta]$. Moreover, we have $D_0[1/X_\Delta]=M^\vee[1/X_\Delta]$, so we may replace $M$ by $D_0^\vee$ so that the second conclusion holds.
\end{rem}
\begin{proof}
The $N_0$-invariance of the subspace $M_\infty \subseteq \pi$ is clear. Let $t\in T_+$ and $m\in M_\infty$ be arbitrary. Assume that $tm$ does not lie in $M_\infty\leq \pi$. Then there is an element $\mu\in \pi^\vee$ such that $\mu_{\mid M_\infty}=0$ but $(t^{-1}\mu)(m)=\mu(tm)\neq 0$. By Lemma \ref{prpsigamma} we compute $0\neq (t^{-1}\mu)_{\mid M_\infty}= \pr_{W,M}(t^{-1}\mu_{\mid W})=\psi_t(\pr_{W,M}(\mu))=\psi_t(0)=0$ which is a contradiction. So $M_\infty$ is $B_+$-invariant.

Assume now that $M^\vee$ is $X_\Delta$-torsion free, ie.\ the map $M^\vee\to M^\vee[1/X_\Delta]$ is injective. Therefore $M$ is contained in $M_\infty$ since $M^\vee$ is a quotient of $M^\vee_\infty$. Hence $M_k\subseteq B_+M$ is also contained in $M_\infty$. Now assume that $\bigcup_k M_k\subsetneq M_\infty$. Then there exists an element $\mu$ in $\pi^\vee$ such that $\mu_{\mid M_k}=0$ for all $k\geq 0$ but $\mu_{\mid M_\infty}\neq 0$. In particular, the image of $\mu$ in $M_k^\vee[1/X_\Delta]$ is zero for all $k\geq 0$ whence it is also zero in $\varprojlim M_k^\vee[1/X_\Delta]=M_\infty^\vee[1/X_\Delta]$ contradicting to $\mu_{\mid M_\infty}\neq 0$.
\end{proof}

Let us denote by $\mathcal{M}^0_\Delta(\pi^{H_{\Delta,0}})$ the set of those $M\in\mathcal{M}_\Delta(\pi^{H_{\Delta,0}})$ so that $M^\vee$ has no $X_\Delta$-torsion. We still have $D_\Delta^\vee(\pi)=\varprojlim_{M\in\mathcal{M}^0_\Delta(\pi^{H_{\Delta,0}})}M^\vee[1/X_\Delta]$.

\begin{lem}\label{1otimesFsurjective}
Suppose we choose $M\in\mathcal{M}^0_\Delta(\pi^{H_{\Delta,0}})$. Then the map
\begin{equation*}
1\otimes F_s\colon A\bs N_{\Delta,0}\js\otimes_{\varphi_s,A\bs N_{\Delta,0}\js}M\to M
\end{equation*}
is surjective.
\end{lem}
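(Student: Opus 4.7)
The strategy is to dualize: surjectivity of $1\otimes F_s$ is equivalent to injectivity of the Pontryagin dual map
$(1\otimes F_s)^\vee \colon M^\vee \to A\bs N_{\Delta,0}\js\otimes_{\varphi_s, A\bs N_{\Delta,0}\js} M^\vee$,
where the target has been identified with the dual of $A\bs N_{\Delta,0}\js\otimes_{\varphi_s, A\bs N_{\Delta,0}\js} M$ using that $A\bs N_{\Delta,0}\js$ is finite free as a module over its subring $\varphi_s(A\bs N_{\Delta,0}\js)$ (since $[N_{\Delta,0}:sN_{\Delta,0}s^{-1}]<\infty$), exactly as in the proof of Prop.\ \ref{fingendualetale}.

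Next, I would observe that after inverting $X_\Delta$ this dual map becomes an isomorphism. This is the content of the argument of Prop.\ \ref{fingendualetale} (specifically the injectivity of \eqref{1otimesFalphavee} upgraded to an isomorphism via Prop.\ \ref{noideal}), applied to $F_s=\prod_{\alpha\in\Delta}F_\alpha$ in place of a single $F_\alpha$; equivalently it encodes the étaleness of the $T_+$-module $M^\vee[1/X_\Delta]$ at $s\in T_+$ produced by that proposition.

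The key use of the hypothesis $M\in\mathcal{M}^0_\Delta(\pi^{H_{\Delta,0}})$ is that the localization map $\iota\colon M^\vee\hookrightarrow M^\vee[1/X_\Delta]$ is injective (since $M^\vee$ has no $X_\Delta$-torsion). Consider the commutative square whose top row is $(1\otimes F_s)^\vee$, whose vertical arrows are the localization maps at $X_\Delta$, and whose bottom row is the isomorphism from the previous step. The composition down-then-right equals the composition right-then-down; the former, being $\iota$ followed by an isomorphism, is injective, so $(1\otimes F_s)^\vee$ itself must be injective. Pontryagin duality then gives the claimed surjectivity of $1\otimes F_s$.

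The only real subtlety is the genuine role of the torsion-freeness hypothesis: étaleness of $M^\vee[1/X_\Delta]$ produces an isomorphism only \emph{after} inverting $X_\Delta$, and the assumption $M\in\mathcal{M}^0_\Delta(\pi^{H_{\Delta,0}})$ is precisely what bridges this localized isomorphism to an integral injectivity statement. Without it, the kernel of $(1\otimes F_s)^\vee$ would be $X_\Delta$-torsion but could be nonzero, and the surjectivity conclusion would fail.
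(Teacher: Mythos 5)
Your proof is correct and follows essentially the same route as the paper: the paper likewise reduces the claim to injectivity of the Pontryagin dual map $(1\otimes F_s)^\vee\colon M^\vee\to A\bs N_{\Delta,0}\js\otimes_{\varphi_s,A\bs N_{\Delta,0}\js}M^\vee$ and deduces this from the fact that $M^\vee$ (being $X_\Delta$-torsion free) embeds into the \'etale $T_+$-module $M^\vee[1/X_\Delta]$, citing Lemma 4.2, Prop.\ 4.5 and Cor.\ 4.8 of \cite{EZ} at that point. Your explicit localization square is precisely the content of those citations (nondegeneracy of the $\psi$-action on a $\psi$-stable submodule of an \'etale module), so nothing essential differs.
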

\begin{proof}
The assertion is equivalent to the injectivity of the dual map
\begin{equation*}
(1\otimes F_s)^\vee\colon M^\vee\to A\bs N_{\Delta,0}\js\otimes_{\varphi_s,A\bs N_{\Delta,0}\js}M^\vee\ .
\end{equation*}
This map is, however, injective by Lemma 4.2, Prop.\ 4.5, and Cor.\ 4.8 in \cite{EZ} noting that $M^\vee$ is contained in the \'etale $T_+$-module $M^\vee[1/X_\Delta]$.
\end{proof}

For an integer $n\geq 1$ we denote by $U^{(n)}$ the kernel of the group homomorphism $\mathbf{G}(\mathbb{Z}_p)\to \mathbf{G}(\mathbb{Z}/p^n\mathbb{Z})$.

\begin{lem}\label{MinftyU1}
We have $U^{(1)}M_\infty =M_\infty$ for all $M\in\mathcal{M}^0_\Delta(\pi^{H_{\Delta,0}})$.
\end{lem}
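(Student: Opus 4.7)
The plan is to use the Iwahori-type decomposition
\begin{equation*}
U^{(1)}=(\overline{N}_0\cap U^{(1)})\cdot(T_0\cap U^{(1)})\cdot(N_0\cap U^{(1)})
\end{equation*}
to reduce the claim to $(\overline{N}_0\cap U^{(1)})M_\infty\subseteq M_\infty$, since $T_0\cap U^{(1)}\subseteq T_0$ and $N_0\cap U^{(1)}\subseteq N_0$ are both contained in $B_+$ and hence preserve $M_\infty$ by Lemma \ref{MinftyB+}. Dually, this amounts to showing that the kernel $K:=\ker\bigl(\pi^\vee\to\mathbb{M}_{\infty,0}(M^\vee[1/X_\Delta])\bigr)$ of the canonical surjection is stable under the action of each $\bar n\in\overline{N}_0\cap U^{(1)}$ on $\pi^\vee$.

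The central input is the contraction identity: for $\bar n=\prod_{\alpha\in\Phi^+}u_{-\alpha}(pa_\alpha)$ with $a_\alpha\in\Zp$ one computes
\begin{equation*}
s^{-k}\bar n s^k=\prod_{\alpha\in\Phi^+}u_{-\alpha}\bigl(p^{1+k\deg(\alpha\circ\xi)}a_\alpha\bigr)\in U^{(1+k)}\ ,
\end{equation*}
so $s^{-k}\bar n s^k$ lies in arbitrarily deep congruence subgroups as $k\to\infty$. Combined with the smoothness of $\pi$, every $m\in M_\infty$ is fixed by some congruence subgroup $U^{(n_m)}$, so for $k$ sufficiently large (depending on $m$ and $\bar n$) the element $s^{-k}\bar n s^k$ fixes $m$, giving the key identity
\begin{equation*}
\bar n\cdot s^km=s^km\qquad\text{in }\pi.
\end{equation*}
Using the $\psi$-equivariance of the canonical map (established essentially by the same argument as in Lemma \ref{prpsigamma}, applied to the quotient $M_\infty^\vee$ of $D^\vee_{\Delta,\infty}(\pi)$), this yields for any $\mu\in K$
\begin{equation*}
(\bar n^{-1}\mu)(s^km)=\mu(\bar n s^km)=\mu(s^km)=(\psi_{s^k}\mu)(m)=0,
\end{equation*}
since $\psi_{s^k}\mu\in K$ (because $K$ is $\psi_t$-stable for all $t\in T_+$) and $m\in M_\infty$. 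Hence $\bar n^{-1}\mu$ vanishes on the $s^k$-shifted subspace $s^kM_\infty$ for $k$ sufficiently large.

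The main obstacle will be descending this vanishing on $s^kM_\infty$ to the required conclusion $\bar n^{-1}\mu\in K$, i.e.\ the $X_\Delta$-torsion of $(\bar n^{-1}\mu)|_{M_k}$ in $M_k^\vee$ for every $k\geq 0$. For this I would invoke the étale structure on $\mathbb{M}_{\infty,0}(M^\vee[1/X_\Delta])\cong\varprojlim_k M_k^\vee[1/X_\Delta]$ furnished by Theorem \ref{equivcat}: since each $M_k^\vee[1/X_\Delta]$ is a finitely generated étale $T_+$-module over $A\bg N_{\Delta,k}\jg$ and $\varphi_{s^{k'}}$ is an isomorphism on it, the vanishing of $\bar n^{-1}\mu$ on the $\varphi_{s^{k'}}$-image (i.e.\ on $s^{k'}M_k$) propagates through the étale decomposition and forces its image in $M_k^\vee[1/X_\Delta]$ to vanish. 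Formalising this descent---despite the fact that $\bar n$ itself does not act naturally on the étale quotient---via the universal property of $D^\vee_{\Delta,\infty}$ (Proposition \ref{1otimestildeprinj}) and the weak decomposition $d=\sum_u u\varphi_{s^{k'}}(\psi_{s^{k'}}(u^{-1}d))$ available in any étale $T_+$-module, constitutes the technical heart of the argument.
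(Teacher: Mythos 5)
Your first reduction (Iwahori factorisation of $U^{(1)}$, with $T_0\cap U^{(1)}$ and $N_0\cap U^{(1)}$ handled by Lemma \ref{MinftyB+}) and the contraction $s^{-k}\bar n s^k\in U^{(k+1)}\cap\overline{N}$ are sound, and they are indeed the same germ as the paper's argument. The genuine gap is the descent step, and the mechanism you propose for it does not work. Knowing that $\bar n^{-1}\mu$ vanishes on $s^{k}M_\infty$ only says that $\psi_{s^{k}}$ applied to its image in $M_\infty^\vee$ (or in any $M_{k'}^\vee[1/X_\Delta]$) is zero; but $\psi_{s^{k}}$ is very far from injective on an \'etale module. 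The \'etale decomposition is $d=\sum_{u\in J(N_0/s^{k}N_0s^{-k})}u\,\varphi_{s^{k}}\bigl(\psi_{s^{k}}(u^{-1}d)\bigr)$, so to force $d=0$ you must kill \emph{every} coset component, i.e.\ you need $\bar n^{-1}\mu$ to vanish on all translates $us^{k}m$ with $u\in N_0$, not just on $s^{k}m$. Establishing that vanishing requires commuting $\bar n$ past $u$: one writes $u^{-1}\bar n u\in U^{(1)}$, Iwahori-factorises it as $n_1t_1\overline{n_1}$ and contracts $\overline{n_1}$ by $s^{k}$ --- which is precisely the computation the paper performs directly in $\pi$ (there with $uvts^nm_i$), so the dual detour does not avoid it. Moreover, even after that, you must know that the $A$-span of $N_0s^{k}M_\infty$ is all of $M_\infty$ in order to conclude $\bar n^{-1}\mu\in K$; this is exactly where Lemma \ref{1otimesFsurjective} (surjectivity of $1\otimes F_s$ on $M$, the reason the lemma is restricted to $\mathcal{M}^0_\Delta$, i.e.\ to $X_\Delta$-torsion-free $M^\vee$) must enter, and your proposal never invokes it. Without this Hecke-surjectivity there is no way to pass from the ``deep'' shifts $s^kM_\infty$ back to $M_\infty$ itself; Proposition \ref{1otimesFtildeprinj}-style universal-property arguments cannot substitute for it.

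There is also a smaller, fixable imprecision: your key identity holds ``for $k$ sufficiently large depending on $m$ and $\bar n$'', which only gives vanishing on the set $\{s^km:\ k\geq k(m)\}$, not on a subspace $s^kM_\infty$ for one fixed $k$. The paper obtains the needed uniformity by exploiting that $M$ is finitely generated over $A\bs N_{\Delta,0}\js[F_\Delta]$: the finitely many generators $m_1,\dots,m_r$ are fixed by a single $U^{(n)}$, and by Lemmata \ref{MinftyB+} and \ref{1otimesFsurjective} every $m\in M_\infty$ is an $A[B_+]$-combination of the elements $s^nm_i$; it then checks $uvts^nm_i\in B_+M=M_\infty$ by conjugation and Iwahori factorisation. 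If you want to salvage the dual formulation, you would have to import both ingredients (the finite generating set fixed by one $U^{(n)}$, and the surjectivity of $F_s$), at which point the argument collapses back onto the paper's primal proof.
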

\begin{proof}
We may assume without loss of generality that $M^\vee$ is $X_\Delta$-torsion free so that we have $M_\infty=B_+M$ by Lemma \ref{MinftyB+}. Now since $M$ lies in $\mathcal{M}_\Delta(\pi^{H_{\Delta,0}})$, it is generated by finitely many elements $m_1,\dots,m_r\in M$ as a module over $A\bs N_{\Delta,0}\js [F_\Delta]$. Since the elements of  $A\bs N_{\Delta,0}\js [F_\Delta]$ are finite linear combinations of elements in $B_+$ we deduce that $m_1,\dots,m_r$ generates $M_\infty$ as a $B_+$-subrepresentation of $\pi$. Since $\pi$ is smooth, there exists an integer $n\geq 1$ such that $m_1,\dots,m_r$ are all fixed by $U^{(n)}$. Moreover, we may assume without loss of generality that the group $T_0$ permutes the elements of the set $\{m_1,\dots,m_r\}$. By Lemmata \ref{MinftyB+} and \ref{1otimesFsurjective} we may write any element $m\in M_\infty$ as a finite linear combination $m=\sum_{i=1}^r\lambda_i s^nm_i$ with $\lambda_i$ in the monoid-ring $A[B_+]$. So we are reduced to showing that $uvts^nm_i$ lies in $M_\infty$ for all $u\in U^{(1)}$, $v\in N_0$, $t\in T_+$, and $1\leq i\leq r$. Since $U^{(1)}$ is normal in $\mathbf{G}(\Zp)$ and $N_0\subseteq \mathbf{G}(\Zp)$, the element $u_1:=v^{-1}uv$ also lies in $U^{(1)}$. By the Iwahori factorization we may write $u_1$ as a product $u_1=n_1t_1\overline{n_1}$ with $n_1\in U^{(1)}\cap N\leq N_0$, $t_1\in  U^{(1)}\cap T$, and $\overline{n_1}\in U^{(1)}\cap \overline{N}$. Therefore $s^{-n}t^{-1}\overline{n_1}ts^n$ lies in $s^{-n}t^{-1}(U^{(1)}\cap\overline{N})ts^n\leq U^{(n)}\cap\overline{N}$ whence we have $s^{-n}t^{-1}\overline{n_1}ts^nm_i=m_i$ for all $1\leq i\leq r$ by our choice of $n$. So we compute
\begin{align*}
uvts^nm_i=vu_1ts^nm_i=vn_1t_1ts^n(s^{-n}t^{-1}\overline{n_1}ts^n)m_i=vn_1t_1ts^nm_i\in B_+M=M_\infty\ .
\end{align*}
\end{proof}

Now let $g$ be an arbitrary element in $G$ and put $\mathcal{U}_g:=\{u\in N_0\mid x_u\in g^{-1}\mathcal{C}_0\cap\mathcal{C}_0\}$ where $u\mapsto x_u=uw_0B$ is the homeomorphism $N_0\to\mathcal{C}_0=N_0w_0B/B\subset G/B$. For $u\in \mathcal{U}_g$ we may factorize $gu$ as $gu=n(g,u)t(g,u)\overline{n}(g,u)$ with $n(g,u)\in N_0$, $t(g,u)\in T$, and $\overline{n}(g,u)\in \overline{N}$. By Lemma 7.3 in \cite{EZ} there exists an integer $k_0=k_0(g)$ such that for all $k\geq k_0$ and $u\in \mathcal{U}_g$ we have $us^kN_0s^{-k}\subseteq \mathcal{U}_g$, $s^kt(g,u)\in T_+$, and $s^{-k}\overline{n}(g,u)s^k\in U^{(1)}\cap \overline{N}$. Moreover, since $\mathcal{U}_g$ is compact, it is a finite union of cosets of the form $us^kN_0s^{-k}$ ($u\in\mathcal{U}_g$). We denote by $\widetilde{M_\infty^\vee}$ the \'etale hull of the image $M_\infty^\vee\subset M_\infty^\vee[1/X_\Delta]$ of the natural map $\pr_M\colon \pi^\vee\to M_\infty^\vee[1/X_\Delta]$. As a special case of Cor.\ 4.8 in \cite{EZ} we may view $\widetilde{M_\infty^\vee}$ as an \'etale $T_+$-submodule of $M_\infty^\vee[1/X_\Delta]$. More explicitly, since the powers of $s$ are cofinal in $T_+$, we have 
\begin{align*}
\widetilde{M_\infty^\vee}=\bigcup_{k\geq 0} \bigoplus_{u\in J(N_0/s^kN_0s^{-k})}u\varphi_{s^k}(M_\infty^\vee)=\bigcup_{k\geq k_0} \bigoplus_{u\in J(N_0/s^kN_0s^{-k})}u\varphi_{s^k}(M_\infty^\vee)=\\
=\bigcup_{k\geq k_0} \bigoplus_{u\in J(N_0/s^kN_0s^{-k})}\res_{us^kN_0s^{-k}}^{N_0}(M_\infty^\vee) 
\end{align*}
by Cor.\ 4.11 in \cite{EZ} noting that $\psi_s\colon M_\infty^\vee\to M_\infty^\vee$ is surjective since $(s\cdot)\colon M_\infty\to M_\infty$ is injective. Here $\res_{us^kN_0s^{-k}}^{N_0}$ denotes the operator $(u\cdot)\circ\varphi_{s^k}\circ\psi_{s^k}\circ(u^{-1}\cdot)$. More generally, for an open compact subset $\mathcal{U}\subset N_0$ we may write $\mathcal{U}$ as a disjoint union of cosets $us^kN_0s^{-k}$ ($u$ running on a finite subset $U_k\subset \mathcal{U}$) for $k\geq 0$ large enough depending on $\mathcal{U}$ and put $\res_{\mathcal{U}}^{N_0}:=\sum_{u\in U_k}\res_{us^kN_0s^{-k}}^{N_0}$. For $k\geq k_0$ we choose a set $U_{k,g}:= \mathcal{U}_g\cap J(N_0/s^kN_0s^{-k})$ of representatives of the cosets of $\mathcal{U}_g/s^kN_0s^{-k}$. By Lemma 4.13 in \cite{EZ} the map 
\begin{equation*}
n(g,\cdot)\colon us^kN_0s^{-k}\to n(g,u)t(g,u)s^kN_0s^{-k}t(g,u)^{-1} 
\end{equation*}
is a bijection for each $u\in U_{k,g}$. Under the identification $N_0\overset{\sim}{\to}\mathcal{C}_0$ the above bijection corresponds to the bijection 
\begin{equation*}
(g\cdot)\colon us^kN_0w_0B/B\overset{\sim}{\to} n(g,u)t(g,u)s^kN_0w_0B/B\ .
\end{equation*}
So we define
\begin{eqnarray*}
\mathcal{H}_g\colon \widetilde{M_\infty^\vee}&\to& \widetilde{M_\infty^\vee}\\
\sum_{u\in J(N_0/s^kN_0s^{-k})}\res_{us^kN_0s^{-k}}^{N_0}(\pr_M(\mu_{u,k}))&\mapsto& \sum_{u\in U_{k,g}}\res_{n(g,u)t(g,u)s^kN_0s^{-k}t(g,u)^{-1}}^{N_0}(\pr_M(g\mu_{u,k}))
\end{eqnarray*}
for $\mu_{u,k}$ in $\pi^\vee$ ($u\in U_{k,g}$, $k\geq k_0$).
\begin{lem}\label{Hgwelldefined}
The map $\mathcal{H}_g$ above is well-defined.
\end{lem}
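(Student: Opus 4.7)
The plan is to verify well-definedness of $\mathcal{H}_g$ in three layers: (a) the output lies in $\widetilde{M_\infty^\vee}$; (b) the formula is independent of the auxiliary integer $k\ge k_0$ and of the chosen coset representatives; (c) the output depends only on the given element of $\widetilde{M_\infty^\vee}$, not on the chosen lifts $\mu_{u,k}\in\pi^\vee$. Of these, only (c) is substantive; (a) and (b) are essentially formal once the right setup is in place.

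For (a), I fix $u\in U_{k,g}$ and observe that since $s^kt(g,u)=:t'\in T_+$, the set $n(g,u)t(g,u)s^kN_0s^{-k}t(g,u)^{-1}=n(g,u)t'N_0t'^{-1}$ is a single coset of the subgroup $t'N_0t'^{-1}\subseteq N_0$. Consequently the associated restriction operator factors as $n(g,u)\,\varphi_{t'}\,\psi_{t'}\,n(g,u)^{-1}$, which preserves the \'etale $T_+$-submodule $\widetilde{M_\infty^\vee}\subseteq M_\infty^\vee[1/X_\Delta]$; moreover $\pr_M(g\mu_{u,k})$ already lies in $M_\infty^\vee\subseteq\widetilde{M_\infty^\vee}$, so the image is indeed in $\widetilde{M_\infty^\vee}$. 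For (b), on refining $k$ to $k+1$, each coset $us^kN_0s^{-k}$ with $u\in U_{k,g}$ splits into sub-cosets indexed by $v\in J(s^kN_0s^{-k}/s^{k+1}N_0s^{-(k+1)})$, giving representatives $uv\in U_{k+1,g}$; I would unwind the Iwahori factorisations of the $g(uv)$ to verify the set-theoretic identity $n(g,u)t(g,u)s^kN_0s^{-k}t(g,u)^{-1}=\bigsqcup_{v} n(g,uv)t(g,uv)s^{k+1}N_0s^{-(k+1)}t(g,uv)^{-1}$, so the refined sum reconstructs the original one.

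The main obstacle is (c). Fix $u\in U_{k,g}$ and suppose $\nu\in\pi^\vee$ satisfies $\res_{us^kN_0s^{-k}}^{N_0}(\pr_M(\nu))=0$; I must show $\res_{n(g,u)t'N_0t'^{-1}}^{N_0}(\pr_M(g\nu))=0$. By the \'etaleness of $M_\infty^\vee[1/X_\Delta]$ the hypothesis is equivalent to $\psi_{s^k}(u^{-1}\pr_M(\nu))=0$, and by Lemma~\ref{prpsigamma} this becomes $\pr_M(\psi_{s^k}(u^{-1}\nu))=0$, where $\psi_{s^k}$ on $\pi^\vee$ denotes the Pontryagin dual of multiplication by $s^k$ on $\pi$. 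Symmetrically the conclusion is equivalent to $\pr_M(\psi_{t'}(n(g,u)^{-1}g\nu))=0$. The crux is the pointwise computation, valid for any $v\in M_\infty$: using the decomposition $gu=n(g,u)t(g,u)\overline{n}(g,u)$ and the identity $t(g,u)^{-1}t'=s^k$, one rewrites
\[
\psi_{t'}(n(g,u)^{-1}g\nu)(v)=\nu(u\,\overline{n}(g,u)^{-1}\,t(g,u)^{-1}t'\,v)=\nu\bigl(u\,s^k\cdot (s^{-k}\overline{n}(g,u)^{-1}s^k)\,v\bigr).
\]
Since $k\ge k_0$ forces $s^{-k}\overline{n}(g,u)^{-1}s^k\in U^{(1)}$, Lemma~\ref{MinftyU1} makes this element act trivially on $s^kv\in M_\infty$ (recall $M_\infty$ is $B_+$-stable by Lemma~\ref{MinftyB+}); the right-hand side therefore collapses to $\nu(us^kv)=\psi_{s^k}(u^{-1}\nu)(v)$, which is annihilated by $\pr_M$ by hypothesis. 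The core insight is that the $\overline{N}$-part of the Iwahori decomposition of $g$ becomes negligible after sufficient $s$-conjugation, since $\pi$ is smooth and $M_\infty$ is $U^{(1)}$-fixed; this is precisely what allows us to replace the limit construction of $\mathcal{H}_g$ in \cite{SVZ} by the genuine $G$-action on $\pi$.
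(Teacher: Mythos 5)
Your overall strategy is the paper's own: reduce well-definedness to independence of the chosen lifts $\mu_{u,k}$, translate the vanishing of a single term $\res_{us^kN_0s^{-k}}^{N_0}(\pr_M(\nu))$ into the vanishing of $\nu$ on $us^kM_\infty$ via the injectivity of $(u\cdot)\circ\varphi_{s^k}$ and the $\psi$-equivariance of $\pr_M$ (Lemma \ref{prpsigamma}), and then use the factorisation $gu=n(g,u)t(g,u)\overline{n}(g,u)$ together with $s^{-k}\overline{n}(g,u)s^k\in U^{(1)}$ for $k\geq k_0$ and Lemma \ref{MinftyU1}. Your points (a) and (b) are essentially formal, as you say; for (b) you need not unwind Iwahori factorisations by hand: the target coset $n(g,u)t(g,u)s^kN_0s^{-k}t(g,u)^{-1}$ is exactly the image of $us^kN_0w_0B/B$ under $g$ (Lemma 4.13, resp.\ 7.4, of \cite{EZ}, recorded just before the definition of $\mathcal{H}_g$), so compatibility under refinement of the covering is automatic from the independence of $\res_{\mathcal{U}}^{N_0}$ of the chosen decomposition of $\mathcal{U}$ into cosets of subgroups $tN_0t^{-1}$ --- which is precisely the one-sentence reduction made in the paper.

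There is, however, a flaw in your justification of the substantive step (c): you misquote Lemma \ref{MinftyU1}. That lemma states $U^{(1)}M_\infty=M_\infty$, i.e.\ that $M_\infty$ is \emph{stable} under $U^{(1)}$; it does not say that $U^{(1)}$ acts trivially on $M_\infty$, and $M_\infty$ is not pointwise $U^{(1)}$-fixed in general (smoothness only fixes each generator by some $U^{(n)}$, and a general element of $M_\infty$ by nothing uniform). Hence the assertion that $\nu\bigl(us^k(s^{-k}\overline{n}(g,u)^{-1}s^k)v\bigr)$ ``collapses to $\nu(us^kv)$'' is unfounded, and the closing remark that ``$M_\infty$ is $U^{(1)}$-fixed'' is false. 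Fortunately your argument does not need any such triviality: put $m':=(s^{-k}\overline{n}(g,u)^{-1}s^k)v$; by Lemma \ref{MinftyU1} we have $m'\in M_\infty$, and your hypothesis $\pr_M(\psi_{s^k}(u^{-1}\nu))=0$ says that $\nu(us^km)=0$ for \emph{every} $m\in M_\infty$, not merely for $m=v$, so applying it to $m'$ gives the required vanishing. With this one-line correction --- stability of $M_\infty$ under $U^{(1)}$ transporting the vanishing of $\pr_M(s^{-k}u^{-1}\nu)$ to that of $\pr_M(s^{-k}t(g,u)^{-1}n(g,u)^{-1}g\nu)$, which is exactly how the paper invokes Lemma \ref{MinftyU1} --- your proof coincides with the one in the text.
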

\begin{proof}
Since $\res_{\mathcal{U}}^{N_0}$ does not depend on the choice of the decomposition of the compact open subset $\mathcal{U}\subseteq N_0$ into a disjoint union of cosets of subgroups of $N_0$ of the form $tN_0t^{-1}$ ($t\in T_+$), it suffices to show that $\mathcal{H}_g$ does not depend on the choice of $\mu_{u,k}$ ($u\in U_{k,g}$, $k\geq k_0$). Let $\mu$ be in $\pi^\vee$ such that $\res_{us^kN_0s^{-k}}^{N_0}(\pr_M(\mu))=0$. Since $(u\cdot)\circ\varphi_{s^k}$ is injective on $\widetilde{M_\infty^\vee}$, we obtain $\pr_M(s^{-k}u^{-1}\mu)=\psi_{s^k}(u^{-1}\pr_M(\mu))=0$ by Lemma \ref{prpsigamma}. In other words the restriction of $s^{-k}u^{-1}\mu$ to $M_\infty\subset \pi$ is zero. By our assumption $k\geq k_0$ we have $s^{-k}\overline{n}(g,u)s^k$ lies in $U^{(1)}$. Hence by Lemma \ref{MinftyU1} we compute 
\begin{align*}
0=\pr_M(s^{-k}u^{-1}\mu)=\pr(s^{-k}\overline{n}(g,u)u^{-1}\mu)=\pr(s^{-k}t(g,u)^{-1}n(g,u)^{-1}g\mu)\ .
\end{align*}
In particular, the formula defining $\mathcal{H}_g$ does not depend on the choice of $\mu_{u,k}$.
\end{proof}

\begin{pro}\label{sheafM}
For any smooth $o$-torsion representation $\pi$ of $G$ and any $M\in\mathcal{M}^0_\Delta(\pi^{H_{\Delta,0}})$ there exists a $G$-equivariant sheaf $\mathfrak{Y}_{\pi,M}$ on $G/B$ with sections $\mathfrak{Y}_{\pi,M}(\mathcal{C}_0)$ on $\mathcal{C}_0$ isomorphic to $\widetilde{M_\infty^\vee}$ as an \'etale $T_+$-module over $A\bs N_0\js$ such that we have $\mathcal{H}_g=(g\cdot)\circ\res^{\mathcal{C}_0}_{g^{-1}\mathcal{C}_0\cap \mathcal{C}_0}$ as maps on $\mathfrak{Y}_{\pi,M}(\mathcal{C}_0)$.
\end{pro}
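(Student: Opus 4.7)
The plan is to adapt the sheaf construction from \cite{SVZ} and \cite{EZ}: since property $\mathfrak{T}(1)$ fails for \'etale $T_+$-modules over $A\bg N_{\Delta,\infty}\jg$ in general, we cannot define the $\mathcal{H}_g$ intrinsically by a limit involving the $T_+$-action as in op.\ cit., but the explicit formula above, which routes through the $G$-action on $\pi^\vee$, provides the required substitute. The task is then to verify that the family $\{\mathcal{H}_g\}_{g\in G}$ satisfies the axioms assembling $\widetilde{M_\infty^\vee}$ into the space of sections over $\mathcal{C}_0$ of a $G$-equivariant sheaf on $G/B$.

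First I would check the formal properties of $\mathcal{H}_g$. $A$-linearity and continuity with respect to the weak topology are immediate from the defining formula, and $\mathcal{H}_1=\id$ is direct. By construction $\mathcal{H}_g(\widetilde{M_\infty^\vee})$ lies in $\res^{N_0}_{g\mathcal{U}_g}(\widetilde{M_\infty^\vee})$, so $\mathcal{H}_g$ factors through the restriction to $g\mathcal{C}_0\cap \mathcal{C}_0$. For $b\in B_+$, a short computation using the Iwahori factorization of $bu$, in which the $\overline{N}$-component is already trivial, shows that $\mathcal{H}_b$ coincides with the intrinsic operator $(b\cdot)\circ\res^{\mathcal{C}_0}_{b^{-1}\mathcal{C}_0\cap\mathcal{C}_0}$ coming from the \'etale $T_+$-module structure of $\widetilde{M_\infty^\vee}$; this uses Lemma \ref{prpsigamma} and the fact that $\pr_M$ intertwines the natural $B_+$-actions.

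The main step is the cocycle relation $\mathcal{H}_{gh}=\mathcal{H}_g\circ\mathcal{H}_h$. I would choose $k$ larger than $k_0(g)$, $k_0(h)$, and $k_0(gh)$ simultaneously, expand $\mathcal{H}_h(\pr_M(\mu))$ for $\mu\in\pi^\vee$, and then apply $\mathcal{H}_g$. Using the identity $(gh)\mu=g(h\mu)$ in $\pi^\vee$, the uniqueness of the Iwahori decomposition $ghu=n(gh,u)t(gh,u)\overline{n}(gh,u)$, and a refinement of $J(N_0/s^kN_0s^{-k})$ that simultaneously respects the partitions induced by $h$ and by $gn(h,\cdot)t(h,\cdot)$, the terms on both sides match after grouping. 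Crucially, the same absorption of ``$\overline{n}$''-factors into $\Ker(\pr_M)$ via Lemma \ref{MinftyU1} that underlies the well-definedness of $\mathcal{H}_g$ (Lemma \ref{Hgwelldefined}) also makes the two expressions agree; this step is the most delicate and proceeds in parallel with the cocycle computation in \S 7 of \cite{EZ}, with their single-generator \'etale $T_+$-module replaced by $\widetilde{M_\infty^\vee}$ throughout.

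Once $A$-linearity, continuity, the unit axiom, the compatibility with the $B_+$-action, and the cocycle relation are established, the standard gluing recipe of \S 7 in \cite{EZ}, which builds a $G$-equivariant sheaf on $G/B$ from a module of sections on $\mathcal{C}_0$ together with a compatible family of operators indexed by $G$, produces the sheaf $\mathfrak{Y}_{\pi,M}$; the required identity $\mathcal{H}_g=(g\cdot)\circ\res^{\mathcal{C}_0}_{g^{-1}\mathcal{C}_0\cap\mathcal{C}_0}$ on $\mathfrak{Y}_{\pi,M}(\mathcal{C}_0)$ is then built into the definition of the $G$-action. I expect the cocycle verification to be the main obstacle, combining the combinatorial bookkeeping of three Iwahori factorizations with the torsion-theoretic input of Lemma \ref{MinftyU1} and the universal factorization of Lemma \ref{prpsigamma}.
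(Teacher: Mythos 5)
Your overall strategy coincides with the paper's: take the operators $\mathcal{H}_g$ already defined through the $G$-action on $\pi^\vee$, verify a list of compatibility relations, and feed them into the gluing machinery (the paper invokes Prop.\ 5.14 of \cite{SVZ}; \S 7 of \cite{EZ} is the same toolkit). However, two of the relations you propose to check are not the right ones, and one of them is false as stated. The operators $\mathcal{H}_g$ have restrictions built into their definition, so they do not satisfy the naked cocycle identity $\mathcal{H}_{gh}=\mathcal{H}_g\circ\mathcal{H}_h$: the left-hand side only cuts down to $(gh)^{-1}\mathcal{C}_0\cap\mathcal{C}_0$, whereas the composite on the right additionally kills everything outside $h^{-1}\mathcal{C}_0$. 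What must be proved (and is proved in the paper as condition H2) is $\mathcal{H}_g\circ\mathcal{H}_h=\mathcal{H}_{gh}\circ\res^{\mathcal{C}_0}_{(gh)^{-1}\mathcal{C}_0\cap h^{-1}\mathcal{C}_0\cap\mathcal{C}_0}$ for $g,h\in N_0\overline{B}N_0$; in addition one needs the full relation H1, namely $\res^{\mathcal{C}_0}_{\mathcal{V}}\circ\mathcal{H}_g=\mathcal{H}_g\circ\res^{\mathcal{C}_0}_{g^{-1}\mathcal{V}\cap\mathcal{C}_0}$ for every compact open $\mathcal{V}\subset\mathcal{C}_0$, which is stronger than your observation that $\mathcal{H}_g$ lands in sections supported on $g\mathcal{C}_0\cap\mathcal{C}_0$.

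The second issue is your treatment of the $B$-compatibility. The gluing result requires $\mathcal{H}_b=b\circ\res^{\mathcal{C}_0}_{b^{-1}\mathcal{C}_0\cap\mathcal{C}_0}$ for every $b\in B\cap N_0\overline{B}N_0=N_0TN_0$, not only for $b\in B_+$. For $b=v_1tv_2$ with $t=t_1^{-1}t_2$ and $t\notin T_+$, the intrinsic action on $\widetilde{M_\infty^\vee}$ is given by $b(d)=v_1\psi_{t_1}\circ\varphi_{t_2}(v_2d)$, and verifying the identity in this case is the substantive part of the paper's H3 computation; it uses the $\psi_t$-equivariance of $\pr_M$ (Lemma \ref{prpsigamma}) in an essential way, whereas in your $B_+$-only check no $\psi$-operator ever enters. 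With these two corrections the remainder of your outline (the case distinction according to whether $x_u$ lies in the relevant intersection, the refinement of the coset decomposition of $hus^kN_0w_0B$ when composing two operators, and the absorption of the $\overline{n}(g,u)$-factors via Lemma \ref{MinftyU1}, which is really the content of the well-definedness Lemma \ref{Hgwelldefined}) does match the paper's argument.
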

\begin{proof}
By Prop.\ 5.14 in \cite{SVZ} we are bound to check the relations H1, H2, H3 therein: for any $g,h\in N_0\overline{B}N_0$, $b\in B\cap N_0\overline{B}N_0$ and compact open subset $\mathcal{V}\subset\mathcal{C}_0$ we have
\begin{enumerate}[H$1$]
\item $\res_{\mathcal{V}}^{\mathcal{C}_0}\circ\mathcal{H}_g=\mathcal{H}_g\circ\res_{g^{-1}\mathcal{V}\cap\mathcal{C}_0}^{\mathcal{C}_0}$ ;
\item $\mathcal{H}_g\circ\mathcal{H}_h=\mathcal{H}_{gh}\circ\res_{(gh)^{-1}\mathcal{C}_0\cap h^{-1}\mathcal{C}_0\cap\mathcal{C}_0}^{\mathcal{C}_0}$ ;
\item $\mathcal{H}_b=b\circ \res_{b^{-1}\mathcal{C}_0\cap \mathcal{C}_0}^{\mathcal{C}_0}$ .
\end{enumerate}
Let $d$ be in $\widetilde{M_\infty^\vee}$ and let $k\geq \max(k_0(g),k_0(h),k_0(gh))$ be large enough so that all the sets $g^{-1}\mathcal{V}\cap \mathcal{C}_0$, $\mathcal{U}_g=g^{-1}\mathcal{C}_0\cap \mathcal{C}_0$, $(gh)^{-1}\mathcal{C}_0\cap h^{-1}\mathcal{C}_0\cap\mathcal{C}_0$, $\mathcal{U}_h$, and $\mathcal{U}_b$ can be written as the disjoint union of sets of the form $us^kN_0w_0B/B$ for $u\in N_0$ and so that $d$ is contained in $N_0\varphi^k(M_\infty^\vee)$. So we may write $d=\sum_{u\in J(N_0/s^kN_0s^{-k})}\res_{us^kN_0s^{-k}}^{N_0}(\pr_M(\mu_{u,k}))$ for some $\mu_{u,k}\in\pi^\vee$ ($u\in J(N_0/s^kN_0s^{-k})$).

H1:  We distinguish two cases: If $x_u$ does not lie in $g^{-1}\mathcal{V}\cap\mathcal{C}_0$ then we have $us^kN_0w_0B\cap g^{-1}\mathcal{V}=\emptyset$ whence the right hand side of H1 has value zero at $\res_{us^kN_0s^{-k}}^{N_0}(\pr_M(\mu_{u,k}))$. Moreover, we obtain $n(g,u)t(g,u)s^kN_0w_0B\cap \mathcal{V}=gus^kN_0w_0B\cap \mathcal{V}=\emptyset$ (Lemma 7.4 in \cite{EZ}), so we the left hand side of H1 also has value zero at $\res_{us^kN_0s^{-k}}^{N_0}(\pr_M(\mu_{u,k}))$. On the other hand, if $x_u$ lies in $g^{-1}\mathcal{V}\cap\mathcal{C}_0$ then we have $us^kN_0w_0B\subseteq g^{-1}\mathcal{V}\cap \mathcal{C}_0$ and $n(g,u)t(g,u)s^kN_0w_0B\subseteq \mathcal{V}\cap g\mathcal{C}_0$ whence both sides have value $\mathcal{H}_g(\res_{us^kN_0s^{-k}}^{N_0}(\pr_M(\mu_{u,k})))$ at $\res_{us^kN_0s^{-k}}^{N_0}(\pr_M(\mu_{u,k}))$.

H2: We distinguish two cases: If $x_u$ does not lie in $(gh)^{-1}\mathcal{C}_0\cap h^{-1}\mathcal{C}_0\cap\mathcal{C}_0$ then we clearly have $$\mathcal{H}_{gh}\circ\res_{(gh)^{-1}\mathcal{C}_0\cap h^{-1}\mathcal{C}_0\cap\mathcal{C}_0}^{\mathcal{C}_0}(\res_{us^kN_0s^{-k}}^{N_0}(\pr_M(\mu_{u,k})))=0\ .$$ Further, $\mathcal{H}_h(\res_{us^kN_0s^{-k}}^{N_0}(\pr_M(\mu_{u,k})))$ is supported on $hus^kN_0w_0B$ that is disjoint to $\mathcal{U}_g$ whence we have $$\mathcal{H}_{g}\circ\mathcal{H}_h(\res_{us^kN_0s^{-k}}^{N_0}(\pr_M(\mu_{u,k})))=0\ ,$$ too. On the other hand, if $x_u$ lies in $(gh)^{-1}\mathcal{C}_0\cap h^{-1}\mathcal{C}_0\cap\mathcal{C}_0$ then we compute
\begin{align*}
\mathcal{H}_{gh}\circ\res_{(gh)^{-1}\mathcal{C}_0\cap h^{-1}\mathcal{C}_0\cap\mathcal{C}_0}^{\mathcal{C}_0}(\res_{us^kN_0s^{-k}}^{N_0}(\pr_M(\mu_{u,k})))=\\
=\mathcal{H}_{gh}(\res_{us^kN_0s^{-k}}^{N_0}(\pr_M(\mu_{u,k})))=\res_{ghus^kN_0w_0B}^{\mathcal{C}_0}(\pr_M(gh\mu_{u,k}))\ .
\end{align*} 
Moreover, we write $hus^kN_0w_0B$ as the disjoint union of sets of the form $vs^lN_0w_0B$ ($v$ running on a subset $V\subseteq J(N_0/s^lN_0s^{-l})$) for some $l\geq k$. So we compute
\begin{align*}
\mathcal{H}_g\circ\mathcal{H}_h(\res_{us^kN_0s^{-k}}^{N_0}(\pr_M(\mu_{u,k})))=\mathcal{H}_g\circ\res_{hus^{k}N_0w_0B}^{\mathcal{C}_0}(\pr_M(h\mu_{u,k}))=\\
=\sum_{v\in V}\mathcal{H}_g\circ\res_{vs^{l}N_0w_0B}^{\mathcal{C}_0}(\pr_M(h\mu_{u,k}))=\sum_{v\in V}\res_{gvs^{l}N_0w_0B}^{\mathcal{C}_0}(\pr_M(gh\mu_{u,k}))=\\
=\res_{ghus^{k}N_0w_0B}^{\mathcal{C}_0}(\pr_M(gh\mu_{u,k}))\ .
\end{align*}

H3: Let $b=v_1tv_2$ be in $B\cap N_0\overline{B} N_0=N_0TN_0$ with $v_1,v_2\in N_0$ and $t\in T$. We may write $t=t_1^{-1}t_2$ for some $t_1,t_2\in T_+$. Recall from \cite{SVZ} that the action of $b$ on $\widetilde{M_\infty^\vee}$ is defined by the formula $b(d)=v_1\psi_{t_1}\circ\varphi_{t_2}(v_2d)$. This does not depend on the choice of $t_1$ and $t_2$ as we have $\psi_{t'}\circ\varphi_{t'}=\id$ for all $t'\in T_+$. If $x_u$ does not lie in $b^{-1}\mathcal{C}_0\cap \mathcal{C}_0$ then both sides of H3 vanish at $\res_{us^kN_0s^{-k}}^{N_0}(\pr_M(\mu_{u,k}))$, so assume we have $x_u\in b^{-1}\mathcal{C}_0\cap\mathcal{C}_0$. By the choice of $k$ and $u$, $ts^k$ lies in $T_+$ and $tv_2ut^{-1}$ lies in $N_0$. We compute
\begin{align*}
\mathcal{H}_b(\res_{us^kN_0s^{-k}}^{N_0}(\pr_M(\mu_{u,k})))=\res_{(v_1tv_2ut^{-1})ts^kN_0w_0B}^{\mathcal{C}_0}(\pr_M(v_1tv_2\mu_{u,k}))=\\
=v_1tv_2ut^{-1}\varphi_{ts^k}\circ\psi_{ts^k}(tu^{-1}v_2^{-1}t^{-1}v_1^{-1}\pr_M(v_1tv_2\mu_{u,k}))=\\
=v_1tv_2ut^{-1}\psi_{t_1}\circ\varphi_{t_2s^k}(\pr_M(s^{-k}t^{-1}tu^{-1}v_2^{-1}t^{-1}v_1^{-1}v_1tv_2\mu_{u,k}))=\\
=v_1\psi_{t_1}(t_1tv_2ut^{-1}t_1^{-1}\varphi_{t_2}\circ\varphi_{s^k}(\pr_M(s^{-k}u^{-1}\mu_{u,k})))=\\
=v_1\psi_{t_1}\circ\varphi_{t_2}(v_2u\varphi_{s^k}\circ\psi_{s^k}(u^{-1}\pr_M(\mu_{u,k})))=\\
=v_1\psi_{t_1}\circ\varphi_{t_2}(v_2\res_{us^kN_0s^{-k}}^{N_0}(\pr_M(\mu_{u,k})))=b\circ\res_{us^kN_0s^{-k}}^{N_0}(\pr_M(\mu_{u,k}))
\end{align*}
as desired.
\end{proof}

We equip the space $\mathfrak{Y}_{\pi,M}(G/B)$ of global sections with the coarsest topology such that the restriction maps $\mathfrak{Y}_{\pi,M}(G/B)\to \mathfrak{Y}_{\pi,M}(g\mathcal{C}_0)$ are continuous for all $g\in G$. This topology is Hausdorff by Lemma 4.9 in \cite{EZ}.

We denote by $\beta_{\mathcal{C}_0,M}$ the composite map $\pi^\vee\to M_\infty^\vee\to \widetilde{M_\infty^\vee}=\mathfrak{Y}_{\pi,M}(\mathcal{C}_0)$. For each $g\in G$ and $\mu\in \pi^\vee$ we define $\beta_{g\mathcal{C}_0,M}(\mu):=g\cdot\beta_{\mathcal{C}_0,M}(g^{-1}\mu)$. Writing $g^{-1}\mathcal{C}_0\cap\mathcal{C}_0$ as a disjoint union of sets of the form $us^kN_0w_0B$ with $u\in U$ for some $k\geq k_0(g)$ and $U:= J(N_0/s^kN_0s^{-k})\cap \mathcal{U}_g$, we compute
\begin{align*}
\res^{g\mathcal{C}_0}_{\mathcal{C}_0\cap g\mathcal{C}_0}(\beta_{g\mathcal{C}_0,M}(\mu))=\res^{g\mathcal{C}_0}_{\mathcal{C}_0\cap g\mathcal{C}_0}(g\beta_{\mathcal{C}_0,M}(g^{-1}\mu))=g\res^{\mathcal{C}_0}_{g^{-1}\mathcal{C}_0\cap \mathcal{C}_0}(\beta_{\mathcal{C}_0,M}(g^{-1}\mu))=\\
=\mathcal{H}_g(\pr_M(g^{-1}\mu))=\sum_{u\in U}\res_{gus^kN_0w_0B}^{\mathcal{C}_0}(\pr_M(\mu))=\\
=\res^{\mathcal{C}_0}_{\mathcal{C}_0\cap g\mathcal{C}_0}(\pr_M(\mu))=\res^{\mathcal{C}_0}_{\mathcal{C}_0\cap g\mathcal{C}_0}(\beta_{\mathcal{C}_0,M}(\mu))\ .
\end{align*}
Since the compact open subsets $g\mathcal{C}_0$ ($g\in G$) cover $G/B$, the maps $\beta_{g\mathcal{C}_0,M}\colon \pi^\vee\to \mathfrak{Y}_{\pi,M}(g\mathcal{C}_0)$ glue together into a continuous $G$-equivariant map $\beta_{G/B,M}\colon \pi^\vee\to \mathfrak{Y}_{\pi,M}(G/B)$ with $\beta_{g\mathcal{C}_0,M}=\res_{g\mathcal{C}_0}^{G/B}\circ\beta_{G/B,M}$ for all $g\in G$. 
\begin{cor}\label{sheaffunctorial}
The map $\beta_{G/B,M}\colon \pi^\vee\to \mathfrak{Y}_{\pi,M}(G/B)$ is natural in the pair $(\pi,M)$ in the following sense: for a morphism $f\colon \pi\to\pi'$ of smooth representations of $G$ and $M\in \mathcal{M}^0_\Delta(\pi^{H_{\Delta,0}})$, the image $f(M)$ lies in $\mathcal{M}^0_\Delta(\pi'^{H_{\Delta,0}})$. Moreover, for any $M'\in \mathcal{M}^0_\Delta(\pi'^{H_{\Delta,0}})$ containing $f(M)$ we have a commutative square
\begin{equation*}
\xymatrix{
\pi'^\vee\ar[rr]^-{\beta_{G/B,M'}}\ar[d] && \mathfrak{Y}_{\pi',M'}(G/B)\ar[d]\\
\pi'^\vee\ar[rr]^-{\beta_{G/B,M}} && \mathfrak{Y}_{\pi,M}(G/B)
}
\end{equation*}
\end{cor}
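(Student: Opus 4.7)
The plan is to first verify that $f(M)$ indeed satisfies all four defining conditions of $\mathcal{M}^0_\Delta(\pi'^{H_{\Delta,0}})$, then build the right vertical arrow as a morphism of $G$-equivariant sheaves $\mathfrak{Y}_{\pi',M'}\to \mathfrak{Y}_{\pi,M}$ induced functorially by $f$, and finally deduce commutativity of the square on each open cell $g\mathcal{C}_0$ from commutativity on $\mathcal{C}_0$ together with $G$-equivariance.

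First I would check $f(M)\in \mathcal{M}^0_\Delta(\pi'^{H_{\Delta,0}})$. $G$-equivariance of $f$ makes $f(M)\subseteq \pi'^{H_{\Delta,0}}$ and the operators $u\in N_{\Delta,0}$, $t\in T_0$, $F_\alpha$ (which are all obtained from the action of elements in $B_+$) commute with $f$, so $f(M)$ is a $T_0$-stable $A\bs N_{\Delta,0}\js[F_\Delta]$-submodule; a finite generating set of $M$ maps to one for $f(M)$. Admissibility as an $N_{\Delta,0}$-representation is inherited by quotients. For the $X_\Delta$-torsion-freeness of $f(M)^\vee$, note that Pontryagin duality turns the surjection $M\twoheadrightarrow f(M)$ into an injection $f(M)^\vee\hookrightarrow M^\vee$, so $f(M)^\vee$ inherits $X_\Delta$-torsion-freeness from $M\in \mathcal{M}^0_\Delta(\pi^{H_{\Delta,0}})$.

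Next I would construct the morphism on sections over $\mathcal{C}_0$. Since $f(M)\subseteq M'$ and $M^\vee, M'^\vee$ are both $X_\Delta$-torsion-free, Lemma \ref{MinftyB+} yields $f(M_\infty)=f(B_+M)=B_+f(M)\subseteq B_+M'=M'_\infty$. Dualising this $B_+$-equivariant inclusion produces a $T_0$-equivariant and $\psi_t$-equivariant (for all $t\in T_+$) $A\bs N_0\js$-homomorphism $f^\vee_\infty\colon M'^\vee_\infty\to M_\infty^\vee$. By the universal property of the \'etale hull (Cor.\ 4.11 in \cite{EZ}), $f^\vee_\infty$ extends uniquely to a $T_+$-equivariant $A\bs N_0\js$-homomorphism
\[
\tilde f\colon \widetilde{M'^\vee_\infty}\longrightarrow \widetilde{M_\infty^\vee},
\]
which by construction satisfies $\tilde f\circ \beta_{\mathcal{C}_0,M'}=\beta_{\mathcal{C}_0,M}\circ f^\vee$ as maps $\pi'^\vee\to \widetilde{M_\infty^\vee}$.

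Then I would upgrade $\tilde f$ to a morphism of the $G$-equivariant sheaves $\mathfrak{Y}_{\pi',M'}\to \mathfrak{Y}_{\pi,M}$. By the reconstruction result (Prop.\ 5.14 in \cite{SVZ}) used in the proof of Prop.\ \ref{sheafM}, it suffices to verify that $\tilde f$ intertwines the operators $\mathcal{H}_g^{M'}$ and $\mathcal{H}_g^M$ for every $g\in G$. For a section of the form $d=\sum_{u}\res_{us^kN_0s^{-k}}^{N_0}\bigl(\pr_{M'}(\mu_{u,k})\bigr)$ with $u$ ranging over $U_{k,g}$, the defining formula for $\mathcal{H}_g^{M'}$ together with naturality $\pr_M\circ f^\vee=f^\vee_\infty\circ \pr_{M'}$ and the fact that $\tilde f$ is $A\bs N_0\js$- and $\psi_s$-equivariant (hence commutes with every $\res$ operator) yields
\[
\tilde f\bigl(\mathcal{H}_g^{M'}(d)\bigr)=\sum_{u\in U_{k,g}}\res_{n(g,u)t(g,u)s^kN_0s^{-k}t(g,u)^{-1}}^{N_0}\bigl(\pr_M(g\,f^\vee(\mu_{u,k}))\bigr)=\mathcal{H}_g^M\bigl(\tilde f(d)\bigr),
\]
using that $f^\vee$ is $G$-equivariant so $f^\vee(g\mu)=g\,f^\vee(\mu)$. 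This produces the desired sheaf morphism; commutativity of the square on each $g\mathcal{C}_0$ then follows from $\beta_{g\mathcal{C}_0,M}(\mu)=g\cdot \beta_{\mathcal{C}_0,M}(g^{-1}\mu)$ and the $G$-equivariance just established, and hence on all of $G/B$ since the $g\mathcal{C}_0$ form an open cover.

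The main technical obstacle is the compatibility of $\tilde f$ with the Hecke-like operators $\mathcal{H}_g$, because these were defined via the $G$-action on $\pi^\vee$ and $\pi'^\vee$ in a way that mixes the operators $\res$, $\varphi_{s^k}$, $\psi_{s^k}$ with $g$-translation on $\pi^\vee$; however, the key calculation reduces (as sketched above) to the naturality $\pr_M\circ f^\vee=f^\vee_\infty\circ\pr_{M'}$ combined with $A\bs N_0\js$- and $\psi$-linearity of $\tilde f$, both of which are automatic from the construction.
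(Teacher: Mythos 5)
Your proposal is correct and amounts precisely to the naturality argument the paper leaves implicit (the corollary is stated without an explicit proof): checking that $f(M)$ satisfies the defining conditions of $\mathcal{M}^0_\Delta(\pi'^{H_{\Delta,0}})$ (with torsion-freeness of $f(M)^\vee$ coming from the injection $f(M)^\vee\hookrightarrow M^\vee$), dualizing the $B_+$-equivariant map $M_\infty\to M'_\infty$, extending to the \'etale hulls via the universal property, verifying compatibility with the operators $\mathcal{H}_g$ using $\pr_M\circ f^\vee=f^\vee_\infty\circ\pr_{M'}$ and $\psi$-linearity, and gluing over the cover $\{g\mathcal{C}_0\}$ is exactly how the construction's functoriality is meant to be unwound. (You also correctly read the lower-left entry of the displayed square as $\pi^\vee$ with left vertical arrow $f^\vee$; the printed $\pi'^\vee$ there is a typo.)
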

Now we fix $\pi$ and vary $M$ in $\mathcal{M}^0_\Delta(\pi^{H_{\Delta,0}})$ to obtain
\begin{cor}\label{deltasheaf}
There exists a $G$-equivariant sheaf $\mathfrak{Y}_{\pi,\Delta}$ on $G/B$ with sections $\mathfrak{Y}_{\pi,\Delta}(\mathcal{C}_0)$ isomorphic to $\widetilde{D_{\Delta}}(\pi)=\widetilde{\pr}(\widetilde{D_{SV}}(\pi))$ together with a natural $G$-equivariant continuous map $\beta_{G/B,\Delta}\colon \pi^\vee\to \mathfrak{Y}_{\pi,\Delta}(G/B)$.
\end{cor}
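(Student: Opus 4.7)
The plan is to construct $\mathfrak{Y}_{\pi,\Delta}$ as a projective limit, over $M\in\mathcal{M}^0_\Delta(\pi^{H_{\Delta,0}})$, of the $G$-equivariant sheaves $\mathfrak{Y}_{\pi,M}$ supplied by Proposition \ref{sheafM}.

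First I would verify that $\mathcal{M}^0_\Delta(\pi^{H_{\Delta,0}})$ is directed under inclusion. Given $M_1,M_2\in \mathcal{M}^0_\Delta(\pi^{H_{\Delta,0}})$, the sum $M_1+M_2\subseteq \pi^{H_{\Delta,0}}$ is again finitely generated over $A\bs N_{\Delta,0}\js[F_\Delta]$, stable under $T_0$, and admissible as a representation of $N_{\Delta,0}$. Moreover $(M_1+M_2)^\vee$ embeds into the $X_\Delta$-torsion-free module $M_1^\vee\oplus M_2^\vee$, so it is itself $X_\Delta$-torsion-free, and $M_1+M_2\in\mathcal{M}^0_\Delta(\pi^{H_{\Delta,0}})$.

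Next, for each inclusion $M\subseteq M'$ in this directed set, Corollary \ref{sheaffunctorial} applied to $f=\id_\pi$ produces a morphism of $G$-equivariant sheaves $\mathfrak{Y}_{\pi,M'}\to\mathfrak{Y}_{\pi,M}$ intertwining the maps $\beta_{G/B,M'}$ and $\beta_{G/B,M}$; on the open cell $\mathcal{C}_0$ this is the transition map $\widetilde{M'{}_\infty^\vee}\twoheadrightarrow \widetilde{M_\infty^\vee}$ induced by the projection $M'{}^\vee[1/X_\Delta]\twoheadrightarrow M^\vee[1/X_\Delta]$. I would then define $\mathfrak{Y}_{\pi,\Delta}$ by setting, for each compact open $U\subseteq G/B$,
\begin{equation*}
\mathfrak{Y}_{\pi,\Delta}(U):=\varprojlim_{M\in \mathcal{M}^0_\Delta(\pi^{H_{\Delta,0}})}\mathfrak{Y}_{\pi,M}(U),
\end{equation*}
with the induced $G$-action. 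Projective limits of (pre)sheaves are sheaves, and the compatibility of the $G$-actions is inherited from each $\mathfrak{Y}_{\pi,M}$, so $\mathfrak{Y}_{\pi,\Delta}$ is a $G$-equivariant sheaf on $G/B$.

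The identification of sections on $\mathcal{C}_0$ with $\widetilde{D_\Delta}(\pi)=\widetilde{\pr}(\widetilde{D_{SV}}(\pi))$ is the heart of the argument. By construction $\widetilde{M_\infty^\vee}$ is the \'etale hull of the image of $\pi^\vee$ in $M_\infty^\vee[1/X_\Delta]$, hence also the image of $\widetilde{D_{SV}}(\pi)$ under the unique extension of $\pr_M$ supplied by Proposition \ref{1otimestildeprinj} and the universal property of the \'etale hull. Compatibility over $M$ then yields a canonical $T_+$-equivariant map $\widetilde{D_\Delta}(\pi)\to \varprojlim_M \widetilde{M_\infty^\vee}$; injectivity follows from $\widetilde{D_\Delta}(\pi)\subseteq D^\vee_{\Delta,\infty}(\pi)=\varprojlim_M M_\infty^\vee[1/X_\Delta]$, and surjectivity I would deduce from the Mittag--Leffler property of the surjective transition maps $\widetilde{M'{}_\infty^\vee}\twoheadrightarrow \widetilde{M_\infty^\vee}$ together with the fact that each $\widetilde{M_\infty^\vee}$ is itself the image of $\widetilde{D_{SV}}(\pi)$. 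Finally, the family $(\beta_{G/B,M})_M$ is compatible by Corollary \ref{sheaffunctorial}, so it induces the required continuous $G$-equivariant map $\beta_{G/B,\Delta}\colon\pi^\vee\to\mathfrak{Y}_{\pi,\Delta}(G/B)$, and its naturality in $\pi$ comes from the naturality of each $\beta_{G/B,M}$.

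The main obstacle I anticipate is the surjectivity in the identification $\widetilde{D_\Delta}(\pi)\cong\varprojlim_M \widetilde{M_\infty^\vee}$: since $\widetilde{D_\Delta}(\pi)$ is defined as an image rather than as a limit, one must argue that a coherent system in the limit actually lies in $\widetilde{\pr}(\widetilde{D_{SV}}(\pi))\subseteq D^\vee_{\Delta,\infty}(\pi)$ and not in some a priori larger subspace. I expect this to reduce to a Mittag--Leffler argument combined with the explicit description of the \'etale hull recalled after Corollary 4.11 of \cite{EZ}.
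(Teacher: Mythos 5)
Your overall architecture (limit of the sheaves $\mathfrak{Y}_{\pi,M}$ over the directed set $\mathcal{M}^0_\Delta(\pi^{H_{\Delta,0}})$, transition maps from Corollary \ref{sheaffunctorial}, gluing of the $\beta_{G/B,M}$) is reasonable, and the directedness and surjectivity observations are fine. But the step you yourself flag as the anticipated obstacle is a genuine gap, and the remedy you propose does not close it. Your definition makes $\mathfrak{Y}_{\pi,\Delta}(\mathcal{C}_0)=\varprojlim_M\widetilde{M_\infty^\vee}$, whereas the statement requires the sections to be $\widetilde{D_\Delta}(\pi)=\widetilde{\pr}(\widetilde{D_{SV}}(\pi))$, which (since the image of $\pi^\vee$ in $D^\vee_\Delta(\pi)$ is the Pontryagin dual of $\bigcup_M M_\infty$) is the \'etale hull of $\varprojlim_M M_\infty^\vee$. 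The \'etale hull is an increasing union over $k$ of the submodules $\bigoplus_{u\in J(N_0/s^kN_0s^{-k})}u\varphi_{s^k}(\cdot)$, i.e.\ a colimit, and this colimit does not commute with the projective limit over $M$: a coherent family $(x_M)_M$ with $x_M\in\widetilde{M_\infty^\vee}$ only provides a level $k_M$ for each $M$ separately, with no uniform bound, while membership in the hull of $\varprojlim_M M_\infty^\vee$ requires a single $k$ working for all $M$ simultaneously. Mittag--Leffler for the surjective transition maps $\widetilde{M'_\infty{}^\vee}\twoheadrightarrow\widetilde{M_\infty^\vee}$ gives exactness of $\varprojlim$ but says nothing about producing such a uniform $k$ (the level of a preimage can be strictly larger than the level of its image), so the surjectivity of $\widetilde{D_\Delta}(\pi)\to\varprojlim_M\widetilde{M_\infty^\vee}$ is exactly what remains unproved, and without it your sheaf has a priori larger sections on $\mathcal{C}_0$ than the statement asserts.

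The paper avoids this interchange altogether: it does not take a limit of the sheaves $\mathfrak{Y}_{\pi,M}$, but constructs the sheaf directly on $\widetilde{D_\Delta}(\pi)$, observing that this space is the \'etale hull of $\varprojlim_M M_\infty^\vee=(\bigcup_M M_\infty)^\vee$ and that the operators $\mathcal{H}_g$ can be defined on it by the same formulas as in Lemma \ref{Hgwelldefined} (whose well-definedness argument uses only the $G$-action on $\pi^\vee$, Lemma \ref{prpsigamma} and Lemma \ref{MinftyU1}, applied now with $M_\infty$ replaced by the union over $M$), after which the relations H1--H3 are checked exactly as in Proposition \ref{sheafM}. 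If you want to salvage your limit construction, you would either have to prove the missing surjectivity or replace the claimed identification of $\mathfrak{Y}_{\pi,\Delta}(\mathcal{C}_0)$ by the paper's direct construction on $\widetilde{D_\Delta}(\pi)$.
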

\begin{proof}
Note that the image of $\pi^\vee$ in $D^\vee_\Delta(\pi)$ is the Pontryagin dual of the union $\bigcup_{M\in \mathcal{M}^0_\Delta(\pi^{H_{\Delta,0}})}M_\infty$. Therefore by definition $\widetilde{D_{\Delta}}(\pi)$ is the \'etale hull of $(\bigcup_{M\in \mathcal{M}^0_\Delta(\pi^{H_{\Delta,0}})}M_\infty)^\vee=\varprojlim_{M\in \mathcal{M}^0_\Delta(\pi^{H_{\Delta,0}})}M_\infty^\vee$. So we may conclude the existence of the operators $\mathcal{H}_g$ as in Lemma \ref{Hgwelldefined}. The existence of the sheaf follows the same way as in Prop.\ \ref{sheafM}.
\end{proof}

\begin{rem}
Let $\pi$ be an irreducible admissible smooth representation of $G$ over $\kappa$ and assume that $D^\vee_\Delta(\pi)\neq 0$. Then we can realize $\pi^\vee$ as a $G$-invariant subspace in the global sections of a ``small'' $G$-equivariant sheaf $\mathfrak{Y}$ on $G/B$. By small we mean here that the section $\mathfrak{Y}(\mathcal{C}_0)$ is contained in a \emph{finitely generated} module over $\kappa\bg N_{\Delta,\infty}\jg$. Indeed, we may take any $0\neq M\in \mathcal{M}^0_\Delta(\pi^{H_{\Delta,0}})$ and put $\mathfrak{Y}:=\mathfrak{Y}_{\pi,M}$. It would be natural to expect that the irreducibility of $\pi$ would imply the irreducibility of $D^\vee_\Delta(\pi)$. In particular, this would mean that $D^\vee_\Delta(\pi)$ is in fact finitely generated. We end this section by presenting some very preliminary ideas towards proving this.
\end{rem}

We choose a total ordering of the Weyl group $N_G(T)/T$ refining the Bruhat order. This gives a decreasing filtration of $G/B$ by open $B$-invariant subsets $\Fil^{w}(G/B):=\bigcup_{w_1\geq w}Bw_1B/B\subseteq G/B$ for $w\in N_G(T)/T$. Its bottom term is $\Fil^{w_0}(G/B)$ for the element $w_0\in N_G(T)/T$ of maximal length and we have $\Fil^1(G/B)=G/B$. For each $w\in N_G(T)/T$ we define $$\Fil^w_M(\pi^\vee):=\beta_{G/B,M}^{-1}(\Ker(\res_{\Fil^w(G/B)}^{G/B}\colon \mathfrak{Y}_{\pi,M}(G/B)\to  \mathfrak{Y}_{\pi,M}(\Fil^w(G/B))))\ .$$
This is an increasing filtration of $\pi^\vee$ by closed $B$-subrepresentations. Taking Pontryagin duals we obtain a decreasing filtration $\Fil^w_M(\pi):=(\pi^\vee/\Fil^w_M(\pi^\vee))^\vee\leq \pi$ whose graded pieces we denote by $\mathrm{gr}^w_M(\pi)$.
\begin{lem}
We have $\mathrm{gr}_{M}^{w_0}(\pi)=\bigcup_{n\geq 0} s^{-n}M_\infty$ and  $D_{SV}(\mathrm{gr}_{M}^{w_0}(\pi))\cong M_\infty^\vee$.
\end{lem}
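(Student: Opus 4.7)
The strategy is to unwind the filtration $\Fil^w_M$ at the top index $w_0$ and then compute the Schneider--Vign\'eras module of the resulting piece directly.

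The first step is to identify $\Fil^{w_0}(G/B)$ explicitly. Since $w_0$ is the maximal element of the Weyl group (and hence of the chosen total refinement of the Bruhat order), $\Fil^{w_0}(G/B) = Bw_0B/B$ is the open big cell. Using $B=TN$ (so $Bw_0B=Nw_0B$ because $T$ stabilises $w_0B$) and $N=\bigcup_{n\ge 0}s^{-n}N_0s^n$, together with the fact that $w_0^{-1}s^nw_0\in T\subseteq B$ and hence $s^nw_0B=w_0B$, I would obtain
\begin{equation*}
\Fil^{w_0}(G/B)=\bigcup_{n\ge 0}s^{-n}\mathcal{C}_0,
\end{equation*}
an increasing union of compact open subsets.

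With this in hand, $\mu\in\Fil^{w_0}_M(\pi^\vee)$ translates into $\beta_{s^{-n}\mathcal{C}_0,M}(\mu)=0$ for every $n\ge 0$. By the defining formula $\beta_{s^{-n}\mathcal{C}_0,M}(\mu)=s^{-n}\beta_{\mathcal{C}_0,M}(s^n\mu)$ and the fact that $\beta_{\mathcal{C}_0,M}\colon \pi^\vee\to\widetilde{M_\infty^\vee}$ factors as the composite of the surjective restriction $\pi^\vee\twoheadrightarrow M_\infty^\vee$ and the canonical inclusion $M_\infty^\vee\hookrightarrow\widetilde{M_\infty^\vee}\subseteq M_\infty^\vee[1/X_\Delta]$ into the \'etale hull, this vanishing is equivalent to $(s^n\mu)|_{M_\infty}=0$, i.e.\ $\mu$ annihilates $s^{-n}M_\infty$. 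Collating over $n$ gives $\Fil^{w_0}_M(\pi^\vee)=\bigl(\bigcup_{n\ge 0}s^{-n}M_\infty\bigr)^{\perp}$, and Pontryagin duality yields $\Fil^{w_0}_M(\pi)=\bigcup_{n\ge 0}s^{-n}M_\infty$. Since $w_0$ is the final index of the filtration, there is no successor to quotient by, so $\mathrm{gr}^{w_0}_M(\pi)=\Fil^{w_0}_M(\pi)$, proving the first equality.

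For the second isomorphism, set $\pi':=\mathrm{gr}^{w_0}_M(\pi)=\bigcup_{n\ge 0}s^{-n}M_\infty$. Lemma \ref{MinftyB+} tells us that $M_\infty$ is already a $B_+$-subrepresentation of $\pi$, hence of $\pi'$; moreover $BM_\infty=\pi'$ since $\pi'$ is the smallest $B$-subspace of $\pi$ containing $M_\infty$. Thus $M_\infty\in\mathcal{B}_+(\pi')$. To identify $D_{SV}(\pi')=\varinjlim_{W\in\mathcal{B}_+(\pi')}W^\vee$ with $M_\infty^\vee$ I would invoke the cofinality result Lemma 2.1 of \cite{SVig} already used in the proof of Lemma \ref{prpsigamma}: every $W\in\mathcal{B}_+(\pi')$ eventually contains the submodules $M_k$ built from any fixed $M\in\mathcal{M}_\Delta(\pi^{H_{\Delta,0}})$, whose union is $M_\infty$. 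This makes $M_\infty$ a cofinal (in fact terminal, under the restriction direction) element of the directed system $\{W^\vee\}_{W\in\mathcal{B}_+(\pi')}$, so the inductive limit collapses to $M_\infty^\vee$.

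The principal obstacle is the last identification: one must verify that the Schneider--Vign\'eras inductive system indeed stabilises at $M_\infty^\vee$, which requires a careful bookkeeping of the direction of the transition maps (restriction $W^\vee\to W'^\vee$ for $W'\subseteq W$) together with the cofinality statement of Lemma 2.1 of \cite{SVig}. Once this is handled, the reconstruction of $D_{SV}$ of the bottom graded piece becomes a formal consequence of the fact that $\pi'$ is the $B$-span of the single $B_+$-stable subrepresentation $M_\infty$.
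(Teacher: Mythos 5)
Your proposal is correct and follows essentially the same route as the paper: the identification $\Fil^{w_0}(G/B)=\bigcup_{n\ge 0}s^{-n}\mathcal{C}_0$ together with $\beta_{s^{-n}\mathcal{C}_0,M}(\mu)=s^{-n}\beta_{\mathcal{C}_0,M}(s^n\mu)$ and $\beta_{\mathcal{C}_0,M}(\pi^\vee)=M_\infty^\vee$ gives the first equality (you phrase it via annihilators, the paper dually via images and $\varprojlim_{\psi_s}M_\infty^\vee$), and for the second part both arguments use Lemma 2.1 of \cite{SVig} applied to the finitely many $A\bs N_{\Delta,0}\js[F_\Delta]$-generators of $M$ to show that every $W\in\mathcal{B}_+(\mathrm{gr}^{w_0}_M(\pi))$ contains $M_\infty=B_+M$, so that the inductive system collapses to $M_\infty^\vee$. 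The only points to make explicit are the routine checks that $\bigcup_n s^{-n}M_\infty$ is $B$-stable and that $M\subseteq M_k$ (so $\bigcup_{k\ge k(M)}M_k=M_\infty$), both of which use $M\in\mathcal{M}^0_\Delta(\pi^{H_{\Delta,0}})$ exactly as in the paper.
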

\begin{proof}
Note that $M_\infty^\vee$ can be identified with $\res_{\mathcal{C}_0}^{G/B}(\beta_{G/B,M}(\pi^\vee))$. On the other hand, we write $\mathcal{C}:=Nw_0B/B=\Fil^{w_0}(G/B)=\bigcup_{n\geq 0} s^{-n}\mathcal{C}_0$. So we compute
\begin{align*}
\mathrm{gr}_{M}^{w_0}(\pi)^\vee=\res^{G/B}_{\Fil^{w_0}}(\beta_{G/B,M}(\pi^\vee))=\varprojlim_n\res^{G/B}_{s^{-n}\mathcal{C}_0}\circ\beta_{G/B,M}(\pi^\vee)=\varprojlim_n\beta_{s^{-n}\mathcal{C}_0,M}(\pi^\vee)=\\
=\varprojlim_n s^{-n}\cdot\beta_{\mathcal{C}_0,M}(\pi^\vee)=\varprojlim_{\psi_s\colon M_\infty^{\vee}\to M_\infty^\vee}M_\infty^\vee=(\varinjlim_{s^n\colon M_\infty\to M_\infty}M_\infty)^\vee=(\bigcup_{n}s^{-n}M_\infty)^\vee\ .
\end{align*} 

This shows, in particular, that $M_\infty$ is a generating $B_+$-subrepresentation of $\mathrm{gr}_{M}^{w_0}(\pi)$. Let $W\subseteq M_\infty$ be another generating $B_+$subrepresentation. By Lemma \ref{MinftyB+} the $B_+$-subrepresentation $M_\infty$ is generated by a finite set $\{m_1,\dots,m_r\}$ of elements in $M$. By Lemma 2.1 in \cite{SVig} there exists an integer $k\geq 0$ such that $s^km_i$ lies in $W$ for all $1\leq i\leq r$. In particular, we have $M\subset M_k=\Tr_{H_{\Delta,k}/s^kH_{\Delta,0}s^{-k}}(s^kM)\subseteq B_+\{s^km_1,\dots,s^km_r\}\subseteq W$ whence we also deduce $M_\infty=B_+M\subseteq W$. So $M_\infty$ is a minimal generating $B_+$-subrepresentation of $\mathrm{gr}_{M}^{w_0}(\pi)$, hence we have $D_{SV}(\mathrm{gr}_{M}^{w_0}(\pi))=M_\infty^\vee$ by definition.
\end{proof}

\begin{que}
In what generality is it true that $D^\vee_\Delta(\mathrm{gr}_{M}^{w_0}(\pi))\cong M^\vee[1/X_{\Delta}]$?
\end{que}

\begin{rem}
The answer to the above question is affirmative if $\pi$ is a principal series.
\end{rem}

\subsection{The fully faithful property of $D^\vee_\Delta$ for principal series}

We denote by $SP_A^0$ the category of those finite length smooth representations of $G$ over $A$ whose irreducible subquotients are (necessarily irreducible) principal series representations.

\begin{lem}\label{splitDsplitpi}
Let $\chi$ and $\chi'$ be two (not necessarily distinct) characters $T\to \kappa^\times$ such that both $\Ind_B^G\chi$ and $\Ind_B^G\chi'$ are irreducible. Then the natural map
\begin{equation*}
\Ext^1_G(\Ind_B^G\chi',\Ind_B^G\chi)\to \Ext^1(D^\vee_\Delta(\Ind_B^G\chi), D^\vee_\Delta(\Ind_B^G\chi'))
\end{equation*}
is injective.
\end{lem}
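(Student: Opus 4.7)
The plan is to reduce this statement to the analogous injectivity for Breuil's usual single-variable $(\varphi,\Gamma)$-module functor $D^\vee_\xi=D^\vee_{\xi,\ell_{gen}}$, exploiting the base change map $\ell$ studied in section \ref{reducetousualsec}.

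First, suppose an extension $0\to\Ind_B^G\chi\to\pi\to\Ind_B^G\chi'\to0$ represents a class in the kernel of the natural map. Since $\pi$ lies in $SP_A$, Thm.\ \ref{princserexact} yields an exact sequence
\[
0\to D^\vee_\Delta(\Ind_B^G\chi')\to D^\vee_\Delta(\pi)\to D^\vee_\Delta(\Ind_B^G\chi)\to 0
\]
in $\mathcal{D}^{et}(T_+,A\bg N_{\Delta,0}\jg)$, which is split by our hypothesis.

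Next, I would apply the exact functor $A\bg X\jg\otimes_{\ell,A\bg N_{\Delta,0}\jg}\cdot$ of Prop.\ \ref{reduceellexact}. Being given by a tensor product, it automatically sends split short exact sequences to split short exact sequences. Invoking Cor.\ \ref{reduceellSP} to identify this base change with Breuil's functor $D^\vee_\xi$ on objects of $SP_A$, we obtain a split short exact sequence
\[
0\to D^\vee_\xi(\Ind_B^G\chi')\to D^\vee_\xi(\pi)\to D^\vee_\xi(\Ind_B^G\chi)\to 0
\]
in $\mathcal{D}^{et}(\varphi,\Gamma,A\bg X\jg)$, which represents the image of the class of $\pi$ under Breuil's analog of the natural map in the statement.

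Finally, the conclusion follows by invoking the corresponding injectivity of the natural map
\[
\Ext^1_G(\Ind_B^G\chi',\Ind_B^G\chi)\to \Ext^1(D^\vee_\xi(\Ind_B^G\chi),D^\vee_\xi(\Ind_B^G\chi'))
\]
for the single-variable functor $D^\vee_\xi$ restricted to irreducible principal series, established in \cite{B}. The main obstacle is verifying that Breuil's paper indeed contains the injectivity statement in exactly the form required — the reduction itself is formal given Thm.\ \ref{princserexact}, Prop.\ \ref{reduceellexact} and Cor.\ \ref{reduceellSP}. If a direct citation is unavailable, one would have to give an independent argument using the explicit description $D^\vee_\xi(\Ind_B^G\chi)\cong A\bg X\jg\otimes_A\chi$ of rank $1$ together with the sheaf-theoretic reconstruction machinery of section \ref{noncommutative} applied to the usual one-variable setting of \cite{EZ}.
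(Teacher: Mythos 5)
There is a genuine gap, and it sits exactly in the case the lemma is careful to allow, namely $\chi=\chi'$. Your reduction step is fine as far as it goes: a split extension of multivariable \'etale $T_+$-modules is sent by $A\bg X\jg\otimes_{\ell,A\bg N_{\Delta,0}\jg}\cdot$ to a split extension of $(\varphi,\Gamma)$-modules, and by Cor.~\ref{reduceellSP} this is $D^\vee_\xi$ of the extension. But the final input you need --- injectivity of $\Ext^1_G(\Ind_B^G\chi',\Ind_B^G\chi)\to\Ext^1(D^\vee_\xi(\Ind_B^G\chi),D^\vee_\xi(\Ind_B^G\chi'))$ --- is not proved in \cite{B}, and for $\chi=\chi'$ it is in fact \emph{false} in general. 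By Thm.~1.2 of \cite{H} the self-extensions of $\Ind_B^G\chi$ are exactly the $\Ind_B^G\delta$ for smooth extensions $\delta$ of $\chi$ by $\chi$, i.e.\ classes in $\Hom^{sm}(T,\kappa)$. The one-variable object $D^\vee_\xi(\Ind_B^G\delta)\cong A\bg X\jg\otimes_A\delta^\vee$ carries only the actions of $\varphi=\varphi_{\xi(p)}$ and $\Gamma=\xi(\Zp^\times)$, so its extension class depends only on $\delta\circ\xi$; whenever $\dim T\geq 2$ there are nonzero smooth homomorphisms $c\colon T\to\kappa$ vanishing on $\xi(\mathbb{Q}_p^\times)$, and for such $\delta$ the nonsplit $G$-extension $\Ind_B^G\delta$ is killed by $D^\vee_\xi$. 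This is precisely the information loss of the generic-$\ell$ functor emphasized in the introduction, and it is why the paper's proof of this case does \emph{not} pass to one variable: it uses the full monoid action of $T_+$ on $D^\vee_\Delta(\Ind_B^G\delta)\cong\kappa\bg N_{\Delta,0}\jg\otimes\delta^\vee$ and recovers $\delta^\vee$ as $\bigcap_{n>0}\varphi_s^n(\kappa\bg N_{\Delta,0}\jg\otimes\delta^\vee)$. Your fallback (rank-one description plus the sheaf machinery in the one-variable setting) cannot repair this, since the defect is intrinsic to forgetting the $T_+$-action.

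For $\chi\neq\chi'$ your route is essentially the paper's, but the ingredient is not a general Ext-injectivity from \cite{B}: one first invokes Thm.~1.1 of \cite{H} to see that $\Ext^1_G$ is at most one-dimensional and that the unique nonsplit extension is parabolically induced from a Levi of type $T\cdot\GL_2(\mathbb{Q}_p)$, and then combines Breuil's compatibility with parabolic induction (Thm.~6.1 in \cite{B}) with Colmez's computation for $\GL_2(\mathbb{Q}_p)$ (Thm.~VII.5.2 in \cite{C}) to see that already the one-variable image is nonsplit. So your proposal can be salvaged for distinct characters by making these citations precise, but the self-extension case requires the genuinely multivariable argument.
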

\begin{proof}
We distinguish two cases whether $\chi=\chi'$ or not. Assume first $\chi\neq \chi'$. By Thm.\ 1.1 in \cite{H} if the left hand side is nonzero then we have $\chi'=s_\alpha(\chi)\cdot\varepsilon^{-1}\circ\alpha$ for some simple root $\alpha\in \Delta$ where $s_\alpha(\chi)$ denotes the conjugate of $\chi$ with respect to the reflection corresponding to $\alpha$ in the Weyl group $N_G(T)/T$ and $\varepsilon$ is the modulo $p$ cyclotomic character. By our assumption that $\Ind_B^G\chi$ is irreducible, we have $s_\alpha(\chi)\neq\chi$ so we can only have $\chi'=s_\alpha(\chi)\cdot\varepsilon^{-1}\circ\alpha$ for at most one element $\alpha\in\Delta$. Moreover, in this case the unique nonsplit extension $\pi$ comes from parabolic induction from $G_\alpha\cong TGL_2(\mathbb{Q}_p)$ generated by $T$, $N_\alpha$, and $s_\alpha$. Therefore even Breuil's functor $D^\vee_\Delta(\pi)\cong\kappa\bg X\jg\otimes_{\ell,\kappa\bg N_{\Delta,0}\jg}D^\vee_\Delta(\pi)$ (see Cor.\ \ref{reduceellSP}) is a non-split extension of $(\varphi,\Gamma)$-modules by Thm.\ 6.1 in \cite{B} and Thm.\ VII.5.2 in \cite{C}.

Now assume $\chi=\chi'$. Then by Thm.\ 1.2 in \cite{H} the natural map $$\Ext^1_T(\chi,\chi)\to \Ext^1_G(\Ind_B^G\chi,\Ind_B^G\chi)$$ is bijective. However, the composite map $\Ext^1_T(\chi,\chi)\to \Ext^1(D^\vee_\Delta(\Ind_B^G\chi), D^\vee_\Delta(\Ind_B^G\chi))$ is injective since whenever $\delta$ is an extension of $\chi$ by itself then we have the identification $D^\vee_\Delta(\Ind_B^G\delta)\cong \kappa\bg N_{\Delta,0}\jg\otimes \delta^\vee$ and we may recover $\delta$ from this by taking the Pontryagin dual of the intersection $$\bigcap_{n>0}\varphi_s^n(\kappa\bg N_{\Delta,0}\jg\otimes \delta^\vee)\cong \left(\bigcap_{n>0}\varphi_s^n(\kappa\bg N_{\Delta,0}\jg)\right)\otimes \delta^\vee\cong\delta^\vee\ .$$
\end{proof}

\begin{rem}
We do not know whether or not the above map between extension groups is also surjective. For this one should determine the dimension of the right hand side. The author plans to turn back to this question in a future work.
\end{rem}

\begin{lem}\label{notinsmaller}
Let $\pi$ be an object in the category $SP_A^0$ and $D$ be an object in $\mathcal{D}^{et}(T_+,A\bg N_{\Delta,0}\jg )$ that is a successive extension of rank $1$ objects. Assume that the length of $\pi$ is bigger than that of $D$. Then $\pi^\vee$ cannot be embedded into the global sections of a $G$-equivariant sheaf $\mathfrak{Y}$ with an injection $\mathfrak{Y}(\mathcal{C}_0)\hookrightarrow \mathbb{M}_{\infty,0}(D)$.
\end{lem}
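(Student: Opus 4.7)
The strategy is to build a $T_+$-equivariant injection $\Psi\colon D^\vee_\Delta(\pi)\hookrightarrow D$, which by Proposition \ref{genlengthDveeSPA} forces $\mathrm{length}(\pi)=\genlength(D^\vee_\Delta(\pi))\le\genlength(D)=\mathrm{length}(D)$, contradicting the hypothesis. The map $\Psi$ is obtained by adapting the proof of Proposition \ref{1otimestildeprinj}: compose the assumed injection $\beta\colon\pi^\vee\hookrightarrow\mathfrak{Y}(G/B)$ with $\res^{G/B}_{\mathcal{C}_0}$ and the given inclusion $\mathfrak{Y}(\mathcal{C}_0)\hookrightarrow \mathbb{M}_{\infty,0}(D)$ to produce a continuous $B_+$-equivariant map $\phi\colon\pi^\vee\to\mathbb{M}_{\infty,0}(D)$, and pass to $H_{\Delta,0}$-coinvariants via the identification $\mathbb{M}_{\infty,0}(D)_{H_{\Delta,0}}\cong D$ to get $\phi_0\colon\pi^\vee\to D$, which by the same decomposition argument as in the proof of Lemma \ref{prpsigamma} is $\psi_t$-equivariant for every $t\in T_+$. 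Since $\pi^\vee$ is compact, the image of $\phi_0$ in the weak topology of $D$ lies in a finitely generated, $\psi$-stable $A\bs N_{\Delta,0}\js$-submodule $D^{(0)}\subseteq D$; Proposition \ref{dualcompactfingen} together with the admissibility input from Proposition \ref{finpresSPA} (which applies because $\pi\in SP_A^0$) shows that $M_\phi:=(D^{(0)})^\vee$ lies in $\mathcal{M}_\Delta(\pi^{H_{\Delta,0}})$. Because each $X_\alpha$ is a unit in $A\bg N_{\Delta,0}\jg$, the $A\bs N_{\Delta,0}\js$-linear inclusion $M_\phi^\vee\hookrightarrow D$ extends uniquely to an injection $M_\phi^\vee[1/X_\Delta]\hookrightarrow D$ in $\mathcal{D}^{et}(T_+,A\bg N_{\Delta,0}\jg)$, and pre-composing with the natural surjection $D^\vee_\Delta(\pi)\twoheadrightarrow M_\phi^\vee[1/X_\Delta]$ coming from the projective-limit description yields the $T_+$-equivariant map $\Psi\colon D^\vee_\Delta(\pi)\to D$ satisfying $\phi_0=\Psi\circ\alpha$ for the canonical $\alpha\colon\pi^\vee\to D^\vee_\Delta(\pi)$. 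Via Theorem \ref{equivcat}, $\Psi$ lifts to a $T_+$-equivariant $\Psi'\colon D^\vee_{\Delta,\infty}(\pi)\to \mathbb{M}_{\infty,0}(D)$ with $\phi=\Psi'\circ\alpha'$ for the canonical $\alpha'\colon\pi^\vee\to D^\vee_{\Delta,\infty}(\pi)$.

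The technical heart of the argument is verifying that $\Psi$ is injective. The plan is to argue by contradiction: assuming $\Ker\Psi'\neq 0$, the exactness of $D^\vee_\Delta$ on $SP_A$ (Theorem \ref{princserexact}) combined with Theorem \ref{equivcat} should allow one to exhibit a nonzero $G$-quotient $\pi_b=\pi/\pi_a$ of $\pi$ such that $D^\vee_{\Delta,\infty}(\pi_b)\subseteq \Ker\Psi'$—for instance by taking $\pi_a\subseteq\pi$ to be the $G$-subrepresentation maximal with respect to the property that $\Psi'\circ\alpha'$ vanishes on $(\pi/\pi_a)^\vee$, and showing $\pi_a\subsetneq\pi$ under the contradiction hypothesis. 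The naturality of $\alpha'$ with respect to the quotient map $\pi\twoheadrightarrow\pi_b$ would then force $\alpha'(\pi_b^\vee)\subseteq D^\vee_{\Delta,\infty}(\pi_b)\subseteq \Ker\Psi'$, hence $\phi(\pi_b^\vee)=\Psi'(\alpha'(\pi_b^\vee))=0$, i.e.\ $\pi_b^\vee\subseteq\Ker\phi$.

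Once $\pi_b^\vee\subseteq\Ker\phi$ is in hand, the $G$-equivariance of $\mathfrak{Y}$ together with the injectivity of $\beta$ readily closes the argument. Indeed, since $\pi_b$ is a $G$-quotient of $\pi$, the subspace $\pi_b^\vee\subseteq\pi^\vee$ is $G$-stable, so for every $\mu\in\pi_b^\vee$ and every $g\in G$ we have $g\mu\in\pi_b^\vee$ and hence $\phi(g\mu)=\res^{G/B}_{\mathcal{C}_0}(\beta(g\mu))=0$; the $G$-equivariance identity $\res^{G/B}_{\mathcal{C}_0}\circ g=g\circ\res^{G/B}_{g^{-1}\mathcal{C}_0}$ then gives $\res^{G/B}_{g^{-1}\mathcal{C}_0}(\beta(\mu))=0$ for every $g\in G$. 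Since the open subsets $g^{-1}\mathcal{C}_0$ cover $G/B$, the sheaf axiom yields $\beta(\mu)=0$, and injectivity of $\beta$ forces $\mu=0$, so $\pi_b^\vee=0$—contradicting $\pi_b\neq 0$. This proves $\Psi$ is injective and the generic length comparison completes the proof.
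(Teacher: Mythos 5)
Your opening step (factoring the composite $\pi^\vee\to\mathfrak{Y}(G/B)\to\mathfrak{Y}(\mathcal{C}_0)\to\mathbb{M}_{\infty,0}(D)\to D$ through $D^\vee_\Delta(\pi)$ via Prop.~\ref{1otimestildeprinj}/Prop.~\ref{dualcompactfingen}) and your closing covering argument (a nonzero $G$-stable subspace of $\Ker(\res^{G/B}_{\mathcal{C}_0}\circ\beta)$ contradicts injectivity of $\beta$) both match what the paper does. The gap is the middle step, which is where the entire difficulty of the lemma sits: from $\Ker\Psi'\neq 0$ you want to produce a \emph{nonzero $G$-quotient} $\pi_b$ of $\pi$ with $\alpha'(\pi_b^\vee)\subseteq\Ker\Psi'$, i.e.\ you need the kernel of $\Psi$ to contain a subobject of $D^\vee_\Delta(\pi)$ arising from the $G$-filtration of $\pi$. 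Nothing in Theorem~\ref{princserexact} or Theorem~\ref{equivcat} gives this: exactness only transports filtrations of $\pi$ to filtrations of $D^\vee_\Delta(\pi)$, not conversely, and $\Ker\Psi$ is merely an \'etale $T_+$-submodule carrying no $G$-structure. Concretely, if $\pi$ is an extension of $\Ind_B^G\chi'$ by $\Ind_B^G\chi$ and $D$ has generic length $1$, the kernel of $D^\vee_\Delta(\pi)\twoheadrightarrow D$ may perfectly well be a rank-one subobject \emph{different} from $D^\vee_\Delta(\Ind_B^G\chi)$ (a complementary or ``diagonal'' copy when $D^\vee_\Delta(\pi)$ splits, e.g.\ when $\chi=\chi'$); in that case no nonzero $G$-quotient $\pi_b$ with the required property exists and your maximal-$\pi_a$ device produces nothing. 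Note also that your target claim, the injectivity of $\Psi$, amounts to a dictionary between subobjects of $D^\vee_\Delta(\pi)$ and subrepresentations of $\pi$, which is of the same nature as the full-faithfulness one is ultimately proving, so assuming such a correspondence here is circular.

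The paper resolves precisely this difficulty by a different route: using the filtration of $D^\vee_\Delta(\pi)$, of $D$ and of the sheaf induced by a composition series of $\pi$, it reduces to $\mathrm{length}(\pi)=2$ and $\genlength(D)=1$, and then distinguishes whether $\Ker(D^\vee_\Delta(\pi)\to D)$ coincides with $\Ker(D^\vee_\Delta(\pi)\to D^\vee_\Delta(\Ind_B^G\chi))$. When they differ, $D^\vee_\Delta(\pi)$ splits as the direct sum of the two kernels, and Lemma~\ref{splitDsplitpi} --- the injectivity of $\Ext^1_G(\Ind_B^G\chi',\Ind_B^G\chi)\to\Ext^1(D^\vee_\Delta(\Ind_B^G\chi),D^\vee_\Delta(\Ind_B^G\chi'))$, which rests on Hauseux's Ext computations and the Breuil/Colmez results --- is the only available bridge transferring that splitting back to a splitting of $\pi$ itself; only then does the covering argument apply, to a genuine $G$-summand killed by $\res_{\mathcal{C}_0}$. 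Your outline never invokes Lemma~\ref{splitDsplitpi} or any substitute for it, and without such an input the passage from a submodule of $D^\vee_\Delta(\pi)$ to a subrepresentation (or quotient) of $\pi$ is unjustified; this is a genuine gap, not a routine verification.
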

\begin{proof}
Assume it can. By Prop.\ \ref{1otimestildeprinj} and Thm.\ \ref{equivcat} (or simply by Prop.\ \ref{dualcompactfingen}) the composite map $$\pi^\vee\hookrightarrow \mathfrak{Y}(G/B)\twoheadrightarrow\mathfrak{Y}(\mathcal{C}_0)\hookrightarrow \mathbb{M}_{\infty,0}(D)\twoheadrightarrow \mathbb{D}_{0,\infty}\circ\mathbb{M}_{\infty,0}(D)\cong D$$ factors uniquely through $D^\vee_\Delta(\pi)$. In particular, there exists an $M\in \mathcal{M}^0_\Delta(\pi^{H_{\Delta,0}})$ such that we have the factorization $\pi^\vee\twoheadrightarrow M^\vee\hookrightarrow M^\vee[1/X_\Delta]\hookrightarrow D$. We may assume without loss of generality that we have $M^\vee[1/X_\Delta]\cong D$. By assumption we have a filtration on $\pi$ whose subquotients are principal series. By Theorem \ref{princserexact} this induces a filtration on $D^\vee_\Delta(\pi)$ whose subquotients are $1$-dimensional objects. Further, this induces a filtration on $D$ via the surjective map $D^\vee_\Delta(\pi)\twoheadrightarrow D$ whence we also obtain a filtration on the sheaf $\mathfrak{Y}$ and its global sections $\mathfrak{Y}(G/B)$. Therefore passing to one particular subquotient we may assume without loss of generality that $\pi$ has length $2$ and $D$ is $1$ dimensional. So $\pi$ is an extension of $2$ principal series for characters $\chi$ and $\chi'$. In particular, we have two surjective maps $D^\vee_\Delta(\pi)\twoheadrightarrow D$ and $D^\vee_\Delta(\pi)\twoheadrightarrow D^\vee_\Delta(\Ind_B^G\chi)$. The kernels of these maps must be different since the map $\pi^\vee\to \mathfrak{Y}_{\pi,M_\chi}(G/B)$ has $(\Ind_B^G\chi')^\vee$ in its kernel but $\pi^\vee\to \mathfrak{Y}(G/B)$ is injective. (Here $M_\chi$ denotes the unique element in $\mathcal{M}_\Delta^0((\Ind_B^G\chi)^{H_{\Delta,0}})$.) In particular, $D^\vee_\Delta(\pi)$ splits as a direct sum of these two kernels whence by Lemma \ref{splitDsplitpi} $\pi\cong \Ind_B^G\chi\oplus \Ind_B^G\chi'$. Therefore the projection to $D$ has kernel $D^\vee_\Delta(\Ind_B^G\chi)$ that contradicts to the assumption that $\pi^\vee\to \mathfrak{Y}(G/B)$ is injective.
\end{proof}

Our main result in this section is the following

\begin{thm}
The restriction of $D^\vee_\Delta$ to the category $SP_A^0$ is fully faithful.
\end{thm}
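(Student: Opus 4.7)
The strategy is to handle faithfulness directly from the length and exactness results already at our disposal, and then to reduce fullness to an $\Ext^1$-injectivity statement whose base case is Lemma \ref{splitDsplitpi}.

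For \textbf{faithfulness}, suppose $f \in \Hom_G(\pi_1,\pi_2)$ with $D^\vee_\Delta(f)=0$. The exactness of $D^\vee_\Delta$ on $SP_A$ (Thm.\ \ref{princserexact}), applied to the factorization $\pi_1\twoheadrightarrow \mathrm{Im}(f)\hookrightarrow \pi_2$, yields $D^\vee_\Delta(\mathrm{Im}(f))=0$. Now $\mathrm{Im}(f)\in SP_A^0$, and by Prop.\ \ref{genlengthDveeSPA} the generic length of $D^\vee_\Delta$ on a nonzero object of $SP_A^0$ equals its length: each irreducible principal series contributes exactly one Jordan--H\"older constituent of the form $\mathcal{C}_1(\chi)$ to its $B$-restriction filtration, the other $\mathcal{C}_w(\chi)$ for $w\ne 1$ being killed by $D^\vee_\Delta$ (Lemma \ref{DDeltaCw}). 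Hence $\mathrm{Im}(f)=0$, so $f=0$.

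For \textbf{fullness} I plan a double induction on $n=\mathrm{length}(\pi_1)+\mathrm{length}(\pi_2)$. The base case $n=2$ is the irreducible-irreducible case: by Cor.\ \ref{princserDDelta} each $D^\vee_\Delta(\pi_i)$ is free of rank one over $A\bg N_{\Delta,0}\jg$ with $T_+$ acting through $\chi_i$, and a $T_+$-equivariant $A\bg N_{\Delta,0}\jg$-linear map is multiplication by some $\mu$ satisfying $\varphi_t(\mu)=(\chi_2/\chi_1)(t)\mu$ for all $t\in T_+$; Prop.\ \ref{noideal} (no nontrivial $T_0$-invariant ideals in $\kappa\bg N_{\Delta,0}\jg$) forces $\mu$ to be constant, so the Hom space is $A$ if $\chi_1=\chi_2$ and $0$ otherwise, matching $\Hom_G(\Ind_B^G\chi_1,\Ind_B^G\chi_2)$ via Thm.\ 1.2 of \cite{H}. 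Combined with faithfulness this yields bijectivity. For the inductive step, given $g\colon D^\vee_\Delta(\pi_2)\to D^\vee_\Delta(\pi_1)$ and a short exact sequence $0\to\pi_1'\to\pi_1\to\pi_1''\to 0$ with $\pi_1''$ irreducible, the inductive hypothesis produces $f'\colon \pi_1'\to\pi_2$ whose $D^\vee_\Delta$-image is the composition of $g$ with $D^\vee_\Delta(\pi_1)\twoheadrightarrow D^\vee_\Delta(\pi_1')$. The lift $g$ itself witnesses that the $\Ext^1$-class of the pushout extension $0\to \pi_2\to\pi_2\sqcup_{\pi_1'}\pi_1\to\pi_1''\to 0$ becomes split on the $\Ext^1$ of \'etale $T_+$-modules (since contravariant exact functors send pushouts to pullbacks, the lift $g$ produces a section of $D^\vee_\Delta(\pi_2\sqcup_{\pi_1'}\pi_1)\cong D^\vee_\Delta(\pi_1)\times_{D^\vee_\Delta(\pi_1')}D^\vee_\Delta(\pi_2)$). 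Assuming the natural map
\begin{equation*}
\Ext^1_G(\pi_1'',\pi_2)\longrightarrow \Ext^1(D^\vee_\Delta(\pi_2),D^\vee_\Delta(\pi_1''))
\end{equation*}
is injective, the original pushout class vanishes on the representation side as well, and the resulting splitting combines with $f'$ to produce the desired $f\colon\pi_1\to\pi_2$; symmetric reasoning reduces the length of $\pi_2$ when $\pi_1$ is already irreducible.

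The \textbf{main obstacle} is establishing this $\Ext^1$-injectivity for arbitrary $\pi_2\in SP_A^0$: Lemma \ref{splitDsplitpi} covers only the irreducible-irreducible case. To bootstrap, I plan a further d\'evissage on $\mathrm{length}(\pi_2)$: pick a short exact sequence $0\to\pi_2'\to\pi_2\to\pi_2''\to 0$ with $\pi_2''$ irreducible, compare the long exact sequence $\Ext^\bullet_G(\pi_1'',-)$ on the representation side with $\Ext^\bullet(-,D^\vee_\Delta(\pi_1''))$ applied to the contravariantly induced short exact sequence of \'etale $T_+$-modules (produced by Thm.\ \ref{princserexact}), and apply a four-lemma; faithfulness supplies injectivity on the $\Hom$ entry, and the inductive hypothesis together with the base case \ref{splitDsplitpi} supplies injectivity on the flanking $\Ext^1$ entries. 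The sheaf-theoretic machinery of Cor.\ \ref{deltasheaf} and Cor.\ \ref{sheaffunctorial}, in combination with the length control in Lemma \ref{notinsmaller}, is designed to exclude phantom extensions at each step: a nonsplit $G$-extension whose image on the \'etale side is split would yield a $G$-equivariant embedding of the corresponding $\pi^\vee$ into a sheaf whose section on $\mathcal{C}_0$ would be too small, contradicting the minimality assertion of Lemma \ref{notinsmaller}. Closing the d\'evissage along these lines will be the delicate final step.
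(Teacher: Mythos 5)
Your faithfulness argument and your irreducible--irreducible base case are essentially fine (though note that Prop.\ \ref{noideal} by itself only forces the element $\mu$ to be a unit; to conclude that $\mu$ is a constant and hence that the Hom space is $A$ or $0$ you need the additional observation, used inside Lemma \ref{splitDsplitpi}, that $\bigcap_{n>0}\varphi_s^n(\kappa\bg N_{\Delta,0}\jg)=\kappa$). The genuine gap is in the fullness step: your whole d\'evissage hinges on injectivity of $\Ext^1_G(\pi_1'',\pi_2)\to \Ext^1(D^\vee_\Delta(\pi_2),D^\vee_\Delta(\pi_1''))$ for \emph{arbitrary} $\pi_2\in SP_A^0$, and this is exactly what you do not prove. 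Lemma \ref{splitDsplitpi} only covers the irreducible--irreducible case, and your proposed four-lemma closing argument is set up with the wrong hypothesis: to get injectivity of the middle $\Ext^1$-map from the six-term sequences you need the map on the preceding $\Hom$-entry, $\Hom_G(\pi_1'',\pi_2'')\to\Hom(D^\vee_\Delta(\pi_2''),D^\vee_\Delta(\pi_1''))$, to be \emph{surjective} (i.e.\ fullness at smaller length), not injective as you assert via faithfulness. Even after repairing this into a joint induction on fullness and $\Ext^1$-injectivity, the appeal to the sheaf machinery to ``exclude phantom extensions'' is left as a plan rather than an argument, and that is precisely where the real content lies; as written, the proof is not complete.

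For comparison, the paper never proves an $\Ext^1$-injectivity statement beyond Lemma \ref{splitDsplitpi} and avoids the homological d\'evissage altogether. Given a nonzero morphism $f\colon D^\vee_\Delta(\pi)\to D^\vee_\Delta(\pi')$ with image $D$ of generic length $n$, it works directly with the sheaf $\mathfrak{Y}_{\pi,\Delta}=\mathfrak{Y}_{\pi,M}$ of Cor.\ \ref{deltasheaf}: the image of $M^\vee$ in $D$ dualizes to an $M_1\in\mathcal{M}_\Delta(\pi^{H_{\Delta,0}})$ with $M_1^\vee[1/X_\Delta]=D$, producing a morphism of sheaves $\mathfrak{Y}_{\pi,M}\to\mathfrak{Y}_{\pi,M_1}$; the length bound of Lemma \ref{notinsmaller} (proved by reducing to the length-two case, where Lemma \ref{splitDsplitpi} enters) applied to the image and kernel of $\pi^\vee\to\mathfrak{Y}_{\pi,M_1}(G/B)$ pins down a subrepresentation $\pi_1\subseteq\pi$ of length exactly $n$ with $D^\vee_\Delta(\pi_1)\cong D$, and a symmetric argument on $\Coker(f)$ gives a quotient $\pi'\twoheadrightarrow\pi_2'$ of length $n$ with $D^\vee_\Delta(\pi_2')\cong D$; finally a third application of Lemma \ref{notinsmaller}, to the sheaf attached to $\pi_1\oplus\pi_2'$ and the diagonal map to $D$, identifies $\pi_1\cong\pi_2'$ and yields the morphism $\pi'\twoheadrightarrow\pi_2'\cong\pi_1\hookrightarrow\pi$ inducing $f$. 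So the two routes are genuinely different: yours would reduce everything to an extension-group computation (which, if completed, would give finer information about $\Ext^1$), while the paper's trades that for repeated use of the length control coming from the $G$-equivariant sheaf, which is available with the tools already established. To salvage your approach you would have to actually carry out the inductive proof of the $\Ext^1$-injectivity (or extract it from Lemma \ref{notinsmaller}), which is the step you have deferred.
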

\begin{proof}
The faithfulness is clear by the exactness (Thm.\ \ref{princserexact}) noting that $D^\vee_\Delta$ does not vanish on irreducible objects in $SP_A^0$ by Cor.\ \ref{princserDDelta}. Let $\pi$ and $\pi'$ be objects in the category $SP_A^0$ and assume we have a nonzero morphism $f\colon D^\vee_\Delta(\pi)\to D^\vee_\Delta(\pi')$. Let $D$ be the image of $f$ and $n$ be the (generic) length of $D$. Consider the sheaf $\mathfrak{Y}_{\pi,\Delta}$ on $G/B$ provided by Cor.\ \ref{deltasheaf}. By Prop.\ \ref{genlengthDveeSPA} $\mathfrak{Y}_{\pi,\Delta}=\mathfrak{Y}_{\pi,M}$ for some $M$ in $\mathcal{M}_\Delta(\pi^{H_{\Delta,0}})$ and we have $\genlength(M^\vee[1/X_\Delta])=\mathrm{length}(\pi)$. Further, the image of the composite map $M^\vee\to M^\vee[1/X_\Delta]=D^\vee_\Delta(\pi)\overset{f}{\to} D$ is the Pontryagin dual of another object $M_1\leq M$ in $\mathcal{M}_\Delta(\pi^{H_{\Delta,0}})$. Since $f$ is onto, $D=M_1^\vee[1/X_\Delta]$. By \ref{sheaffunctorial} we obtain a morphism $\mathfrak{Y}_{\pi,M}\to \mathfrak{Y}_{\pi,M_1}$ of $G$-equivariant sheaves whose kernel has sections
\begin{equation*}
\mathfrak{Y}_{\pi,\Ker(f)}(U):=\{x\in \mathfrak{Y}_{\pi,M}(U)\mid \res^{gU}_{gU\cap\mathcal{C}_0}(gx)\in\Ker(f)\subseteq M^\vee[1/X_\Delta]\text{ for all }g\in G\}
\end{equation*}
on an open subset $U\subseteq G/B$. Let $f_{G/B}\colon\pi^\vee\to\mathfrak{Y}_{\pi,M_1}(G/B)$ be the natural map and $\pi_1^\vee$ be its image and $\pi_2^\vee$ be its kernel. Then $\pi_1:=(\pi_1^\vee)^\vee$ is naturally a $G$-subrepresentation of $\pi$ with quotient $\pi_2=(\pi_2^\vee)^\vee$. By Lemma \ref{notinsmaller} applied to both $\pi_1^\vee\hookrightarrow\mathfrak{Y}_{\pi,M_1}(G/B)$ and $\pi_2^\vee\hookrightarrow \mathfrak{Y}_{\pi,\Ker(f)}(G/B)$ we obtain $\mathrm{length}(\pi_1)\leq n$ and $\mathrm{length}(\pi_2)\leq \genlength(\Ker(f))=\mathrm{length}(\pi)-n$. Hence $\mathrm{length}(\pi_1)=n$ and $D^\vee_\Delta(\pi_1)\cong D$. 

By a similar argument applied to the quotient map $D^\vee_\Delta(\pi')\twoheadrightarrow\Coker(f)$ we find a quotient $\pi'\twoheadrightarrow \pi'_2$ with an injective morphism $\pi_2'^\vee\hookrightarrow\mathfrak{Y}_{\pi,D}(G/B)$ into the global sections of a sheaf $\mathfrak{Y}_{\pi,D}$ whose sections on an open subset $U\subseteq G/B$ are given by
\begin{equation*}
\mathfrak{Y}_{\pi,D}(U):=\{x\in \mathfrak{Y}_{\pi',\Delta}(U)\mid \res^{gU}_{gU\cap\mathcal{C}_0}(gx)\in D\subseteq D^\vee_\Delta(\pi')\text{ for all }g\in G\}\ .
\end{equation*}
Further, we have $\mathrm{length}(\pi'_2)=n$ and $D^\vee_\Delta(\pi'_2)\cong D$.

Now we have maps $\pi_1^\vee\to D^\vee_\Delta(\pi_1)\cong D$ and $\pi_2'^\vee\to D^\vee_\Delta(\pi_2')\cong D$ so we may take the direct sum $\theta\colon \pi_1^\vee\oplus\pi_1'^\vee\to D$. This gives rise to an object $M_\ast\in \mathcal{M}_\Delta((\pi_1\oplus\pi_2')^{H_{\Delta,0}})$ and a $G$-equivariant continuous map $\overline{\theta}\colon \pi_1^\vee\oplus \pi_2'^\vee\to \mathfrak{Y}_{\pi_1\oplus\pi_1',M_\ast}(G/B)$ that is injective restricted to both $\pi_1^\vee$ and to $\pi_1'^\vee$. However, applying Lemma \ref{notinsmaller} to this situation again we deduce that the image of $\overline{\theta}$ has length at most $n$. Therefore $\overline{\theta}$ induces an isomorphism between $\pi_1^\vee$ and $\pi_1'^\vee$ (and the image). All in all we obtain a map $\pi'\twoheadrightarrow \pi_1'\cong \pi_1\hookrightarrow \pi$ that induces $f$ after taking $D^\vee_\Delta$.
\end{proof}

\begin{rems}\begin{enumerate}
\item In particular, the forgetful functor restricting $\pi$ to $B$ is also fully faithful on $SP_A^0$ as $D^\vee_\Delta$ factors through this.
\item $D^\vee_\Delta$ is not faithful on $SP_A$: for instance any finite dimensional representation lies in its kernel.
\end{enumerate}
\end{rems}

\end{document}